\newtheorem{thm}{Theorem}[section]
\newtheorem{rmk}[thm]{Remark}
\newtheorem{lem}[thm]{Lemma}
\newtheorem{prop}[thm]{Proposition}
\newtheorem{cor}[thm]{Corollary}
\newtheorem{e.g.}[thm]{Example}
\newtheorem{defi}[thm]{Definition}
\begin{document}

\title{\Large Pseudo-effective and nef cones on spherical varieties}
\author{\large Qifeng LI}
\date{}
\maketitle

\begin{abstract}
We show that nef cycle classes on smooth complete spherical varieties are effective, and the products of nef cycle classes are also nef. Let $X$ be a smooth projective spherical variety such that its effective cycle classes of codimension $k$ are nef, where $1\leq k\leq \text{dim}(X)-1$. We study the properties of $X$. And we show that if $X$ is a toric variety, then $X$ is isomorphic to the product of some projective spaces; if $X$ is toroidal, then $X$ is isomorphic to a rational homogeneous space; if $X$ is horospherical, $\text{dim}(X)\geq 3$ and $k=2$, then effective divisors on $X$ are nef; if $X$ is horospherical and effective divisors on $X$ are nef, then there is a morphism from $X$ to a rational homogeneous space such that each fiber is isomorphic to the product of some horospherical varieties of Picard number one.
\end{abstract}

\tableofcontents

\section{Introduction}

The positivity of divisors and curves occupy an important position in algebraic geometry. And recently, there are some work on the positivity of subvarieties or more generally, on the positivity of cycles, for example \cite{Pet09}\cite{Fulg11}\cite{DELV11}\cite{Ot13} and \cite{Le13}. In the paper \cite{DELV11}, they defined pseudo-effective and nef cones and studied the properties of these cones on some Abelian varieties. The aim of this paper is to study the pseudo-effective and nef cones on spherical varieties.

We work on the complex number field $\mathbb{C}$. Let $X$ be a smooth complete variety of dimension $n$. Let $A^{*}(X)=\bigoplus\limits_{k=0}^{n}A^{k}(X)$ be the Chow ring of $X$. Let $N^{k}(X)_{\mathbb{R}}$ be the finite dimensional real vector space of numerical equivalence classes of codimension $k$ algebraic cycles on $X$ with real coefficients. Denote by $\text{Eff}^{\, k}(X)\subseteq N^{k}(X)_{\mathbb{R}}$ the cone generated by effective cycles, $\text{Psef}^{\, k}(X)$ the closure of $\text{Eff}^{\, k}(X)$ in $N^{k}(X)_{\mathbb{R}}$. Let $\text{Nef}^{\, k}(X)=\{\eta\in N^{k}(X)_{\mathbb{R}}\mid \eta\cdot \delta \geq 0, \delta\in \text{Psef}^{\, n-k}(X) \}$, $N_{k}(X)_{\mathbb{R}}=N^{n-k}(X)_{\mathbb{R}}$, $\text{Eff}_{k}(X)=\text{Eff}^{\, n-k}(X)$, $\text{Nef}_{k}(X)=\text{Nef}^{\, n-k}(X)$ and $\text{Psef}_{k}(X)=\text{Psef}^{\, n-k}(X)$.  We call $\text{Eff}^{\, k}(X), \text{Psef}^{\, k}(X), \text{Nef}^{\, k}(X)$ effective cones, pseudo-effective cones and nef cones respectively, and call their elements effective cycle classes, pseudo-effective cycle classes and nef cycle classes respectively.

Let $G$ be a connected reductive algebraic group, $B$ be a Borel subgroup, and $R_{u}(B)$ be the unipotent radical of $B$. A normal $G$-variety $X$ is said to be $G$-spherical if there is an open $B$-orbit on $X$.

\begin{thm} (Theorem \ref{Nef<=Eff=Psef}) \label{introduction nef<=Eff=Psef}
Let $X$ be a smooth complete $G$-spherical variety of dimension $n$. Then for any integer $k$, $\text{Nef}^{\, k}(X)\subseteq\text{Psef}^{\, k}(X)=\text{Eff}^{\, k}(X)$, and these cones are rational polyhedra. If $\eta_{1}\in\text{Nef}^{\, k_{1}}(X)$ and $\eta_{2}\in\text{Nef}^{\, k_{2}}(X)$, then $\eta_{1}\cdot\eta_{2}\in\text{Nef}^{\, k_{1}+k_{2}}(X)$
\end{thm}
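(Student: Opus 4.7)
My plan is to exploit the $B$-orbit stratification of $X$ together with Brion's Kleiman--Bertini transversality theorem for spherical varieties.

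For the identity $\text{Psef}^{\, k}(X)=\text{Eff}^{\, k}(X)$ and rational polyhedrality, I use that a spherical $X$ has only finitely many $B$-orbits, so the classes $[V_1],\dots,[V_r]$ of the codimension-$k$ $B$-orbit closures span $N^{\, k}(X)_{\mathbb{R}}$. To see that they in fact generate $\text{Eff}^{\, k}(X)$ as a convex cone, I would move any irreducible effective cycle $Z$ to a $B$-invariant one by taking the flat limit of the family $\{b\cdot Z:b\in B\}$ in the appropriate Chow/Hilbert scheme; since $B$ is connected the limit is numerically equivalent to $Z$, and its components, being $B$-stable, are $B$-orbit closures appearing with non-negative multiplicities. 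Therefore $\text{Eff}^{\, k}(X)$ is a finitely generated rational polyhedral cone, automatically closed, so $\text{Psef}^{\, k}(X)=\text{Eff}^{\, k}(X)$, and $\text{Nef}^{\, k}(X)$ is rational polyhedral as its dual under the numerical pairing.

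For $\text{Nef}^{\, k}(X)\subseteq\text{Psef}^{\, k}(X)$, I would invoke the duality between $B$-orbit closures and $B^{-}$-orbit closures, where $B^{-}$ is the Borel opposite to $B$. Brion's Bialynicki--Birula decomposition associated with a generic one-parameter subgroup of the maximal torus $T\subseteq B\cap B^{-}$ produces, for each codimension-$k$ $B$-orbit closure $V_i$, a dimension-$k$ $B^{-}$-orbit closure $V_i^{-}$ satisfying $[V_i]\cdot[V_j^{-}]=\delta_{ij}$. Writing a nef class $\eta=\sum_i a_i[V_i]$ in the $B$-orbit basis and pairing with the effective class $[V_j^{-}]$ then yields $a_j=\eta\cdot[V_j^{-}]\geq 0$, so $\eta\in\text{Eff}^{\, k}(X)$.

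For the product statement, I would first establish $\text{Eff}^{\, k_1}(X)\cdot\text{Eff}^{\, k_2}(X)\subseteq\text{Eff}^{\, k_1+k_2}(X)$ via Brion's Kleiman--Bertini theorem: for irreducible subvarieties $A,B\subseteq X$, a generic $g\in G$ satisfies $[gA]=[A]$ numerically and $gA$ meets $B$ properly, so the scheme-theoretic intersection represents the product class and is effective. Given nef classes $\eta_1,\eta_2$, to prove $\eta_1\cdot\eta_2\in\text{Nef}^{\, k_1+k_2}(X)$ it suffices by the first part to check $(\eta_1\cdot\eta_2)\cdot[W]\geq 0$ for every $B$-orbit closure $W$ of dimension $k_1+k_2$. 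By the second part $\eta_2\in\text{Eff}^{\, k_2}(X)$, so by Kleiman--Bertini $\eta_2\cdot[W]\in\text{Eff}^{\, n-k_1}(X)=\text{Psef}^{\, n-k_1}(X)$, and pairing this effective class with the nef class $\eta_1$ yields $\eta_1\cdot(\eta_2\cdot[W])\geq 0$, as required.

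The principal obstacle I anticipate is establishing the perfect duality $[V_i]\cdot[V_j^{-}]=\delta_{ij}$ at the required level of generality. Brion's Bialynicki--Birula argument is cleanest for smooth projective spherical varieties, and some care is needed when $X$ is merely complete, either by reducing to a projective birational model or by weakening the statement to an intersection matrix that is triangular with positive diagonal in a suitable cellular order; the latter, together with an inductive argument, already suffices to force the non-negativity of the coefficients $a_j$.
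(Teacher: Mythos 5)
Your first step is sound and coincides with the paper's: finiteness of the $B$-orbits plus the degeneration of any effective cycle to a $B$-stable one (the paper's Proposition \ref{cycles are rat. equiv. to stable ones, picard group on spherical varieties}$(i)$) gives $\text{Eff}^{\,k}(X)=\text{Psef}^{\,k}(X)$ as a rational polyhedral cone. The other two steps, however, each rest on a claim that is false for general spherical varieties. The ``perfect duality'' $[V_i]\cdot[V_j^{-}]=\delta_{ij}$ between codimension-$k$ $B$-orbit closures and dimension-$k$ $B^{-}$-orbit closures cannot hold: the classes of the codimension-$k$ $B$-orbit closures span $N^{k}(X)_{\mathbb{R}}$ but are in general linearly dependent (already for $k=1$ the relations are the $\text{div}(f)$, $f\in\mathbb{C}(X)^{(B)}$; the blow-up of $\mathbb{P}^{2}$ at a torus-fixed point has four torus-invariant prime divisors and Picard rank two). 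With linearly dependent generators there is no dual family, the coefficients $a_i$ in $\eta=\sum_i a_i[V_i]$ are not well defined, and the triangular-matrix fallback does not help, since the obstruction is linear dependence rather than failure of orthogonality. Separately, Kleiman--Bertini requires a transitive action; on a non-homogeneous spherical variety a general translate $gA$ of a subvariety contained in the boundary need not meet $B$ properly, and the inclusion $\text{Eff}^{\,k_1}(X)\cdot\text{Eff}^{\,k_2}(X)\subseteq\text{Eff}^{\,k_1+k_2}(X)$ is simply false: on the same blow-up the exceptional curve satisfies $E\cdot E=-[\mathrm{pt}]$. Your product argument uses exactly this false inclusion when you declare $\eta_2\cdot[W]$ effective because both factors are effective.

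The paper replaces both ingredients by a single correct one: an effective K\"unneth decomposition of the diagonal. Since $X\times X$ has finitely many $B\times B$-orbits, for any $B$-stable subvariety $Z$ one can write $\Delta_{*}(Z)=\sum_i c_i\,u_i\otimes v_i$ with $c_i\geq 0$ and $u_i, v_i$ classes of irreducible $B$-stable subvarieties (Corollary \ref{effective cycles on X*X' are group stable effective}); the projection formula then gives $\eta\cdot Z=\sum_{\dim(u_i)=k}c_i(\eta\cdot u_i)\,v_i$. For $\eta$ nef every coefficient $c_i(\eta\cdot u_i)$ is nonnegative, so $\eta\cdot Z$ is effective; taking $Z=X$ gives $\text{Nef}^{\,k}(X)\subseteq\text{Eff}^{\,k}(X)$, and the same formula shows that a nef class times an effective class is effective, from which nefness of products of nef classes follows. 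This decomposition is the correct substitute both for your duality and for your appeal to Kleiman--Bertini.
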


This theorem answers Problem 6.8 in \cite{DELV11}. And these phenomena are quite different from those on Abelian varieties. Now a natural question arises. What does $X$ look like if it is a smooth projective $G$-spherical variety such that $\text{Nef}^{\, k}(X)=\text{Psef}^{\, k}(X)$ for some $1\leq k\leq \text{dim}(X)-1$? For this question, we give some answers for special types of spherical varieties. Let $X$ be a $G$-spherical variety. Recall that if there is a point $x$ in the open $G$-orbit on $X$ such that the isotropy group $G_{x}\supseteq R_{u}(B)$, then $X$ is said to be $G$-horospherical. If there is no prime $B$-stable divisor $D$ on $X$ such that $D$ contains some $G$-orbit, but it's not $G$-stable, then $X$ is said to be toroidal. The following are our main results.

\begin{thm} (Theorem \ref{Nef=Psef toric}, Theorem \ref{nef=psef toroidal}, Theorem \ref{nef2=psef2 horospherical} and Corollary \ref{nef1=psef1 horospherical description}) \label{introduction nef=psef all the cases we have discussed}
Let $X$ be a smooth projective $G$-spherical variety of dimension $n$. Then the following hold.

$(i)$ If $X$ is a toric variety, then $\text{Nef}^{\, k}(X)=\text{Psef}^{\, k}(X)$ for some $1\leq k\leq n-1$ if and only if $X$ is isomorphic to the product of some projective spaces.

$(ii)$ If $X$ is toroidal, then $\text{Nef}^{\, k}(X)=\text{Psef}^{\, k}(X)$ for some $1\leq k\leq n-1$ if and only if $X$ is isomorphic to a rational homogeneous space.

$(iii)$ If $X$ is $G$-horospherical, $n\geq 3$ and $\text{Nef}^{\, 2}(X)=\text{Psef}^{\, 2}(X)$, then $\text{Nef}^{\, 1}(X)=\text{Psef}^{\, 1}(X)$.

$(iv)$ If $X$ is $G$-horospherical and $\text{Nef}^{\, 1}(X)=\text{Psef}^{\, 1}(X)$, then there is a $G$-equivariant morphism $\pi: X\rightarrow G/P$ such that each fiber of $\pi$ is isomorphic to the product of some smooth projective $L$-horospherical varieties of Picard number one, where $P$ is a parabolic subgroup of $G$ and $L$ is a Levi factor of $P$.
\end{thm}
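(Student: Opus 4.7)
The overall framework is that by Theorem \ref{introduction nef<=Eff=Psef} one already has $\text{Nef}^{k}\subseteq\text{Psef}^{k}$, so $\text{Nef}^{k}=\text{Psef}^{k}$ means every effective codimension-$k$ class is nef, and equivalently every $B$-stable effective codimension-$k$ cycle pairs non-negatively with every $B$-stable effective cycle of complementary dimension. Since both cones are generated by the finitely many $B$-stable subvariety classes, the hypothesis reduces to finitely many numerical inequalities. The plan in each part is to translate these inequalities, via the Luna--Vust/colored-fan description, into rigid combinatorial data and read off the geometric consequence.

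For (i), one direction is immediate: on $\prod\mathbb{P}^{n_i}$ the Chow ring is generated in degree one by hyperplane pullbacks, so every pseudo-effective class is nef. For the converse, I would handle the divisorial case $k=1$ first, using standard toric methods (primitive collections, or analysing extremal rays of the Mori cone, characterise smooth projective fans whose effective divisors are all nef as products of simplices), and then reduce intermediate $k$ to $k=1$ using the product-of-nef statement of Theorem \ref{introduction nef<=Eff=Psef} together with the explicit formula for intersections of torus-invariant orbit closures on a smooth toric variety. For (ii), I would restrict the hypothesis to the open toric slice of the toroidal variety and apply (i); rigidity of toroidal spherical embeddings then promotes the resulting product-of-simplices structure to $X$ being a rational homogeneous space.

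For (iii), the goal is to show that any $B$-stable effective divisor $D$ pairs non-negatively with any $B$-stable effective curve $C$ on the horospherical $X$ of dimension $n\geq 3$. My strategy is to express $[C]$ as a non-negative combination of products $[Y]\cdot [E_{1}]\cdots [E_{n-3}]$, where $Y$ is a $B$-stable codimension-$2$ cycle and each $E_{j}$ is a divisor class already known to be nef (for instance, a pullback from the flag-variety factor of the horospherical structure, where such classes are automatically nef). Then $[Y]\in\text{Nef}^{2}$ by hypothesis, the multiplicativity of Nef from Theorem \ref{introduction nef<=Eff=Psef} makes each product nef, and pairing with $[D]$ gives the required non-negativity. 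The main obstacle is the combinatorial construction of such a positive decomposition, and the $n\geq 3$ hypothesis is exactly what provides the room to perform it.

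For (iv), with every effective divisor on $X$ already nef, I would construct $\pi$ as the contraction of all Mori rays coming from colors, with target $G/P$ for $P$ the parabolic subgroup determined by those colors; this is the natural $G$-equivariant map forced by the colored-fan data. The fibers then carry the uncolored $L$-horospherical structure. To see each fiber splits as a product of $L$-horospherical Picard-one varieties, I would analyse the uncolored fan of a fiber: the assumption that every $G$-stable divisor class is nef forces each primitive edge to come from an independent factor of the fan, yielding the product decomposition. The main obstacle is establishing this last combinatorial splitting rigorously, which will rely on the symmetry between nef and pseudo-effective forced by the hypothesis.
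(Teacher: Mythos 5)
Your reduction of part $(i)$ to the divisorial case together with the multiplicativity of nef classes and the explicit intersection formula for orbit closures is essentially the argument of Theorem \ref{Nef=Psef toric}, and is fine. The problems are in parts $(ii)$ and $(iii)$. For $(ii)$, the plan ``restrict to the open toric slice, apply $(i)$, and let rigidity promote the product structure to homogeneity of $X$'' does not work. First, for a toroidal spherical variety that is not horospherical (e.g.\ a wonderful compactification of a symmetric space) the fan $\{\mathfrak{C}_{Y}\}$ is supported only on the valuation cone $\mathcal{V}(G/H)\subsetneq (N_{X})_{\mathbb{Q}}$, so the associated toric variety is not complete and part $(i)$ simply does not apply to it; moreover the slice is an open, non-proper subset on which ``nef'' is not even defined, so transferring the hypothesis already needs an argument (the paper only carries out such a transfer in the horospherical setting, via the curves $C_{\mu}$). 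Second, and more fundamentally, the structure of the toric slice does not determine homogeneity: $G\times^{P}Y$ with $Y=\mathbb{P}^{1}$ can be a non-homogeneous $\mathbb{P}^{1}$-bundle (the Hirzebruch surface in the remark after Theorem \ref{morphism to G/P_0} is toroidal with slice $\mathbb{P}^{1}$), so no rigidity statement upgrades ``slice is a product of projective spaces'' to ``$X$ is homogeneous.'' The paper's proof of this implication in Theorem \ref{nef=psef toroidal} is of a completely different nature: it uses log homogeneity (surjectivity of $\mathcal{O}_{X}\otimes\mathfrak{g}\rightarrow\mathbb{T}_{X}(-\text{log}\partial X)$), the fact that the nef boundary divisors are globally generated so that each normal bundle $\mathcal{N}_{X_{i}/X}$ is globally generated, and the Five Lemma to conclude that $\mathbb{T}_{X}$ itself is globally generated.

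For $(iii)$, the decomposition of an arbitrary effective curve class as a non-negative combination of products $[Y]\cdot[E_{1}]\cdots[E_{n-3}]$ with $Y$ effective of codimension two and $E_{j}$ nef is precisely the crux, and it is not available in general; you have deferred the entire content of the theorem to it. The paper instead must (a) prove that the exceptional locus $A$ of an offending contraction is an irreducible $G$-orbit closure of dimension one (Proposition \ref{exceptional locus of bir. Mori cont. horospherical}), (b) realize $D\cdot A$ as $D\cdot E\cdot W$ with $E$ a $G$-stable divisor and $W$ a two-dimensional orbit closure \emph{only} in the case where some $B$-stable divisor $E\neq D$ is negative on $A$, and (c) dispose of the remaining cases --- no $G$-stable divisor negative on $A$, or the unique negative $B$-stable divisor being $G$-stable --- by the Ionescu length inequality and rank computations that force $\text{dim}(X)\leq 3$. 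Your sketch covers only the analogue of (b). In $(iv)$ the picture is closer to the paper's, but note that the fibers of $\pi$ are \emph{not} uncolored: only the colors $D$ with $\rho(\nu_{D})=0$ are absorbed into $G/P$, and the fiber is a horospherical $L_{0}/H_{0}$-embedding that may retain many colors. The splitting of the fiber is obtained in Theorem \ref{Nef1=Psef1 horospherical is more or less product} by showing that its associated toric variety is a product of projective spaces and then invoking a genuinely nontrivial criterion (Proposition \ref{when will a product of horo. var. be horo.}) for when a product of horospherical $G$-varieties is again horospherical; an edge-by-edge inspection of an ``uncolored fan'' is not sufficient.
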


This paper is organized as follows. In Section \ref{section spherical varieties}, we review some basic notations and results from the Luna-Vust Theory and the Mori Theory on spherical varieties, which are our main tools. In Section \ref{section nef are effective}, we prove Theorem \ref{introduction nef<=Eff=Psef}. In Section \ref{section classification nef=psef}, we mainly study the smooth projective spherical variety $X$ of dimension $n$ such that $\text{Nef}^{\, k}(X)=\text{Psef}^{\, k}(X)$ for some $1\leq k\leq n-1$. In Subsection \ref{subsection general spherical varieties}, we study the general cases. And we get that if $2\leq k\leq n-2$ and $\text{Nef}^{\, 1}(X)\neq\text{Psef}^{\, 1}(X)$, then for any birational Mori contraction, the dimension of the exceptional locus is no more than $k-1$. Then we study the $G$-equivariant morphisms from a complete spherical variety to some special rational homogeneous spaces, and these morphisms contribute to the proof of Theorem \ref{introduction nef=psef all the cases we have discussed}$(iv)$. In Subsection \ref{subsection toric varieties} and \ref{subsection toroidal varieties}, we show Theorem \ref{introduction nef=psef all the cases we have discussed}$(i)$ and $(ii)$ respectively. In Subsection \ref{subsection horospherical varieties}, we prove Theorem \ref{introduction nef=psef all the cases we have discussed}$(iii)(iv)$. Since the horospherical cases are complicated, we make some preliminaries in the part \ref{subsubsection preliminaries}. In the part \ref{subsubsection horospherical codimension two}, we show in Proposition \ref{exceptional locus of bir. Mori cont. horospherical} that the exceptional locus of a birational Mori contraction on a projective $\mathbb{Q}$-factorial horospherical variety is irreducible, and then prove Theorem \ref{introduction nef=psef all the cases we have discussed}$(iii)$. Finally, we prove Theorem \ref{introduction nef=psef all the cases we have discussed}$(iv)$ in the part \ref{subsubsection horospherical codimension one} after some reductions in the part \ref{subsubsection isomorphic colored fans}.

\medskip

\textbf{\normalsize Conventions and notations.} Schemes are always assumed to be seperated and of finite type over $\mathbb{C}$ and varieties are irreducible and reduced schemes.

Denote by $G$ a connected reductive algebraic group. Let $B$ be a Borel subgroup of $G$, $T$ a maximal torus in $B$. Let $\mathfrak{g}, \mathfrak{b}, \mathfrak{t}$ be the corresponding Lie algebras. Let $S$ be the set of simple roots. If $I$ is a subset of $S$, then we denote by $P_{I}$ the corresponding parabolic subgroup of $G$ containing $B$. Denote by $B^{-}$ the opposite Borel subgroup corresponding to $B$ and by $P_{I}^{-}$ the opposite parabolic subgroup corresponding to $P_{I}$.

For a linear algebraic group $H$, denote by $R_{u}(H)$ the unipotent radical of $H$. Let $\langle\cdot\, , \cdot\rangle_{H}$ be paring between the characters and the coroots on $H$. When there is no confusions, we omit the subscript $H$.

For a group $H$ and an $H$-module $V$, denote by
\begin{eqnarray*}
V^{(H)}=\{v\in V\mid v\neq 0,\text{ and } hv=\chi(h)v\text{ for some character } \chi\in\chi(H)\},
\end{eqnarray*}
where $\chi(H)$ is the group of characters of $H$. On the other hand, we denote by
\begin{eqnarray*}
V^{H}=\{v\in V\mid  hv=v\text{ for any } h\in H\}.
\end{eqnarray*}


\medskip

\textbf{\normalsize Acknowledgements.} I am greatly indebted to St\'{e}phane Druel for suggesting this problem and for a lot of discussions and advices. I would like to thank Michel Brion for answering my questions on algebraic groups and spherical varieties, and to thank Baohua Fu for reading a draft of this paper.

\section{Spherical varieties} \label{section spherical varieties}

In this section, we will recall some basic notations and results from the Luna-Vust Theory and the Mori Theory on spherical varieties which will be frequently used in this paper.

\medskip

Let $H$ be an algebraic group, $X$ a scheme (resp. a variety). If there is a morphism $\varphi: H\times X\rightarrow X$ such that $\varphi(h_{1}, \varphi(h_{2}, x))=\varphi(h_{1}h_{2}, x)$, $\varphi(e, x)=x$, where $h_{1}, h_{2}\in H$, $x\in X$ and $e$ is the unit of $H$, we say that $H$ acts on $X$ or say that there is an $H$-action on $X$. And $X$ is said to be an $H$-scheme (resp. an $H$-variety). For any $h\in H$ and any $x\in X$, denote by $h\cdot x=\varphi(h, x)$.

Let $X$ be an $H$-scheme. Denote by $S_{X, H}$ the set of $H$-orbits on $X$, and $S^{c}_{X, H}$ the set of closed $H$-orbits on $X$. Let $M\subseteq H$, $Y\subseteq X$ be subsets, then denote by $MY=\{m\cdot y| m\in M, y\in Y\}$. If $Y\subseteq X$ is a subscheme such that $HY\subseteq Y$ in the set theory, then we say that $Y$ is $H$-stable. For any point $x\in X$, denote by the isotropy group $H_{x}=\{h\in H\mid h\cdot x=x\}$.

\begin{defi} \label{defi. of spherical horospherical}
A normal $G$-variety $X$ is $G$-spherical if there is an open $B$-orbit on $X$.
\end{defi}

Let $x_{0}$ be a point in the open $G$-orbit of the $G$-spherical variety $X$, and $H=G_{x_{0}}$. Then we say that $X$ is a spherical $G/H$-embedding and identify $G/H$ with an open subset of $X$. Note that $G/H$ is itself a $G$-spherical variety, and we call it a homogeneous $G$-spherical variety. Denote by $\partial X=X\backslash(G/H)$, and we call it the boundary of $X$.

\medskip

Since all Borel subgroups of $G$ are conjugate, a normal $G$-variety $X$ is $G$-spherical if and only if there is an open $B'$-orbit on $X$, where $B'$ is any fixed Borel subgroup of $G$. There are several equivalent definitions of spherical varieties, see for example \cite[Def. 1.0.1, Thm. 2.1.2]{Per12}, \cite{Br86}, and  \cite{BLV86}.

\begin{prop} \label{equi. defi.}
Let $X$ be a normal $G$-variety. Then the following are equivalent.

$(i)$ $X$ is $G$-spherical.

$(ii)$ $\mathbb{C}(X)^{B}=\mathbb{C}$;

$(iii)$ for any (or some) Borel subgroup $B'$ of $G$, there is an open $B'$-orbit on $X$;

$(iv)$ $X$ has finitely many $B$-orbits;

$(v)$ every normal $G$-variety containing $G/H$ as the maximal $G$-orbit has only finitely many $G$-orbits, where $G/H\subseteq X$ is an open $G$-orbit;



\end{prop}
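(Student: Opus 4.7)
The plan is to prove every condition equivalent to (i). The equivalence (i) $\Leftrightarrow$ (iii) is immediate from conjugacy of Borel subgroups: if $B' = gBg^{-1}$ and $O$ is an open $B$-orbit, then $g \cdot O$ is an open $B'$-orbit. The implication (iv) $\Rightarrow$ (i) follows from the irreducibility of $X$: a decomposition of $X$ into finitely many locally closed $B$-orbits forces one of them to be dense, hence open. For (i) $\Leftrightarrow$ (ii) I would invoke Rosenlicht's theorem on generic quotients, which gives the identity
\begin{equation*}
\dim X - \max_{x \in X} \dim(Bx) = \mathrm{tr.deg}_{\mathbb{C}} \, \mathbb{C}(X)^{B};
\end{equation*}
existence of an open $B$-orbit is thus equivalent to $\mathbb{C}(X)^{B} = \mathbb{C}$.

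The substantive content is (i) $\Rightarrow$ (iv). I would apply Sumihiro's theorem to cover $X$ by $B$-stable affine open subsets, reducing the problem to the case $X$ affine. On such an $X$, the $B$-orbit closures correspond bijectively to $B$-stable prime ideals of $\mathbb{C}[X]$; passing to $U$-invariants with $U = R_{u}(B)$, these correspond in turn to $T$-stable prime ideals of the finitely generated $T$-algebra $\mathbb{C}[X]^{U}$. Sphericality forces each $B$-eigenspace of $\mathbb{C}[X]$ of a given weight to be at most one-dimensional (multiplicity-freeness), so each such prime ideal is determined by a face of the weight semigroup; since a finitely generated semigroup has only finitely many faces, there are only finitely many $B$-orbits. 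This is the classical argument of Vinberg--Kimelfeld and Brion. The implication (i) $\Rightarrow$ (v) is then formal: any normal $G$-variety $Y$ with $G/H$ as its open $G$-orbit inherits sphericality from $G/H$ (the open $B$-orbit in $G/H$ is still an open $B$-orbit in $Y$), so by (iv) it has finitely many $B$-orbits, and a fortiori finitely many $G$-orbits.

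For (v) $\Rightarrow$ (i) I would argue by contraposition: if $G/H$ fails to be spherical, then $\mathbb{C}(G/H)^{B} \ne \mathbb{C}$ by (ii), and the positive complexity yields a nontrivial modulus for $G$-orbits in a suitable equivariant completion — concretely, the higher-complexity Luna--Vust theory (Akhiezer, Brion, Knop) produces a normal $G$-embedding of $G/H$ carrying a positive-dimensional family of $G$-orbits, contradicting the assumption in (v). The main obstacle is the implication (i) $\Rightarrow$ (iv); it is the deep classical fact underlying the theory and rests on the local structure theorem for spherical varieties. The direction (v) $\Rightarrow$ (i) is the other non-formal step, and in practice one would quote it from the references \cite{Per12}, \cite{Br86}, \cite{BLV86} already cited in the paper rather than reprove it from scratch.
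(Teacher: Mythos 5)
The paper does not prove this proposition: it is recalled as a classical result with pointers to \cite{Per12}, \cite{Br86} and \cite{BLV86}, so there is no in-paper argument to measure yours against. Your treatment of the easy implications is correct and standard: (i)$\Leftrightarrow$(iii) by conjugacy of Borel subgroups, (iv)$\Rightarrow$(i) from irreducibility (a dense locally closed orbit is open), (i)$\Leftrightarrow$(ii) via Rosenlicht, and the attributions for (v)$\Leftrightarrow$(i) (Akhiezer's theorem in one direction, finiteness of orbits of embeddings in the other) are the right ones to quote.

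There is, however, a genuine gap in your reduction for (i)$\Rightarrow$(iv). Sumihiro's theorem does not furnish a cover of a normal $G$-variety by $B$-stable \emph{affine} open subsets; it gives $G$-stable (hence $B$-stable) quasi-projective opens, and invariant affine covers are available only for torus actions. The failure is already visible for $X=\mathbb{P}^1$ with $G=SL_2$: the $B$-fixed point lies in no $B$-stable affine open, because the only $B$-stable open subsets are $\emptyset$, $\mathbb{A}^1$ and $\mathbb{P}^1$ itself. Your affine argument (multiplicity-freeness, $\mathbb{C}[X]^{U}\cong\mathbb{C}[\Gamma]$ for the weight monoid $\Gamma$, $T$-stable primes corresponding to faces of $\Gamma$, finiteness of faces of a finitely generated monoid) is sound, but the passage to it must be repaired: for instance, reduce via Sumihiro to a $G$-stable quasi-projective open, embed $G$-equivariantly into $\mathbb{P}(V)$, and run the affine argument on the cone in $V$ for the Borel $B\times\mathbb{G}_m$ of $G\times\mathbb{G}_m$ (the cone is again a multiplicity-free domain, and $B$-orbits on $X$ correspond to $B\times\mathbb{G}_m$-orbits on the punctured cone); alternatively argue by induction on dimension via the local structure theorem as in \cite{Br86} or \cite[Cor. 2.6]{Kn91}. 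A smaller imprecision in the same step: $B$-orbit closures inject into, but are not in bijection with, the $B$-stable prime ideals of $\mathbb{C}[X]$ --- injectivity is all you need, so this does not affect the conclusion once the reduction is fixed.
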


\textit{From now on to the end of Section \ref{section spherical varieties}, we assume that $X$ is a spherical $G/H$-embedding.} Define a map $f\mapsto\chi_{f}$ from $\mathbb{C}(X)^{(B)}$ to $\chi(B)$, where $\chi_{f}\in\chi(B)$ satisfies that for all $b\in B$, $b\cdot f=\chi_{f}(b)f$. This is a morphism of abelian groups with kernel $\mathbb{C}^{*}$. Denote by $M_{X}$ or $M_{G/H}$ the image of this morphism, and identify it with $\mathbb{C}(X)^{(B)}/\mathbb{C}^{*}$. Denote by $\text{rank}(X)=\text{rank}(G/H)=\text{rank}(M_{X})$ and call it the rank of $X$ or the rank of $G/H$.

Let $N_{X}=N_{G/H}=\text{Hom}(M_{X}, \mathbb{Z})$ be the dual of $M_{X}$. Any valuation $\nu$ on $X$ induces a homomorphism $\mathbb{C}(X)^{(B)}\rightarrow\mathbb{Q}$ by $f\mapsto\nu(f)$. Hence, $\nu$ induces an element $\rho(\nu)\in\text{Hom}(M_{X}, \mathbb{Q})$. Thus we can define a morphism $\rho=\rho_{G/H}=\rho_{X}: \{\text{valuations}\}\rightarrow (N_{X})_{\mathbb{Q}}$, where $(N_{X})_{\mathbb{Q}}=N_{X}\otimes\mathbb{Q}$. Denote by $(M_{X})_{\mathbb{Q}}=M_{X}\otimes\mathbb{Q}$.

Denote by $\mathcal{V}(G/H)$ the set of $G$-invariant valuations on $X$. By \cite[Cor. 1.8]{Kn91}, $\rho: \mathcal{V}(G/H)\rightarrow(N_{G/H})_{\mathbb{Q}}$ is injective. We regard $\mathcal{V}(G/H)$ as a subset of $(N_{X})_{\mathbb{Q}}$.

A subset $\mathfrak{C}$ of a vector space $\mathbb{Q}^{n}$ is called a cone, if it's closed under addition and multiplication by $\mathbb{Q}^{+}=\{q\in\mathbb{Q}| q\geq 0\}$. For a cone $\mathfrak{C}$ in $\mathbb{Q}^{n}$, we denote by $\mathfrak{C}^{o}$ the interior of it. The cone $\mathfrak{C}$ is called strictly convex if $\mathfrak{C}\cap(-\mathfrak{C})=\{0\}$. The cone $\mathfrak{C}$ is called finitely generated if there are finitely many elements $v_{1},\ldots,v_{s}\in\mathbb{Q}^{n}$ such that $\mathfrak{C}=\sum\limits_{i=1}^{s}\mathbb{Q}^{+}v_{i}$.

\begin{prop}(\cite[Cor. 3.2]{BP87}, \cite{Br90}) \label{valuation cone is a max dim. cone}
$\mathcal{V}(G/H)$ is a finitely generated cone in $(N_{X})_{\mathbb{Q}}$. Moreover, there exist linear independent forms $\chi_{1},\ldots,\chi_{m}$ in $(M_{G/H})_{\mathbb{Q}}$ such that $\mathcal{V}(G/H)=\{\nu\in(N_{G/H})\mid \chi_{i}(\nu)\geq 0, 1\leq i\leq m\}$.
\end{prop}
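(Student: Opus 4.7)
The plan is to prove the two assertions—finite generation and the existence of linearly independent defining forms—in turn, using as a black box the already-cited injectivity of $\rho$ on $\mathcal{V}(G/H)$, which identifies $\mathcal{V}(G/H)$ with a subset of $(N_{G/H})_{\mathbb{Q}}$.

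For finite generation, I would proceed in two steps. First, show that $\mathcal{V}(G/H)$ is convex: given $\nu_1,\nu_2\in\mathcal{V}(G/H)$ with images $\rho(\nu_1),\rho(\nu_2)$, one builds a $G$-invariant valuation realizing an arbitrary positive rational combination. The standard construction (as in \cite{BP87}) goes via a simple embedding whose $G$-stable boundary divisor is a flat deformation of orbits corresponding to $\nu_1$ and $\nu_2$, so the generic valuation of this family interpolates between them. Second, realize $\mathcal{V}(G/H)$ as the support of a complete colored fan: by the Luna--Vust theory, one can construct a smooth complete toroidal embedding $X'$ of $G/H$, and each of the finitely many $G$-stable prime divisors of $X'$ induces an element of $\mathcal{V}(G/H)\cap N_{G/H}$. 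Completeness of $X'$ forces the corresponding cones to cover $\mathcal{V}(G/H)$, giving finitely many generators.

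For the description by linearly independent forms, the key input is Brion's theorem (\cite{Br90}) that $-\mathcal{V}(G/H)$ is a fundamental chamber for the action on $(N_{G/H})_{\mathbb{Q}}$ of a finite reflection group, the \emph{little Weyl group} $W_{G/H}$, generated by reflections along the spherical roots of $G/H$. Since the fundamental chamber of a finite reflection group acting on a real vector space is a simplicial cone cut out precisely by the walls of the defining reflections, and these walls are the kernels of linearly independent characters dual to the spherical roots, one obtains forms $\chi_1,\ldots,\chi_m\in (M_{G/H})_{\mathbb{Q}}$ with $\mathcal{V}(G/H)=\{\nu\mid \chi_i(\nu)\geq 0\}$ and the $\chi_i$ linearly independent.

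The hard part is the second assertion: while finite generation can be extracted reasonably directly from Luna--Vust and Knop's convexity theorem, the fact that the defining hyperplanes are linearly independent is genuinely deep, as it relies on the reflection group interpretation of the spherical root system. Any proof that avoids \cite{Br90} would need to develop this Weyl group action from scratch, which is well beyond what a short argument can do; for the purposes of this paper, the cleanest route is to cite the two references and record only the structural consequences used in later sections.
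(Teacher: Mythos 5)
The paper does not prove this proposition at all: it is stated as a citation to \cite{BP87} and \cite{Br90}, followed only by the remark that the second assertion does not even appear explicitly in \cite{Br90} and that, as Knop notes after Theorem 5.4 of \cite{Kn91}, its proof there is rather involved and rests ultimately on a case-by-case consideration. Your proposal ends up in the same place --- deferring to the two references --- so in that sense it matches the paper's treatment, and your assessment of where the real difficulty lies (the linear independence of the defining forms, not the finite generation) is exactly right. Your sketch of the first assertion (convexity of $\mathcal{V}(G/H)$ plus realizing it as the support of the fan of a complete toroidal embedding, so that finitely many $G$-stable boundary divisors generate it) is the standard argument and is sound. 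One caveat on the second assertion: you present it as a formal consequence of the little Weyl group acting as a finite reflection group with $-\mathcal{V}(G/H)$ as fundamental chamber, but that reflection-group interpretation is essentially equivalent to (and historically came after, in Knop's later work) the statement being proved; in \cite{Br90} itself the linear independence is established directly by the case-by-case analysis the paper alludes to, not derived from a prior reflection-group theorem. So your ``key input'' is not quite available as a black box independent of the statement; the honest citation is to the case-by-case result, as the paper does.
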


It should be noticed that the second assertion of Proposition \ref{valuation cone is a max dim. cone} didn't appear explicitly in \cite{Br90}. Just as what Knop pointed out after Theorem 5.4 in \cite{Kn91}, the proof of this assertion in \cite{Br90} is rather involved and rests ultimately on a case-by-case consideration. We call $\mathcal{V}(G/H)$ the valuation cone of $X$ or the valuation cone of $G/H$.

Denote by $\mathfrak{B}(X)$ the set of irreducible $B$-stable divisors on $X$. Let $\mathfrak{D}(G/H)=\{D\in\mathfrak{B}(X)\mid D\cap(G/H)\neq\emptyset\}$ and $\mathcal{V}_{X}=\{D\in\mathfrak{B}(X)\mid D\subseteq\partial{X}\}$. Thus, $\mathfrak{B}(X)=\mathcal{V}_{X}\cup\mathfrak{D}(G/H)$, and $\mathcal{V}_{X}\cap\mathfrak{D}(G/H)=\emptyset$. We call the elements in the set $\mathfrak{D}(G/H)$ colors of $G/H$, while the elements in the set $\mathcal{V}_{X}$ are called boundary divisors. For any $Y\in S_{X, G}$, we denote by $\mathfrak{D}_{Y}=\{D\in\mathfrak{D}(G/H)\mid Y\subseteq D\}$, $\mathcal{V}_{Y}=\{D\in\mathcal{V}_{X}\mid Y\subseteq D\}$, and $\mathfrak{B}_{Y}=\mathcal{V}_{Y}\cup\mathfrak{D}_{Y}$. Let $\mathfrak{D}_{X}=\bigcup\limits_{Y\in S_{X, G}}\mathfrak{D}_{Y}$, and we call its elements colors of $X$.

\begin{defi} \label{defi. of colored fans}
$(i)$ A colored cone is a pair $(\mathfrak{C}, \mathfrak{D})$ with $\mathfrak{C}\subseteq (N_{G/H})_{\mathbb{Q}}$ and $\mathfrak{D}\subseteq\mathfrak{D}(G/H)$ having the following properties:

$(a)$ $\mathfrak{C}$ is a cone generated by $\rho(\mathfrak{D})$ and finitely many elements in $\mathcal{V}(G/H)$;

$(b)$ $\mathfrak{C}^{\circ}\cap\mathcal{V}(G/H)\neq\emptyset$, i.e. there is a $G$-invariant valuation in the interior of $\mathfrak{C}$.

A colored cone $(\mathfrak{C}, \mathfrak{D})$ is called strictly convex if the following holds:

$(c)$ $\mathfrak{C}$ is a strictly convex cone and $0\notin\rho(\mathfrak{D})$.

$(ii)$ A pair $(\mathfrak{C}_{0}, \mathfrak{D}_{0})$ is a colored face of the colored cone $(\mathfrak{C}, \mathfrak{D})$ if $\mathfrak{C}_{0}$ is a face of $\mathfrak{C}$, $\mathfrak{C}_{0}^{\circ}\cap\mathcal{V}(G/H)\neq\emptyset$ and $\mathfrak{D}_{0}=\mathfrak{D}\cap\rho^{-1}(\mathfrak{C}_{0})$.

$(iii)$ A colored fan $\mathbb{F}$ is a nonempty finite set of colored cones with the following properties:

$(a)$ Every colored face of $(\mathfrak{C}, \mathfrak{D})\in\mathbb{F}$ belongs to $\mathbb{F}$;

$(b)$ For every $\nu\in\mathcal{V}(G/H)$, there is at most one $(\mathfrak{C}, \mathfrak{D})\in\mathbb{F}$ such that $\nu\in\mathfrak{C}^{\circ}$.

A colored fan $\mathbb{F}$ is called strictly convex if $(0, \emptyset)\in\mathbb{F}$, or equivalently, if all elements of $\mathbb{F}$ are strictly convex.

\end{defi}

For any $G$-orbit $Y$ on $X$, we denote by $\mathfrak{C}^{c}_{Y}=(\mathfrak{C}_{Y}, \mathfrak{D}_{Y})$, where $\mathfrak{C}_{Y}$ is the cone in $(N_{X})_{\mathbb{Q}}$ generated by all $\rho(\nu_{D})$, $D\in\mathfrak{B}_{Y}$. Let $\overline{Y}$ be the closure of $Y$ in $X$. For the convenience of discussions, we also denote by $\mathfrak{C}_{\overline{Y}}=\mathfrak{C}_{Y}$, $\mathcal{V}_{\overline{Y}}=\mathcal{V}_{Y}$, $\mathfrak{D}_{\overline{Y}}=\mathfrak{D}_{Y}$ and $\mathfrak{C}^{c}_{\overline{Y}}=\mathfrak{C}^{c}_{Y}$. Denote by $\mathbb{F}_{X}=\{\mathfrak{C}^{c}_{Y}\mid Y\in S_{X, G}\}$, $\mathfrak{C}(X)=\bigcup\limits_{Y\in S_{X, G}}\mathfrak{C}_{Y}$, and $\text{Supp}(\mathbb{F}_{X})=\mathfrak{C}(X)\cap\mathcal{V}(G/H)$.

A $G$-spherical variety $X$ is said to be simple if it has only one closed $G$-orbit. If $X$ is simple with the unique closed $G$-orbit $Y$, then we denote by $\mathfrak{C}^{c}(X)=\mathfrak{C}^{c}_{Y}$.

\begin{thm}(\cite[Thm. 3.1, Lem. 3.2, Thm. 3.3]{Kn91}) \label{correspondence fans and varieties}
$(i)$ The map $X\mapsto\mathfrak{C}^{c}(X)$ is a bijection between isomorphism classes of simple spherical $G/H$-embeddings and strictly convex colored cones in $(N_{G/H})_{\mathbb{Q}}$.

$(ii)$ Let $Y$ be a $G$-orbit on a spherical $G/H$-embedding $X$. Then $Z\mapsto\mathfrak{C}^{c}_{Z}$ is a bijection between $G$-orbits whose closures contain $Y$ and colored faces of $\mathfrak{C}^{c}_{Y}$.

$(iii)$ The map $X\mapsto\mathbb{F}_{X}$ induces a bijection between isomorphism classes of spherical $G/H$-embeddings and strictly convex colored fans in $(N_{G/H})_{\mathbb{Q}}$.
\end{thm}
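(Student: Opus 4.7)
The plan is to prove $(i)$ first for simple embeddings, then bootstrap to $(ii)$ and $(iii)$ by a local-to-global gluing argument. Throughout, the key technical device is the $B$-chart attached to a closed $G$-orbit, which translates the orbit/divisor combinatorics of $X$ into semigroup data inside $M_{G/H}$.

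For $(i)$, let $X$ be a simple $G/H$-embedding with unique closed $G$-orbit $Y$. I would set
$$X_{Y,B}=X\setminus\bigcup_{D\in\mathfrak{B}(X)\setminus\mathfrak{B}_Y}D,$$
the complement of those $B$-stable prime divisors that do not contain $Y$. The crucial step is to show that $X_{Y,B}$ is an affine, $B$-stable, open neighbourhood of $Y$ satisfying $X=G\cdot X_{Y,B}$; this uses Sumihiro's theorem together with the existence of a $B$-semi-invariant section vanishing exactly on the removed divisors. Granting this, the coordinate ring $\mathbb{C}[X_{Y,B}]$ splits as a direct sum of one-dimensional $B$-weight spaces, whose weights form a finitely generated saturated sub-semigroup $\Lambda_Y\subseteq M_{G/H}$; the rational cone dual to $\Lambda_Y$ in $(N_{G/H})_\mathbb{Q}$ is exactly $\mathfrak{C}_Y$. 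Strict convexity reflects the existence of a closed orbit, and $\mathfrak{C}_Y^{\circ}\cap\mathcal{V}(G/H)\ne\emptyset$ reflects the fact that $Y$ is $G$-stable rather than merely $B$-stable. Conversely, from a strictly convex colored cone $(\mathfrak{C},\mathfrak{D})$ I would assemble the affine $B$-variety with weight semigroup $\Lambda=\mathfrak{C}^{\vee}\cap M_{G/H}$, then splice it to $G/H$ into a simple embedding by prescribing, via the set $\mathfrak{D}$, which colors of $G/H$ become $B$-stable Weil divisors containing the new closed orbit. Uniqueness follows because the $B$-chart determines $X$ through the $G$-action and the semigroup description.

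For $(ii)$, each orbit $Z\subseteq X$ with $Y\subseteq\overline{Z}$ meets $X_{Y,B}$, and its closure there is cut out by the vanishing of those $B$-semi-invariants whose weights lie strictly outside a specific face of $\mathfrak{C}_Y$. Reading this backwards, colored faces of $\mathfrak{C}^c_Y$ whose relative interior meets $\mathcal{V}(G/H)$ correspond bijectively to $G$-orbits $Z$ with $Y\subseteq\overline{Z}$, and the associated face is precisely $\mathfrak{C}^c_Z$. For $(iii)$, I would cover a general spherical $G/H$-embedding $X$ by the simple sub-embeddings
$$X_{Y}=X\setminus\bigcup_{Y'\ne Y}\overline{Y'},$$
with $Y'$ ranging over closed $G$-orbits distinct from $Y$. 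Any two of these meet in a simple open sub-embedding, forcing the intersection of their colored cones to be a common colored face, while separatedness of $X$ translates into the uniqueness condition that at most one cone of the fan contains a given $G$-invariant valuation in its relative interior. The converse is obtained by gluing the simple embeddings produced by $(i)$ along their common colored faces.

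The main obstacle is the affineness of $X_{Y,B}$ together with the description of $\mathbb{C}[X_{Y,B}]$ via the semigroup $\Lambda_Y$; this is the genuinely nontrivial input, resting on Sumihiro-type linearization and on a careful analysis of $B$-semi-invariant rational functions on spherical varieties. Every subsequent step, including the gluing arguments for $(ii)$ and $(iii)$, is essentially formal translation between spherical geometry and colored fan combinatorics.
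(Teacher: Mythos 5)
This theorem is quoted from Knop's survey (\cite[Thm.\ 3.1, Lem.\ 3.2, Thm.\ 3.3]{Kn91}); the paper supplies no proof of its own, so there is nothing internal to compare your argument against. Judged as an outline of the known proof, your proposal follows exactly the standard Luna--Vust route as organized by Knop: pass to the $B$-chart $X_{Y,B}$ of the closed orbit, encode it by the monoid of weights of regular $B$-semi-invariants, recover the colored cone by duality, and glue simple pieces along common colored faces, with condition (b) of Definition \ref{defi. of colored fans} accounting for separatedness. That is the right skeleton.

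Two points need repair before this could count as a proof. First, the assertion that $\mathbb{C}[X_{Y,B}]$ ``splits as a direct sum of one-dimensional $B$-weight spaces'' is false: a rational $B$-module is not semisimple, and this decomposition is a feature of the toric case only. What sphericality actually gives (via $\mathbb{C}(X)^{B}=\mathbb{C}$) is that each weight of a $B$-\emph{semi-invariant} occurs with multiplicity one, so the correct invariant is the monoid $\{f\in\mathbb{C}(X)^{(B)}\mid \nu_{D}(f)\geq 0\ \forall D\in\mathfrak{B}_{Y}\}$, whose weight set is $\mathfrak{C}_{Y}^{\vee}\cap M_{G/H}$; the chart is then recovered from $G/H$ and this monoid, not from a weight decomposition of the whole coordinate ring. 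Second, the entire technical core --- affineness of $X_{Y,B}$, the fact that $X=G\cdot X_{Y,B}$, the local structure theorem $X_{Y,B}\cong R_{u}(P)\times Z$, and the reconstruction of a simple embedding from an abstract strictly convex colored cone --- is exactly what you defer, and it is where all the work lies (Sumihiro linearization plus a careful analysis of $G$-invariant versus $B$-invariant valuations). A smaller inaccuracy: in your gluing step for $(iii)$, the intersection $X_{Y}\cap X_{Y'}$ of two simple pieces is a $G$-stable open sub-embedding but need not itself be simple; the correct statement is that its colored fan consists of the common colored faces of $\mathfrak{C}^{c}_{Y}$ and $\mathfrak{C}^{c}_{Y'}$. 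None of this changes the verdict that your strategy is the standard one, but as written the proposal is a roadmap to Knop's proof rather than a proof.
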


If $G/H, G/H'$ are two homogeneous $G$-spherical varieties and there is a dominant $G$-equivariant morphism $\phi: G/H\rightarrow G/H'$. Then it induces a surjective morphism $\phi_{*}: N_{G/H}\rightarrow N_{G/H'}$, and $\phi_{*}(\mathcal{V}(G/H))=\mathcal{V}(G/H')$. Let $\mathfrak{D}_{\phi}$ be the set of those $D\in\mathfrak{D}(G/H)$ such that $\phi(D)$ is dense in $G/H'$.

\begin{defi} \label{defi. of morphism of colored cones and colored fans}
Keep notations as above.

$(a)$ Let $(\mathfrak{C}, \mathfrak{D}), (\mathfrak{C}', \mathfrak{D}')$ be colored cones for $G/H, G/H'$ respectively. Then we say that $(\mathfrak{C}, \mathfrak{D})$ maps to $(\mathfrak{C}', \mathfrak{D}')$ if $\phi_{*}(\mathfrak{C})\subseteq\mathfrak{C}'$ and $\phi_{*}(\mathfrak{D}\backslash\mathfrak{D}_{\phi})\subseteq\mathfrak{D}'$.

$(b)$ Let $\mathbb{F}, \mathbb{F}'$ be colored fans for $G/H, G/H'$ respectively. Then we say that $\phi_{*}:\mathbb{F}\rightarrow\mathbb{F}'$ is a morphism of colored fans if every element of $\mathbb{F}$ maps to some element of $\mathbb{F}'$.
\end{defi}

\begin{thm}(\cite[Thm. 4.1]{Kn91}) \label{morphism of fan}
Keep notations as above. Let $X$ and $X'$ be a spherical $G/H$-embedding and a spherical $G/H'$-embedding respectively. Then $\phi$ extends to a morphism $X\rightarrow X'$ if and only if $\mathbb{F}_{X}$ maps to $\mathbb{F}_{X'}$ by $\phi_{*}$.
\end{thm}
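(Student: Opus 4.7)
My plan is to prove both directions by reducing to the simple case and then invoking the local structure theorem for spherical embeddings. Recall that every $G$-orbit $Y$ of a spherical embedding $X$ has a minimal $G$-stable open subscheme $X_Y\subseteq X$ containing $Y$, which is itself simple with $\mathfrak{C}^{c}(X_Y)=\mathfrak{C}^{c}_{Y}$; moreover $X=\bigcup_{Y\in S^{c}_{X,G}}X_Y$. Thus a $G$-equivariant map $X\to X'$ may be constructed by extending $\phi$ on each $X_Y$ separately and checking compatibility on overlaps, which is automatic because a $G$-equivariant morphism from a simple spherical variety to a spherical variety factors through one of its simple open subschemes.

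For the \emph{only if} direction, suppose $\tilde{\phi}:X\to X'$ extends $\phi$. Take any $G$-orbit $Y\subseteq X$ and let $Y'\subseteq X'$ be the $G$-orbit containing $\tilde{\phi}(Y)$. For each boundary divisor $D\in\mathcal{V}_{Y}$, the image $\tilde{\phi}(D)$ lies in $\overline{Y'}$, and one shows directly from the definition of $\phi_{*}:(N_{G/H})_{\mathbb{Q}}\to (N_{G/H'})_{\mathbb{Q}}$ that $\phi_{*}\rho(\nu_{D})\in\mathfrak{C}_{Y'}$: indeed either $\tilde{\phi}(D)$ has codimension at least two (so $\phi_{*}\rho(\nu_{D})=0$) or $\tilde{\phi}(D)$ is a $G$-stable prime divisor of $X'$ containing $Y'$, hence an element of $\mathcal{V}_{Y'}$ with valuation $\phi_{*}\rho(\nu_{D})$ up to a positive rational multiple. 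For a color $D\in\mathfrak{D}_{Y}\setminus\mathfrak{D}_{\phi}$, the closure $\overline{\phi(D)}$ in $X'$ is a $B$-stable, non-$G$-stable prime divisor containing $Y'$, i.e.\ an element of $\mathfrak{D}_{Y'}$. Together these give $\phi_{*}(\mathfrak{C}_{Y},\mathfrak{D}_{Y})\subseteq(\mathfrak{C}_{Y'},\mathfrak{D}_{Y'})$.

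For the \emph{if} direction, fix $Y\in S^{c}_{X,G}$ and, by hypothesis, choose $Y'\in S_{X',G}$ with $\phi_{*}\mathfrak{C}_{Y}\subseteq\mathfrak{C}_{Y'}$ and $\phi_{*}(\mathfrak{D}_{Y}\setminus\mathfrak{D}_{\phi})\subseteq\mathfrak{D}_{Y'}$; replace $X$ by $X_{Y}$ and $X'$ by $X'_{Y'}$ so that both are simple. Now I would apply the local structure theorem of Brion--Luna--Vust to write $X$ as $P\times^{Q}Z$ with $Z$ an affine toric variety whose fan is $\mathfrak{C}_{Y}$ with rays indexed by $\mathcal{V}_{Y}\cup\rho(\mathfrak{D}_{Y})$, and likewise for $X'$. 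Extending $\phi$ then reduces, after stripping off the parabolic factor, to constructing a toric extension $Z\to Z'$, which exists precisely because $\phi_{*}$ sends the generators of $\mathfrak{C}_{Y}$ into $\mathfrak{C}_{Y'}$. Functoriality in $G$-orbits ensures these local extensions glue into a global morphism $X\to X'$ restricting to $\phi$ on $G/H$.

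The main technical obstacle is the \emph{if} direction: one has to control the indeterminacy of $\phi$ along the colors of $X$, and this is exactly what the requirement $\phi_{*}(\mathfrak{D}\setminus\mathfrak{D}_{\phi})\subseteq\mathfrak{D}'$ on colored cones is designed to prevent. After that reduction, the remaining step is essentially a toric computation governed by Proposition \ref{valuation cone is a max dim. cone} and the bijection in Theorem \ref{correspondence fans and varieties}, so there are no further surprises.
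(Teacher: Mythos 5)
This theorem is not proved in the paper at all --- it is quoted verbatim from Knop (\cite[Thm. 4.1]{Kn91}) --- so your sketch has to stand on its own, and as written it does not. In the \emph{only if} direction, your dichotomy for a boundary divisor $D\in\mathcal{V}_{Y}$ is false: $\phi_{*}\rho(\nu_{D})$ is the image of the valuation under the linear map dual to the inclusion $M_{G/H'}\subseteq M_{G/H}$, not the cycle-theoretic pushforward of the divisor, so it does not vanish when $\tilde{\phi}(D)$ has codimension at least two. Already for the blow-up of a toric surface at a fixed point the exceptional divisor maps to a point while $\phi_{*}\rho(\nu_{E})=\rho(\nu_{E})\neq 0$. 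The conclusion $\phi_{*}\rho(\nu_{D})\in\mathfrak{C}_{Y'}$ is true, but the correct route is through centers of valuations: $f'\mapsto\nu_{D}(\phi^{*}f')$ is a $G$-invariant valuation of $\mathbb{C}(G/H')$ whose center on $X'$ contains $Y'$, and Knop's criterion for a valuation to admit a center on the simple piece $X'_{Y'}$ (\cite[Thm. 2.5]{Kn91}, the same statement used in the proof of Proposition \ref{description of V_Y}) then forces $\phi_{*}\rho(\nu_{D})\in\mathfrak{C}_{Y'}$.

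In the \emph{if} direction the reduction to ``a toric extension $Z\to Z'$'' is not available in general: the local structure theorem presents the $B$-chart of a simple embedding as $R_{u}(P)\times Z$ with $Z$ an affine $L$-\emph{spherical} variety, and $Z$ is toric essentially only in the toroidal and horospherical situations, so for a general spherical embedding the problem cannot be stripped down to a toric computation. Knop's actual argument works directly with the $B$-stable affine chart of $X_{Y}$ and $X'_{Y'}$: one checks that every $f'\in\mathbb{C}(X')^{(B)}$ regular on the chart of $X'_{Y'}$ pulls back to a regular function on the chart of $X_{Y}$, because the required inequalities $\nu_{D}(\phi^{*}f')\geq 0$ for $D\in\mathcal{V}_{Y}\cup(\mathfrak{D}_{Y}\backslash\mathfrak{D}_{\phi})$ are exactly the hypotheses $\phi_{*}(\mathfrak{C}_{Y})\subseteq\mathfrak{C}_{Y'}$ and $\phi_{*}(\mathfrak{D}_{Y}\backslash\mathfrak{D}_{\phi})\subseteq\mathfrak{D}_{Y'}$, while colors in $\mathfrak{D}_{\phi}$ impose no condition since their image is dense in $G/H'$. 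Your gluing step is unobjectionable once the local extensions exist, but producing the local extension is the entire content of the theorem, and your route to it fails outside the toric-like cases.
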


\begin{defi}

$(i)$ Let $PL(X)$ be the set such that an element $l\in PL(X)$ is a family $(l_{Y})_{Y\in S_{X, G}}$ as follows

$(a)$ for any $G$-orbit $Y$, $l_{Y}$ is a linear function defined on $\mathfrak{C}_{Y}$ with valuations in $\mathbb{Q}$, and it equals to the restriction of an element in $M_{X}$ in the sense of the inclusion $M_{X}\subseteq\text{Hom}(N_{X}, \mathbb{Q})$;

$(b)$ for any $G$-orbit $Z$ such that $Z\subseteq\overline{Y}$, there is an equality $l_{Z}|_{\mathfrak{C}_{Y}}=l_{Y}$.

$(ii)$ Denote by $L(X)$ the group of linear functions on $(N_{X})_{\mathbb{Q}}$ with valuation in $\mathbb{Q}$.
\end{defi}

We call the elements of $PL(X)$ piecewise linear functions on $\mathfrak{C}(X)$, while it should be noticed that if $Y_{1}, Y_{2}\in S_{X, G}$, maybe $l_{Y_{1}}|_{\mathfrak{C}_{Y_{1}\cap\mathfrak{C}_{Y_{2}}}}\neq l_{Y_{2}}|_{\mathfrak{C}_{Y_{1}\cap\mathfrak{C}_{Y_{2}}}}$.

Suppose that $X$ is moreover complete. Take $l=(l_{Y})_{Y\in S_{X, G}}\in PL(X)$, then for any $Y\in S^{c}_{X, G}$, $\text{dim}(\mathfrak{C}_{Y})=\text{dim}(N_{X})_{\mathbb{Q}}$ by \cite[Thm. 6.3]{Kn91}. Thus, $l_{Y}$ can be uniquely extended as a linear function $l_{Y}^{N}$ defined on $(N_{X})_{\mathbb{Q}}$. If $\delta=\sum\limits_{D\in\mathfrak{B}(X)}n_{D}(\delta)D$ is a Cartier $B$-stable divisor, then by \cite[Prop. 3.1]{Br89}, for each $Z\in S^{c}_{X, G}$, there exists a unique element $\chi_{Z}\in M_{X}$ such that $\nu_{D}(\chi_{Z})=n_{D}(\delta)$ for all $D\in\mathfrak{B}_{Z}$. These $\chi_{Z}$ determine a unique $l(\delta)=(l_{Y})_{Y\in S_{X, G}}$ such that $l_{Z}|_{\mathfrak{C}_{Z}}=\chi_{Z}|_{\mathfrak{C}_{Z}}$ for all $Z\in S^{c}_{X, G}$. Hence, $l_{Y}(\rho(\nu_{D}))=n_{D}(\delta)$ for all $Y\in S_{X, G}$ and $D\in\mathfrak{B}_{Y}$. By \cite[Thm. 3.3]{Br89}, if $X$ is moreover projective, then every Cartier $B$-stable divisor $\delta$ is uniquely associated with a well-defined piecewise linear function $l(\delta)$ on $\mathfrak{C}(X)$.

Let $X$ be a projective $\mathbb{Q}$-factorial spherical $G/H$-embedding. We will define some 1-cycle classes on $X$ as in the proof of \cite[Thm. 3.2]{Br93}. Take $\delta$ to be a Cartier divisor on $X$. Then by \cite[Thm. 1.3(ii)]{Br93}, we can find integers $n_{D}(\delta)$ such that $\delta=\sum\limits_{D\in\mathfrak{B}(X)}n_{D}(\delta)D$. Let $l(\delta)\in PL(X)$ be the corresponding piecewise linear function on $\mathfrak{C}(X)$. In particular, $l(\delta)(\rho(\nu_{D}))=n_{D}(\delta)$ for all $D\in\mathcal{V}_{X}\cup\mathfrak{D}_{X}$, and $l_{Y}=l(\delta)\mid_{\mathfrak{C}_{Y}}$ is linear on $\mathfrak{C}_{Y}$ for each $Y\in S_{X, G}$. For each $Y\in S^{c}_{X, G}$, we denote by $l(\delta, \mathfrak{C}_{Y})$ or $l(\delta, Y)$ or $l^{N}_{Y}$ the linear extension of $l_{Y}$ to the whole vector space $(N_{X})_{\mathbb{Q}}$.

Suppose that $\mu\in\mathbb{F}_{X}$ is a wall, i.e. there are two maximal dimensional colored cone $(\mu_{+}, \mathfrak{D}_{+})$ and $(\mu_{-}, \mathfrak{D}_{-})$ in $\mathbb{F}_{X}$ (i.e. $\text{dim}(\mu_{+})=\text{dim}(\mu_{-})=\text{rank}(G/H)$) such that $\mu=\mu_{+}\cap\mu_{-}$ and for some subset $\mathfrak{D}\subseteq\mathfrak{D}_{+}\cap\mathfrak{D}_{-}$, $(\mu, \mathfrak{D})$ is a colored face of $(\mu_{+}, \mathfrak{D}_{+})$ and $(\mu_{-}, \mathfrak{D}_{-})$ of codimension one. Let $\chi_{\mu}\in M_{X}$ be the primitive lattice point such that $\chi_{\mu}$ vanishes on $\mu$ and it is positive on $\mu_{+}\backslash\mu$, negative on $\mu_{-}\backslash\mu$. Define $C_{\mu}\in N_{1}(X)_{\mathbb{R}}$ such that
\begin{eqnarray}\label{eqn. C_u}
\delta\cdot C_{\mu}=(l(\delta, \mu_{+})-l(\delta, \mu_{-}))/\chi_{\mu}.
\end{eqnarray}

For any closed $G$-orbit $Y$ and any color $D\in\mathfrak{D}\backslash\mathfrak{D}_{Y}$, i.e. $Y\nsubseteq D$, define $C_{D, Y}\in N_{1}(X)_{\mathbb{R}}$ such that
\begin{eqnarray} \label{eqn. C_(D, Y)}
\delta\cdot C_{D, Y}=n_{D}(\delta)-l(\delta, Y)(\rho(\nu_{D})).
\end{eqnarray}

Note that by \cite[Thm. 3.2]{Br93}, these two kinds of curve classes are pseudo-effective and they generate $NE(X)$.

\begin{rmk} \label{fix C_u as a curve instead of a class}
Assume that $X$ is complete. By \cite[Prop. 3.3]{Br93}, there is a unique $B$-stable smooth rational curve $C_{YZ}$ such that $C_{YZ}^{B}=\{p_{1}, p_{2}\}$, $C_{\mu}\cap Y=\{p_{1}\}$, $C_{\mu}\cap Z=\{p_{2}\}$ and $C_{YZ}=C_{\mu}$ in $N_{1}(X)_{\mathbb{R}}$. We identify $C_{\mu}$ with the curve $C_{YZ}$. Moreover, by the proof of \cite[Prop. 3.3]{Br93}, $GC_{\mu}=\overline{V}$, $C_{\mu}=(\overline{V})^{R_{u}(B)}$, $Gp_{1}=Y$ and $Gp_{2}=Z$, where $V$ is the $G$-orbit such that $\mathfrak{C}_{V}=\mu$.
\end{rmk}

\section{Nef versus effective cycle classes on smooth complete spherical varieties} \label{section nef are effective}

We will show in Theorem \ref{Nef<=Eff=Psef} that on smooth complete spherical varieties, nef cycle classes are effective and the products of nef cycle classes are also nef.

\bigskip

Let $\eta=\sum\limits_{i}n_{i}Y_{i}$ be an algebraic cycle of an $H$-scheme $X$, where $H$ is an algebraic group, all $n_{i}\neq 0$ and all $Y_{i}$ are irreducible closed subschemes. If all $Y_{i}$ are $H$-stable, then we call $\eta$ an $H$-stable algebraic cycle of $X$.

If $X$ is a smooth complete variety, and $\eta_{1},\eta_{2},\ldots,\eta_{m}\in A^{*}(X)_{\mathbb{R}}$, then we denote by $\prod\limits_{i=1}^{m}\eta_{i}=\eta_{1}\cdot\eta_{2}\cdot\ldots\cdot\eta_{m}$. If $X_{1},\ldots,X_{m}$ are smooth complete varieties and $\eta_{i}\in A^{*}(X_{i})_{\mathbb{R}}$, then we denote by $\bigotimes\limits_{i=1}^{m}\eta_{i}=\eta_{1}\otimes\eta_{2}\cdots\otimes\eta_{m}=\prod\limits_{i=1}^{m}\pi_{i}^{*}\eta_{i}$, where $\pi_{i}: \prod\limits_{j=1}^{m}X_{j}\rightarrow X_{i}$ is the $i$-th projection.

\begin{prop} \label{cycles are rat. equiv. to stable ones, picard group on spherical varieties}
$(i)$ Let $X$ be a $\Gamma$-scheme, where $\Gamma$ is a connected solvable linear algebraic group. Then any effective algebraic cycle on $X$ is rationally equivalent to an effective $\Gamma$-stable algebraic cycle.

$(ii)$ Let $X$ be a $G$-spherical variety. Then the group of rationally equivalent divisor classes on $X$ is generated by irreducible $B$-stable divisors, while the linearly equivalences are defined by $\text{div}(f)$ for all $f\in\mathbb{C}(X)^{(B)}$. More precisely, there is an exact sequence $M_{X} \rightarrow \mathbb{Z}(\mathfrak{B}(X))\rightarrow A^{1}(X)\rightarrow 0$. If moreover $X$ is complete, then the map $M_{X}\rightarrow \mathbb{Z}(\mathfrak{B}(X))$ is injective.
\end{prop}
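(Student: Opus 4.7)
For part $(i)$ I reduce by linearity and positivity to showing that a single irreducible closed subvariety $Y \subseteq X$ is rationally equivalent to an effective $\Gamma$-stable cycle. My approach is the standard Chow-scheme argument: let $\Gamma$ act on a Chow scheme of $X$ via its action on $X$, take the closure $Z$ of the orbit $\Gamma\cdot [Y]$ inside a $\Gamma$-equivariant projective compactification (available by Sumihiro-type theorems), and apply the Borel fixed point theorem to obtain a $\Gamma$-fixed point $[Y']$. Because $\Gamma$ is a connected rational variety, any two points of $\Gamma\cdot [Y]$ represent the same class in $A_{*}(X)$, and the specialization principle for the Chow functor extends this to the closure $Z$, so $[Y'] = [Y]$ with $Y'$ effective and $\Gamma$-stable. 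Alternatively, one can use the composition series of the connected solvable group $\Gamma$ to reduce to $\Gamma = \mathbb{G}_{m}$ or $\mathbb{G}_{a}$, and then construct the degenerate cycle explicitly as $\lim_{t\to 0} t \cdot Y$ from a flat family over $\mathbb{A}^{1}$.

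For part $(ii)$, surjectivity of $\mathbb{Z}(\mathfrak{B}(X)) \to A^{1}(X)$ follows by applying $(i)$ to $\Gamma = B$: every divisor is rationally equivalent to a $B$-stable divisor, which is by definition a $\mathbb{Z}$-linear combination of prime $B$-stable divisors. The map $M_{X} \to \mathbb{Z}(\mathfrak{B}(X))$ sending $\chi_{f} \mapsto \text{div}(f)$ is well-defined because $\text{div}(b\cdot f) = \text{div}(\chi_{f}(b)f) = \text{div}(f)$ makes $\text{div}(f)$ a $B$-stable divisor, and rescaling $f$ by $\mathbb{C}^{*}$ leaves $\text{div}(f)$ unchanged.

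The central step is exactness at $\mathbb{Z}(\mathfrak{B}(X))$: given a principal divisor $D = \text{div}(f)$ lying in $\mathbb{Z}(\mathfrak{B}(X))$, I want to show $f \in \mathbb{C}(X)^{(B)}$. For each $b \in B$, the quotient $u(b) := (b\cdot f)/f$ has trivial divisor (since $b \cdot D = D$), hence is a global unit on the normal variety $X$. By Rosenlicht's theorem, $\mathcal{O}_{X}(X)^{*}/\mathbb{C}^{*}$ is a finitely generated abelian group; being discrete, the morphism $B \to \mathcal{O}_{X}(X)^{*}/\mathbb{C}^{*}$ from connected $B$ is constant, so $u(b) \in \mathbb{C}^{*}$ for all $b$. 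The cocycle identity $u(bb') = (b\cdot u(b'))\,u(b) = u(b')u(b)$ (using that $B$ fixes constants) then exhibits $u$ as a character of $B$, so $f \in \mathbb{C}(X)^{(B)}$ with $\chi_{f} = u$. Injectivity of $M_{X} \to \mathbb{Z}(\mathfrak{B}(X))$ in the complete case is then immediate: if $\text{div}(f) = 0$ then $f$ is a global unit on $X$, and on a complete normal variety global units are scalars, forcing $\chi_{f} = 0$. The main technical obstacle I anticipate is the specialization step in $(i)$, i.e.\ verifying that rational equivalence is preserved in passing from $\Gamma\cdot [Y]$ to its limit inside the Chow-scheme compactification; once this is in place, the exactness argument in $(ii)$ is essentially formal, resting only on Rosenlicht's theorem and the cocycle manipulation above.
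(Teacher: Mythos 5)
Your proposal is essentially sound and, for part $(i)$ and the exactness statement in $(ii)$, reconstructs the arguments that the paper only cites (the paper gives no self-contained proof: $(i)$ is attributed to the reduction in the proof of \cite[Thm. 1]{FMSS95} and to \cite{Hi84}, and the first assertion of $(ii)$ to \cite[Thm. 1.3(ii)]{Br93}). Two remarks on $(i)$: the Chow-scheme/Borel-fixed-point route requires properness of $X$ (otherwise there is no proper Chow scheme in which to compactify the orbit and no fixed point to extract), while the statement is for an arbitrary $\Gamma$-scheme; so you must fall back on your ``alternative'' route --- filter $\Gamma$ by one-parameter subgroups and take the closure of the graph of the action in $\mathbb{P}^{1}\times X$, whose fibre over the boundary point is an effective invariant cycle rationally equivalent to $Y$ --- which is exactly what \cite{FMSS95} do and which visibly preserves effectiveness. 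One should also note that the induction along the filtration has to preserve the invariance already achieved, which is why the one-parameter subgroups must be taken normal in the appropriate order.

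In $(ii)$, your key step (a principal divisor lying in $\mathbb{Z}(\mathfrak{B}(X))$ is $\text{div}(f)$ for some $f\in\mathbb{C}(X)^{(B)}$) is the standard one, but the sentence ``being discrete, the morphism $B\rightarrow\mathcal{O}_{X}(X)^{*}/\mathbb{C}^{*}$ from connected $B$ is constant'' is not yet an argument: a priori $b\mapsto [u(b)]$ is only a map of sets, so connectedness alone gives nothing. The clean fix is Rosenlicht's unit theorem applied to the normal variety $B\times X$: the function $F(b,x)=f(b^{-1}\cdot x)/f(x)$ has trivial divisor because $\text{div}(f)$ is $B$-stable and the action map is flat, hence $F$ is a unit on $B\times X$ and therefore of the form $\chi(b)g(x)$ with $\chi\in\chi(B)$ and $g$ a unit on $X$; evaluating at $b=e$ gives $g\equiv 1$, so $b\cdot f=\chi(b)f$ directly, with no cocycle manipulation needed. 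Finally, your proof of injectivity in the complete case (a global unit on a complete normal variety is a scalar) is correct but genuinely different from the paper's, which instead invokes \cite[Thm. 6.3]{Kn91} to see that $\rho(\mathfrak{B}(X))$ spans $(N_{X})_{\mathbb{Q}}$ and deduces injectivity of the dual pairing; your argument is more elementary and avoids the valuation-cone machinery, at the cost of not recording the extra structural information the paper's route provides.
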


Note that these two conclusions should be well-known, but we fail to find proper references stating them explicitly. For the case when $X$ is a projective $\Gamma$-variety, the statement of $(i)$ has appeared in the proof of \cite[Thm. 1]{Hi84}, while the case when $X$ is a projective normal $\Gamma$-variety, the statement of $(i)$ appeared in \cite[Thm. 1.3(i)]{Br93}. Recall that the proof of \cite[Thm. 1]{FMSS95} was deduced to the case when $X$ is a projective $\Gamma$-variety, and the effectiveness of the cycles are preserved in the proof. Then we can get Proposition \ref{cycles are rat. equiv. to stable ones, picard group on spherical varieties}$(i)$. In fact, when proving \cite[Cor. of Thm. 1]{FMSS95}, the statement of $(i)$ has been used. For the first assertion of $(ii)$, we only need to notice that $M_{X}\cong\mathbb{C}(X)^{(B)}/\mathbb{C}^{*}$, then it's just \cite[Thm. 1.3(ii)]{Br93}, and it's also a corollary of \cite[Thm. 1]{FMSS95}. By \cite[Thm. 6.3]{Kn91}, when $X$ is moreover complete, $\rho(\mathfrak{B}(X))$ generates $(N_{X})_{\mathbb{Q}}$. Thus, the map $M_{X}\rightarrow \mathbb{Z}(\mathfrak{B}(X))$ is injective.

The following proposition is a direct consequence of Proposition \ref{cycles are rat. equiv. to stable ones, picard group on spherical varieties} and \cite[Prop. 3.1]{Br89}. It also appeared in \cite[Thm. 3.2.14]{Per12} with a little different statement, while we can also get the statement here from the proof of \cite[Thm. 3.2.14]{Per12}.

\begin{prop} \label{locally fac. Q-fac. criterion}
Let $X$ be a $G$-spherical variety.

$(i)$ The variety $X$ is locally factorial if and only if for any $G$-orbit $Y$, the elements $\rho(v_{D})$ form a part of a $\mathbb{Z}$-basis of $N_{X}$, where $D$ runs over the set $\mathfrak{B}_{Y}$.

$(ii)$ The variety $X$ is $\mathbb{Q}$-factorial if and only if for any $G$-orbit $Y$, the elements $\rho(v_{D})$ are linearly independent in $(N_{X})_{\mathbb{Q}}$, where $D$ runs over the set $\mathfrak{B}_{Y}$.
\end{prop}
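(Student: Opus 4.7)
The plan is to reduce both criteria to local questions at each closed $G$-orbit and then apply Brion's Cartier criterion \cite[Prop. 3.1]{Br89} together with elementary duality for finitely generated free $\mathbb{Z}$-modules.

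First I would reduce to the case of closed $G$-orbits. Local factoriality and $\mathbb{Q}$-factoriality are local properties, so by covering $X$ with the $G$-invariant simple open subvarieties associated to the closed orbits I may work separately near each closed orbit. Moreover, by Theorem~\ref{correspondence fans and varieties}(ii), for any non-closed orbit $Y'$ with a closed orbit $Y \subseteq \overline{Y'}$, the cone $\mathfrak{C}_{Y'}$ is a colored face of $\mathfrak{C}_{Y}$, whence $\mathfrak{B}_{Y'} \subseteq \mathfrak{B}_{Y}$; so verifying the stated conditions at closed orbits automatically gives them for every $G$-orbit.

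Next I would use Proposition~\ref{cycles are rat. equiv. to stable ones, picard group on spherical varieties}(ii), which says that $\mathrm{Cl}(X)$ is generated by classes of $B$-stable prime divisors. Since $\mathrm{Pic}(X) \to \mathrm{Cl}(X)$ is injective (by normality) and its surjectivity is exactly local factoriality, $X$ is locally factorial iff every $B$-stable Weil divisor is Cartier, and analogously $\mathbb{Q}$-factorial iff every such divisor is $\mathbb{Q}$-Cartier. The Brion criterion recalled in the excerpt then says that a $B$-stable divisor $\delta = \sum n_{D} D$ is Cartier at a closed orbit $Y$ iff there exists $\chi_{Y} \in M_{X}$ with $\langle \chi_{Y}, \rho(v_{D}) \rangle = n_{D}$ for all $D \in \mathfrak{B}_{Y}$; note that the coefficients $n_D$ with $D \notin \mathfrak{B}_Y$ are irrelevant locally at $Y$, since such divisors do not meet a small enough $B$-stable neighbourhood of $Y$.

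Hence ``every $B$-stable $\delta$ is Cartier at $Y$'' is equivalent to surjectivity of the evaluation map
\[
\mathrm{ev}_{Y}\colon M_{X} \longrightarrow \mathbb{Z}^{\mathfrak{B}_{Y}}, \qquad \chi \mapsto \big(\langle \chi, \rho(v_{D}) \rangle\big)_{D \in \mathfrak{B}_{Y}}.
\]
By the standard duality for free $\mathbb{Z}$-modules (obtained by dualising a split short exact sequence), such surjectivity is equivalent to $\{\rho(v_{D}) : D \in \mathfrak{B}_{Y}\}$ forming part of a $\mathbb{Z}$-basis of $N_{X}$, which gives~(i). Repeating the argument with $\mathbb{Q}$-coefficients, $\mathbb{Q}$-Cartier-ness of every $B$-stable divisor near $Y$ translates to surjectivity of $(\mathrm{ev}_{Y})_{\mathbb{Q}}$, which is in turn equivalent to $\mathbb{Q}$-linear independence of $\{\rho(v_{D}) : D \in \mathfrak{B}_{Y}\}$ in $(N_{X})_{\mathbb{Q}}$, yielding~(ii). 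The main nontrivial inputs are Brion's Cartier criterion and the $B$-stable generation of $\mathrm{Cl}(X)$, both quoted from the references; the remaining argument is bookkeeping and standard linear algebra over $\mathbb{Z}$ and $\mathbb{Q}$, so I do not anticipate a serious technical obstacle.
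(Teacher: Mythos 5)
Your proposal is correct and follows essentially the same route as the paper, which does not write out a proof but states that the proposition is a direct consequence of the $B$-stable generation of the divisor class group (Proposition \ref{cycles are rat. equiv. to stable ones, picard group on spherical varieties}$(ii)$) and Brion's Cartier criterion \cite[Prop. 3.1]{Br89} — exactly the two inputs you combine. Your reduction to closed orbits, the translation of (locally) Cartier into surjectivity of the evaluation map $M_{X}\rightarrow\mathbb{Z}^{\mathfrak{B}_{Y}}$, and the $\mathbb{Z}$- versus $\mathbb{Q}$-duality are the intended bookkeeping.
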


Note that by \cite[Thm. 2]{FMSS95}, if $X$ is a smooth complete spherical variety, then for any integer $k$, $N^{k}(X)=A^{k}(X)=H^{2k}(X)$. In particular, the numerical equivalence and the rational equivalence coincide.

\begin{cor} \label{effective cycles on X*X' are group stable effective}
Let $X$ be a $\Gamma$-scheme such that $\Gamma$ has only finitely many orbits, where $\Gamma$ is a connected solvable linear algebraic group. Let $X'$ be another $\Gamma$-scheme and $\eta\in A^{*}(X\times X')$ be an effective cycle. Then $\eta=\sum\limits_{i=1}^{m}c_{i}\delta_{i}\otimes\delta'_{i}$, where $c_{i}\geq 0$, and $\delta_{i}, \delta'_{i}$ can be represented by irreducible $\Gamma$-stable closed subvarieties of $X$ and $X'$ respectively.
\end{cor}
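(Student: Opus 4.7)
The plan is to apply Proposition~\ref{cycles are rat. equiv. to stable ones, picard group on spherical varieties}$(i)$ to $X \times X'$ equipped with an enlarged group action, and then show that the resulting stable subvarieties are forced to be products.

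First, I would equip $X \times X'$ with the action of $\Gamma \times \Gamma$ defined by $(\gamma_{1}, \gamma_{2}) \cdot (x, x') = (\gamma_{1} \cdot x, \gamma_{2} \cdot x')$. Since $\Gamma$ is connected and solvable, so is $\Gamma \times \Gamma$, and it is a linear algebraic group. Applying Proposition~\ref{cycles are rat. equiv. to stable ones, picard group on spherical varieties}$(i)$ to this $(\Gamma \times \Gamma)$-scheme, the effective cycle $\eta$ is rationally equivalent to an effective $(\Gamma \times \Gamma)$-stable cycle
\[
\eta \sim \sum_{i=1}^{m} c_{i} [W_{i}],
\]
with $c_{i} \geq 0$ and each $W_{i}$ an irreducible $(\Gamma \times \Gamma)$-stable closed subvariety of $X \times X'$.

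Next, I would show that each such $W = W_{i}$ is necessarily a product $W = V \times V'$, where $V := \overline{\pi_{1}(W)}$ and $V' := \overline{\pi_{2}(W)}$ are irreducible $\Gamma$-stable closed subvarieties of $X$ and $X'$, respectively. Both $V$ and $V'$ are irreducible (as closures of continuous images of the irreducible $W$) and $\Gamma$-stable (via the sub-actions of $\Gamma \times \{e\}$ and $\{e\} \times \Gamma$). Since $X$ has only finitely many $\Gamma$-orbits, $V$ inherits an open $\Gamma$-orbit $O$. Fix $x_{0} \in O$ and set $F := W \cap (\{x_{0}\} \times X')$, identified with a closed subscheme of $X'$. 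Exploiting that $\Gamma \times \Gamma$ contains elements acting independently on the two factors, one verifies the set equality $W \cap (O \times X') = O \times F$ (the inclusion $\supseteq$ from applying $(\gamma_{1}, e)$ to $(x_{0}, f) \in W$, and the reverse from applying $(\gamma_{1}^{-1}, e)$ to $(x, x') \in W \cap (O \times X')$ with $x = \gamma_{1} x_{0}$) and that $F$ is $\Gamma$-stable (by applying $(e, \gamma_{2})$). Since $W$ is irreducible and $W \cap (O \times X')$ is open and non-empty in $W$, it is dense, so taking closures yields $W = V \times F$; a short calculation then identifies $F$ with $V'$. Combining everything gives the required decomposition
\[
\eta = \sum_{i=1}^{m} c_{i} [V_{i} \times V_{i}'] = \sum_{i=1}^{m} c_{i} [V_{i}] \otimes [V_{i}'].
\]

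The main obstacle is the product-structure claim in the second step. The essential point is that $(\Gamma \times \Gamma)$-stability (as opposed to mere diagonal $\Gamma$-stability) forces $W$ to contain the full slice $O \times F$ as soon as it contains a single point $(x_{0}, f)$ with $x_{0} \in O$. The hypothesis that $X$ has only finitely many $\Gamma$-orbits is used exclusively to guarantee that the irreducible $\Gamma$-stable subvariety $V$ has an open orbit $O$; without this, $V$ might fail to have a dense $\Gamma$-orbit and the argument above would need modification.
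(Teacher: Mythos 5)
Your argument is correct, and its overall skeleton matches the paper's: both first invoke Proposition~\ref{cycles are rat. equiv. to stable ones, picard group on spherical varieties}$(i)$ for the $\Gamma\times\Gamma$-action to reduce to an irreducible $(\Gamma\times\Gamma)$-stable closed subvariety $W$ of $X\times X'$, and then establish that $W$ is a product. The difference lies in how that product structure is obtained. The paper restricts attention to the action of $\Gamma\times\{e\}$ and cites \cite[Lem.~3]{FMSS95}, which yields $W=Y\times Y'$ with $Y$ a $\Gamma$-stable closed subvariety of $X$ but $Y'$ merely a closed subvariety of $X'$; it must then apply Proposition~\ref{cycles are rat. equiv. to stable ones, picard group on spherical varieties}$(i)$ a second time to replace $Y'$ by a rationally equivalent effective $\Gamma$-stable cycle. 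You instead prove the splitting directly by a slice argument over the open $\Gamma$-orbit $O$ of $V=\overline{\pi_{1}(W)}$ (whose existence is exactly where the finite-orbit hypothesis enters), showing $W\cap(O\times X')=O\times F$ and taking closures to get $W=V\times F$; and because you use the full $\Gamma\times\Gamma$-stability rather than only $\Gamma\times\{e\}$-stability, the second factor $F$ comes out $\Gamma$-stable for free, so the second application of Proposition~\ref{cycles are rat. equiv. to stable ones, picard group on spherical varieties}$(i)$ is unnecessary. Your route is self-contained (it in effect reproves the relevant case of the cited lemma) and gives equality of cycles $[W]=[V]\otimes[F]$ on the nose at that step, whereas the paper's route is shorter on the page at the cost of an external reference and one extra rational equivalence; the details you should make sure to keep explicit are that $O\subseteq\pi_{1}(W)$ (constructibility plus $\Gamma$-stability of $\pi_{1}(W)$) and that $\overline{O\times F}=\overline{O}\times\overline{F}$ in the Zariski topology of the product.
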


\begin{proof}
By Proposition \ref{cycles are rat. equiv. to stable ones, picard group on spherical varieties}$(i)$, we can assume that $\eta$ is $\Gamma\times\Gamma$-stable and effective. Without loss of generality, we can assume that $\eta$ is represented by an irreducible $\Gamma\times\Gamma$-stable closed subvariety $Z$ of $X\times X'$.

Let $X_{0}=X\times X'$ be a variety with a $\Gamma_{0}$-action such that $\Gamma_{0}=\Gamma$ and $g\cdot(x, x')=(g\cdot x, e\cdot x')=(g\cdot x, x')$ for all $g\in\Gamma_{0}$, $x\in X$, and $x'\in X'$. Note that $\Gamma_{0}$ can be identified with the subgroup $\Gamma\times\{e\}$ of $\Gamma\times\Gamma$. In particular, $Z$ is $\Gamma_{0}$-stable. By \cite[Lem. 3]{FMSS95}, $Z\subseteq X_{0}$ has the form $Z=Y\times Y'$, where $Y\subseteq X$ is a $\Gamma$-stable closed subvariety and $Y'\subseteq X'$ is a closed subvariety. By Proposition \ref{cycles are rat. equiv. to stable ones, picard group on spherical varieties}$(i)$ again, $Y'$ is rationally equivalent to an effective $\Gamma$-stable algebraic cycle. The conclusion follows.
\end{proof}

\begin{thm} \label{Nef<=Eff=Psef}
Let $X$ be a smooth complete $G$-spherical variety of dimension $n$. Then for any integer $k$, the following hold.

$(i)$ $\text{Eff}^{\, k}(X)=\text{Psef}^{\, k}(X)$, and it's a rational polyhedron. Dually, $\text{Nef}^{n-k}(X)$ is also a ratinal polyhedron.

$(ii)$  $\text{Nef}^{\, k}(X)\subseteq\text{Psef}^{\, k}(X)$.

$(iii)$ If $\eta\in A^{k}(X)$ is a cycle such that $\eta\cdot A^{n-k}(X)=0$, then $\eta\cdot A^{*}(X)=0$.

$(iv)$ If $\eta\in \text{Nef}^{\, k}(X)$, then for any effective algebraic cycle class $\delta$ in $A^{*}(X)$, the intersection $\eta\cdot\delta$ is an effective algebraic cycle class in $A^{*}(X)$. In particular, if for all $1\leq i\leq m$, $\eta_{i}\in\text{Nef}^{\, k_{i}}(X)$ , then $\prod\limits_{i=1}^{m}\eta_{i}\in\text{Nef}^{\, k_{0}}(X)$, where $k_{0}=\sum\limits_{i=1}^{m}k_{i}$.
\end{thm}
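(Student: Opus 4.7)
The plan is to use the diagonal class $[\Delta_X]\in A^n(X\times X)$ as the organizing tool. For (i), since $B$ acts on $X$ with finitely many orbits by Proposition~\ref{equi. defi.}, Proposition~\ref{cycles are rat. equiv. to stable ones, picard group on spherical varieties}(i) shows every effective class is a nonnegative combination of classes of irreducible $B$-stable closed subvarieties, of which there are only finitely many in each codimension. Hence $\text{Eff}^{\, k}(X)$ is a finitely generated rational polyhedral cone; in particular it is closed, so it equals $\text{Psef}^{\, k}(X)$, and by standard duality its dual $\text{Nef}^{\, n-k}(X)$ is also a rational polyhedron.

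The crucial ingredient for (ii)--(iv) is a decomposition
\[
[\Delta_X] = \sum_{i} c_i \, [Y_i] \otimes [Y'_i], \qquad c_i \geq 0,
\]
where $Y_i$ is an irreducible $B$-stable closed subvariety of $X$ and $Y'_i$ an irreducible $B^-$-stable closed subvariety of $X$, of complementary codimension. To obtain it, I would apply Corollary~\ref{effective cycles on X*X' are group stable effective} with $\Gamma = B \times B^-$ (connected solvable, with finitely many orbits on each copy of $X$), viewing the first factor of $X \times X$ as a $\Gamma$-scheme with only $B$ acting nontrivially, and the second with only $B^-$ acting nontrivially; the $\Gamma$-stable irreducible closed subvarieties of $X \times X$ are then precisely such products.

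With this decomposition, the correspondence formula $\eta = \pi_{2*}(\pi_1^*\eta \cdot [\Delta_X])$ applied term-by-term, together with $\pi_{2*}(\alpha \otimes \beta) = \deg(\alpha)\cdot\beta$, yields for any $\eta\in A^k(X)$ the expression
\[
\eta = \sum_{i\,:\,\text{codim}(Y_i) = n-k} c_i \, (\eta \cdot [Y_i]) \, [Y'_i],
\]
where the numbers $\eta\cdot[Y_i]$ are the intersection degrees. If $\eta$ is nef, each $\eta\cdot[Y_i]\geq 0$ since $[Y_i]$ is effective, so $\eta$ is a nonnegative combination of the effective classes $[Y'_i]$, giving (ii). For (iii), the assumption $\eta \cdot A^{n-k}(X)=0$ forces every coefficient $\eta\cdot [Y_i]$ to vanish, hence $\eta = 0$ in $A^k(X)$ and therefore $\eta\cdot A^*(X)=0$.

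For (iv), the formula above expresses $\eta$ as a nonnegative combination of $B^-$-orbit closures, and Proposition~\ref{cycles are rat. equiv. to stable ones, picard group on spherical varieties}(i) lets me replace $\delta$ by a nonnegative combination of $B$-orbit closures. Thus $\eta\cdot\delta$ becomes a nonnegative combination of products $[Y^-]\cdot[Y^+]$ of a $B^-$-orbit closure with a $B$-orbit closure. The main obstacle, which I expect to be the hardest step, is to show each such product is effective: for this I would invoke the spherical analogue of Kleiman's transversality theorem, which says that a $B$-orbit closure and a $B^-$-orbit closure in a smooth complete spherical variety meet properly, so that on the smooth $X$ the product $[Y^-]\cdot[Y^+]$ equals the class of the scheme-theoretic intersection $Y^-\cap Y^+$, which is visibly effective. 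Finally, the ``in particular'' assertion follows by induction on $m$: given (iv) for $m-1$ factors, and given that $\eta_m$ is nef and $\alpha$ is effective of complementary codimension, $\eta_m\cdot\alpha$ is effective by (iv), so $(\eta_1\cdots\eta_{m-1})\cdot(\eta_m\cdot\alpha)$ is a nonnegative number, giving $\prod_i\eta_i\in\text{Nef}^{\, k_0}(X)$.
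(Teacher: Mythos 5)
Your treatment of $(i)$--$(iii)$ is essentially the paper's argument and is correct: the finiteness of $B$-orbits plus Proposition \ref{cycles are rat. equiv. to stable ones, picard group on spherical varieties}$(i)$ gives $(i)$, and the correspondence formula $\eta=\pi_{2*}(\pi_{1}^{*}\eta\cdot\Delta_{*}(X))=\sum_{i}c_{i}(\eta\cdot u_{i})v_{i}$ gives $(ii)$ and $(iii)$. (One cosmetic point: Corollary \ref{effective cycles on X*X' are group stable effective} is stated for a single solvable group acting on both factors, so your $B\times B^{-}$ variant needs the corollary restated for two different groups; the proof adapts trivially, and in fact the paper simply takes $B$ on both factors.)

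The gap is in $(iv)$. After writing $\eta$ as a nonnegative combination of $B^{-}$-stable subvariety classes and $\delta$ as a nonnegative combination of $B$-stable subvariety classes, you reduce to the claim that $[Y^{-}]\cdot[Y^{+}]$ is effective for every $B^{-}$-stable irreducible $Y^{-}$ and $B$-stable irreducible $Y^{+}$, and you propose to obtain this from a ``spherical analogue of Kleiman transversality.'' No such statement holds on a general smooth complete spherical variety: if it did, then together with Proposition \ref{cycles are rat. equiv. to stable ones, picard group on spherical varieties}$(i)$ it would imply that the product of \emph{any} two effective classes is effective, hence that $\text{Psef}^{\, k}(X)\subseteq\text{Nef}^{\, k}(X)$ for all $k$, which is false. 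A concrete counterexample appears in the paper itself (the remark following Theorem \ref{morphism to G/P_0}): the blow-up $X$ of $\mathbb{P}^{2}$ at a point is a smooth projective spherical variety for a suitable $G\cong GL_{2}(\mathbb{C})$, the exceptional curve $E$ is $G$-stable --- hence simultaneously a $B$-orbit closure and a $B^{-}$-orbit closure --- and $[E]\cdot[E]=-1$ is not effective; since $E$ is $G$-stable, no group translate can put it in proper position with itself, so any Kleiman-type moving argument breaks down. (Richardson-type transversality is special to flag varieties, where $G$ acts transitively.) The repair is exactly the paper's device: do not decompose only $[\Delta_{X}]$ and then try to multiply by $\delta$; instead, for each irreducible $B$-stable $Z$ occurring in $\delta$, decompose $\Delta_{*}(Z)=\sum_{i}c_{i}u_{i}\otimes v_{i}$ with $c_{i}\geq 0$ and $u_{i},v_{i}$ $B$-stable, so that the projection formula yields $\eta\cdot Z=\sum_{\dim(u_{i})=k}c_{i}(\eta\cdot u_{i})v_{i}$. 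Nefness of $\eta$ alone then makes every coefficient nonnegative and $\eta\cdot Z$ effective, with no transversality between two effective cycles ever invoked. Your induction for the ``in particular'' clause is fine once this is in place.
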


\begin{proof}
The  conclusion $(i)$ follows from Proposition \ref{equi. defi.}$(i)(iv)$ and Proposition \ref{cycles are rat. equiv. to stable ones, picard group on spherical varieties}$(i)$.

Let $\eta\in A^{k}(X)$ be any cycle class and $Z\subseteq X$ be a $B$-stable closed subvariety. By Proposition \ref{equi. defi.}$(i)(iv)$ and Corollary \ref{effective cycles on X*X' are group stable effective}, we can assume that $\Delta_{*}(Z)=\sum\limits_{i=1}^{m} c_{i} u_{i}\otimes v_{i}$ in $A^{*}(X\times X)$, where $\Delta: X\rightarrow X\times X$ is the diagonal morphism, $m$ is a positive integer, all $c_{i}\geq 0$, and all $u_{i}, v_{i}$ are algebraic cycle classes represented by irreducible $B$-stable closed subvarieties. Let $\pi_{1}, \pi_{2}$ be the two projections from $X\times X$ to the two factors, then $\pi_{1}\Delta=id_{X}$, and $\pi_{2}\Delta=id_{X}$. Hence, $\eta\cdot Z=\pi_{2*}\Delta_{*}(\eta\cdot Z)=\pi_{2*}\Delta_{*}(\Delta^{*}\pi_{1}^{*}\eta\cdot Z)=\pi_{2*}(\pi_{1}^{*}\eta\cdot\Delta_{*}(Z))=\pi_{2*}(\sum\limits_{i=1}^{m}(c_{i}\eta\cdot u_{i})\otimes v_{i})=\sum\limits_{\text{dim}(u_{i})=k}c_{i}(\eta\cdot u_{i})v_{i}$. Then the rest of this theorem follows easily from this formula. In particular, for $(ii)$, we can take $\eta\in\text{Nef}^{\, k}(X)$ and $Z=X$ to apply this formula.
\end{proof}

Note that the conclusion $(i)$ is indeed a classical result: for the case when $X$ is projective, see \cite[Thm. 1.3(ii)]{Br93}; for more general cases, see \cite[Cor. of Thm. 1]{FMSS95}. The conclusion $(iii)$ can be deduced from \cite[Cor. of Thm. 2]{FMSS95} readily. We state and prove these two conclusions here for the later use and for the discussions of the case when $X$ is a projective $\mathbb{Q}$-factorial toric variety, see Remark \ref{Nef<=Eff=Psef proj. Q-fac. toric}.

\begin{rmk} \label{Nef1<=Eff1=Psef1 Q-factorial}
Let $X$ be a $\mathbb{Q}$-factorial complete $G$-spherical variety of dimension $n$. Define $A^{1}(X)_{\mathbb{Q}}\times A^{n-1}(X)_{\mathbb{Q}}\rightarrow \mathbb{Q}$ as the linear extension of the map $\text{Pic}(X)\times A^{n-1}(X)\rightarrow \mathbb{Z}, \delta\times C\mapsto \text{deg}(\delta\mid_{C})$, where $A^{n-1}(X)$ (resp. $A^{1}(X)$) is the Chow group of curves (resp. Weil divisors), $\text{Pic}(X)$ is the Picard group of $X$ and $A^{k}(X)_{\mathbb{Q}}=A^{k}(X)\otimes\mathbb{Q}$ for $k=1, n-1$.  Thus, $N^{k}(X)_{\mathbb{R}}$, $\text{Eff}^{\, k}(X)$, $\text{Psef}^{\, k}(X)$, and $\text{Nef}^{\, k}(X)$ are well-defined for $k=1, n-1$. By Proposition \ref{cycles are rat. equiv. to stable ones, picard group on spherical varieties}$(i)$, $\text{Eff}^{\, k}(X)=\text{Psef}^{\, k}(X)$ for $k=1, n-1$. Since the proof of Theorem \ref{Nef<=Eff=Psef}$(ii)$ also works when $k=1$ under our assumption of $X$ here, we know that $\text{Nef}^{\, 1}(X)\subseteq\text{Psef}^{\, 1}(X)$. By the duality, $\text{Nef}^{\, n-1}(X)\subseteq\text{Psef}^{\, n-1}(X)$.
\end{rmk}

\begin{cor} \label{nef(X)=psef(X) if and only if nef(X_i)=psef(X_i)}
Suppose that for each $1\leq i\leq m$, $X_{i}$ is a smooth complete $G_{i}$-spherical variety, where each $G_{i}$ is a connected reductive algebraic group. Let $X=\prod\limits_{i=1}^{m}X_{i}$ and $r$ be a positive integer. Then $\text{Nef}^{\, k}(X)=\text{Psef}^{\, k}(X)$ for all $1\leq k\leq r$ if and only if $\text{Nef}^{\, k}(X_{i})=\text{Psef}^{\, k}(X_{i})$ for all $1\leq k\leq r$ and all $1\leq i\leq m$.
\end{cor}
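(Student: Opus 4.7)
Plan: The argument combines three ingredients from the paper --- pushforward of effective cycles is effective, the projection formula, and Corollary \ref{effective cycles on X*X' are group stable effective}, applied iteratively, which gives that every effective class on $X=\prod_{l=1}^{m}X_{l}$ is a nonnegative combination of tensor products of effective classes on the factors. I would isolate two basic lemmas and then deduce each implication by a short calculation.

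First lemma (pullback preserves nefness): for each $i$ and $\eta_{i}\in\textnormal{Nef}^{\, k}(X_{i})$, the class $\pi_{i}^{*}\eta_{i}$ lies in $\textnormal{Nef}^{\, k}(X)$. This follows directly from the projection formula: for any effective cycle $Z$ of codimension $n-k$ on $X$, one has $\deg_{X}(\pi_{i}^{*}\eta_{i}\cdot Z)=\deg_{X_{i}}(\eta_{i}\cdot\pi_{i*}Z)\geq 0$, since $\pi_{i*}Z$ is effective on $X_{i}$ and $\eta_{i}$ is nef there. Second lemma (slicing preserves effectiveness): fix point classes $p_{l}\in A^{\dim X_{l}}(X_{l})$ of degree one for $l\neq i$ and set $F=p_{1}\otimes\cdots\otimes[X_{i}]\otimes\cdots\otimes p_{m}$; define $q_{i}:A^{k}(X)\to A^{k}(X_{i})$ by $q_{i}(\alpha)=\pi_{i*}(\alpha\cdot F)$. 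For $\alpha\in\textnormal{Eff}^{\, k}(X)$, decompose $\alpha=\sum_{j}c_{j}\,\alpha_{1,j}\otimes\cdots\otimes\alpha_{m,j}$ via Corollary \ref{effective cycles on X*X' are group stable effective}; each factor $\alpha_{l,j}\cdot p_{l}$ with $l\neq i$ either vanishes (when $\textnormal{codim}(\alpha_{l,j})>0$, by degree reasons) or equals a nonnegative multiple of $p_{l}$ (when $\alpha_{l,j}=n_{l,j}[X_{l}]$ with $n_{l,j}\geq 0$), so $\alpha\cdot F$ is a nonnegative combination of effective tensor products and hence $q_{i}(\alpha)\in\textnormal{Eff}^{\, k}(X_{i})$. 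The same computation applied to $\pi_{i}^{*}\eta_{i}=[X_{1}]\otimes\cdots\otimes\eta_{i}\otimes\cdots\otimes[X_{m}]$ gives $q_{i}\circ\pi_{i}^{*}=\textnormal{id}$, so $q_{i}$ is a left inverse of $\pi_{i}^{*}$.

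With these lemmas in hand the two implications are short. For ``only if'', take $\eta_{i}\in\textnormal{Nef}^{\, k}(X_{i})$ with $1\leq k\leq r$; the first lemma gives $\pi_{i}^{*}\eta_{i}\in\textnormal{Nef}^{\, k}(X)=\textnormal{Psef}^{\, k}(X)$ by hypothesis, and the second lemma then yields $\eta_{i}=q_{i}(\pi_{i}^{*}\eta_{i})\in\textnormal{Eff}^{\, k}(X_{i})=\textnormal{Psef}^{\, k}(X_{i})$; combined with Theorem \ref{Nef<=Eff=Psef}$(ii)$ this gives $\textnormal{Nef}^{\, k}(X_{i})=\textnormal{Psef}^{\, k}(X_{i})$. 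For ``if'', take $\zeta\in\textnormal{Psef}^{\, k}(X)$ with $1\leq k\leq r$ and decompose it as $\sum_{j}c_{j}\,\delta_{1,j}\otimes\cdots\otimes\delta_{m,j}$ via Corollary \ref{effective cycles on X*X' are group stable effective}; each $\delta_{l,j}$ has codimension $k_{l,j}\leq k\leq r$ on $X_{l}$, hence lies in $\textnormal{Nef}^{\, k_{l,j}}(X_{l})$ by hypothesis; the first lemma lifts each to a nef class $\pi_{l}^{*}\delta_{l,j}$ on $X$, Theorem \ref{Nef<=Eff=Psef}$(iv)$ makes $\prod_{l}\pi_{l}^{*}\delta_{l,j}=\delta_{1,j}\otimes\cdots\otimes\delta_{m,j}$ nef, and a nonnegative combination of nef classes is nef, so $\zeta\in\textnormal{Nef}^{\, k}(X)$. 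The most delicate point is the second lemma: verifying that the slicing map $q_{i}$ returns an effective class depends essentially on the tensor-product decomposition of effective classes furnished by Corollary \ref{effective cycles on X*X' are group stable effective}, without which the forward implication would be awkward to establish at higher codimension.
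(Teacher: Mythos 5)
Your ``if'' direction is sound and is essentially the paper's argument (decompose an effective class on $X$ into tensor products of effective classes on the factors via Corollary \ref{effective cycles on X*X' are group stable effective}, upgrade each factor to nef using the hypothesis, and use that pullbacks and products of nef classes are nef). The codimension-$0$ factors are not literally covered by the hypothesis ``for all $1\leq k\leq r$'', but they are nonnegative multiples of fundamental classes and hence trivially nef, so that is only a cosmetic omission.

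The ``only if'' direction, however, has a genuine logical error: you prove the wrong inclusion. Starting from $\eta_{i}\in\text{Nef}^{\,k}(X_{i})$ and concluding $\eta_{i}\in\text{Psef}^{\,k}(X_{i})$ establishes $\text{Nef}^{\,k}(X_{i})\subseteq\text{Psef}^{\,k}(X_{i})$, which is exactly Theorem \ref{Nef<=Eff=Psef}$(ii)$ and holds unconditionally; ``combining'' it with Theorem \ref{Nef<=Eff=Psef}$(ii)$ gives the same inclusion twice, not an equality. What the ``only if'' direction requires is the reverse inclusion $\text{Psef}^{\,k}(X_{i})\subseteq\text{Nef}^{\,k}(X_{i})$. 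Your own tools repair this immediately, but they must be pointed the other way: take $\delta_{i}\in\text{Eff}^{\,k}(X_{i})$ and $\gamma_{i}\in\text{Eff}^{\,n_{i}-k}(X_{i})$; then $\pi_{i}^{*}\delta_{i}=[X_{1}]\otimes\cdots\otimes\delta_{i}\otimes\cdots\otimes[X_{m}]$ is effective on $X$, hence nef on $X$ by the hypothesis $\text{Nef}^{\,k}(X)=\text{Psef}^{\,k}(X)$, and the projection formula against the effective class $p_{1}\otimes\cdots\otimes\gamma_{i}\otimes\cdots\otimes p_{m}$ of complementary codimension gives $\delta_{i}\cdot\gamma_{i}=\pi_{i}^{*}\delta_{i}\cdot(p_{1}\otimes\cdots\otimes\gamma_{i}\otimes\cdots\otimes p_{m})\geq 0$. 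This is what the paper means by ``follows from the projection formula''; note also that with this argument your second lemma (the slicing map $q_{i}$) becomes unnecessary for either direction.
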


\begin{proof}
Note that $X$ is $(\prod\limits_{i=1}^{m}G_{i})$-spherical. By Theorem \ref{Nef<=Eff=Psef}$(i)(ii)$, $\text{Nef}^{\, k}(X)\subseteq\text{Eff}^{\, k}(X)=\text{Psef}^{\, k}(X)$ and $\text{Nef}^{\, k}(X_{i})\subseteq\text{Eff}^{\, k}(X_{i})=\text{Psef}^{\, k}(X_{i})$ for all integers $k$ and all $1\leq i\leq m$.

The ``only if'' part follows from Theorem \ref{Nef<=Eff=Psef}$(i)(ii)$ and the projection formula.

The ``if'' part: Let $n=\text{dim}(X)$ and $n_{i}=\text{dim}(X_{i})$. For any $1\leq k\leq r$, take any class $\eta\in\text{Eff}^{\, k}(X)$, and any class $\delta\in\text{Eff}^{\, n-k}(X)$, then by Corollary \ref{effective cycles on X*X' are group stable effective} and the induction on $m$, we get that the class $\eta\in\sum\limits_{i_{1}+\ldots+i_{m}=k}\bigotimes\limits_{j=1}^{m}\text{Eff}^{\, i_{j}}(X_{j})$ and the class $\delta\in\sum\limits_{i_{1}+\ldots+i_{m}=k}\bigotimes\limits_{j=1}^{m}\text{Eff}^{\, n_{j}-i_{j}}(X_{j})$. By the assumption on $X_{i}$, $\eta\in\sum\limits_{i_{1}+\ldots+i_{m}=k}\bigotimes\limits_{j=1}^{m}\text{Nef}^{\, i_{j}}(X_{j})$. Hence, $\eta\cdot\delta\geq 0$, i.e. $\text{Nef}^{\, k}(X)=\text{Psef}^{\, k}(X)$ for all $1\leq k\leq r$.
\end{proof}

\begin{rmk} \label{nef1(X)=psef1(X) if and only if nef1(X_i)=psef1(X_i) Q-factorial}
Suppose that for each $1\leq i\leq m$, $X_{i}$ is a complete $\mathbb{Q}$-factorial $G_{i}$-spherical variety, where each $G_{i}$ is a connected reductive algebraic group.  Let $X=\prod\limits_{i=1}^{m}X_{i}$. Then $\text{Nef}^{\, 1}(X)=\text{Psef}^{\, 1}(X)$ if and only if $\text{Nef}^{\, 1}(X_{i})=\text{Psef}^{\, 1}(X_{i})$ for all $1\leq i\leq m$. By Remark \ref{Nef1<=Eff1=Psef1 Q-factorial}, $\text{Nef}^{\, 1}(X)\subseteq\text{Eff}^{\, 1}(X)=\text{Psef}^{\, 1}(X)$ and $\text{Nef}^{\, 1}(X_{i})\subseteq\text{Eff}^{\, 1}(X)=\text{Psef}^{\, 1}(X_{i})$ for all $1\leq i\leq m$. Note that if we take $r=1$, then the proof of Corollary \ref{nef(X)=psef(X) if and only if nef(X_i)=psef(X_i)} also works for the case here.
\end{rmk}

\section{Smooth projective spherical varieties whose effective cycle classes of codimension $k$ are nef} \label{section classification nef=psef}

In this section, we mainly study the smooth projective spherical variety $X$ of dimension $n$ such that $\text{Nef}^{\, k}(X)=\text{Psef}^{\, k}(X)$ for some $1\leq k\leq n-1$. We also study some related properties when $X$ may be not smooth, but only $\mathbb{Q}$-factorial. We discuss the general spherical cases, toric cases, toroidal cases and horospherical cases in the corresponding four subsections. The horospherical cases are the most complicated, and we need to make some preparations in the part \ref{subsubsection preliminaries}.

\subsection{General spherical varieties} \label{subsection general spherical varieties}

There are two main results in this subsection, namely Theorem \ref{higher codimension arbitrary varieites} and Theorem \ref{morphism to G/P_0}. The former says that if $X$ is a smooth projective variety such that $\text{Nef}^{\, k}(X)\subseteq\text{Psef}^{\, k}(X)$ for some $2\leq k\leq \text{dim}(X)-2$, then the dimensions of the exceptional loci of the birational Mori contractions on $X$ are less than $k$.

Now let $X$ be a spherical $G/H$-embedding. Denote by $\mathfrak{D}_{0}(G/H)=\{D\in\mathfrak{D}(G/H)\mid \rho(\nu_{D})=0\}$. In Theorem \ref{morphism to G/P_0}, we show that if $X$ is complete, then for any subset $\mathfrak{D}_{1}\subseteq\mathfrak{D}_{0}(G/H)$, we can construct a $G$-equivariant morphism $\pi_{0}: X\rightarrow G/P_{0}$, where $P_{0}$ is a parabolic subgroup of $G$ related to $\mathfrak{D}_{1}$. Moreover, $X$ shares a lot of properties with the fibers. Let $F$ be any fixed fiber of $\pi_{0}$, then $X$ is smooth (resp. projective, $\mathbb{Q}$-factorial, locally factorial) if and only if $F$ is so. If $F$ and $X$ are both projective and $\mathbb{Q}$-factorial, then $\text{Nef}^{\, 1}(X)=\text{Psef}^{\, 1}(X)$ if and only if $\text{Nef}^{\, 1}(F)=\text{Psef}^{\, 1}(F)$.

\begin{thm} \label{higher codimension arbitrary varieites}
Let $X$ be a smooth projective variety of dimension $n$ such that $\text{Psef}^{\, k}(X)\subseteq\text{Nef}^{\, k}(X)$ for some $2\leq k\leq n-2$. Assume that $R$ is an extremal ray of $\overline{NE(X)}$ which can be contracted, i.e. there is a morphism $\pi: X\rightarrow Y$ such that $\pi_{*}\mathcal{O}_{X}=\mathcal{O}_{Y}$ and for any irreducible curve $C$ on $X$, $\pi(C)$ is a point if and only if $C\in R$. Then either $R\subseteq\text{Nef}_{1}(X)$ or $\text{dim}(A)\leq\min\{k-1, n-k-1\}$, where $A\subseteq X$ is the exceptional locus of $\pi$.
\end{thm}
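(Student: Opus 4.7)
The plan is a proof by contradiction built around an explicit cycle construction. First I would exploit the duality: since $\text{Nef}^{n-k}(X)$ is by definition the dual cone of $\text{Psef}^{k}(X)$ under the perfect top-intersection pairing $N^{k}(X)_{\mathbb{R}} \times N^{n-k}(X)_{\mathbb{R}} \to \mathbb{R}$, the inclusion $\text{Psef}^{k}(X) \subseteq \text{Nef}^{k}(X)$ dualizes to $\text{Psef}^{n-k}(X) \subseteq \text{Nef}^{n-k}(X)$. Hence the hypothesis is invariant under $k \leftrightarrow n-k$, and it is enough to prove $\dim A \leq k-1$; the companion bound $\dim A \leq n-k-1$ follows from the mirror argument.

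Suppose now $R \not\subseteq \text{Nef}_{1}(X)$, so that there is an effective divisor $D$ with $D \cdot r < 0$ for a generator $r$ of $R$. A standard moving argument forces every irreducible curve of class in $R$ into $D$, and since $A$ is swept out by such contracted curves (which have class in $R$ by the defining property of $\pi$), $A \subseteq D$. Fix a very ample $H$ on $X$. For any fiber $F$ of $\pi$ of dimension $d \geq n-k$, set
\[
\eta := [F] \cdot H^{d+k-n} \in N^{k}(X)_{\mathbb{R}}, \qquad \delta := D \cdot H^{n-k-1} \in N^{n-k}(X)_{\mathbb{R}};
\]
both are effective, hence in the corresponding pseudo-effective cones. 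By the hypothesis $\eta$ is nef, so $\eta \cdot \delta \geq 0$. On the other hand $\eta \cdot \delta = [F] \cdot D \cdot H^{d-1}$, and the factor $[F] \cdot H^{d-1}$ is the pushforward under $F \hookrightarrow X$ of the ample power $(H|_{F})^{d-1}$, which is a positive multiple of the class of an irreducible curve on $F$; such a curve has class in $R$ because it is contracted by $\pi$. Pairing against $D$ then produces $\eta \cdot \delta = \lambda(D \cdot r') < 0$ for some $\lambda > 0$ and $r' \in R$, a contradiction. Hence every fiber of $\pi$ has dimension at most $n-k-1$, and symmetrically at most $k-1$.

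The remaining task---and this is where I expect the main obstacle---is to upgrade this uniform bound on individual fiber dimensions to the bound $\dim A \leq \min\{k-1, n-k-1\}$ on the whole exceptional locus. The natural strategy is to use that $A$ is the union of the positive-dimensional fibers of $\pi$, together with the extremality of $R$ and the covering of $A$ by the irreducible family of contracted rational curves, to show that $\dim A$ is controlled by the maximal fiber dimension. Combined with the bound already obtained, this gives the desired conclusion.
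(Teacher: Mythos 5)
Your duality reduction and your bound on the dimensions of individual fibers are both correct, and in substance they reproduce Case 1 of the paper's argument (the paper splits the product as $\prod_{i=1}^{d-k}D_i\cdot F\in\text{Psef}^{\,n-k}(X)$ against $\prod_{i=d-k+1}^{d-1}D_i\cdot E\in\text{Psef}^{\,k}(X)\subseteq\text{Nef}^{\,k}(X)$, which is just the dual split of your $\eta\cdot\delta$). But the step you defer to the end is the actual content of the theorem, and the strategy you sketch for it cannot succeed: the dimension of the exceptional locus is \emph{not} controlled by the maximal fiber dimension. For instance, a contraction whose exceptional locus $A$ is a $\mathbb{P}^1$-bundle over a base $B$, contracted along the fibers of the bundle, has every fiber of $\pi|_A$ one-dimensional while $\dim A=\dim B+1$ can be as large as $n-1$. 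So no amount of "extremality of $R$ plus covering by contracted curves" will convert the fiber bound into a bound on $\dim A$; you must use the positivity hypothesis a second time.

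This is exactly what the paper's Case 2 does, and it is the piece your proposal is missing. Assume all fibers have dimension $\le k-1$ but some irreducible component $A_0$ of $A$ has $a=\dim A_0\ge k$; then $a'=\dim\pi(A_0)\ge 1$. Take $a'$ general very ample divisors $H_1,\dots,H_{a'}$ on $Y$ and general sections $D_1,\dots,D_{a'}$ of $\pi^*\mathcal{O}_Y(H_i)$, followed by $a-a'-1$ general ample divisors $D_{a'+1},\dots,D_{a-1}$ on $X$. Slicing $A_0$ by the pullbacks first produces an effective combination of ($(a-a')$-dimensional) fibers over points of $\pi(A_0)$, and the further ample slices cut these down to a positive multiple of a contracted curve $C\in R$, so $\prod_{i=1}^{a-1}D_i\cdot A_0\cdot E<0$ for the prime divisor $E$ with $E\cdot R<0$. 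On the other hand, grouping $\prod_{i=1}^{a-k}D_i\cdot A_0\in\text{Psef}^{\,n-k}(X)$ against $\prod_{i=a-k+1}^{a-1}D_i\cdot E\in\text{Psef}^{\,k}(X)\subseteq\text{Nef}^{\,k}(X)$ forces that same number to be $\ge 0$ --- a contradiction. The key idea you are missing is that one must slice by divisors pulled back from the base $Y$ (not arbitrary ample divisors on $X$) in order to land inside fibers, and then invoke $\text{Psef}^{\,k}(X)\subseteq\text{Nef}^{\,k}(X)$ on the resulting effective codimension-$k$ piece containing $E$. Without this second application of the hypothesis the theorem does not follow.
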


\begin{proof}
Assume that $R\nsubseteq\text{Nef}_{1}(X)$. Then $\pi$ is birational. Let $A\subseteq X$ be the exceptional locus and $A'=\pi(A)$. By the duality, we can assume that $k\leq\frac{n}{2}$. Now assume that $\text{dim}(A)\geq k$. Let $C$ (resp. $E$) be an irreducible curve (resp. a prime divisor) on $X$ such that $C\in R$ and $E\cdot C<0$.

Case 1. Assume that there exists a point $y\in Y$ such that $\text{dim}(\pi^{-1}(y))\geq k$.

Let $F$ be an irreducible component of $\pi^{-1}(y)$ such that $d=\text{dim}(F)\geq k$. Take $d-1$ general ample divisors $D_{1},\ldots,D_{d-1}$ on $X$. Thus, $\prod\limits_{i=1}^{d-1}D_{i}\cdot F=\lambda C\in N_{1}(X)_{\mathbb{R}}$ for some $\lambda>0$. So $\prod\limits_{i=1}^{d-1}D_{i}\cdot F\cdot E<0$.

On the other hand, $\prod\limits_{i=1}^{d-k}D_{i}\cdot F\in\text{Psef}^{\, n-k}(X)$ and $\prod\limits_{i=d-k+1}^{d-1}D_{i}\cdot E\in\text{Psef}^{\, k}(X)\subseteq\text{Nef}^{\, k}(X)$. Thus, $\prod\limits_{i=1}^{d-1}D_{i}\cdot F\cdot E\geq 0$. We get a contradiction.

Case 2. Assume that for any $y\in Y$, $\text{dim}(\pi^{-1}(y))\leq k-1$. Let $A_{0}$ be an irreducible component of $A$ such that $a=\text{dim}(A_{0})\geq k$. Let $A'_{0}=\pi(A_{0})$. Thus, $a'=\text{dim}(A'_{0})\geq 1$.

Take $a'$ general very ample Cartier divisors $H_{1},\ldots,H_{a'}$ on $Y$. Take a general section $D_{i}$ of each Cartier divisor $\pi^{*}\mathcal{O}_{Y}(H_{i})$. Take $a-a'-1$ general ample divisors $D_{a'+1},\ldots,D_{a-1}$ on $X$. Thus, $\prod\limits_{i=1}^{a-k}D_{i}\cdot A_{0}\in\text{Psef}^{\, n-k}(X)$ and $\prod\limits_{i=a-k+1}^{a-1}D_{i}\cdot E\in\text{Psef}^{\, k}(X)\subseteq\text{Nef}^{\, k}(X)$. So $\prod\limits_{i=1}^{a-1}D_{i}\cdot A_{0}\cdot E\geq 0$.

By the choices of $H_{i}$ on $Y$, $\prod\limits_{i=1}^{a'}\pi^{*}H_{i}\cdot A_{0}=\sum\limits_{j=1}^{t}\lambda_{j}\pi^{-1}(y_{j})$, where $t\geq 1$, each $\lambda_{j}>0$, and each $\pi^{-1}(y_{j})$ is an irreducible closed subvariety of $X$ of dimension $a-a'$. Hence, $\prod\limits_{i=1}^{a-1}D_{i}\cdot A_{0}=\lambda C\in N_{1}(X)_{\mathbb{R}}$ for some $\lambda>0$. So $\prod\limits_{i=1}^{a-1}D_{i}\cdot A_{0}\cdot E<0$. We get a contradiction.
\end{proof}

Suppose that $X$ is a smooth projective $G$-spherical variety. By Theorem \ref{Nef<=Eff=Psef}$(i)(ii)$, for any integer $k$, $\text{Nef}^{\, k}(X)\subseteq\text{Eff}^{\, k}(X)=\text{Psef}^{\, k}(X)$. By \cite[Thm. 3.1]{Br93}, every extremal ray of $NE(X)$ can be contracted. So we have the following

\begin{cor} \label{higher codimension spherical}
Let $X$ be a smooth projective $G$-spherical variety of dimension $n$ such that $\text{Nef}^{\, k}(X)=\text{Psef}^{\, k}(X)$ for some $2\leq k\leq n-2$. If there is an extremal ray $R$ of $NE(X)$ such that $R\nsubseteq\text{Nef}_{1}(X)$, then the corresponding contraction $\text{cont}_{R}: X\rightarrow Y$ is birational and the dimension of the exceptional locus $A\subseteq X$ is no more than $\text{min}\{k-1, n-k-1\}$.
\end{cor}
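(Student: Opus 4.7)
The plan is to obtain Corollary \ref{higher codimension spherical} as an immediate consequence of Theorem \ref{higher codimension arbitrary varieites}, after checking that all its hypotheses hold in the spherical setting.

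First, I would verify the numerical hypothesis. Theorem \ref{Nef<=Eff=Psef}$(ii)$ already gives $\text{Nef}^{\, k}(X)\subseteq\text{Psef}^{\, k}(X)$, so the corollary's assumption $\text{Nef}^{\, k}(X)=\text{Psef}^{\, k}(X)$ is equivalent to the reverse inclusion $\text{Psef}^{\, k}(X)\subseteq\text{Nef}^{\, k}(X)$, which is exactly what Theorem \ref{higher codimension arbitrary varieites} requires. Smoothness, projectivity, and the range $2\leq k\leq n-2$ are all inherited directly from the hypotheses of the corollary.

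Second, I would produce the contraction morphism. Since $X$ is projective and spherical, \cite[Thm. 3.1]{Br93} asserts that every extremal ray of $NE(X)$ can be contracted; applied to $R$ this yields a morphism $\pi=\text{cont}_{R}: X\to Y$ with $\pi_{*}\mathcal{O}_{X}=\mathcal{O}_{Y}$ whose contracted irreducible curves are precisely those whose classes lie in $R$. This is exactly the $\pi$ whose existence is hypothesized in Theorem \ref{higher codimension arbitrary varieites}.

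Third, I would conclude. The assumption $R\nsubseteq\text{Nef}_{1}(X)$ supplies an effective divisor $E$ and a curve $C\in R$ with $E\cdot C<0$; then every irreducible curve in $R$ must lie in $E$, so all positive-dimensional fibers of $\pi$ are contained in $E$. Since $E$ is a proper subset of $X$, the morphism $\pi$ cannot be a Mori fiber contraction and is therefore birational. Applying Theorem \ref{higher codimension arbitrary varieites} with $R\nsubseteq\text{Nef}_{1}(X)$ then forces the exceptional locus $A$ of $\pi$ to satisfy $\dim(A)\leq\min\{k-1,n-k-1\}$. No substantive obstacle is anticipated; the only point requiring any care is the verification that $\pi$ is birational, which is a routine consequence of standard Mori theory once a divisor negative on $R$ is in hand.
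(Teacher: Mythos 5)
Your proposal is correct and follows essentially the same route as the paper: the paper likewise observes that $\text{Nef}^{\,k}(X)\subseteq\text{Eff}^{\,k}(X)=\text{Psef}^{\,k}(X)$ (Theorem \ref{Nef<=Eff=Psef}) makes the hypothesis of Theorem \ref{higher codimension arbitrary varieites} equivalent to the corollary's, invokes \cite[Thm. 3.1]{Br93} for the existence of $\text{cont}_{R}$, and then reads off the conclusion, the birationality being exactly the first assertion in the proof of Theorem \ref{higher codimension arbitrary varieites} under $R\nsubseteq\text{Nef}_{1}(X)$. Your extra verification that $\pi$ is birational (using an irreducible effective divisor $E$ with $E\cdot R<0$, available since $\text{Psef}^{\,1}(X)=\text{Eff}^{\,1}(X)$) just makes explicit a step the paper leaves implicit.
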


For smooth projective Fano varieties, we can get a strong corollary as follows.

\begin{cor}
Let $X$ be a smooth projective Fano variety of dimension $n$. If there is some integer $2\leq k\leq n-2$ such that $\text{Psef}^{\, k}(X)\subseteq\text{Nef}^{\, k}(X)$, then $\text{Psef}^{\, 1}(X)=\text{Nef}^{\, 1}(X)$.
\end{cor}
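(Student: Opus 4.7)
The plan is to reduce the desired equality to showing that every extremal ray of $\overline{NE(X)}$ lies in $\text{Nef}_{1}(X)$, and then combine Theorem \ref{higher codimension arbitrary varieites} with the Ionescu--Wisniewski dimension bound on exceptional loci of $K_{X}$-negative birational Mori contractions to rule out any extremal ray outside $\text{Nef}_{1}(X)$.

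The inclusion $\text{Nef}^{1}(X)\subseteq\text{Psef}^{1}(X)$ is automatic on any smooth projective variety: by Kleiman's criterion $\text{Nef}^{1}(X)$ is the closure of the ample cone, and every ample divisor is effective (some positive multiple is very ample and has sections), so $\overline{\text{Ample}^{1}(X)}\subseteq\overline{\text{Eff}^{1}(X)}=\text{Psef}^{1}(X)$. The reverse inclusion $\text{Psef}^{1}(X)\subseteq\text{Nef}^{1}(X)$ is by duality equivalent to $\overline{NE(X)}\subseteq\text{Nef}_{1}(X)$. Since $X$ is Fano, the Cone theorem gives that $\overline{NE(X)}$ is a rational polyhedron generated by finitely many $K_{X}$-negative extremal rays, each of which can be contracted. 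It therefore suffices to show that every such extremal ray $R$ lies in $\text{Nef}_{1}(X)$.

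Suppose for contradiction that some extremal ray $R$ satisfies $R\not\subseteq\text{Nef}_{1}(X)$. Theorem \ref{higher codimension arbitrary varieites} then forces the contraction $\pi:X\to Y$ of $R$ to be birational, and the exceptional locus $A$ to satisfy $\dim A\le\min\{k-1,\,n-k-1\}\le\lfloor(n-2)/2\rfloor<n/2$. On the other hand, $-K_{X}$ is ample, so the length $l(R)$ of $R$ satisfies $l(R)\ge 1$, and the Ionescu--Wisniewski inequality applied to the birational Mori contraction $\pi$ yields, for every irreducible component $F$ of a nontrivial fiber of $\pi$, the bound $\dim F+\dim A\ge n+l(R)-1\ge n$. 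Since $F\subseteq A$ one has $\dim F\le\dim A$, hence $\dim A\ge n/2$, contradicting the previous bound. The main obstacle is the correct invocation of the Ionescu--Wisniewski bound; its hypotheses are immediate in our setting because $X$ is smooth Fano, so every contraction of an extremal ray of $\overline{NE(X)}$ is automatically $K_{X}$-negative and the extremal rays contain rational curves of positive $(-K_{X})$-degree.
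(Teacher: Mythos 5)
Your proof is correct and follows essentially the same route as the paper: assume an extremal ray $R\nsubseteq\text{Nef}_{1}(X)$, contract it (possible since $X$ is Fano), apply Theorem \ref{higher codimension arbitrary varieites} to bound $\dim(A)$ strictly below $n/2$, and contradict this with the Ionescu--Wi\'sniewski inequality $2\dim(A)\geq\dim F+\dim A\geq n+l(R)-1\geq n$. The only (harmless) difference is that you spell out the easy inclusion $\text{Nef}^{1}(X)\subseteq\text{Psef}^{1}(X)$, which the paper leaves implicit.
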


\begin{proof}
By the duality, we can assume that $k\leq\frac{n}{2}$. Now assume that $R\subseteq\text{Psef}_{1}(X)\backslash\text{Nef}_{1}(X)$ is an extremal ray. Let $X, Y, \pi, A$ be as in Theorem \ref{higher codimension arbitrary varieites}. Note that the existence of $\pi=\text{cont}_{R}$ follows from the fact that $X$ is Fano. Then by Theorem \ref{higher codimension arbitrary varieites}, $\text{dim}(A)\leq k-1$. By \cite[Thm. (0.4)]{Io86}, $2\text{\,dim}(A)\geq\text{dim}(X)+l(R)-1$, where $l(R)=\min\{-K_{X}\cdot C \mid C \text{ is a rational curve and } C\in R \}$. Since $X$ is a Fano variety, $l(R)\geq 1$. Thus, $\text{dim}(A)\geq\frac{n}{2}$. We get a contradiction.
\end{proof}

Let $X$ be a spherical $G/H$-embedding and $Y$ be a $G$-orbit on $X$. By Theorem \ref{correspondence fans and varieties}, $\mathfrak{C}^{c}_{Y}$ is a strictly convex colored cone in $(N_{X})_{\mathbb{Q}}$. In particular, for each $D\in\mathfrak{D}_{Y}$, $\rho(\nu_{D})\neq 0$. Recall that $\mathfrak{D}_{0}(G/H)=\{D\in\mathfrak{D}(G/H)\mid \rho(\nu_{D})=0\}$. Hence, $\mathfrak{D}_{0}(G/H)\subseteq\mathfrak{D}(G/H)\backslash\mathfrak{D}_{X}$.

\begin{lem} \label{D_0 and C_(D_0) are extremal}
Let $X$ be a $\mathbb{Q}$-factorial projective spherical $G/H$-embedding.  Take $D\in\mathfrak{D}_{0}(G/H)$, then

$(i)$ the numerical class $C_{D, Y}$ doesn't depend on the choice of the closed $G$-orbit $Y$, thus, we can denote it by $C_{D}$;

$(ii)$ $\mathbb{R}^{+}C_{D}$ is an extremal ray of $NE(X)$ as well as one of $\text{Nef}_{1}(X)$;

$(iii)$ $\mathbb{R}^{+}D$ is an extremal ray of $\text{Nef}^{\, 1}(X)$ as well as one of $\text{Psef}^{\, 1}(X)$.

Moreover, suppose that $D_{1},\ldots,D_{m}\in\mathfrak{D}_{0}(G/H)$ are pairwise different, then

$(iv)$ $F=\langle C_{D_{1}}, \ldots, C_{D_{m}}\rangle$ is an extremal face of $NE(X)$ as well as one of $\text{Nef}_{1}(X)$;

$(v)$ $F^{*}=\langle D_{1}, \ldots, D_{m}\rangle$ is an extremal face of $\text{Nef}^{\, 1}(X)$ as well as one of $\text{Psef}^{\, 1}(X)$;
\end{lem}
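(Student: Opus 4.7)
Part (i) is immediate: plugging $\rho(\nu_{D})=0$ into the defining formula (\ref{eqn. C_(D, Y)}) gives $\delta\cdot C_{D,Y}=n_{D}(\delta)$, which does not involve $Y$. Along the way I will record that the functional $\delta\mapsto n_{D}(\delta)$ descends to $A^{1}(X)_{\mathbb{R}}$: for any $f\in\mathbb{C}(X)^{(B)}$ with weight $\chi_{f}\in M_{X}$ one has $\nu_{D}(f)=\chi_{f}(\rho(\nu_{D}))=\chi_{f}(0)=0$. In particular $[D]\neq 0$ in $A^{1}(X)_{\mathbb{R}}$, since otherwise $n_{D}(D)=1$ would have to equal $\chi_{f}(0)=0$.

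The core of the proof is a pairing table. For any $D\in\mathfrak{D}_{0}(G/H)$ and any $G$-orbit $Y$, strict convexity of $\mathfrak{C}^{c}_{Y}\in\mathbb{F}_{X}$ together with $\rho(\nu_{D})=0$ rules out $D\in\mathfrak{D}_{Y}$, and $D$ is a color so $D\notin\mathcal{V}_{Y}$; hence $D\notin\mathfrak{B}_{Y}$. For any closed orbit $Z$ this gives $l(D,Z)(\rho(\nu_{E}))=n_{E}(D)=0$ for every $E\in\mathfrak{B}_{Z}$, and since $\rho(\mathfrak{B}_{Z})$ spans $(N_{X})_{\mathbb{Q}}$ by \cite[Thm.~6.3]{Kn91}, I conclude $l(D,Z)\equiv 0$ on $(N_{X})_{\mathbb{Q}}$. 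Substituting into (\ref{eqn. C_u}) and (\ref{eqn. C_(D, Y)}) yields the orthogonality relations $D_{i}\cdot C_{D_{j}}=\delta_{ij}$, $D_{i}\cdot C_{\mu}=0$ for every wall $\mu$, and $D_{i}\cdot C_{E,Z}=0$ for every color $E\notin\{D_{1},\dots,D_{m}\}$. Since the $C_{\mu}$'s and $C_{E,Z}$'s generate $NE(X)$ by \cite[Thm.~3.2]{Br93}, each $D_{i}$ pairs non-negatively with every pseudo-effective curve class and so lies in $\text{Nef}^{\,1}(X)$; dually, by representing an arbitrary effective divisor class as a non-negative combination of elements of $\mathfrak{B}(X)$ via Proposition \ref{cycles are rat. equiv. to stable ones, picard group on spherical varieties}, each $C_{D_{i}}$ belongs to $\text{Nef}_{1}(X)\cap NE(X)$.

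For parts $(ii)$ and $(iv)$, take any decomposition $v=v_{1}+v_{2}$ with $v\in F$ and $v_{1},v_{2}\in NE(X)$, express each $v_{j}$ as a non-negative combination of the Brion generators of $NE(X)$, and split off the $C_{D_{i}}$-summands. The above pairing table, applied with each $D_{i}$, forces the residual parts $v_{1}',v_{2}'$ (non-negative combinations of $C_{\mu}$'s and of $C_{E,Z}$'s with $E\notin\{D_{1},\dots,D_{m}\}$) to satisfy $v_{1}'+v_{2}'=0$. Strict convexity of $NE(X)$ (projective $X$) then gives $v_{1}'=v_{2}'=0$, so $v_{1},v_{2}\in F$. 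This shows $F$ is an extremal face of $NE(X)$; since $F\subseteq\text{Nef}_{1}(X)\subseteq NE(X)$, the same argument applied to decompositions inside $\text{Nef}_{1}(X)$ shows $F$ is extremal there as well.

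For parts $(iii)$ and $(v)$, given a decomposition $\sum_{i}a_{i}[D_{i}]=[\gamma]+[\epsilon]$ in $\text{Psef}^{\,1}(X)=\text{Eff}^{\,1}(X)$, I invoke Proposition \ref{cycles are rat. equiv. to stable ones, picard group on spherical varieties}$(i)$ to represent $\gamma,\epsilon$ by $B$-stable effective divisors $\sum_{E}c_{E}E$ and $\sum_{E}c_{E}'E$, and then use the exact sequence of Proposition \ref{cycles are rat. equiv. to stable ones, picard group on spherical varieties}$(ii)$ to produce $\chi\in(M_{X})_{\mathbb{R}}$ with $\chi(\rho(\nu_{E}))=-(c_{E}+c_{E}')\leq 0$ for every $E\in\mathfrak{B}(X)\setminus\{D_{1},\dots,D_{m}\}$ (and automatically zero on $\rho(\nu_{D_{i}})=0$). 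The main obstacle is to conclude $\chi=0$: a $B$-eigenfunction of weight $-\chi$ then has non-negative valuation along every prime $B$-stable divisor of $X$, hence is regular on the normal variety $X$, and completeness of $X$ forces it to be constant, so $\chi=0$. Consequently $c_{E}=c_{E}'=0$ for $E\notin\{D_{1},\dots,D_{m}\}$, so $\gamma,\epsilon\in F^{*}$, proving $F^{*}$ is an extremal face of $\text{Psef}^{\,1}(X)$. Since $F^{*}\subseteq\text{Nef}^{\,1}(X)\subseteq\text{Psef}^{\,1}(X)$ by the second paragraph, extremality in $\text{Nef}^{\,1}(X)$ follows immediately.
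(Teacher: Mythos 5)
Your proof is correct, and its computational core is the same as the paper's: the pairing table $D_{i}\cdot C_{D_{j}}=\delta_{ij}$, $D_{i}\cdot C_{\mu}=0$, $D_{i}\cdot C_{E,Z}=0$ (obtained, as you do, from $l(D_i,Z)\equiv 0$ and formulas (\ref{eqn. C_u})--(\ref{eqn. C_(D, Y)})), combined with Brion's generators of $NE(X)$ and the $B$-stable generators of $\text{Eff}^{\,1}(X)$. Where you diverge is in packaging: for $(ii)$ and $(iv)$ the paper shows that the span of $\{C_{D_{1}},\dots,C_{D_{m}}\}$ meets the span of the remaining generators only in $0$ and then quotes \cite[Thm.~3.2]{Br93}, whereas you unwind this into an explicit decomposition-and-cancellation argument using strict convexity of $NE(X)$ — same content, slightly more self-contained. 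For $(v)$ the paper proves $W_{1}\cap W_{2}=\{0\}$ by citing \cite[Cor.~1.3(iv)]{Br93}, while you argue directly that the weight $\chi$ produced by the exact sequence must vanish via regularity of eigenfunctions on the complete variety $X$; this is a genuinely more elementary route, and it buys you independence from that corollary at the level of the cone argument. Two small patches are needed, though. First, your relation $\sum_{i}a_{i}[D_{i}]=[\gamma]+[\epsilon]$ lives in $N^{1}(X)_{\mathbb{R}}$, while the exact sequence of Proposition \ref{cycles are rat. equiv. to stable ones, picard group on spherical varieties}$(ii)$ computes $A^{1}(X)$; since $X$ is only assumed $\mathbb{Q}$-factorial (not smooth, so the FMSS identification $N^{k}=A^{k}$ is not available), you still need the coincidence of rational and numerical equivalence for divisors on a complete spherical variety — which is precisely what the paper's citation of \cite[Cor.~1.3(iv)]{Br93} supplies. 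Second, "a $B$-eigenfunction of weight $-\chi$" presupposes $\chi\in M_{X}$; since your $\chi$ lies in $(M_{X})_{\mathbb{R}}$, you should first observe that the cone of $\chi$ with $\chi(\rho(\nu_{E}))\leq 0$ for all $E\in\mathfrak{B}(X)$ is rational polyhedral, so if it were nonzero it would contain a nonzero lattice point, to which the eigenfunction argument then applies. With these two remarks inserted, the argument is complete.
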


\begin{proof}
$(i)$ Take any $\delta\in\text{Pic}(X)$. Then there is an equality $\delta\cdot C_{D_{\alpha}, Y_{1}}=\delta\cdot C_{D_{\alpha}, Y_{2}}$ for any $Y_{1}, Y_{2}\in S^{c}_{X, G}$ by the formula (2) on page \pageref{eqn. C_(D, Y)}. This shows $(i)$.

\medskip

$(ii)(iii)$ Since $D\in\mathfrak{D}_{0}(G/H)$, by the formula (2) again, if $D'\in\mathfrak{B}(X)\backslash\{D\}$, then $D'\cdot C_{D}=0$. On the other hand, $D\cdot C_{D}=1$. By Proposition \ref{cycles are rat. equiv. to stable ones, picard group on spherical varieties}$(i)$, $C_{D}\in\text{Nef}_{1}(X)$ and $\mathbb{R}^{+}D$  is an extremal ray of $\text{Psef}^{\, 1}(X)$.

By the formulas (1) and (2) on page \pageref{eqn. C_(D, Y)}, we find that for any wall $\mu\in\mathbb{F}_{X}$, the intersection number $D\cdot C_{\mu}=0$ and that for any $Y\in S^{c}_{X, G}$ and any $D'\in\mathfrak{D}(G/H)\backslash(\mathfrak{D}_{Y}\cup\{D\})$, the intersection number $D\cdot C_{D', Y}=0$. On the other hand, $D\cdot C_{D}=1$. By \cite[Thm. 3.2]{Br93} and $(i)$ of this lemma, $D\in\text{Nef}^{\, 1}(X)$ and $\mathbb{R}^{+}C_{D}$ is an extremal ray of $NE(X)$. By Remark \ref{Nef1<=Eff1=Psef1 Q-factorial}, $\text{Nef}_{1}(X)\subseteq\text{Psef}_{1}(X)$ and $\text{Nef}^{\, 1}(X)\subseteq\text{Psef}^{\, 1}(X)$. Thus, $\mathbb{R}^{+}C_{D}$ is an extremal ray of $\text{Nef}_{1}(X)$ and $\mathbb{R}^{+}D$ is an extremal ray of $\text{Nef}^{\, 1}(X)$.

\medskip

$(iv)$ Now choose $D_{1},\ldots,D_{m}\in\mathfrak{D}_{0}(G/H)$ which are pairwise different. Let $V_{1}\subset N_{1}(X)_{\mathbb{R}}$ be the subspace generated by $C_{D_{1}},\ldots,C_{D_{m}}$, and $V_{2}\subseteq N_{1}(X)_{\mathbb{R}}$ be the subspace generated by those $C_{D', Y}$ who are different from $C_{D_{1}},\ldots,C_{D_{m}}$ and those $C_{\mu}$ where $\mu$ runs over the set of the walls of $\mathbb{F}_{X}$. Take any $C\in V_{1}\cap V_{2}$. If $C\neq 0$, then by the formula (2), the fact $C\in V_{1}$ implies that we can choose some $i_{0}$ such that $D_{i_{0}}\cdot C\neq 0$. By the formulas (1) and (2), the fact $C\in V_{2}$ implies that $D_{i_{0}}\cdot C=0$. We get a contradiction. Hence, $V_{1}\cap V_{2}=0$. By \cite[Thm. 3.2]{Br93}, $F$ is an extremal face of $NE(X)$. By $(ii)$ of this lemma, $F\subseteq\text{Nef}_{1}(X)$. By Remark \ref{Nef1<=Eff1=Psef1 Q-factorial}, $\text{Nef}_{1}(X)\subseteq NE(X)$. Hence, the extremal face $F$ of $NE(X)$ is also an extremal face of $\text{Nef}_{1}(X)$.

\medskip

$(v)$ Let $W_{1}\subseteq N^{1}(X)_{\mathbb{R}}$ be the subspace generated by $D_{1},\ldots, D_{m}$, and $W_{2}\subseteq N^{1}(X)_{\mathbb{R}}$ be the subspace generated by those $D\in\mathfrak{B}(X)\backslash\{D_{1},\ldots, D_{m}\}$. By Proposition \ref{cycles are rat. equiv. to stable ones, picard group on spherical varieties}$(ii)$ and \cite[Cor. 1.3(iv)]{Br93}, $W_{1}\cap W_{2}=\{0\}$. Thus, by Proposition \ref{cycles are rat. equiv. to stable ones, picard group on spherical varieties}$(i)$, $F^{*}$ is an extremal face of $\text{Eff}^{\, 1}(X)$. By $(iii)$ of this lemma, $F^{*}\subseteq\text{Nef}^{\, 1}(X)$. By Remark \ref{Nef1<=Eff1=Psef1 Q-factorial}, $\text{Nef}^{\, 1}(X)\subseteq\text{Eff}^{\, 1}(X)=\text{Psef}^{\, 1}(X)$. Hence, the extremal face $F^{*}$ of $\text{Eff}^{\, 1}(X)$ is also an extremal face of $\text{Nef}^{\, 1}(X)$.
\end{proof}

\begin{thm} \label{morphism to G/P_0}
Let $X$ be a complete spherical $G/H$-embedding, and $\mathfrak{D}_{1}\subseteq\mathfrak{D}_{0}(G/H)$ be a subset. Denote by $D_{0}=\sum\limits_{D\in\mathfrak{D}_{1}}D$. Then the following hold.

$(i)$ $D_{0}$ induces a $G$-equivariant morphism $\pi_{0}: X\rightarrow\widetilde{X}$, where $\widetilde{X}=\text{Proj}(R)$, $R=\bigoplus\limits_{m\geq 0}H^{0}(X, mD_{0})$. Moreover, there is a $G$-equivariant isomorphism $\widetilde{X}\rightarrow G/P_{0}$, where $P_{0}$ is a parabolic subgroup of $G$ containing $B$.

$(ii)$ Let $F$ be any fiber of $\pi_{0}$. Then $X$ is  $\mathbb{Q}$-factorial (resp. locally factorial, smooth, projective) if and only if $F$ is $\mathbb{Q}$-factorial (resp. locally factorial, smooth, projective).

$(iii)$ Assume that $X$ is projective and $\mathbb{Q}$-factorial. Then $\text{Nef}^{\, 1}(X)=\text{Psef}^{\, 1}(X)$ if and only if $\text{Nef}^{\, 1}(F)=\text{Psef}^{\, 1}(F)$.
\end{thm}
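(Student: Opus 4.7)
My plan is to prove the three parts in the order stated. For part (i), the key observation is that each $D \in \mathfrak{D}_1 \subseteq \mathfrak{D}_0(G/H)$ satisfies $\rho(\nu_D) = 0$, and by the strict convexity of every colored cone in the fan (Theorem \ref{correspondence fans and varieties}), no color with $\rho(\nu_D) = 0$ can belong to $\mathfrak{D}_X = \bigcup_Y \mathfrak{D}_Y$. Hence the coefficient $n_D(D_0)$ vanishes for every $D \in \mathcal{V}_X \cup \mathfrak{D}_X$, so the piecewise linear function $l(D_0) \in PL(X)$ is identically zero on $\mathfrak{C}(X)$. I then analyze $B$-eigenvector sections of $D_0$ through the correspondence $\mathbb{C}(X)^{(B)}/\mathbb{C}^* \cong M_X$: a character $\chi \in M_X$ yields a section iff $\chi(\rho(\nu_D)) \geq -n_D(D_0)$ for every $D \in \mathfrak{B}(X)$, and using $l(D_0) \equiv 0$ plus $\rho(\mathfrak{D}_1) = \{0\}$, these inequalities are automatic on $\mathfrak{D}_0(G/H)$ and reduce to the defining conditions of a full polyhedral cone $\sigma^\vee \subseteq (M_X)_{\mathbb{Q}}$. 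From this I will show that $D_0$ is semi-ample, so $\pi_0$ is defined everywhere and $\widetilde{X}$ is a complete spherical $G$-variety. Since $l(D_0) \equiv 0$ makes the linear extension $l_Y^N$ vanish for every closed $G$-orbit $Y$, every $G$-orbit of $X$ gets collapsed into a single $G$-orbit on $\widetilde{X}$, forcing $\widetilde{X}$ to be $G$-homogeneous and projective, hence of the form $G/P_0$ for a parabolic $P_0 \supseteq B$ (namely the stabilizer of the image of the base point, which contains $B$ because $D_0$ is $B$-stable).

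For part (ii), the morphism $\pi_0 \colon X \to G/P_0$ is $G$-equivariant onto a homogeneous base, so it is a smooth, étale-locally trivial fibration, and there is a canonical identification $X \cong G \times^{P_0} F$ as $G$-varieties, where $F = \pi_0^{-1}(eP_0)$. Because $G/P_0$ is smooth, the local properties (smooth, locally factorial, $\mathbb{Q}$-factorial) are preserved in both directions by the smooth morphism $\pi_0$ together with the fact that $F$ is a smooth fiber. Projectivity transfers both ways: $X$ projective forces $F$ projective as a closed subscheme, while if $F$ is projective we pull back an ample line bundle from $G/P_0$ and tensor with a sufficiently positive $P_0$-linearized ample line bundle on $F$ to obtain an ample line bundle on $X = G \times^{P_0} F$.

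For part (iii), assuming both $X$ and $F$ are projective and $\mathbb{Q}$-factorial, the bundle description yields a splitting
\begin{equation*}
N^1(X)_{\mathbb{R}} \cong \pi_0^{*} N^1(G/P_0)_{\mathbb{R}} \oplus N^1(F)^{P_0}_{\mathbb{R}},
\end{equation*}
compatible with the restriction $\eta \mapsto \eta|_F$. Pullback $\pi_0^{*}$ preserves both the pseudo-effective cone (using Proposition \ref{cycles are rat. equiv. to stable ones, picard group on spherical varieties}(i) applied on $G/P_0$) and the nef cone; similarly, restriction to the fiber preserves nefness automatically and preserves pseudo-effectiveness by flatness of $\pi_0$. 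Since $G/P_0$ is a rational homogeneous space, on it $\text{Nef}^{\,1}$ and $\text{Psef}^{\,1}$ agree. If $\text{Nef}^{\,1}(X) = \text{Psef}^{\,1}(X)$, restriction to $F$ gives the equality on $F$; conversely, the equalities on both $G/P_0$ and $F$ combine through the split (and the compatibility between pullback, restriction, and the decomposition) to give the equality on $X$.

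The step I expect to be the main obstacle is the part of (i) showing that $\widetilde{X}$ is exactly the homogeneous variety $G/P_0$, rather than some larger non-homogeneous spherical $G$-variety. This requires both producing enough $B$-eigenvector sections to see that $D_0$ is genuinely semi-ample, and verifying that no $G$-orbit of $X$ is sent to a non-dense $G$-orbit in $\widetilde{X}$; both rely crucially on $l(D_0) \equiv 0$, which forces all orbit-wise linear functions to vanish and hence collapses the colored fan of $\widetilde{X}$ to the trivial one $\{(0, \emptyset)\}$. Once this rigidity is established, identifying $P_0$ with the stabilizer of the highest weight line in the defining $G$-module, and parabolicity from $B \subseteq P_0$, is straightforward.
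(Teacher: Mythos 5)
Your overall architecture (define $\pi_0$ from the semi-ample divisor $D_0$, identify the base as $G/P_0$, exploit the bundle structure $X\cong G\times^{P_0}F$) matches the paper's, and your part (ii) is essentially sound in outline: the paper likewise establishes the local product structure $R_u(P_0)\times X_0\cong\pi_0^{-1}(B\tilde x_0)$ over the big cell and reads off the local properties from it (though note that $\pi_0$ is a smooth morphism only once $F$ is known to be smooth, and your projectivity argument needs a $P_0$-linearizable ample bundle on $F$, which is why the paper instead builds an ample divisor combinatorially via Brion's criterion). However, there are two genuine gaps.

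First, in part (i) your mechanism for homogeneity of $\widetilde X$ is not developed and partly rests on tools that are unavailable here. The curve classes $C_\mu$ and $C_{D,Y}$ through which you want to see that ``every $G$-orbit gets collapsed'' are only defined for projective $\mathbb{Q}$-factorial $X$, whereas in (i) $X$ is merely complete; and the assertion that the colored fan of $\widetilde X$ collapses to $\{(0,\emptyset)\}$ is exactly what has to be proved, not a consequence of $l(D_0)\equiv 0$ alone. The paper's route is representation-theoretic: one shows $\dim H^0(X,mm_0D_0)^{(B)}=1$ (any two $B$-eigensections differ by an $f\in\mathbb{C}(X)^{(B)}$ whose only possible poles lie on $\mathfrak{D}_1$, where $\nu_D(f)=\rho(\nu_D)(\chi_f)=0$, so $f$ is regular, hence constant), so each $H^0(X,mm_0D_0)$ is an irreducible $G$-module; if $\widetilde X$ had a proper closed orbit $Z$, then $H^0(\mathcal{O}(mA)\otimes I_Z)$ would be a proper nonzero $G$-submodule, a contradiction. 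Your description of the eigensection conditions as cutting out a ``full polyhedral cone $\sigma^\vee$'' points in the wrong direction: the whole point is that the eigenspaces are one-dimensional. You also silently assume $H^0(X,mD_0)$ is a $G$-module, which requires a linearization argument (Sumihiro plus passage to a finite cover $G'$ with trivial Picard group, as in the paper's Lemma \ref{every Cartier divisor is G'-linearizable}).

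Second, and more seriously, the ``if'' direction of part (iii) is broken as stated. A splitting $N^1(X)_\mathbb{R}\cong\pi_0^*N^1(G/P_0)_\mathbb{R}\oplus N^1(F)_\mathbb{R}$ together with $\text{Nef}^{\,1}=\text{Psef}^{\,1}$ on both base and fiber does \emph{not} imply the equality on the total space: the paper's own Remark \ref{X is a quotient of G*X_0 by (P_0)-}\,--\,adjacent example (the Hirzebruch surface $\text{Bl}_p\mathbb{P}^2\to\mathbb{P}^1$, a $G$-equivariant $\mathbb{P}^1$-bundle over a homogeneous base) satisfies all of your hypotheses yet has $\text{Nef}^{\,1}(X)\neq\text{Psef}^{\,1}(X)$ because of the $(-1)$-curve. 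The pseudo-effective and nef cones of a fiber bundle do not decompose as direct sums of the cones of base and fiber. What saves the theorem is precisely the hypothesis $\mathfrak{D}_1\subseteq\mathfrak{D}_0(G/H)$, i.e.\ $\rho(\nu_D)=0$: Lemma \ref{D_0 and C_(D_0) are extremal} shows each $D\in\mathfrak{D}_1$ is itself nef and its dual curve class $C_D$ is nef, and the remaining generators $C_\mu$, $C_{D',Y}$ of $NE(X)$ have nonnegative intersection with every $B$-stable divisor because the corresponding intersection numbers are computed on $X_0$. Your proposal never uses $\rho(\nu_D)=0$ in part (iii), so this key input is missing and the argument, as written, proves a false statement.
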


\begin{rmk}
The conclusion $(iii)$ in Theorem \ref{morphism to G/P_0} is not true for a general $G$-equivariant morphism from a complete $G$-spherical variety to a rational $G$-homogeneous space. Now we consider such an example. Let $X$ be the blowing-up of $\mathbb{P}^{2}$ at the point $p=[1, 0, 0]$. Then $X$ is a Hizebruch surface and it's a $\mathbb{P}^{1}$-bundle over $\mathbb{P}^{1}$. Denote by $\pi: X\rightarrow Y=\mathbb{P}^{1}$ the fibration. Let $G$ be the set of matrices $(a_{i, j})_{1\leq i, j\leq 3}$ in $GL_{3}(\mathbb{C})$ such that the first row is $(1, 0, 0)$ and the first column is $(1, 0, 0)^{t}$. Thus, $G\cong GL_{2}(\mathbb{C})$ is reductive. There is a natural $G$-action on $\mathbb{A}^{3}_{\mathbb{C}}$. It induces a $G$-action on $\mathbb{P}^{2}$ fixing the point $p$. Then $X$ is a $G$-spherical variety. Moreover, there is a natural $G$-action on $Y$ such that $\pi$ is $G$-equivariant and $Y$ is $G$-homogeneous. Let $F$ be any fiber of $\pi$. Note that $\text{Nef}^{\, 1}(F)=\text{Psef}^{\, 1}(F)$ and $Y$ is a rational $G$-homogeneous space, but $\text{Nef}^{\, 1}(X)\neq\text{Psef}^{\, 1}(X)$, since there is a $(-1)$-curve on $X$.
\end{rmk}

\begin{lem} \label{every Cartier divisor is G'-linearizable}
Let $X$ be a normal $G$-variety. Then there exists a connected reductive algebraic group $G'$ and a finite surjective morphism $\pi: G'\rightarrow G$ of algebraic groups such that under the induced $G'$-action on $X$, every Cartier divisor on $X$ is $G'$-linearizable.
\end{lem}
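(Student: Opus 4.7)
The plan rests on the standard obstruction theory for $G$-linearizations of line bundles. Recall that for any connected algebraic group $H$ acting on a normal variety $X$, one has an exact sequence
\[
\chi(H)\longrightarrow \operatorname{Pic}^{H}(X)\longrightarrow \operatorname{Pic}(X)\xrightarrow{\;\alpha\;}\operatorname{Pic}(H),
\]
going back to Mumford's GIT and made precise by Knop--Kraft--Luna--Vust, so that a line bundle on $X$ admits an $H$-linearization if and only if its image under $\alpha$ vanishes. Since a Cartier divisor on $X$ is $H$-linearizable exactly when its associated line bundle $\mathcal{O}_{X}(D)$ is, it is enough to produce a finite cover $\pi: G'\to G$ with $G'$ connected reductive and $\operatorname{Pic}(G') = 0$.

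To construct $G'$, I would use the standard decomposition $G = Z\cdot [G,G]$, where $Z = Z(G)^{\circ}$ is the connected center (a torus) and $[G,G]$ is the semisimple derived subgroup, with $Z\cap[G,G]$ finite. Let $\nu: G_{sc}\to [G,G]$ denote the simply connected isogeny. Set
\[
G' := Z\times G_{sc},
\]
a connected reductive algebraic group, and define $\pi:G'\to G$ by $\pi(z,g) = z\cdot\nu(g)$. Then $\pi$ is a homomorphism whose kernel consists of pairs $(z,g)$ with $\nu(g) = z^{-1}\in Z\cap [G,G]$, hence is finite, and $\pi$ is surjective because $\nu$ is and $G = Z\cdot [G,G]$. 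Thus $\pi$ is a finite surjective morphism, and $X$ inherits a $G'$-action through $\pi$.

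It then remains to compute $\operatorname{Pic}(G') = 0$. For the torus factor one has $\operatorname{Pic}(Z) = 0$. For the semisimple simply connected factor, the classical vanishing $\operatorname{Pic}(G_{sc}) = 0$ is due to Iversen (also Popov, Fossum--Iversen). The Picard group of a product of connected linear algebraic groups splits as $\operatorname{Pic}(G_{1}\times G_{2}) = \operatorname{Pic}(G_{1})\oplus\operatorname{Pic}(G_{2})$, which is a consequence of Rosenlicht's description of units on algebraic groups; hence $\operatorname{Pic}(G') = 0$. Applying the obstruction sequence to $H = G'$ acting on $X$, the map $\alpha$ is zero, so every line bundle on $X$ admits a $G'$-linearization, and therefore every Cartier divisor on $X$ is $G'$-linearizable.

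The main obstacle is citational rather than conceptual: the obstruction sequence and the vanishing $\operatorname{Pic}(G_{sc}) = 0$ must be invoked in forms valid for arbitrary normal $G$-varieties without imposing projectivity or quasi-projectivity on $X$. These statements do hold in the generality needed, but some standard references impose additional hypotheses, so care must be taken in citing the correct versions (e.g.\ the Knop--Kraft--Luna--Vust treatment of the Picard group of a $G$-variety, which handles the normal case directly).
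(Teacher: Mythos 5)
Your proof is correct and follows essentially the same route as the paper: both reduce the statement to producing a finite surjective homomorphism $\pi: G'\rightarrow G$ from a connected reductive group with $\text{Pic}(G')=0$ and then invoke the vanishing of the linearization obstruction for normal varieties. The paper simply cites Perrin's notes for the existence of such a cover (proved there by exactly your $Z\times G_{sc}$ construction) and instead spells out the seesaw/obstruction step by hand, so the two write-ups only distribute the citations differently.
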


\begin{proof}
We take a similar proof with \cite[Prop. 2.3.1]{Per}. By \cite[Cor. 2.2.7]{Per}, there exists a connected algebraic group $G'$ and a finite surjective morphism $\pi: G'\rightarrow G$ of algebraic groups such that $\text{Pic}(G')=0$. Since $G$ is reductive, $G'$ is also reductive. Denote by $\varphi: G'\times X\rightarrow X, (g', x)\mapsto \pi(g')\cdot x$, i.e. $\varphi$ defines the $G'$-action on $X$ induced by the $G$-action. Let $p_{G'}$ and $p_{X}$ be the projections from $G'\times X$ to $G'$ and $X$ respectively. Take any Cartier divisor $\delta$ on $X$. By \cite[Lem. 2.2.3]{Per}, there exists a point $x_{0}\in X$ such that $\varphi^{*}\delta=p_{G'}^{*}(\varphi^{*}\delta|_{G'\times\{x_{0}\}})\otimes p_{X}^{*}(\varphi^{*}\delta|_{\{e\}\times X})$. Since $\text{Pic}(G')=0$, $\varphi^{*}\delta|_{G'\times\{x_{0}\}}=\mathcal{O}_{G'}$. Since $e$ has a trivial action on $X$, $\varphi^{*}\delta|_{\{e\}\times X}=\delta$ in $\text{Pic}(X)$. Thus, $\varphi^{*}\delta=p_{X}^{*}\delta$ in $\text{Pic}(G'\times X)$. By \cite[Cor. 2.1.7]{Per}, $\delta$ is $G'$-linearizable.
\end{proof}

\begin{proof}[Proof of Theorem \ref{morphism to G/P_0}$(i)$]
By \cite[Prop. 3.1]{Br89}, $\mathcal{O}_{X}(D_{0})$ is a Cartier divisor.  By \cite[Thm. 1.6]{Su75}, there exists a positive integer $m_{0}$ such that $\mathcal{O}_{X}(m_{0}D_{0})$ is $G$-linearizable. Thus, by \cite[Lem. 2.3.2]{Per}, $H^{0}(X, mm_{0}D_{0})$ is a $G$-module for each $m\geq 0$. For any $m\geq 1$, let $s\in H^{0}(X, mm_{0}D_{0})^{(B)}$ be the global section corresponding to $mm_{0}D_{0}$. Take any $s'\in H^{0}(X, mm_{0}D_{0})^{(B)}$, then there is some element $f\in\mathbb{C}(X)^{(B)}$ such that $s'=fs$. Thus, $\text{div}(f)+mm_{0}D_{0}$ is a $B$-stable effective divisor. In particular, the only possible poles of $f$ are in the support of $D_{0}$. By Proposition \ref{cycles are rat. equiv. to stable ones, picard group on spherical varieties}$(ii)$, $\nu_{D}(f)=0$ for any $D\in\mathfrak{D}_{1}$. Hence, $f$ is a regular function on $X$. Since $X$ is complete, we know that $f$ is a constant function, i.e. $\text{dim } H^{0}(X, mm_{0}D_{0})^{(B)}=1$. Thus, $H^{0}(X, mm_{0}D_{0})$ is an irreducible $G$-module. Hence, $R_{0}=\bigoplus\limits_{m\geq 0}H^{0}(X, mm_{0}D_{0})$ is generated by $H^{0}(X, m_{0}D_{0})$. By \cite[Thm. 3.3]{Br89}, $\mathcal{O}_{X}(m_{0}D_{0})$ is globally generated. Thus, $m_{0}D_{0}$ induces a natural $G$-equivariant morphism $\pi_{0}: X\rightarrow\widetilde{X}$, where $\widetilde{X}=\text{Proj}(R_{0})$.

On the other hand, by Lemma \ref{every Cartier divisor is G'-linearizable}, there exists a connected reductive algebraic group $G'$ and a finite surjective morphism $\pi: G'\rightarrow G$ of algebraic groups such that under the induced $G'$-action on $X$, every Cartier divisor on $X$ is $G'$-linearizable. Let $B'=\pi^{-1}(B)$ and $H'=\pi^{-1}(H)$. Then $S_{X, G'}=S_{X, G}$ and $S_{X, B'}=S_{X, B}$. In particular, $X$ is a complete spherical $G'/H'$-embedding, and the set of prime $B'$-stable divisors is exactly the set of prime $B$-stable divisors. Under the natural inclusion $\chi(B)\subseteq \chi(B')$, we have $M_{G/H}\subseteq M_{G'/H'}$ with a finite index and for all $D\in\mathfrak{B}(X)$, $\rho_{G'/H'}(\nu_{D})=\rho_{G/H}(\nu_{D})$. In particular, for any $D\in\mathfrak{D}_{1}$, $\rho_{G'/H'}(\nu_{D})=0$. Note that $\mathcal{O}_{X}(D_{0})$ is $G'$-linearizable and $H^{0}(X, mD_{0})$ is a $G'$-module for all $m\geq 0$. Therefore, by taking a similar argument with the last paragraph, we can know that $R=\bigoplus\limits_{m\geq 0}H^{0}(X, mD_{0})$ is generated by $H^{0}(X, D_{0})$, and $D_{0}$ induces a natural $G'$-equivariant morphism $\pi': X\rightarrow X'$, where $X'=\text{Proj}(R)$. Note that the inclusion $R_{0}\subseteq R$ induces a natural isomorphism $\text{Proj}(R)\cong \text{Proj}(R_{0})$. We identify them. Hence, $\pi_{0}$ coincides with $\pi'$. Therefore, $D_{0}$ induces a natural $G$-equivariant morphism $\pi_{0}: X\rightarrow \widetilde{X}$, where $\widetilde{X}=\text{Proj}(R)$, and $R=\bigoplus\limits_{m\geq 0}H^{0}(X, mD_{0})$.

We claim that $\widetilde{X}$ is $G$-equivariantly isomorphic to a rational homogeneous space $G/P_{0}$, where $P_{0}$ is a parabolic subgroup of $G$ containing $B$. Otherwise, let $Z$ be a closed $G$-orbit on $\widetilde{X}$ and $\mathcal{O}_{\widetilde{X}}(A)$ be the Cartier divisor on $\widetilde{X}$ such that $H^{0}(\widetilde{X}, A)=H^{0}(X, m_{0}D_{0})$. Thus, $A$ is very ample and $G$-linearizable. There exists some positive integer $m$ such that the sheaf $\mathcal{O}_{\widetilde{X}}(mA)\otimes I_{Z}$ is globally generated, where $I_{Z}$ is the ideal sheaf corresponding to $Z$. On the other hand, $H^{0}(\widetilde{X}, \mathcal{O}_{\widetilde{X}}(mA)\otimes I_{Z})\subseteq H^{0}(\widetilde{X}, \mathcal{O}_{\widetilde{X}}(mA))$ is a nonzero $G$-submodule. However, $H^{0}(\widetilde{X}, \mathcal{O}_{\widetilde{X}}(mA))=H^{0}(X, \mathcal{O}_{X}(mm_{0}D_{0}))$ is an irreducible $G$-module and $\mathcal{O}_{\widetilde{X}}(mA)$ is globally generated. Hence, $\mathcal{O}_{\widetilde{X}}(mA)\otimes I_{Z}=\mathcal{O}_{\widetilde{X}}(mA)$, which is contradicted with the assumption that $Z\neq X$. The conclusion follows.
\end{proof}

Before proving Theorem \ref{morphism to G/P_0}$(ii)(iii)$, we make some remarks. Firstly, note that it suffices to prove these two conclusions for a special fiber. Our approach is as follows. We take a special fiber $X_{0}$ and show that $X_{0}$ is a spherical variety. Roughly speaking, $\mathbb{F}_{X}\subseteq\mathbb{F}_{X_{0}}$ and their difference $\mathbb{F}_{X_{0}}\backslash\mathbb{F}_{X}$ can be described clearly. In this case, we say that $\mathbb{F}_{X}$ is immersed into $\mathbb{F}_{X_{0}}$, (for the precise meaning, see Definition \ref{defi. of immersion of colored fans}). The study of these two colored fans helps us to complete the proof of Theorem \ref{morphism to G/P_0}.

\begin{defi} \label{defi. of immersion of colored fans}
Let $X$ be a spherical $G/H$-embedding and $X'$ be a spherical $G'/H'$-embedding, where $G, G'$ are connected reductive algebraic groups. We say that there is an immersion from $\mathbb{F}_{X}$ to $\mathbb{F}_{X'}$ if the following conditions are satisfied.

$(i)$ There is an isomorphism of abelian groups $\phi: N_{G/H}\rightarrow N_{G'/H'}$. We also use $\phi$ to denote the linear extension to $(N_{G/H})_{\mathbb{Q}}\rightarrow (N_{G'/H'})_{\mathbb{Q}}$.

$(ii)$ $\phi(\mathcal{V}(G/H))\subseteq\mathcal{V}(G'/H')$.

$(iii)$ There is a bijective map $\varphi: S(X, X')\rightarrow\mathcal{V}_{X'}\cup\mathfrak{D}_{X'}$ such that $\mathcal{V}_{X}\cup\mathfrak{D}_{X}\subseteq S(X, X')\subseteq\mathfrak{B}(X)$ and for any $D\in S(X, X')$, there is an equality $\rho_{G'/H'}(\nu_{\varphi(D)})=\phi(\rho_{G/H}(\nu_{D}))$.

$(iv)$ There is an injective map $\psi: S_{X, G}\rightarrow S_{X', G'}$ such that $\psi(S_{X, G})=\{Y'\in S_{X', G'}\mid \mathfrak{C}_{Y'}^{o}\cap\phi(\mathcal{V}(G/H))\neq\emptyset\}$, and for any $Y\in S_{X, G}$ and any $D\in S(X, X')$, $D\supseteq Y$ if and only if $\varphi(D)\supseteq\psi(Y)$.
\end{defi}

\begin{rmk} \label{explanation of defi. of immmersion of colored fans}
Keep notations as in Definition \ref{defi. of immersion of colored fans} and assume that all the conditions $(i)-(iv)$ in it are satisfied.

$(i)$ We can define an injective map $\Phi: \mathbb{F}_{X}\rightarrow\mathbb{F}_{X'}$ such that for any $Y\in S_{X, G}$, $\Phi(\mathfrak{C}^{c}_{Y})=\mathfrak{C}^{c}_{\psi(Y)}$. We say that $\Phi$ is induced by $\phi$. We identify $M_{X}$ with $M_{X'}$ as groups and identify $S(X, X')$ with $\mathcal{V}_{X'}\cup\mathfrak{D}_{X'}$ in the set theory. Thus, $\Phi$ is a natural inclusion, i.e. $\mathbb{F}_{X}\subseteq\mathbb{F}_{X'}$. This is the reason why we use the word ``immersion" in Definition \ref{defi. of immersion of colored fans}.
 Moreover, we know that the set $\mathbb{F}_{X'}\backslash\mathbb{F}_{X}=\{\mathfrak{C}^{c}_{Y'}\in\mathbb{F}_{X'}\mid \mathfrak{C}_{Y'}^{o}\cap\mathcal{V}(G/H)=\emptyset\}$.
When there is an immersion $\Phi: \mathbb{F}_{X}\rightarrow\mathbb{F}_{X'}$, we will always identify $M_{X}$ with $M_{X'}$, $S(X, X')$ with $\mathcal{V}_{X'}\cup\mathfrak{D}_{X'}$, and $\Phi$ with the natural inclusion.

\medskip

$(ii)$ Assume that $S(X, X')\subseteq\Omega\subseteq\mathfrak{B}(X)$. If there is an injective map $\Psi: \Omega\rightarrow \mathfrak{B}(X')$ such that $\Psi|_{S(X, X')}=\varphi$, and for any $D\in\Omega$, there is an equality $\rho_{G'/H'}(\nu_{\Psi(D)})=\phi(\rho_{G/H}(\nu_{D}))$, then we say that the immersion $\Phi: \mathbb{F}_{X}\rightarrow\mathbb{F}_{X'}$ is compatible with $\Psi$. We also say $\phi: N_{G/H}\rightarrow N_{G'/H'}$ is compatible with $\Psi$, or say that $\phi^{*}: M_{G'/H'}\rightarrow M_{G/H}$ is compatible with $\Psi$, where $\phi^{*}$ is the dual map of $\phi$.

\medskip

$(iii)$ If moreover $\mathcal{V}(G/H)=\mathcal{V}(G'/H')$, then $\mathbb{F}_{X}=\mathbb{F}_{X'}$. We say that $\mathbb{F}_{X}$ is isomorphic to $\mathbb{F}_{X'}$. When there is no confusion, we also say that $\mathbb{F}_{X}$ is identified with $\mathbb{F}_{X'}$.

\medskip

$(iv)$ Assume that $G=G'$ and $H\subseteq H'$. Then there is a natural inclusion $M_{G/H'}\rightarrow M_{G/H}$. By Theorem \ref{morphism of fan}, there is a corresponding $G$-equivariant morphism $X\rightarrow X'$. If moreover $H=H'$, then $\mathbb{F}_{X}=\mathbb{F}_{X'}$ and $X$ is $G$-equivariantly isomorphic to $X'$ by Theorem \ref{correspondence fans and varieties}. This is the reason why we use the word ``isomorphic" in $(iii)$.
\end{rmk}

The following is the key lemma of Theorem \ref{morphism to G/P_0}, and the lemma itself is meaningful.

\begin{lem} \label{morphism to G/P_0 then F_X <= F_(X_0)}
Keep notations as in Theorem \ref{morphism to G/P_0}. Let $L_{0}=P_{0}\cap P_{0}^{-}$ be the standard Levi factor of $P_{0}$ and $P_{0}^{-}$. Let $\tilde{x}_{0}\in\widetilde{X}$ be the point such that $G_{\tilde{x}_{0}}=P_{0}^{-}$. Denote by $X_{0}=\pi_{0}^{-1}(\tilde{x}_{0})$. Identify $\widetilde{X}$ with $G/P_{0}^{-}$. Then the following hold.

$(a)$ $X_{0}$ is a complete spherical $L_{0}/H_{0}$-embedding, where $H_{0}=L_{0}\cap G_{x_{0}}$, and $x_{0}$ is any point in the open $B$-orbit of $X$ such that $x_{0}\in X_{0}$. Moreover, $B_{0}x_{0}$ is an open subset of $X_{0}$, where $B_{0}=B\cap L_{0}$.

$(b)$ There is a $P_{0}$-equivariant isomorphism $\tau:R_{u}(P_{0})\times X_{0}\rightarrow\pi_{0}^{-1}(B\tilde{x}_{0})$.

$(c)$ The natural morphism $\phi: M_{X}\rightarrow M_{X_{0}}$ induced by the restriction of rational functions is an isomorphism of free abelian groups.

$(d)$ The map $\Psi: \mathfrak{B}(X)\backslash\mathfrak{D}_{1}\rightarrow\mathfrak{B}(X_{0}), D\mapsto D\cap X_{0}$ is bijective and it's compatible with the morphism $\phi$. And $\pi_{0}^{\sharp}: \mathfrak{D}(G/P_{0}^{-})\rightarrow\mathfrak{D}_{1}, D'\rightarrow\pi_{0}^{-1}(D')$ is a bijective map.

$(e)$ The isomorphism $\phi^{-1}: M_{X_{0}}\rightarrow M_{X}$ induces an immersion $\Phi: \mathbb{F}_{X}\rightarrow \mathbb{F}_{X_{0}}$ of colored fans, where $\Phi(\mathfrak{C}^{c}_{Y})=\mathfrak{C}^{c}_{Y\cap X_{0}}$ for all closed $G$-orbit on $X$.

$(f)$ If $\mathfrak{D}_{1}=\mathfrak{D}_{0}(G/H)$, then $\mathfrak{D}_{0}(L_{0}/H_{0})=\emptyset$.
\end{lem}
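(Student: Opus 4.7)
The plan is to establish part (b) as the central structural result, from which (a), (c), (d), (e), (f) all follow by transporting structures between $X$ and $X_0$ through the product decomposition. The main technical obstacle I anticipate is upgrading the set-theoretic bijection in (b) to an isomorphism of varieties.

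For (b), I would analyze the multiplication map
\[
\mu: R_u(P_0) \times X_0 \longrightarrow \pi_0^{-1}(B \tilde{x}_0), \qquad (u, y) \longmapsto u \cdot y.
\]
Since $R_u(P_0) \cap P_0^- = \{e\}$, the open Bruhat cell $B \tilde{x}_0 = R_u(P_0) \cdot \tilde{x}_0$ is isomorphic to $R_u(P_0)$ as an $R_u(P_0)$-variety. The map $\mu$ is set-theoretically bijective: for $x \in \pi_0^{-1}(B\tilde{x}_0)$, the element $u \in R_u(P_0)$ with $u \tilde{x}_0 = \pi_0(x)$ is unique, and $x = u \cdot (u^{-1}x)$ with $u^{-1}x \in X_0$. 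I would then upgrade this to an isomorphism of varieties by exploiting the free $R_u(P_0)$-action on $\pi_0^{-1}(B\tilde{x}_0)$, normality of $X$, and a standard descent argument; $P_0$-equivariance under the natural twisted action on the source is built in by construction.

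Given (b), the rest follows by transport. For (a), $X_0$ is complete as a fiber of the proper morphism $\pi_0$ and normal via the product structure, while the open $B$-orbit $Bx_0 \subseteq X$ decomposes under $B = R_u(P_0) \cdot B_0$ as $R_u(P_0) \cdot B_0 x_0$, so $B_0 x_0$ is open in $X_0$, giving $L_0$-sphericality with open orbit $L_0/H_0$ where $H_0 = L_0 \cap G_{x_0}$. For (c), the restriction map $\mathbb{C}(X)^{(B)} \to \mathbb{C}(X_0)^{(B_0)}$ is an isomorphism because every $B$-semi-invariant rational function on $R_u(P_0) \times X_0$ is pulled back from $X_0$ (the unipotent $R_u(P_0)$ admits only the trivial character), so $\phi: M_X \to M_{X_0}$ is an isomorphism of free abelian groups, dualizing to $N_X \cong N_{X_0}$. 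For (d), a prime $B$-stable divisor $D \subseteq X$ either dominates $G/P_0^-$ (in which case $D \cap X_0$ is a prime $B_0$-stable divisor of $X_0$, and $D \mapsto D \cap X_0$ is bijective between $\mathfrak{B}(X) \setminus \mathfrak{D}_1$ and $\mathfrak{B}(X_0)$) or is a fiber-type divisor $\pi_0^{-1}(D')$ for a Schubert divisor $D' \subseteq G/P_0^-$; the latter form exactly $\mathfrak{D}_1$, since $\pi_0$ was constructed from the linear system of $D_0 = \sum_{D \in \mathfrak{D}_1} D$.

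For (e), I would identify $M_X$ with $M_{X_0}$ via $\phi$; for $D \in \mathfrak{B}(X) \setminus \mathfrak{D}_1$, the product structure shows the local equation of $D \cap X_0$ in $X_0$ equals the restriction of the local equation of $D$ in $X$, giving $\rho_{G/H}(\nu_D) = \rho_{L_0/H_0}(\nu_{D \cap X_0})$ and the compatibility of $\phi$ with $\Psi$. For a closed $G$-orbit $Y \subseteq X$, the fiber $Y \cap X_0$ is $P_0^-$-homogeneous, and I would verify that the associated $L_0$-orbit structure picks out a distinguished closed $L_0$-orbit $Y_0 \subseteq Y \cap X_0$ whose set of boundary divisors and colors is precisely $\mathfrak{B}_Y \setminus \mathfrak{D}_1$, so $\mathfrak{C}^c_{Y_0} = \mathfrak{C}^c_Y$ under $\phi$, yielding the immersion $\Phi$. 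This orbit-level analysis is the secondary technical point, relying on the fact that $\pi_0$ was built from colors with vanishing $\rho$. Finally, (f) is immediate: under $\mathfrak{D}_1 = \mathfrak{D}_0(G/H)$, every color in $\mathfrak{B}(X) \setminus \mathfrak{D}_1$ has $\rho_{G/H} \neq 0$, so the image in $\mathfrak{D}(L_0/H_0)$ has $\rho_{L_0/H_0} \neq 0$, whence $\mathfrak{D}_0(L_0/H_0) = \emptyset$.
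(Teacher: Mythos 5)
Your overall strategy coincides with the paper's: everything is made to rest on the trivialization $\pi_0^{-1}(B\tilde{x}_0)\cong R_u(P_0)\times X_0$ over the big cell, after which $M_X\cong M_{X_0}$, the divisor correspondence $\Psi$, and parts (a), (c), (d), (f) are transported across the product. The only structural difference is that you prove (b) first and deduce normality and irreducibility of $X_0$ from it (open subsets of the normal irreducible $X$ are normal and irreducible, and $R_u(P_0)\times X_0$ normal irreducible forces the same for $X_0$), whereas the paper establishes (a) first via the cartesian square $G\times X_0\to X$ over $G\to G/P_0^-$ and smoothness of $G\to G/P_0^-$; your ordering is legitimate and arguably cleaner. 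Your treatment of (d) is terse but contains the right idea: the key point that the non-dominating $B$-stable divisors are \emph{exactly} $\mathfrak{D}_1$ comes from the ampleness of $\pi_0(D_0)=\sum_{D\in\mathfrak{D}_1}\pi_0(D)$ on $G/P_0^-$, which forces every Schubert divisor of $G/P_0^-$ to occur among the $\pi_0(D)$, $D\in\mathfrak{D}_1$; this is what your phrase ``constructed from the linear system of $D_0$'' is implicitly invoking, and it should be said.

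The one place where your proposal falls short of a proof is part (e). First, the conclusion $\Phi(\mathfrak{C}^c_Y)=\mathfrak{C}^c_{Y\cap X_0}$ requires that for each closed $G$-orbit $Y$ the intersection $Y\cap X_0$ be a \emph{single} $L_0$-orbit, not merely that it contain a distinguished closed one as you write; otherwise the right-hand side is not defined and, worse, two closed $G$-orbits could fail to give distinct cones in $\mathbb{F}_{X_0}$. The argument the paper uses here is the genuinely non-formal step of the lemma: since $X$ is complete, $\mathfrak{C}_Y$ is full-dimensional (Knop, Thm.\ 6.3), so for any two $L_0$-orbits $Y_1,Y_2\subseteq Y\cap X_0$ one gets $\mathfrak{C}_{Y_1}^{o}\cap\mathfrak{C}_{Y_2}^{o}\supseteq\mathfrak{C}_Y^{o}\ni$ a point of $\mathcal{V}(G/H)\subseteq\mathcal{V}(L_0/H_0)$, and the colored-fan axiom (a valuation lies in the interior of at most one cone) forces $Y_1=Y_2$. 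Your sketch does not supply this, nor the surjectivity of $Y_0\mapsto GY_0$ needed to identify $\psi(S_{X,G})$ with $\{Y'\mid\mathfrak{C}_{Y'}^{o}\cap\mathcal{V}(G/H)\neq\emptyset\}$. Second, the definition of immersion also requires the containment $\phi(\mathcal{V}(G/H))\subseteq\mathcal{V}(L_0/H_0)$, which you never address; it follows by pushing a $G$-invariant valuation forward along the projection $R_u(P_0)\times X_0\to X_0$ and checking $L_0$-invariance, an easy but necessary verification. With these two points filled in, your plan reproduces the paper's proof.
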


\begin{proof}
$(a)$ Firstly, note that $B\tilde{x}_{0}$ is the open $B$-orbit on $\widetilde{X}$. Since $\widetilde{X}$ is $G$-homogeneous and $\pi_{0}$ is $G$-equivariant, for any $Y\in S_{X, G}$, $\pi_{0}(Y)=\widetilde{X}$ and $\pi_{0}(Y^{o})=B\tilde{x}_{0}$, where $Y^{o}$ is the open $B$-orbit on $Y$, which exists by \cite[Cor. 2.2]{Kn91}. In particular, we can choose a point $x_{0}$ on the open $B$-orbit of $X$ such that $\pi_{0}(x_{0})=\tilde{x}_{0}$. Without loss of generality, we can assume that $H=G_{x_{0}}$. In particular, $P_{0}^{-}=\{g\in G| gX_{0}=X_{0}\}$ and $H\subseteq P_{0}^{-}$.

Let $B_{0}=B\cap L_{0}=B\cap P_{0}^{-}$ and $H_{0}=H\cap L_{0}$. Then $Bx_{0}\cap X_{0}=\{b\cdot x_{0}\mid b\in B, \pi_{0}(b\cdot x_{0})=\tilde{x}_{0}\}=(B\cap P_{0}^{-})x_{0}=B_{0}x_{0}$. Since $Bx_{0}$ is an open subset of $X$, $B_{0}x_{0}$ is an open subset of $X_{0}$. Moreover, $(L_{0})_{x_{0}}=G_{x_{0}}\cap L_{0}=H\cap L_{0}=H_{0}$. Since $X$ is complete and $X_{0}$ is a closed subscheme of $X$, we know that $X_{0}$ is complete. Hence, to show that $X_{0}$ is a complete spherical $L_{0}/H_{0}$-embedding, we only need to show that $X_{0}$ is irreducible and normal.

Now we consider the following diagram $(*)$:
\begin{eqnarray*}
\xymatrix{G\times X_{0}\ar[r]^-{\theta}\ar[d]_-{p}&X\ar[d]^-{\pi_{0}}\\
G\ar[r]^-{\pi}&G/P_{0}^{-}.
}
\end{eqnarray*}

In the diagram $(*)$, $\theta(g, x)=g\cdot x$, $p(g, x)=g$, and $\pi(g)=g\cdot\tilde{x}_{0}$ for all $g\in G$, and $x\in X_{0}$. It's easy to see that the diagram $(*)$ is cartesian, i.e. $G\times X_{0}$ is the fiber product of $G$ and $X$ over $G/P_{0}^{-}$. Since $\pi$ is a smooth morphism, $\theta$ is a smooth morphism to a normal variety. Thus, $G\times X_{0}$ is normal, which implies that $X_{0}$ is normal. On the other hand, since $\pi_{0}$ is $G$-equivariant and $P_{0}^{-}/H$ is irreducible, the fibers of $\pi_{0}$ are irreducible. In particular, $X_{0}$ is irreducible. Hence, $X_{0}$ is a complete spherical $L_{0}/H_{0}$-embedding and $B_{0}\cdot x_{0}\subseteq X_{0}$ is an open subset.

\medskip

$(b)$ Since $R_{u}(P_{0})\cap G_{\tilde{x}_{0}}=R_{u}(P_{0})\cap P_{0}^{-}=\{e\}$, we know that $R_{u}(P_{0})\tilde{x}_{0}\cong R_{u}(P_{0})$. There is a natural morphism $\tau: R_{u}(P_{0})\times X_{0}\rightarrow R_{u}(P_{0})X_{0}, (v, x)\mapsto v\cdot x$. Note that $BP_{0}^{-}=P_{0}P_{0}^{-}=R_{u}(P_{0})P_{0}^{-}$ in $G$, which implies that $B\tilde{x}_{0}=P_{0}\tilde{x}_{0}=R_{u}(P_{0})\tilde{x}_{0}$ in $X$. Hence, $R_{u}(P_{0})X_{0}=\pi_{0}^{-1}(R_{u}(P_{0})\tilde{x}_{0})=\pi_{0}^{-1}(B\tilde{x}_{0})=\pi_{0}^{-1}(P_{0}\tilde{x}_{0})$ is a $P_{0}$-stable open subset of $X$. Consider the following commutative diagram:
\begin{eqnarray*}
\xymatrix{R_{u}(P_{0})\times X_{0}\ar[r]^-{\tau}\ar[d]_-{\phi_{1}}&R_{u}(P_{0})X_{0}\ar[d]^-{\phi_{2}}\\
R_{u}(P_{0})\tilde{x}_{0}\ar@{=}[r]&R_{u}(P_{0})\tilde{x}_{0}.
}
\end{eqnarray*}

For any $v\in R_{u}(P_{0})$, the morphism $\phi_{1}^{-1}(v\cdot\tilde{x}_{0})\xrightarrow{\tau}\phi_{2}^{-1}(v\cdot\tilde{x}_{0})$ is an isomorphism. Thus, $\tau: R_{u}(P_{0})\times X_{0}\rightarrow R_{u}(P_{0})X_{0}$ is an isomorphism of varieties.

Now we define a $P_{0}$-action on $R_{u}(P_{0})\times X_{0}$ as follows. Since $P_{0}$ is a semi-direct product of $R_{u}(P_{0})$ and $L_{0}$, for every element $p\in P_{0}$, there is a unique pair $(l, v)\in L_{0}\times R_{u}(P_{0})$ such that $p=lv$. Then for any point $(v_{0}, x)\in R_{u}(P_{0})\times X_{0}$, define the action $p\cdot (v_{0}, x)=(lvv_{0}l^{-1}, l\cdot x)$. Thus, the morphism $P_{0}\times R_{u}(P_{0})\times X_{0}\rightarrow R_{u}(P_{0})\times X_{0}, (p, v_{0}, x)\mapsto p\cdot (v_{0}, x)$ defines a $P_{0}$-action on $R_{u}(P_{0})\times X_{0}$, and $\tau$ is a $P_{0}$-equivariant isomorphism.

\medskip

$(c)$ Firstly, we want to use the isomorphism $\tau$ in $(b)$ to show that there is an isomorphism $\mathbb{C}(R_{u}(P_{0})\times X_{0})^{(B)}\cong\mathbb{C}(X_{0})^{(B_{0})}$, which is similar with the proof of \cite[Lem. 10.1.11]{Per}.

Now we define the following two morphisms:
$$\varphi_{1}: \mathbb{C}(R_{u}(P_{0})\times X_{0})^{(B)}\rightarrow\mathbb{C}(X_{0})^{(B_{0})}$$
$$f\mapsto f|_{\{e\}\times X_{0}},$$
and
$$\varphi_{2}: \mathbb{C}(X_{0})^{(B_{0})}\rightarrow\mathbb{C}(R_{u}(P_{0})\times X_{0})^{(B)}$$
$$f_{0}\mapsto (R_{u}(P_{0})\times X_{0}\rightarrow\mathbb{C}, (v_{0}, x)\mapsto f_{0}(x)).$$

We need to show that $\varphi_{2}$ is well-defined. Take $f_{0}\in\mathbb{C}(X_{0})^{(B_{0})}$, $(v_{0}, x)\in R_{u}(P_{0})\times X_{0}$ and $b\in B$. Since $b\in P_{0}$, there is a unique pair $(l, v)\in L_{0}\times R_{u}(P_{0})$ such that $b^{-1}=lv$. Thus, $v\in B$ and $l\in L_{0}\cap B=B_{0}$. Let $f=\varphi_{2}(f_{0})$. We get that $(b\cdot f)(v_{0}, x)=f(b^{-1}\cdot (v_{0}, x))=f(lvv_{0}l^{-1}, l\cdot x)=f_{0}(l\cdot x)=(l^{-1}\cdot f_{0})(x)=\chi_{f_{0}}(l^{-1})f_{0}(x)$. Note that $T\subseteq B_{0}$ and $\chi(B_{0})=\chi(T)=\chi(B)$. Thus, $\chi_{f_{0}}\in\chi(B)$ and $\chi_{f_{0}}(b)=\chi_{f_{0}}(v^{-1})\chi_{f_{0}}(l^{-1})=\chi_{f_{0}}(l^{-1})$, i.e. $b\cdot f=\chi_{f_{0}}(b)f$. Hence, $f\in\mathbb{C}(R_{u}(P_{0})\times X_{0})^{(B)}$, and $\varphi_{2}$ is well-defined.

It's clear that $\varphi_{1}$ is well-defined and $\varphi_{1}$, $\varphi_{2}$ are the inverse of each other. Therefore, $\mathbb{C}(R_{u}(P_{0})\times X_{0})^{(B)}\cong\mathbb{C}(X_{0})^{(B_{0})}$.

By the isomorphism $\tau$ in $(b)$, $\mathbb{C}(X)^{(B)}=\mathbb{C}(\pi_{0}^{-1}(B\tilde{x}_{0}))^{(B)}\cong\mathbb{C}(R_{u}(P_{0})\times X_{0})^{(B)}\cong\mathbb{C}(X_{0})^{(B_{0})}$. Thus, there is an isomorphism of groups $\phi: M_{X}\rightarrow M_{X_{0}}$. Denote by $\phi^{*}$ the dual map $N_{X_{0}}\rightarrow N_{X}$.

\medskip

$(d)$ Take any $D'\in\mathfrak{D}(G/P_{0}^{-})$. Since $\pi_{0}$ is $G$-equivariant and the fibers are irreducible, $\pi_{0}^{-1}(D')$ is an irreducible $B$-stable divisor, i.e. $\pi_{0}^{-1}(D')\in\mathfrak{D}(G/H)$. On the other hand, by the definition of $\pi_{0}$, for any $D''\in\mathfrak{D}_{1}$, $\pi_{0}(D'')\in\mathfrak{D}(G/P_{0}^{-})$. Moreover, $\sum\limits_{D\in\mathfrak{D}_{1}}\pi_{0}(D)$ is an ample $B$-stable divisor on $G/P_{0}^{-}$. By the intersection theory on rational homogeneous spaces (see for example \cite[Prop. 1.3.6]{Br05}), this implies that $D'\subseteq\bigcup\limits_{D\in\mathfrak{D}_{1}}\pi_{0}(D)$. So $\pi_{0}^{-1}(D')\in\mathfrak{D}_{1}$, $\pi_{0}^{-1}\pi_{0}(D'')=D''$ and $\pi_{0}\pi_{0}^{-1}(D')=D'$. Therefore, $\pi_{0}^{\sharp}: \mathfrak{D}(G/P_{0}^{-})\rightarrow\mathfrak{D}_{1}, D'\rightarrow\pi_{0}^{-1}(D')$ is a bijective map, and for any $D\in\mathfrak{B}(X)$, $\pi_{0}(D)\neq G/P_{0}^{-}$ if and only if $D\in\mathfrak{D}_{1}$.

Let $S_{0}=\{D\in\mathfrak{B}(X)\mid D\cap\pi_{0}^{-1}(B\tilde{x}_{0})\neq\emptyset\}$. By $(b)$, there is a bijective map $S_{0}\rightarrow\mathfrak{B}(X_{0}), D\mapsto D\cap X_{0}$. We claim that $S_{0}=\mathfrak{B}(X)\backslash\mathfrak{D}_{1}$. Note that the claim is equivalent to the assertion that for any $D\in\mathfrak{B}(X)$, $\pi_{0}(D)=G/P_{0}^{-}$ if and only if $D\in S_{0}$.

Consider the following commutative diagram $(**)$:
\begin{eqnarray*}
\xymatrix{R_{u}(P_{0})\times X_{0}\ar[r]^-{\tau}\ar[d]_-{\phi_{1}}&X\ar[d]^-{\pi_{0}}\\
R_{u}(P_{0})\tilde{x}_{0}\ar[r]^-{i}&G/P_{0}^{-}.
}
\end{eqnarray*}

In the diagram $(**)$, the morphism $i$ is the inclusion and $R_{u}(P_{0})\times X_{0}\rightarrow X$ is the composition $R_{u}(P_{0})\times X_{0}\rightarrow \pi_{0}^{-1}(B\tilde{x}_{0})\subseteq X$, and $\tau$, $\phi_{1}$ are as those in the commutative diagram in the proof of $(b)$.

For any $D\in S_{0}$, $\pi_{0}(D)=B\pi_{0}(D)\supseteq B\tilde{x}_{0}$, i.e. $\pi_{0}(D)=G/P_{0}^{-}$. For any $D\in \mathfrak{B}(X)\backslash S_{0}$, $D\subseteq X\backslash\pi_{0}^{-1}(B\tilde{x}_{0})$. So $\pi_{0}(D)\neq G/P_{0}^{-}$. Hence, the claim holds, i.e. $\Psi: \mathfrak{B}(X)\backslash\mathfrak{D}_{1}\rightarrow\mathfrak{B}(X_{0}), D\mapsto D\cap X_{0}$ is a bijective map.

Now we consider the inverse of the map $\Psi$. By $(b)$, for any $D\in\mathfrak{B}(X_{0})$, $\Psi^{-1}(D)=\overline{R_{u}(P_{0})D}=\overline{BD}$. Hence, for any $f\in\mathbb{C}(X)^{(B)}$, we have $\nu_{D}(f|_{X_{0}})=\nu_{\Psi^{-1}(D)}(f)$, i.e. $\rho_{X_{0}}(D)=\phi(\rho_{X}(\Psi^{-1}(D)))$. So $\Psi$ is compatible with $\phi$. Thus, $(d)$ holds.

\medskip

$(e)$ Remark that by $(c)$, the condition $(i)$ in Definition \ref{defi. of immersion of colored fans} is satisfied and we can identify $(N_{X})_{\mathbb{Q}}$ with $(N_{X_{0}})_{\mathbb{Q}}$ as vector spaces. By $(d)$, the condition $(iii)$ in Definition \ref{defi. of immersion of colored fans} is satisfied. We identify $\mathfrak{B}(X)\backslash\mathfrak{D}_{1}$ with $\mathfrak{B}(X_{0})$ in the set theory, when there is no confusion.

Now we check the condition $(ii)$ in Definition \ref{defi. of immersion of colored fans}. Let $p_{X_{0}}: R_{u}(P_{0})\times X_{0}\rightarrow X_{0}$ be the projection. Define
$$p_{X_{0}}^{*}: \mathbb{C}(X_{0})\rightarrow\mathbb{C}(R_{u}(P_{0})\times X_{0})\cong\mathbb{C}(X),$$
$$f_{0}\mapsto f_{0}\circ p_{X_{0}},$$
and
$$(p_{X_{0}})_{*}: \{\text{valuations on } X\}\rightarrow\{\text{valuations on } X_{0}\},$$
$$\mu\mapsto (\mathbb{C}(X_{0})\backslash\{0\}\rightarrow\mathbb{Q}, f_{0}\mapsto\mu(p_{X_{0}}^{*}f_{0})).$$
Note that $\mathbb{C}(X_{0})^{(B_{0})}\xrightarrow{p_{X_{0}}^{*}}\mathbb{C}(X)^{(B)}$ induces a morphism $M_{X_{0}}\rightarrow M_{X}$, which is exactly the inverse of the isomorphism $\phi: M_{X}\rightarrow M_{X_{0}}$ in $(c)$.

On the other hand, take any $\mu\in\mathcal{V}(G/H)$, i.e. $\mu$ is a $G$-invariant valuation on $X$. Take any $l\in L_{0}$, then $(p_{X_{0}})_{*}\mu (l\cdot f_{0})=\mu(p_{X_{0}}^{*}(l\cdot f_{0}))=\mu(l\cdot p_{X_{0}}^{*}(f_{0}))=\mu(p_{X_{0}}^{*}(f_{0}))=(p_{X_{0}})_{*}\mu (f_{0})$, i.e. $(p_{X_{0}})_{*}\mu$ is a $L_{0}$-invariant valuation on $X_{0}$. Therefore, the condition $(ii)$ in Definition \ref{defi. of immersion of colored fans} is also satisfied.

Now we define a map $\psi: S_{X_{0}, L_{0}}\rightarrow S_{X, G}, Y_{0}\mapsto GY_{0}$. For any $D\in\mathfrak{B}_{GY_{0}}$, $\Psi(D)=D\cap X_{0}\supseteq GY_{0}\cap X_{0}\supseteq Y_{0}$, i.e. $\Psi(D)\in\mathfrak{B}_{Y_{0}}$, which implies that $\mathfrak{C}^{c}_{GY_{0}}\subseteq\mathfrak{C}^{c}_{Y_{0}}$. Since $X=GX_{0}$ and $X_{0}=\bigcup\limits_{Y_{0}\in S_{X_{0}, L_{0}}}Y_{0}$, we know that $X=\bigcup\limits_{Y\in\psi(S_{X_{0}, L_{0}})}Y$, which implies that $\psi$ is surjective.

For any $Y\in S^{c}_{X, G}$, take any $Y_{1}, Y_{2}\in S_{Y\cap X_{0}, L_{0}}$. Note that $\pi_{0}(Y)=G/P_{0}^{-}$, thus, $Y\cap X_{0}\neq\emptyset$. Since $Y$ is $G$-homogeneous, $\psi(Y_{1})=\psi(Y_{2})=Y$. Thus, $\mathfrak{C}_{Y}\subseteq\mathfrak{C}_{Y_{1}}\cap\mathfrak{C}_{Y_{2}}$. Since $X$ is a complete variety, $Y$ is $G$-equivariantly isomorphic to a rational homogeneous space. In particular, as a $G$-spherical variety, $\text{rank}(Y)=0$. Thus, by \cite[Thm. 6.3]{Kn91}, $\text{dim}(\mathfrak{C}_{Y})=\text{rank}(M_{X})$, which implies that $\mathfrak{C}_{Y}^{o}\subseteq\mathfrak{C}_{Y_{1}}^{o}\cap\mathfrak{C}_{Y_{2}}^{o}$. By Theorem \ref{correspondence fans and varieties}, $\mathfrak{C}_{Y_{1}}^{o}\cap\mathfrak{C}_{Y_{2}}^{o}\cap\mathcal{V}(L_{0}/H_{0})\supseteq\mathfrak{C}_{Y}^{0}\cap\mathcal{V}(G/H)\neq\emptyset$. By the definition of colored fans, $\mathfrak{C}^{c}_{Y_{1}}=\mathfrak{C}^{c}_{Y_{2}}$, i.e. $Y_{1}=Y_{2}$. Hence, the $L_{0}$-stable set $Y\cap X_{0}$ is indeed a $L_{0}$-orbit. So the map $\eta: S^{c}_{X, G}\rightarrow S^{c}_{X_{0}, L_{0}}, Y\mapsto Y\cap X_{0}$ is well-defined and $\psi\circ\eta=id_{S^{c}_{X, G}}$.

For any $Y\in S^{c}_{X, G}$ and any $D\in\mathfrak{B}(X_{0})$, if $D\supseteq\eta(Y)$, then $\Psi^{-1}(D)=\overline{BD}\supseteq B(Y^{o}\cap X_{0})=Y^{o}$, where $Y^{o}$ is the open $B$-orbit on $Y$, which exists by \cite[Cor. 2.2]{Kn91}. It should also be noticed that the fact $Y^{o}\cap X_{0}\neq\emptyset$ follows from the facts that $\pi_{0}(Y)=G/P_{0}^{-}$ and $\pi_{0}(Y^{o})$ is the open $B$-orbit on $G/P_{0}^{-}$. Thus, $\mathfrak{C}^{c}_{Y\cap X_{0}}\subseteq\mathfrak{C}^{c}_{Y}$. Hence, $\mathfrak{C}^{c}(Y)=\mathfrak{C}^{c}_{Y\cap X_{0}}$ for all $Y\in S^{c}_{X, G}$.

By the fact $\mathcal{V}(G/H)\subseteq\mathcal{V}(L_{0}/H_{0})$ and Theorem \ref{correspondence fans and varieties}, $\eta$ can be extended uniquely to $S_{X, G}\rightarrow S_{X_{0}, L_{0}}$ satisfying the condition $(iv)$ in Definition \ref{defi. of immersion of colored fans}.

Therefore, we have shown that there is an immersion of colored fans $\Phi: \mathbb{F}_{X}\rightarrow \mathbb{F}_{X_{0}}$ induced by the map $\phi^{-1}: M_{X_{0}}\rightarrow M_{X}$ in the sense of Definition \ref{defi. of immersion of colored fans}, and $\Phi(\mathfrak{C}^{c}_{Y})=\mathfrak{C}^{c}_{Y\cap X_{0}}$ for all $Y\in S^{c}_{X, G}$.

\medskip

$(f)$ It's a direct consequence of the compatibility of $\Psi$ and $\phi$.  More precisely, if $\mathfrak{D}_{1}=\mathfrak{D}_{0}(G/H)$, then by $(d)$, for any $D\in\mathfrak{D}(L_{0}/H_{0})$, $\Psi^{-1}(D)\in\mathfrak{D}(G/H)\backslash\mathfrak{D}_{0}(G/H)$, and $\rho_{L_{0}/H_{0}}(D)=\phi(\rho_{G/H}(\Psi^{-1}(D)))\neq 0$, i.e. $\mathfrak{D}_{0}(L_{0}/H_{0})=\emptyset$.
\end{proof}

Now we can complete the proof of Theorem \ref{morphism to G/P_0}.

\begin{proof}[Proof of Theorem \ref{morphism to G/P_0}$(ii)(iii)$]
Keep notations as in Lemma \ref{morphism to G/P_0 then F_X <= F_(X_0)}.

$(ii)$ Note that by Lemma \ref{morphism to G/P_0 then F_X <= F_(X_0)}$(d)(e)$ and Remark \ref{explanation of defi. of immmersion of colored fans}$(i)$, we get the following fact
\begin{eqnarray}
\mathbb{F}_{X}\subseteq\mathbb{F}_{X_{0}}, \text{ and } \mathbb{F}_{X_{0}}\backslash\mathbb{F}_{X}=\{\mathfrak{C}^{c}_{Y_{0}}\in\mathbb{F}_{X_{0}}\mid \mathfrak{C}_{Y_{0}}^{o}\cap\mathcal{V}(G/H)=\emptyset\}.
\end{eqnarray}

By Proposition \ref{locally fac. Q-fac. criterion}, the fact $(3)$ implies that if $X_{0}$ is locally factorial (resp. $\mathbb{Q}$-factorial), then $X$ is locally factorial (resp. $\mathbb{Q}$-factorial).

Conversely, if $X$ is locally factorial (resp. $\mathbb{Q}$-factorial, smooth), then by Lemma \ref{morphism to G/P_0 then F_X <= F_(X_0)}$(b)$, $X_{0}$ is also locally factorial (resp. $\mathbb{Q}$-factorial, smooth).

If $X_{0}$ is smooth, then the morphism $\pi_{0}$ is a smooth morphism from $X$ to a smooth variety $G/P_{0}^{-}$. Hence, $X$ is a smooth variety.

If $X$ is projective, then its closed subvariety $X_{0}$ is also projective.

Now we assume that $X_{0}$ is projective. Take any Cartier $B_{0}$-stable divisor $\delta_{0}=\sum\limits_{D\in\mathfrak{B}(X_{0})}n_{D}D$. Then by the fact $(3)$, and \cite[Prop. 3.1]{Br89}, the $B$-stable divisor $\delta=\sum\limits_{D\in\mathfrak{B}(X_{0})}n_{D}\Psi^{-1}(D)$ is Cartier. Note that by \cite[Prop. 3.1]{Br89}, the $B$-stable divisor $D_{0}$ is also Cartier.

If $\delta_{0}$ is moreover ample, then by the fact $(3)$, Lemma \ref{morphism to G/P_0 then F_X <= F_(X_0)}$(d)$, Definition \ref{defi. of colored fans}, and the formulas $(1)(2)$ on page \pageref{eqn. C_(D, Y)}, we can get the following two conclusions.

$\bullet$ If $\mu$ is a wall in $\mathbb{F}_{X}$, then $\mu_{0}$ is a wall in $\mathbb{F}_{X_{0}}$, where $\mu=\phi^{*}(\mu_{0})$ and $\phi^{*}: (N_{X_{0}})_{\mathbb{Q}}\rightarrow (N_{X})_{\mathbb{Q}}$ is the dual map induced by the isomorphism $\phi$ in Lemma \ref{morphism to G/P_0 then F_X <= F_(X_0)}$(c)$. Moreover, $(D_{0}+\delta)\cdot C_{\mu}=\delta\cdot C_{\mu}=\delta_{0}\cdot C_{\mu_{0}}>0$.

$\bullet$ If $Y\in S^{c}_{X, G}$ and $D\in\mathfrak{D}(G/H)\backslash(\mathfrak{D}_{1}\cup\mathfrak{D}_{Y})$, then $Y\cap X_{0}\in S^{c}_{X_{0}, L_{0}}$, $\Psi(D)\in\mathfrak{D}(L_{0}/H_{0})\backslash D_{Y\cap X_{0}}$ and $(D_{0}+\delta)\cdot C_{D, Y}=\delta\cdot C_{D, Y}=\delta_{0}\cdot C_{\Psi(D), Y\cap X_{0}}>0$.

Note that if $Y\in S^{c}_{X, G}$ and $D\in\mathfrak{D}_{1}$, then by the formula $(2)$ on page \pageref{eqn. C_(D, Y)}, $(D_{0}+\delta)\cdot C_{D, Y}=D\cdot C_{D, Y}=1>0$. Hence, by \cite[Thm. 3.2$(ii)$]{Br93}, the Cartier divisor $D_{0}+\delta$ is ample, and $X$ is projective.

\medskip

$(iii)$ The ``only if" part: Denote by $l: X_{0}\rightarrow X$ the natural inclusion. Then by Proposition \ref{cycles are rat. equiv. to stable ones, picard group on spherical varieties}$(i)$ and Lemma \ref{morphism to G/P_0 then F_X <= F_(X_0)}$(d)$, $\text{Psef}^{\, 1}(X_{0})\subseteq l^{*}(\text{Psef}^{\, 1}(X))$.  By the assumption that $\text{Psef}^{\, 1}(X)=\text{Nef}^{\, 1}(X)$ and the projection formula, $\text{Psef}^{\, 1}(X_{0})\subseteq\text{Nef}^{\, 1}(X_{0})$. Hence, $\text{Nef}^{\, 1}(X_{0})=\text{Psef}^{\, 1}(X_{0})$.

\medskip

For the ``if" part, we want to apply \cite[Thm. 3.2$(ii)$]{Br93} and the formulas $(1)(2)$ on page \pageref{eqn. C_(D, Y)}.

Let $D_{1}\in\mathfrak{B}(X)\backslash\mathfrak{D}_{1}$, $\mu$ is a wall in $\mathbb{F}_{X}$, $Y\in S^{c}_{X, G}$ and $D\in\mathfrak{D}(G/H)\backslash (\mathfrak{D}_{Y}\cup\mathfrak{D}_{1})$. Then by Lemma \ref{morphism to G/P_0 then F_X <= F_(X_0)}$(d)$, the fact $(3)$, and the formulas $(1)(2)$ on page \pageref{eqn. C_(D, Y)}, we know that $D_{1}\cdot C_{\mu}=\Psi(D_{1})\cdot C_{\mu_{0}}\geq 0$ and $D_{1}\cdot C_{D, Y}=\Psi(D_{1})\cdot C_{\Psi(D), Y\cap X_{0}}\geq 0$, where $\mu_{0}=(\phi^{*})^{-1}(\mu)$ is the corresponding wall of $\mathbb{F}_{X_{0}}$.

On the other hand, by Lemma \ref{D_0 and C_(D_0) are extremal}$(i)(ii)(iii)$, for any $D\in\mathfrak{D}_{1}\subseteq\mathfrak{D}_{0}(G/H)$,  $D\in\text{Nef}^{\, 1}(X)$ and $C_{D}\in\text{Nef}_{1}(X)$. Hence, by Proposition \ref{cycles are rat. equiv. to stable ones, picard group on spherical varieties}$(ii)$ and \cite[Thm. 3.2(ii)]{Br93}, $\text{Nef}^{\, 1}(X)=\text{Psef}^{\, 1}(X)$.
\end{proof}

\begin{rmk} \label{X is a quotient of G*X_0 by (P_0)-}
Keep notations as in Theorem \ref{morphism to G/P_0} and Lemma \ref{morphism to G/P_0 then F_X <= F_(X_0)}. Define $G\times^{P_{0}^{-}}X_{0}$ as the set of the quotient of $G\times X_{0}$ by equivalences $(gp, x)\sim(g, px)$ for all $g\in G, p\in P_{0}^{-}$ and $x\in X_{0}$. Then $G\times^{P_{0}^{-}}X_{0}$ is bijective to $X$ as sets. Thus, $X$ is the geometric quotient of $G\times X_{0}$ by $P_{0}^{-}$.

In general, $X$ is not isomorphic to the product $G/P_{0}^{-}\times X_{0}$. In Subsection \ref{subsection horospherical varieties}, when discussing the pseudo-effective and nef cones of smooth projective horospherical varieties, we will give a conterexample (see Example \ref{example}). The parabolic subgroup $P_{0}^{-}$ has a close relationship with the set $\mathfrak{D}_{1}$, especially in the horospherical cases (see Theorem \ref{morphism to G/P_0 horospherical}). Moreover, we will prove the converse of Theorem \ref{morphism to G/P_0} in the horospherical cases (see Theorem \ref{converse of morphism to G/P_0 horospherical}).
\end{rmk}

\subsection{Toric varieties} \label{subsection toric varieties}

By the discussions in \cite[Chapter 5]{Fult93}, we can define an intersection theory on $\mathbb{Q}$-factorial complete toric varieties with the intersection numbers in $\mathbb{Q}$. The main result in this subsection is Theorem \ref{Nef=Psef toric}, who says that if $X$ is a projective $\mathbb{Q}$-factorial variety such that $\text{Nef}^{\, k}(X)=\text{Psef}^{\, k}(X)$ for some $1\leq k\leq\text{dim}(X)-1$, then $X$ is a quotient of the product of some projective spaces by a finite group.

Let $X$ be a $\mathbb{Q}$-factorial complete toric variety, and $\mathbb{F}_{X}$ be the fan of $X$. Take a cone $\sigma\in\mathbb{F}_{X}$, and assume $v_{1}, \ldots, v_{k}$ to be the primitive lattice points along rays of $\sigma$. Let $N_{\sigma}=N_{X}\cap\text{span}(\sigma)$, where $\text{span}(\sigma)\subseteq (N_{X})_{\mathbb{Q}}$ is the subspace generated by $\sigma$. Define $\text{mult}(\sigma)=[N_{\sigma}: \sum\limits_{i=1}^{k}\mathbb{Z}v_{i}]$. Let $\tau\in\mathbb{F}_{X}$ be a cone such that $\sigma\cap\tau=\{0\}$, then define
\begin{eqnarray*}
V(\sigma)\cdot V(\tau)=\left\{ \begin{array}{ll}
\frac{\text{mult}(\sigma).\text{mult}(\tau)}{\text{mult}(\gamma)}V(\gamma), &\text{if } \gamma=\langle\sigma,\tau\rangle\in\mathbb{F}_{X},\\
0, &\text{otherwise},
\end{array} \right.
\end{eqnarray*}
where $V(\sigma)$, $V(\tau)$, and $V(\gamma)$ are the orbit closures on $X$ of the corresponding cones. When $X$ is a smooth complete toric variety, this intersection theory coincides with the usual one.

\begin{rmk} \label{Nef<=Eff=Psef proj. Q-fac. toric}
Let $X$ be a $\mathbb{Q}$-factorial complete toric variety. By the intersection theory on it, the cones $\text{Eff}^{\, k}(X)$, $\text{Psef}^{\, k}(X)$, and $\text{Nef}^{\, k}(X)$ are well-defined.

$(i)$ Let $Y$ be another $\mathbb{Q}$-factorial complete toric variety and $f: Y\rightarrow X$ be a toric morphism. We want to show the projection formula for $f$. More precisely, we claim that the equality $f_{*}(f^{*}\eta\cdot\xi)=\eta\cdot f_{*}\xi$ holds for all $\eta\in A^{k}(X)_{\mathbb{Q}}$ and $\xi\in A^{m}(Y)_{\mathbb{Q}}$, where $0\leq k\leq\text{dim}(X)$ and $0\leq m\leq\text{dim}(Y)$.

By Proposition \ref{cycles are rat. equiv. to stable ones, picard group on spherical varieties}$(i)$ and the linearity of the formulas at the two sides, to show the claim, we can assume that $\eta$ is represented by an orbit closure. Since $X$ is $\mathbb{Q}$-factorial, there exist Cartier divisors $\eta_{1},\ldots,\eta_{k}$ and a positive rational number $\lambda$ such that $\eta=\lambda\prod\limits_{i=1}^{k}\eta_{i}$. Thus, by the induction on $k$ and the projection formula for Cartier divisors, $f_{*}(f^{*}\eta\cdot\xi)=f_{*}(f^{*}(\lambda\prod\limits_{i=1}^{k}\eta_{i})\cdot\xi)=\lambda f_{*}(\prod\limits_{i=1}^{m}f^{*}\eta_{i}\cdot\xi)=\lambda\prod\limits_{i=1}^{k}\eta_{i}\cdot f_{*}\xi=\eta\cdot f_{*}\xi$.

$(ii)$ By $(i)$, we can see that the proof of Theorem \ref{Nef<=Eff=Psef} holds for $X$ without any change. In particular, for any integer $k$, $\text{Nef}^{\, k}(X)\subseteq\text{Eff}^{\, k}(X)=\text{Psef}^{\, k}(X)$ and the products of nef cycle classes are nef.
\end{rmk}

\begin{thm} \label{Nef=Psef toric}
Let $X$ be a $\mathbb{Q}$-factorial projective toric variety of dimension $n$. Then the following are equivalent.

$(i)$ There exists some $1\leq k\leq n-1$ such that $\text{Psef}^{\, k}(X)=\text{Nef}^{\, k}(X)$.

$(ii)$ For all $1\leq k\leq n-1$, $\text{Psef}^{\, k}(X)=\text{Nef}^{\, k}(X)$.

$(iii)$ There is a finite surjective toric morphism $ f: \mathbb{P}^{d_{1}}\times\ldots\times\mathbb{P}^{d_{\rho}}\rightarrow X$, where $\rho$ is the Picard number of $X$ and $d_{1},\ldots,d_{\rho}$ are some positive integers such that their sum is just $n$.

If $X$ is moreover smooth, then the conditions above are also equivalent to

$(iv)$  $X\cong\mathbb{P}^{d_{1}}\times\ldots\times\mathbb{P}^{d_{\rho}}$, where $\rho, d_{1},\ldots,d_{\rho}$ are as in $(iii)$.
\end{thm}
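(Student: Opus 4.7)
My plan is to prove the cycle $(iv)\Rightarrow(iii)\Rightarrow(ii)\Rightarrow(i)\Rightarrow(iii)$ together with $(i)\Rightarrow(iv)$ in the smooth case. Three of these are easy. The implication $(iv)\Rightarrow(iii)$ is immediate by $f=\mathrm{id}$, and $(ii)\Rightarrow(i)$ is tautological. For $(iii)\Rightarrow(ii)$, note that each $N^k(\mathbb{P}^d)_\mathbb{R}$ is one-dimensional and spanned by the effective class $H^k$, whence $\mathrm{Nef}^k(\mathbb{P}^d)=\mathrm{Psef}^k(\mathbb{P}^d)$ for every $k$; Corollary~\ref{nef(X)=psef(X) if and only if nef(X_i)=psef(X_i)} then propagates the equality to $Y=\prod_i\mathbb{P}^{d_i}$. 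Given a finite surjective toric morphism $f:Y\to X$, the projection formula of Remark~\ref{Nef<=Eff=Psef proj. Q-fac. toric}(i) combined with $f_*f^*=(\deg f)\,\mathrm{id}$ yields $(\deg f)\,\eta\cdot\delta=f_*(f^*\eta\cdot f^*\delta)\geq 0$ for all $\eta\in\mathrm{Psef}^k(X)$ and $\delta\in\mathrm{Psef}^{n-k}(X)$, giving $\mathrm{Psef}^k(X)\subseteq\mathrm{Nef}^k(X)$; the reverse inclusion is Remark~\ref{Nef<=Eff=Psef proj. Q-fac. toric}(ii).

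The main work is the implication $(i)\Rightarrow(iii)$. I first reduce to the case $k=1$. By definition $\mathrm{Nef}^k(X)=\mathrm{Psef}^{n-k}(X)^\vee$, so biduality of closed convex cones makes the equality $\mathrm{Nef}^k=\mathrm{Psef}^k$ invariant under $k\leftrightarrow n-k$; we may thus assume $k\leq n/2$. If $k=1$ we are done; otherwise $2\leq k\leq n-2$ and we argue by contradiction. If some prime torus-invariant divisor $D$ is not nef, then $D\cdot R<0$ for some extremal ray $R$ of $NE(X)$, so the standard toric Mori contraction $\mathrm{cont}_R$ has exceptional locus $A\supseteq D$ of dimension $n-1$. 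However, the proof of Theorem~\ref{higher codimension arbitrary varieites} adapts verbatim to the $\mathbb{Q}$-factorial projective toric setting using the intersection theory and projection formula of Remark~\ref{Nef<=Eff=Psef proj. Q-fac. toric}, yielding $\dim A\leq\min(k-1,n-k-1)=k-1<n-1$, a contradiction.

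Assuming now $\mathrm{Nef}^1(X)=\mathrm{Psef}^1(X)$, every torus-invariant prime divisor $D_\rho$ is nef, so for each wall $\tau=\sigma_+\cap\sigma_-$ of $\mathbb{F}_X$ the wall relation $a_+v_++a_-v_-+\sum_{v_i\in\tau(1)}b_iv_i=0$ with $a_\pm>0$ has $b_i\geq 0$, since $D_{\rho_i}\cdot C_\tau$ is a positive multiple of $b_i$. I declare two rays equivalent when they share a common positive-coefficient wall relation, and let $G_1,\dots,G_s$ be the resulting equivalence classes. The non-negativity forces the $\mathbb{Q}$-spans of distinct $G_j$ to intersect trivially, each $G_j$ to satisfy a positive linear relation $\sum_{\rho\in G_j}c_\rho v_\rho=0$, and $\mathbb{F}_X$ to coincide with the cartesian product of the fans $\Sigma_j$ on $\mathrm{span}_\mathbb{Q}(G_j)$ whose maximal cones are the $(|G_j|-1)$-element subsets of $\{c_\rho v_\rho\}_{\rho\in G_j}$; each $\Sigma_j$ is then, up to lattice index, the fan of $\mathbb{P}^{d_j}$ with $d_j=|G_j|-1$, and $s$ equals the Picard number $\rho=\sum_j(d_j+1)-n$, producing the required finite surjective toric morphism $\prod_j\mathbb{P}^{d_j}\to X$. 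When $X$ is moreover smooth, the rays of every maximal cone form a $\mathbb{Z}$-basis of $N$, a short divisibility argument forces all $c_\rho=1$, and $\mathbb{F}_X$ becomes literally the product fan of projective spaces, so $X\cong\prod_j\mathbb{P}^{d_j}$.

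The chief technical obstacle is the combinatorial fan-decomposition step in the previous paragraph: globalizing the pointwise non-negativity of wall coefficients into a cartesian product structure on $\mathbb{F}_X$. A conceptual route is via toric Mori theory: the hypothesis forbids any divisorial or small extremal contraction (each would produce an effective divisor intersecting the contracted ray negatively), so every extremal contraction of $X$ is a fiber-type morphism, and iterating such contractions while tracking base and fiber, in the spirit of Theorem~\ref{morphism to G/P_0}, ultimately exhibits $X$ up to finite cover as a product of projective spaces. The passage from finite cover to isomorphism in the smooth case is precisely the divisibility step noted above.
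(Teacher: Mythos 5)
The easy implications and $(iii)\Rightarrow(ii)$ are fine and match the paper's use of the projection formula, but the main implication $(i)\Rightarrow(iii)$ has two problems. The first is a genuine error in your reduction from $2\leq k\leq n-2$ to $k=1$: from $D\cdot R<0$ you deduce that the exceptional locus $A$ of $\text{cont}_{R}$ contains $D$ and so has dimension $n-1$. The inclusion goes the other way --- an irreducible curve $C$ with $D\cdot C<0$ must lie inside $D$, so $A$, being the union of the contracted curves, satisfies $A\subseteq D$ --- and the correct inclusion yields only $\dim A\leq n-1$, which is perfectly compatible with the bound $\dim A\leq k-1$ from the toric adaptation of Theorem~\ref{higher codimension arbitrary varieites}. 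Small (flipping) toric contractions with one-dimensional exceptional locus exist in every dimension, and for any birational toric extremal contraction some invariant prime divisor is negative on the contracted ray, so no contradiction is reached and the reduction collapses. The paper proves this reduction without Mori theory: it reduces to showing $V(\sigma)\cdot V(v)\geq 0$ for every $(n-1)$-dimensional cone $\sigma$ and every ray generator $v$, writes $V(\sigma)$ as a product of the divisors $V(v_{i})$ up to multiplicity, groups $V(v)$ with $k-1$ of the $V(v_{i})$ to obtain a class in $\text{Psef}^{\, k}(X)=\text{Nef}^{\, k}(X)$, and pairs it with the remaining factor, which lies in $\text{Psef}^{\, n-k}(X)$. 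You should replace your contraction argument with something of this kind.

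The second problem is that the core step, deducing the product structure from $\text{Psef}^{\, 1}(X)=\text{Nef}^{\, 1}(X)$, is asserted rather than proved: the claims that nonnegativity of the wall coefficients forces the spans of your classes $G_{j}$ to meet trivially, that each $G_{j}$ carries a positive linear relation, and that $\mathbb{F}_{X}$ is the cartesian product of the resulting simplex fans are exactly the content that needs an argument, and neither of the two routes you propose is carried out. The paper does not prove this either; it invokes \cite[Prop.~5.3]{FS09}, which is precisely the statement that a $\mathbb{Q}$-factorial projective toric variety all of whose effective divisors are nef admits a finite surjective toric morphism from a product of projective spaces, and is isomorphic to such a product in the smooth case. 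Either cite that result or supply the combinatorial proof; as written this step is a gap.
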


\begin{proof} Note that by the projection formula, $(iii)$ implies $(ii)$. Hence, by \cite[Prop. 5.3]{FS09}, to complete the proof, we only need to show the following two claims for the case when $X$ is a $\mathbb{Q}$-factorial projective toric variety of dimension $n$.

\textit{Claim 1: If $\text{Psef}^{\, 1}(X)=\text{Nef}^{\, 1}(X)$, then for all $2\leq k\leq n-1$, $\text{Psef}^{\, k}(X)=\text{Nef}^{\, k}(X)$.}

\textit{Claim 2: If $\text{Psef}^{\, k}(X)=\text{Nef}^{\, k}(X)$ for some $2\leq k\leq n-1$, then $\text{Psef}^{\, 1}(X)=\text{Nef}^{\, 1}(X)$.}

\medskip

By Proposition \ref{cycles are rat. equiv. to stable ones, picard group on spherical varieties}$(i)$, for both claims, we only need to analysis the intersections of orbit closures.

The claim 1 is a direct consequence of the fact that the products of nef cycles on $X$ are nef and the fact that each orbit closure on $X$ is the intersection of some prime divisors up to a multiplicity.

To show the claim 2, it suffices to show that for any $(n-1)$-dimensional cone $\sigma\in\mathbb{F}_{X}$ and any primitive lattice point $v$ which generates a ray of $\mathbb{F}_{X}$, the intersection number  $V(\sigma)\cdot V(v)$ is nonnegative .

Assume that $\sigma=\langle v_{1},\ldots,v_{n-1}\rangle$, where $v_{i}$ is the primitive lattice point on a ray $R_{i}$ of $\sigma$. By reordering $v_{i}$, we can assume that $v_{i}\neq v$ for all $i=1,\ldots,k-1$. Since $X$ is $\mathbb{Q}$-factorial, $\mathbb{Q}^{+}v\cap\langle v_{1}, \cdots, v_{k-1}\rangle=\{0\}$.

If $v,v_{1},\ldots,v_{k-1}$ don't generate any $k$-dimensional cone in $\mathbb{F}_{X}$, then $V(v)\cdot\prod\limits_{i=1}^{k-1}V(v_{i})=0$.

If $v,v_{1},\ldots,v_{k-1}$ generate a cone $\gamma$ in $\mathbb{F}_{X}$, then $V(v)\cdot\prod\limits_{i=1}^{k-1}V(v_{i})=\frac{1}{\text{mult}(\gamma)}V(\gamma)$.

All in all, $V(v)\cdot\prod\limits_{i=1}^{k-1}V(v_{i})\in\text{Psef}^{\, k}(X)=\text{Nef}^{\, k}(X)$. Denote by $\tau$ the face of $\sigma$ generated by $v_{k},\ldots,v_{n-1}$, then $\prod\limits_{i=k}^{n-1}V(v_{i})=\frac{1}{\text{mult}(\tau)}V(\tau)\in\text{Psef}^{\, n-k}(X)$. Hence, $V(\sigma)\cdot V(v)=\text{mult}(\sigma)V(v)\cdot\prod\limits_{i=1}^{n-1}V(v_{i})=\frac{\text{mult}(\sigma)}{\text{mult}(\tau)} (V(v)\cdot\prod\limits_{i=1}^{k-1}V(v_{i}))\cdot V(\tau)\geq 0$. Then the claim 2 holds. The conclusion follows.
\end{proof}

\subsection{Toroidal varieties} \label{subsection toroidal varieties}

In this subsection, we show in Theorem \ref{nef=psef toroidal} that if $X$ is a smooth projective $G$-spherical variety such that $\mathfrak{D}_{X}=\emptyset$ and $\text{Nef}^{\, k}(X)=\text{Psef}^{\, k}(X)$ for some $1\leq k\leq \text{dim}(X)-1$, then $X$ must be isomorphic to a rational homogeneous space. Note that this theorem is a generalization of Theorem \ref{Nef=Psef toric}.

\begin{defi} \label{defi. of toroidal regular log homogeneous}
$(i)$ A spherical $G/H$-embedding $X$ is toroidal, if  $\mathfrak{D}_{X}=\emptyset$.

$(ii)$ Let $\Gamma$ be a connected linear algebraic group. A $\Gamma$-variety $X$ is said to be regular if the following conditions are satisfied:

$(a)$ there is an open $\Gamma$-orbit on $X$;

$(b)$ the closure of every $\Gamma$-orbit is smooth;

$(c)$ for any $Y\in S_{X, \Gamma}$, if its closure $\overline{Y}\neq X$, then $\overline{Y}$ is the transversal intersection of the orbit closures of codimension one containing $\overline{Y}$;

$(d)$ for any point $x\in X$, the isotropy group $\Gamma_{x}$ has a dense orbit in the normal space of the orbit $\Gamma\cdot x$ in $X$.

$(iii)$ Let $\Gamma$ be a connected algebraic group, $X$ be a smooth $\Gamma$-variety, and $D\subseteq X$ be a $\Gamma$-stable divisor with normal crossings. We say that $X$ is a log homogeneous $\Gamma$-variety with boundary $D$, if the natural morphism of $G$-linearized sheaves $op_{X, D}: \mathcal{O}_{X}\otimes\mathfrak{h}\rightarrow\mathbb{T}_{X}(-\text{log}D)$ is surjective, where $\mathfrak{h}$ is the Lie algebra of $\Gamma$ and $\mathbb{T}_{X}(-\text{log}D)$ is the logarithmic tangent sheaf corresponding to $D$.
\end{defi}

\begin{prop} (\cite[Prop. 2.2.1]{BB96}, \cite[Cor. 2.1.4, Cor. 3.2.2]{Br07}) \label{smooth complete toroidal iff regular iff log homogeneous}
Let $X$ be a smooth complete $G$-variety. Then $X$ is a toroidal spherical variety if and only if it's a regular variety if and only if it's a log homogeneous variety.
\end{prop}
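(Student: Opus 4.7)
The plan is to establish the cyclic chain of implications: toroidal spherical $\Rightarrow$ log homogeneous $\Rightarrow$ regular $\Rightarrow$ toroidal spherical. All three conditions are local near each closed $G$-orbit, so the unifying tool is the local structure theorem for spherical embeddings: for every closed $G$-orbit $Y \subseteq X$ there exist a parabolic subgroup $P \supseteq B$ and a $B$-stable affine open $X_{Y}$ meeting $Y$ such that $X_{Y} \cong R_{u}(P) \times Z$ as $P$-schemes, where $Z$ is stable under a Levi factor $L \subseteq P$ and a quotient torus of $L$ acts on $Z$ making it a toric slice.

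For toroidal spherical $\Rightarrow$ log homogeneous, the hypothesis $\mathfrak{D}_{X} = \emptyset$ forces no color to pass through $Y$, which in turn forces $Z$ to be a smooth affine toric variety whose torus-invariant boundary coincides with $Z \cap \partial X$. On a smooth affine toric variety the logarithmic tangent sheaf (for the toric boundary) is trivialised by the acting torus, so its Lie algebra already surjects onto $\mathbb{T}_{Z}(-\text{log}(Z \cap \partial X))$; the Lie algebra of $R_{u}(P)$ handles its own factor and the Levi $L$ fills in the remaining $P$-directions. By $G$-equivariance the resulting local surjection glues over the cover $\{g X_{Y} : g \in G,\ Y \in S^{c}_{X,G}\}$ to the desired global surjection $\mathcal{O}_{X} \otimes \mathfrak{g} \twoheadrightarrow \mathbb{T}_{X}(-\text{log}\partial X)$.

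For log homogeneous $\Rightarrow$ regular, restricting to $X \setminus D$ gives surjectivity onto the ordinary tangent sheaf, hence $X \setminus D$ is $G$-homogeneous by connectedness, yielding the open orbit. At a point $x$ lying on exactly $r$ normal-crossing branches of $D$ with local equations $x_{1} = \cdots = x_{r} = 0$, the log tangent sheaf is locally free with generators $x_{i}\partial_{i}$ and $\partial_{j}$ for $j > r$, and surjectivity by $\mathfrak{g}$ forces each $G$-orbit through $x$ to be cut out by a subset of these branches, yielding conditions (b), (c) as well as the normal-slice condition (d) by reading off the induced action on $\mathbb{T}_{X,x}/\mathbb{T}_{G\cdot x, x}$. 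Finally, for regular $\Rightarrow$ toroidal spherical, condition (d) combined with induction on orbit-closure dimension produces an open $B$-orbit in $X$, giving sphericality; and any hypothetical color $D \in \mathfrak{D}_{X}$ would be a $B$-stable non-$G$-stable prime divisor containing a $G$-orbit, violating condition (c) that each orbit closure is the transversal intersection of the $G$-stable (boundary) divisors containing it. The main obstacle throughout is the first step, where one must verify that the local toric surjections on the various charts $X_{Y}$ glue coherently and that no Lie-algebraic obstruction prevents $\mathfrak{g}$ from reaching every log tangent direction at points lying on several boundary components.
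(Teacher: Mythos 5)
The paper offers no proof of this proposition: it is quoted verbatim from the literature, with the three equivalences attributed to \cite[Prop. 2.2.1]{BB96} and \cite[Cor. 2.1.4, Cor. 3.2.2]{Br07}. So the real comparison is with those sources. Your first two implications (toroidal spherical $\Rightarrow$ log homogeneous via the local structure theorem and the freeness of the log tangent sheaf of a smooth toric variety; log homogeneous $\Rightarrow$ regular via the local frame $x_{1}\partial_{1},\ldots,x_{r}\partial_{r},\partial_{r+1},\ldots,\partial_{n}$) are faithful sketches of Brion's arguments and are essentially correct; note only that there is no genuine ``gluing'' issue in the first step, since the morphism $\mathcal{O}_{X}\otimes\mathfrak{g}\rightarrow\mathbb{T}_{X}(-\text{log}\partial X)$ is defined globally and surjectivity is a pointwise condition checked on the $G$-translates of the charts.

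The genuine gap is in the implication regular $\Rightarrow$ toroidal spherical, which is precisely the content of \cite[Prop. 2.2.1]{BB96} and the hardest of the three. First, ``condition $(d)$ combined with induction on orbit-closure dimension produces an open $B$-orbit'' is an assertion, not an argument: your announced unifying tool is the local structure theorem \emph{for spherical embeddings}, which presupposes sphericality and is therefore unavailable here. The actual proof requires a local structure theorem for regular varieties (a $P$-stable chart $R_{u}(P)\times N_{y}$ near a closed orbit, with $N_{y}$ the normal space), together with the fact that condition $(d)$ makes $N_{y}$ a $P$-module with a dense $P$-orbit whose weights are linearly independent, from which sphericality and toroidality are read off simultaneously. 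Second, the claim that a color $D$ containing a $G$-orbit $Y$ ``violates condition $(c)$'' does not follow: condition $(c)$ only asserts that $\overline{Y}$ equals the transversal intersection of the $G$-stable divisors through it, and says nothing that forbids an additional $B$-stable prime divisor from also containing $\overline{Y}$ (three distinct curves can pass through a point of a surface, two of them meeting transversally there). In the standard example $X=\mathbb{P}(\mathbb{C}^{2}\oplus\mathbb{C})$ with $G=SL_{2}$ acting on the first factor, the unique color is a line through the $G$-fixed point, so $X$ is not toroidal; regularity indeed fails, but via condition $(c)$ at the fixed point for a different reason (there are no $G$-stable divisors through it at all), not by the mechanism you describe. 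To close this step you must either reprove the Bien--Brion local structure result or cite it; as written the chain of implications does not close.
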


\begin{thm} \label{nef=psef toroidal}
Let $X$ be a smooth projective toroidal $G/H$-embedding of dimension $n$. Then the following are equivalent:

$(i)$ there exists some $1\leq k\leq n-1$ such that $\text{Nef}^{\, k}(X)=\text{Psef}^{\, k}(X)$;

$(ii)$ for all  $1\leq k\leq n-1$, $\text{Nef}^{\, k}(X)=\text{Psef}^{\, k}(X)$;

$(iii)$ $X$ is isomorphic to a rational homogeneous space.
\end{thm}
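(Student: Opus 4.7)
The implication $(ii)\Rightarrow(i)$ is trivial. For $(iii)\Rightarrow(ii)$, if $X$ is isomorphic to some rational homogeneous $G'/P'$, then by Kleiman's transversality theorem any two effective cycles of complementary dimension on $G'/P'$ can be translated by the $G'$-action to meet properly, showing that effective cycle classes are nef. Combined with $\text{Nef}^{\,k}\subseteq\text{Psef}^{\,k}=\text{Eff}^{\,k}$ from Theorem~\ref{Nef<=Eff=Psef}, this gives $\text{Nef}^{\,k}(X)=\text{Psef}^{\,k}(X)$ for every $k$.

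For $(i)\Rightarrow(iii)$ the plan is to fiber $X$ over a rational homogeneous base with toric fiber, apply Theorem~\ref{Nef=Psef toric} on the fiber, and then rigidify the bundle. Since $X$ is smooth, complete and toroidal, Proposition~\ref{smooth complete toroidal iff regular iff log homogeneous} says $X$ is regular, so the local structure theorem of Luna--Vust and Brion produces a smooth surjective $G$-equivariant morphism $\pi\colon X\to G/P$, with $P=P(X)$ the stabilizer of the open $B$-orbit of $X$, identifying $X$ with a homogeneous fiber bundle $G\times^{P}F$. The fiber $F=\pi^{-1}(eP)$ is a smooth projective toric variety for the quotient torus $T/(T\cap H)$, whose fan coincides with $\mathbb{F}_{X}$ viewed as an uncolored fan in $(N_{X})_{\mathbb{Q}}$; this identification is available precisely because $\mathfrak{D}_{X}=\emptyset$.

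Next I descend the equality $\text{Nef}^{\,k}(X)=\text{Psef}^{\,k}(X)$ to $F$. Each $T$-stable subvariety $Y\subseteq F$ gives a $B$-stable subvariety $G\times^{P}Y\subseteq X$ whose transverse intersection with $F$ equals $Y$, so by Proposition~\ref{cycles are rat. equiv. to stable ones, picard group on spherical varieties}$(i)$ the restriction $i^{*}\colon A^{*}(X)\to A^{*}(F)$ surjects $\text{Psef}^{\,k}(X)$ onto $\text{Psef}^{\,k}(F)$, while the projection formula shows that $i^{*}$ carries $\text{Nef}^{\,k}(X)$ into $\text{Nef}^{\,k}(F)$. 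Combining both gives $\text{Nef}^{\,k}(F)=\text{Psef}^{\,k}(F)$, and Theorem~\ref{Nef=Psef toric} applied to the smooth projective toric variety $F$ yields $F\cong\mathbb{P}^{d_{1}}\times\cdots\times\mathbb{P}^{d_{\rho}}$.

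The main remaining obstacle is the rigidification: to show the bundle $X=G\times^{P}F$ is trivial, so that $X\cong G/P\times F$ is a product of rational homogeneous spaces, hence itself rational homogeneous. Because the $P$-action on $F$ factors through the toric torus, each projective factor is the projectivization $\mathbb{P}(L_{i,0}\oplus\cdots\oplus L_{i,d_{i}})\to G/P$ of a sum of $G$-linearized line bundles. If some $L_{i,j}$ were not isomorphic to $L_{i,0}$, then pulling back along an appropriate rational curve $C\subseteq G/P$ would exhibit a Hirzebruch surface $\mathbb{F}_{m}$ with $m>0$ as a $B$-stable subvariety of $X$, producing a prime $B$-stable boundary divisor $D$ and an irreducible $B$-stable curve $C'$ with $D\cdot C'<0$. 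When $k=1$ this contradicts the hypothesis directly; for $2\le k\le n-1$, the contradiction follows by intersecting $D$ with $k-1$ ample divisors and $C'$ with $n-k-1$ ample divisors, in the spirit of Theorem~\ref{higher codimension arbitrary varieites}. Hence all line bundles $L_{i,j}$ coincide for each $i$, every projective bundle trivializes, and $X\cong G/P\times\mathbb{P}^{d_{1}}\times\cdots\times\mathbb{P}^{d_{\rho}}$, which is rational homogeneous.
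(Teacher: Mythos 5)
Your directions $(ii)\Rightarrow(i)$ and $(iii)\Rightarrow(ii)$ are fine (the Kleiman-transversality argument is essentially equivalent to the paper's appeal to the dual bases of $H^{*}$ of a rational homogeneous space). The problem is the very first structural step of $(i)\Rightarrow(iii)$: it is not true that a smooth complete toroidal spherical variety is a $G$-homogeneous fiber bundle $G\times^{P}F$ over $G/P$ with toric fiber $F$. That description characterizes \emph{toroidal horospherical} varieties (cf.\ Remark \ref{toroidal horospherical var.  has form G*(P)Y}), not toroidal spherical varieties in general. The local structure theorem only gives a $P$-stable affine chart $X_{0}\cong R_{u}(P)\times Z$ with $Z$ an affine $L$-spherical slice; it does not produce a global $G$-equivariant morphism $X\rightarrow G/P$, and indeed no such morphism exists in general. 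For instance, the wonderful compactification of an adjoint semisimple group $H$ (a smooth projective toroidal $(H\times H)$-spherical variety) admits no $(H\times H)$-equivariant map to any proper $(H\times H)/Q$, since the isotropy group $\mathrm{diag}(H)$ of the open orbit is contained in no proper parabolic. Everything downstream of this claim --- the descent of $\text{Nef}^{k}=\text{Psef}^{k}$ to the fiber, the application of Theorem \ref{Nef=Psef toric}, and the rigidification of the projective bundles --- therefore has nothing to stand on outside the horospherical case. (The rigidification step is also only sketched: the reduction of the case $2\le k\le n-2$ to a negative curve ``in the spirit of'' Theorem \ref{higher codimension arbitrary varieites} is asserted rather than carried out, and the conclusion would in any case only give $X\cong G/P\times\prod\mathbb{P}^{d_{i}}$, which is a strictly smaller class than the rational homogeneous spaces allowed by $(iii)$.)

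The paper avoids any global fibration. For $(i)\Rightarrow(iii)$ it first reduces a hypothesis in codimension $2\le k\le n-2$ to the divisor case $k=1$ by a Mori-contraction argument that uses regularity (a $G$-orbit closure is the transverse intersection of the boundary divisors containing it, so $F=\prod_{i}D_{i}$ in $A^{*}(X)$ and a negative intersection number can be produced against a product of nef classes). Then, with all boundary divisors nef, it uses log homogeneity: the exact sequence $0\rightarrow\mathbb{T}_{X}(-\log\partial X)\rightarrow\mathbb{T}_{X}\rightarrow\bigoplus_{i}(l_{i})_{*}\mathcal{N}_{X_{i}/X}\rightarrow0$ together with global generation of $\mathbb{T}_{X}(-\log\partial X)$ and of each $\mathcal{O}_{X}(X_{i})$ (nef Cartier divisors on spherical varieties are globally generated) forces $\mathbb{T}_{X}$ to be globally generated, hence $X$ homogeneous, hence rational homogeneous. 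If you want to salvage your approach, you would need to restrict it to the horospherical setting; for the general toroidal case the log-tangent-bundle argument (or something equivalent) seems unavoidable.
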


Note that in the condition $(iii)$ of the theorem, $X$ may be not $G$-homogeneous, but it's homogeneous under the action of a lager group.

\begin{proof}
We depart the condition $(i)$ into the following two conditions:

$(ia)$ there is an equality $\text{Nef}^{\, 1}(X)=\text{Psef}^{\, 1}(X)$;

$(ib)$ there exists some $2\leq k\leq n-2$ such that $\text{Nef}^{\, k}(X)=\text{Psef}^{\, k}(X)$.

\medskip

$(iii)\Rightarrow(ii)$ Since $X$ is a rational homogeneous space, it's well-known that there are two dual bases  of $H^{*}(X)$, see for example \cite[Prop. 1.3.6]{Br05}. Note that for the rational homogeneous space $X$, the natural cycle map $A^{*}(X)\rightarrow H^{*}(X)$ is an isomorphism. Thus, the conclusion $(ii)$ follows.

\medskip

$(ii)\Rightarrow (ib)$ is trivial.

\medskip

$(ib)\Rightarrow(ia)$ Assume that $\text{Nef}^{\, 1}(X)\neq\text{Psef}^{\, 1}(X)$, then by the duality, we can choose an extremal ray $R$ of $NE(X)$ such that $R\nsubseteq\text{Nef}_{1}(X)$. Assume that $E$ is an irreducible $B$-stable divisor such that $E\cdot R<0$. Choose a curve $C\in R$ and let $F=\overline{GC}$. Since $E\cdot C<0$ and $\mathfrak{D}_{X}=\emptyset$, we know that $E\in\mathcal{V}_{X}$ and $E\supseteq F$.

By \cite[Thm. 3.1]{Br93}, there exists a corresponding contraction $\pi=\text{cont}_{R}: X\rightarrow X'$. Thus, $F$ is a component of the exceptional locus. Assume that $a=\text{dim}(F)$ and $b=\text{dim}(\pi(F))$. Note that $F=\overline{GC}$ is a $G$-orbit closure. Let $D_{1}=E, D_{2} \ldots, D_{t}$ be all the boundary divisors containing $F$. By Proposition \ref{smooth complete toroidal iff regular iff log homogeneous} and the definition of regular varieties, $t$ is exactly the codimension of $F$ in $X$, and $\prod\limits_{i=1}^{t}D_{i}=F$ in $A^{*}(X)$.

Take $b$ general very ample divisors $H'_{t+1},\ldots,H'_{t+b}$ on $X'$. Let $D_{j}$ be a general global section of $\pi^{*}H'_{j}$ for $t+1\leq j\leq t+b$. Take $n-t-b-1$ general  ample divisor $D_{t+b+1},\ldots,D_{n-1}$ on $X$. Thus, $\prod\limits_{i=1}^{n-1}D_{i}=\lambda C$ in $N_{1}(X)_{\mathbb{R}}$, where $\lambda>0$. Hence, $E\cdot\prod\limits_{i=1}^{n-1}D_{i}<0$.

On the other hand, $\prod\limits_{i=1}^{k}D_{i}\in\text{Psef}^{\, k}(X)=\text{Nef}^{\, k}(X)$, and $E\cdot\prod\limits_{i=k+1}^{n-1}D_{i}\in\text{Psef}^{\, n-k}(X)$. Thus, $E\cdot\prod\limits_{i=1}^{n-1}D_{i}=(E\cdot\prod\limits_{i=k+1}^{n-1}D_{i})\cdot \prod\limits_{i=1}^{k}D_{i}\geq 0$. We get a contradiction. Thus, the conclusion $(ia)$ holds.

\medskip

$(ia)\Rightarrow(iii)$ Denote by $X_{1}, \ldots, X_{m}$ all the boundary divisors and identify $\partial{X}$ with the divisor $\sum\limits_{i=1}^{m}X_{i}$. By Proposition \ref{smooth complete toroidal iff regular iff log homogeneous}, $X$ is a regular variety. Hence, each $X_{i}$ is a smooth variety.

Denote by $l_{i}: X_{i}\rightarrow X$ the natural inclusion and $\mathcal{N}_{X_{i}/X}$ the normal bundle of $X_{i}$ in $X$. Then by \cite[Prop. 2.3.2, Prop. 4.1.1]{BB96}, there are two short exact sequences
$$0\rightarrow \mathbb{T}_{X}(-\text{log}\partial{X})\rightarrow \mathbb{T}_{X}\rightarrow \bigoplus\limits_{i=1}^{m} (l_{i})_{*}\mathcal{N}_{X_{i}/X}\rightarrow0,$$
and
$$0\rightarrow H^{0}(X, \mathbb{T}_{X}(-\text{log}\partial{X}))\rightarrow H^{0}(X, \mathbb{T}_{X})\rightarrow \bigoplus\limits_{i=1}^{m}H^{0}(X, (l_{i})_{*}\mathcal{N}_{X_{i}/X})\rightarrow 0.$$
Note that the second exact sequence appeared in the proof of \cite[Prop. 4.11]{BB96}.

Now we consider the following commutative diagram:
\begin{eqnarray*}
\xymatrix{
H^{0}(X, \mathbb{T}_{X}(-\text{log}\partial{X}))\otimes\mathcal{O}_{X}\ar@{^{(}->}[r]\ar[d]^-{\phi_{1}}&H^{0}(X, \mathbb{T}_{X})\otimes\mathcal{O}_{X}\ar@{>>}[r]\ar[d]^-{\phi_{2}}& \bigoplus\limits_{i=1}^{m}H^{0}(X, (l_{i})_{*}\mathcal{N}_{X_{i}/X})\otimes\mathcal{O}_{X}\ar[d]^-{\phi_{3}}\\
\mathbb{T}_{X}(-\text{log}\partial{X})\ar@{^{(}->}[r]&\mathbb{T}_{X}\ar@{>>}[r]&\bigoplus\limits_{i=1}^{m}(l_{i})_{*}\mathcal{N}_{X_{i}/X}.
}
\end{eqnarray*}

By Proposition \ref{smooth complete toroidal iff regular iff log homogeneous} and the definition of log homogeneous varieties, $\mathbb{T}_{X}(-\text{log}\partial{X})$ is globally generated, i.e. $\phi_{1}$ is surjective. By $(ia)$, each $X_{i}$ is a nef Cartier divisor on $X$. By \cite[Cor. 3.2.11]{Per12}, $\mathcal{O}_{X}(X_{i})$ is globally generated, which implies that $\mathcal{N}_{X_{i}/X}=(\mathcal{O}_{X}(X_{i}))|_{X_{i}}$ is globally generated. Thus, $\phi_{3}$ is surjective. Then by the Short Five Lemma, $\phi_{2}$ is surjective, i.e. the tangent bundle $\mathbb{T}_{X}$ is globally generated, which implies that $X$ is a homogeneous variety. Since spherical varieties are rational (see for example \cite[Cor. 2.1.3]{Per12}), $X$ is isomorphic to a rational homogeneous space.
\end{proof}

\subsection{Horospherical varieties} \label{subsection horospherical varieties}

The main results in this subsection are as follows. Let $X$ be a smooth projective horospherical $G/H$-embedding such that $H\supseteq R_{u}(B)$ and $N_{G}(H)=P_{I}$. Theorem \ref{nef2=psef2 horospherical} says that if $\text{Nef}^{\, 2}(X)=\text{Psef}^{\, 2}(X)$ and $\text{dim}(X)\geq 3$, then $\text{Nef}^{\, 1}(X)=\text{Psef}^{\, 1}(X)$. Corollary \ref{nef1=psef1 horospherical description} says that if $\text{Nef}^{\, 1}(X)=\text{Psef}^{\, 1}(X)$, then there is a $G$-equivariant morphism $\pi: X\rightarrow G/P_{S\backslash\mathfrak{D}_{0}(G/H)}$ and each fiber is isomorphic to the product of some smooth projective $L$-horospherical varieties of Picard number one, where $\mathfrak{D}_{0}(G/H)$ is identified with a subset of $S\backslash I$ by Remark \ref{D=S-I, rho(D_a)=a*} in the following, and $L$ is a Levi factor of $P_{S\backslash\mathfrak{D}_{0}(G/H)}$. Corollary \ref{nef1=psef1 horospherical description} is a direct consequence of Theorem \ref{morphism to G/P_0 horospherical} and Theorem \ref{Nef1=Psef1 horospherical is more or less product}.

This subsection is organized as follows. In the the part \ref{subsubsection preliminaries}, we study some properties on horospherical varieties. In the part \ref{subsubsection horospherical codimension two}, we prove Theorem \ref{nef2=psef2 horospherical}, which has been described as above. In the part \ref{subsubsection isomorphic colored fans}, we study the $G$-equivariant morphism $\pi$ mentioned above. In the part \ref{subsubsection horospherical codimension one}, we describe of the fiber of $\pi$.

\subsubsection{Preliminaries} \label{subsubsection preliminaries}

In order to study the smooth projective horospherical varieties whose effective cycles of codimension $k$ are nef for $k=1 \text{ or } 2$, we need to study some related properties on horospherical varieties. These results are easy to prove, but we fail to find references to include them.

Now we describe the contents in this part. Let $X$ be a horospherical $G/H$-embedding such that $H\supseteq R_{u}(B)$. Firstly, we make a remark on the isotropy group $G_{x}$ and its normalizer $N_{G}(G_{x})$ for a point $x\in X$. Then we show in Corollary \ref{correspondence rays and divisors horospherical Q-factorial} that if $X$ is $\mathbb{Q}$-factorial, then there is a one to one correspondence between the rays of $\mathbb{F}_{X}$ and the elements in $\mathcal{V}_{X}\cup\mathfrak{D}_{X}$. We continue to recall the description of the Picard number of $X$ (assuming $X$ to be complete) and the descriptions of the $G$-orbits on $X$. Finally, we study the relationship between the images of $G$-orbits and the images of colored cones in Proposition \ref{all faces are colored faces, maps of orbits, C_u maps to a point}.

\bigskip

\begin{defi}
A spherical $G/H$-embedding $X$ is $G$-horospherical if there is a point $x\in G/H$ such that $G_{x}\supseteq R_{u}(B)$. We also say that $X$ is a horospherical $G/H$-embedding.
\end{defi}

Firstly, we will summarize some well-known results on horospherical varieties, which will be used frequently in the following.

\begin{rmk} \label{D=S-I, rho(D_a)=a*}
Let $X$ be a horospherical $G/H$-embedding such that $H\supseteq R_{u}(B)$. Then the following hold.

$(i)$ $N_{G}(H)$ is a parabolic subgroup of $G$ containing $B$, and $TH=HT=N_{G}(H)$.

$(ii)$ Let $I$ be the subset of $S$ such that $P_{I}=N_{G}(H)$. Then $P_{I}/H$ is isomorphic to a torus, $M_{G/H}\cong \chi(P_{I}/H)$ and $H=\text{Ker}_{P_{I}}M_{G/H}$. We identify $M_{G/H}$ and $\chi(P_{I}/H)$. Moreover, $P_{I}$ is the unique parabolic subgroup $P$ of $G$ containing $B$ such that $H$ is the intersection of some characters of $P$.

$(iii)$ There is a bijective map $S\backslash I\rightarrow \mathfrak{D}(G/H), \alpha\mapsto D_{\alpha}$. When there is no confusions, we will identify them. Moreover, this identity only depends on the triple $(B, T, U)$, where $U=G/H$ is the open $G$-orbit.

$(iv)$ For any $\alpha\in S\backslash I$, $\rho(\nu_{D_{\alpha}})=\alpha^{\vee}|_{M_{G/H}}$, where $\alpha^{\vee}$ is the corresponding coroot.

$(v)$ Let $B'$ be any Borel subgroup of $G$, then by \cite[Cor. 21.3A]{Hu75}, there exists some element $g\in G$ such that $R_{u}(B')=gR_{u}(B)g^{-1}$. Let $x=H/H\in G/H\subseteq X$. Then $G_{g\cdot x}=gHg^{-1}\supseteq R_{u}(B')$. So $X$ is also a horospherical $G/H'$-embedding such that $H'\supseteq R_{u}(B')$ for any given Borel subgroup $B'$ and some corresponding $H'$. Therefore, when saying that $X$ is a horospherical $G/H$-embedding, we can either assume $H\supseteq R_{u}(B)$ or assume $H\supseteq R_{u}(B')$ for any fixed Borel group $B'$ of $G$. This doesn't matter. In particular, we can assume $H\supseteq R_{u}(B^{-})$ and $N_{G}(H)=P_{I}^{-}$ if necessary.
\end{rmk}

Note that by \cite[Prop. 1.3]{Pa06}, $N_{G}(H)$ is a parabolic subgroup $P$ of $G$, and it's the unique parabolic subgroup $P$ of $G$ containing $B$ such that $H$ is the intersection of the kernels of some characters of $P$. Then we can easily know the conclusions $(i)$ and $(ii)$. The conclusion $(iii)$ follows from the Bruhat decomposition of a rational homogeneous space and the one to one correspondence between the $B$-orbits on $G/H$ and the $B$-orbits on $G/P_{I}$ induced by the natural morphism $\pi: G/H\rightarrow G/P_{I}$. The conclusion $(iv)$ follows from the isomorphism in $(ii)$.

For a spherical $G/H$-embedding $X$, we also use $\rho$ to denote the composition of the maps $\mathfrak{B}(X)\rightarrow\{\text{valuations}\}\rightarrow (N_{X})_{\mathbb{Q}}, D\mapsto\nu_{D}\mapsto\rho(\nu_{D})$.

\begin{prop} \label{description of V_Y}
Let $X$ be a spherical $G/H$-embedding and $Y$ be a $G$-orbit on $X$. Then $\mathcal{V}_{Y}=\mathcal{V}_{X}\cap\rho^{-1}(\mathfrak{C}_{Y})$.
\end{prop}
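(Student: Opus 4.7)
The plan is to prove the two inclusions separately. The containment $\mathcal{V}_Y \subseteq \mathcal{V}_X \cap \rho^{-1}(\mathfrak{C}_Y)$ is immediate from the definitions: any $D \in \mathcal{V}_Y$ lies in $\mathcal{V}_X$ by definition, and since $D \in \mathfrak{B}_Y$, one has $\rho(\nu_D) \in \mathfrak{C}_Y$ by the very construction of $\mathfrak{C}_Y$.

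The reverse inclusion is a direct application of the Luna--Vust correspondence. Fix $D \in \mathcal{V}_X$ with $\rho(\nu_D) \in \mathfrak{C}_Y$, and let $V$ denote the open $G$-orbit of $D$, so that $\overline{V} = D$ and $\mathfrak{B}_V = \{D\}$; hence $\mathfrak{C}^c_V = (\mathbb{Q}^+\rho(\nu_D), \emptyset)$. Since $D$ is $G$-stable, the valuation $\nu_D$ is $G$-invariant, and by the injectivity of $\rho$ on $\mathcal{V}(G/H)$ combined with $\nu_D \neq 0$, one has $\rho(\nu_D) \in \mathcal{V}(G/H) \setminus \{0\}$, so it lies in the relative interior $\mathfrak{C}_V^\circ$. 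I next take the smallest face $\mathfrak{C}_0$ of $\mathfrak{C}_Y$ that contains $\rho(\nu_D)$ and set $\mathfrak{D}_0 = \mathfrak{D}_Y \cap \rho^{-1}(\mathfrak{C}_0)$; then $(\mathfrak{C}_0, \mathfrak{D}_0)$ satisfies all three conditions for a colored face of $\mathfrak{C}^c_Y$ in the sense of Definition \ref{defi. of colored fans}(ii), since $\mathfrak{C}_0^\circ \cap \mathcal{V}(G/H)$ already contains $\rho(\nu_D)$.

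By Theorem \ref{correspondence fans and varieties}(ii), this colored face corresponds to some $G$-orbit $Z$ with $Y \subseteq \overline{Z}$ and $\mathfrak{C}^c_Z = (\mathfrak{C}_0, \mathfrak{D}_0)$. Now both colored cones $\mathfrak{C}^c_V$ and $\mathfrak{C}^c_Z$ lie in $\mathbb{F}_X$, and the relative interior of each of them contains the common vector $\rho(\nu_D) \in \mathcal{V}(G/H)$; the separation axiom for colored fans (Definition \ref{defi. of colored fans}(iii)(b)) therefore forces $\mathfrak{C}^c_V = \mathfrak{C}^c_Z$, and the bijection of Theorem \ref{correspondence fans and varieties}(iii) then gives $V = Z$. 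Consequently $Y \subseteq \overline{Z} = \overline{V} = D$, so $D \in \mathcal{V}_Y$, which closes the argument. I do not anticipate any real obstacle; the only point requiring some care is to pass to the minimal face of $\mathfrak{C}_Y$ containing $\rho(\nu_D)$, so that this vector sits in a relative interior and the uniqueness property of the colored fan can be invoked.
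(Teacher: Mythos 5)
Your proof is correct, but it takes a genuinely different route from the paper's. The paper argues via valuations: since $\rho(\nu_{D})\in\mathfrak{C}_{Y}\cap\mathcal{V}(G/H)$, the center of the $G$-invariant valuation $\nu_{D}$ exists on the simple open subembedding $X_{Y}$ by [Kn91, Thm.\ 2.5b]; as the center of $\nu_{D}$ on $X$ is $D$ itself, $D\cap X_{Y}\neq\emptyset$, and this closed $G$-stable subset of $X_{Y}$ must contain the unique closed orbit $Y$. You instead stay entirely inside the combinatorics of the colored fan: you identify $\mathfrak{C}^{c}_{V}=(\mathbb{Q}^{+}\rho(\nu_{D}),\emptyset)$ for the open orbit $V$ of $D$, produce the colored face of $\mathfrak{C}^{c}_{Y}$ whose relative interior contains $\rho(\nu_{D})$, and let the separation axiom of Definition \ref{defi. of colored fans}(iii)(b) force the two colored cones to coincide. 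Both arguments are sound; the paper's is shorter and closer to the geometric mechanism of the Luna--Vust theory, while yours has the advantage of using only the fan axioms and statements actually reproduced in Section \ref{section spherical varieties}, rather than an external result on centers of valuations. Two small points of care in your version: the final step needs the injectivity of the map $Y\mapsto\mathfrak{C}^{c}_{Y}$ on all of $S_{X,G}$, which is standard but follows from Theorem \ref{correspondence fans and varieties}(i) applied to the simple open subembeddings (your citation of part (iii), the embedding--fan bijection, is not quite the right reference); and the nonvanishing $\rho(\nu_{D})\neq 0$ is cleanest to see by noting that $(0,\emptyset)$ is the colored cone of the open orbit $G/H$, which $V$ is not.
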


\begin{proof}
By the definition, $\mathcal{V}_{Y}\subseteq\mathcal{V}_{X}\cap\rho^{-1}(\mathfrak{C}_{Y})$.

On the other hand, take any $D\in\mathcal{V}_{X}\cap\rho^{-1}(\mathfrak{C}_{Y})$. Let $X_{Y}$ be the open $G$-subset which is a simple spherical $G/H$-embedding with the unique closed $G$-orbit $Y$. By \cite[Thm. 2.5b]{Kn91}, the center of $\nu_{D}$ exists on $X_{Y}$. This implies that $D\cap X_{Y}\neq\emptyset$, since the center of $\nu_{D}$ on $X$ is $D$. Since $Y$ is the unique $G$-orbit on $X_{Y}$, $D\cap X_{Y}\supseteq Y$. Thus, $D\supseteq Y$ and $D\in\mathcal{V}_{Y}$. The conclusion follows.
\end{proof}

\begin{prop} \label{description of D_Y}
Let $X$ be a spherical $G/H$-embedding and $Y$ be a $G$-orbit. Then $\mathfrak{D}_{Y}\cap\rho^{-1}(\mathcal{V}(G/H))=\{D\in\mathfrak{D}_{X}|\rho(\nu_{D})\in\mathfrak{C}_{Y}\cap\mathcal{V}(G/H)\}$. If $X$ is moreover $G$-horospherical, then $\mathfrak{D}_{Y}=\{D\in\mathfrak{D}_{X}|\rho(\nu_{D})\in\mathfrak{C}_{Y}\}$, and $\mathfrak{B}_{Y}=\{D\in\mathcal{V}_{X}\cup\mathfrak{D}_{X}\mid \rho(\nu_{D})\in\mathfrak{C}_{Y}\}$.
\end{prop}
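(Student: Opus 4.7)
The plan is to first establish the general equality via the colored face structure of $\mathbb{F}_X$ and the uniqueness clause built into the definition of a colored fan, and then deduce the two horospherical statements by combining this with Proposition~\ref{description of V_Y} together with the standard fact that $\mathcal{V}(G/H)=(N_{G/H})_{\mathbb{Q}}$ when $G/H$ is horospherical.

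The inclusion $\subseteq$ in the first equality is immediate: if $D\in\mathfrak{D}_Y$ then $D\in\mathfrak{D}_X$ tautologically, and $\rho(\nu_D)\in\mathfrak{C}_Y$ since $D\in\mathfrak{B}_Y$. For the reverse inclusion I would take $D\in\mathfrak{D}_X$ with $v:=\rho(\nu_D)\in\mathfrak{C}_Y\cap\mathcal{V}(G/H)$, and pick some $Y_1\in S_{X,G}$ with $D\in\mathfrak{D}_{Y_1}$, so that $v\in\mathfrak{C}_{Y_1}$ as well. Let $F$ and $F_1$ be the smallest faces of $\mathfrak{C}_Y$ and $\mathfrak{C}_{Y_1}$ containing $v$; then $v$ lies in the relative interior of each, and since $v\in\mathcal{V}(G/H)$ the non-emptiness condition in Definition~\ref{defi. of colored fans}$(ii)$ is satisfied, so both are colored faces. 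By Theorem~\ref{correspondence fans and varieties}$(ii)$ we may write $F=\mathfrak{C}_Z$ and $F_1=\mathfrak{C}_{Z_1}$ with $\overline{Z}\supseteq Y$, $\overline{Z_1}\supseteq Y_1$, and the colored face formula gives $\mathfrak{D}_Z=\mathfrak{D}_Y\cap\rho^{-1}(\mathfrak{C}_Z)$ and $\mathfrak{D}_{Z_1}=\mathfrak{D}_{Y_1}\cap\rho^{-1}(\mathfrak{C}_{Z_1})$. The uniqueness clause of Definition~\ref{defi. of colored fans}$(iii)(b)$, applied to $v\in\mathcal{V}(G/H)$, forces $Z=Z_1$ since $v$ lies in the relative interior of both $\mathfrak{C}_Z$ and $\mathfrak{C}_{Z_1}$. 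From $D\in\mathfrak{D}_{Y_1}$ and $v\in\mathfrak{C}_Z$ we then obtain $D\in\mathfrak{D}_{Z_1}=\mathfrak{D}_Z\subseteq\mathfrak{D}_Y$, which settles the first assertion.

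For the horospherical assertions, the key input is the well-known fact that $\mathcal{V}(G/H)=(N_{G/H})_{\mathbb{Q}}$ for horospherical $G/H$ (compatible with Proposition~\ref{valuation cone is a max dim. cone} in the case where the list of defining inequalities is empty). Once this is in hand, $\rho^{-1}(\mathcal{V}(G/H))$ is all of $\mathfrak{B}(X)$ and $\mathfrak{C}_Y\cap\mathcal{V}(G/H)=\mathfrak{C}_Y$, so the first statement reduces to $\mathfrak{D}_Y=\{D\in\mathfrak{D}_X\mid\rho(\nu_D)\in\mathfrak{C}_Y\}$. The third statement then follows by writing $\mathfrak{B}_Y=\mathcal{V}_Y\cup\mathfrak{D}_Y$, combining the just-obtained description of $\mathfrak{D}_Y$ with Proposition~\ref{description of V_Y}, and distributing the intersection over the union.

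The main obstacle is the $\supseteq$ direction of the first statement: one has to argue that the smallest face of $\mathfrak{C}_Y$ through $v$ is genuinely a colored face (this is precisely where the hypothesis $v\in\mathcal{V}(G/H)$ is used) and that the two orbits $Z$ and $Z_1$ arising from $\mathfrak{C}_Y$ and $\mathfrak{C}_{Y_1}$ coincide via the uniqueness built into the fan axiom. Once this identification is made, the colored face formulas deliver the rest of the proof, and the horospherical parts become formal consequences.
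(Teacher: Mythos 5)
Your proposal is correct and follows essentially the same route as the paper's own proof: both arguments take the minimal face of the colored cone through $v=\rho(\nu_D)$, use $v\in\mathcal{V}(G/H)$ to see it is a colored face, invoke the uniqueness clause of Definition~\ref{defi. of colored fans}$(iii)(b)$ to identify the two orbits arising from $\mathfrak{C}_Y$ and from the orbit whose color set contains $D$, and then conclude via Theorem~\ref{correspondence fans and varieties}$(ii)$; the horospherical reduction via $\mathcal{V}(G/H)=(N_{G/H})_{\mathbb{Q}}$ and Proposition~\ref{description of V_Y} is likewise identical.
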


\begin{proof}
By the definition of $\mathfrak{C}_{Y}$, $\rho(\mathfrak{D}_{Y})\subseteq\mathfrak{C}_{Y}$. On the other hand, assume that $D\in\mathfrak{D}_{Z}\cap\rho^{-1}(\mathfrak{C}_{Y}\cap\mathcal{V}(G/H))$, where $Z$ is a $G$-orbit on $X$.

Let $\mathfrak{C}$ be the minimal face of $\mathfrak{C}_{Y}$ containing $\rho(\nu_{D})$. Then $\rho(\nu_{D})\in\mathfrak{C}^{o}$. Since $\rho(\nu_{D})\in\mathcal{V}(G/H)$, $\mathfrak{C}^{o}\cap\mathcal{V}(G/H)\neq\emptyset$. Then by Definition \ref{defi. of colored fans}, there is a subset $\mathfrak{D}\subseteq\mathfrak{D}(G/H)$ such that $(\mathfrak{C}, \mathfrak{D})\in\mathbb{F}_{X}$. By Theorem \ref{correspondence fans and varieties}, there is a $G$-orbit $V$ on $X$ such that $Y\subseteq\overline{V}$ and $\mathfrak{C}^{c}_{V}=(\mathfrak{C}, \mathfrak{D})$. Note that $\rho(\nu_{D})\in\mathfrak{C}_{V}^{o}$, and $\mathfrak{C}^{c}_{V}$ is a colored face of $\mathfrak{C}^{c}_{Y}$.

A similar discussion implies that there is a $G$-orbit $V'$ on $X$ such that $\rho(\nu_{D})\in\mathfrak{C}_{V'}^{o}$, and $\mathfrak{C}^{c}_{V'}$ is a colored face of $\mathfrak{C}^{c}_{Z}$. In particular, $\rho(\nu_{D})\in\mathfrak{C}_{V}^{o}\cap\mathfrak{C}_{V'}^{o}\cap\mathcal{V}(G/H)$. Then by the condition $(ii)$ in Definition \ref{defi. of colored fans}, $(\mathfrak{C}_{V}, \mathfrak{D}_{V})=(\mathfrak{C}_{V'}, \mathfrak{D}_{V'})$, which implies $V=V'$ by Theorem \ref{correspondence fans and varieties}.

On the other hand, by the fact $D\in\mathfrak{D}_{Z}$ and the definition of colored faces, $D\in\mathfrak{D}_{V'}=\mathfrak{D}_{V}$, i.e. $D\supseteq V$. Hence, $D\supseteq\overline{V}\supseteq Y$ and $D\in\mathfrak{D}_{Y}$. So $\mathfrak{D}_{Y}\cap\rho^{-1}(\mathcal{V}(G/H))=\{D\in\mathfrak{D}_{X}|\rho(\nu_{D})\in\mathfrak{C}_{Y}\cap\mathcal{V}(G/H)\}$.

If $X$ is moreover $G$-horospherical, then the conclusion follows from \cite[Cor. 6.2]{Kn91} and Proposition \ref{description of V_Y}.
\end{proof}

\begin{cor} \label{correspondence rays and divisors horospherical Q-factorial}
Let $X$ be a $\mathbb{Q}$-factorial horospherical $G/H$-embedding. Then there is a bijective map $\psi: \mathcal{V}_{X}\cup\mathfrak{D}_{X}\rightarrow\{\text{rays of }\mathbb{F}_{X}\}, D\mapsto\mathbb{Q}^{+}\rho(\nu_{D})$.
\end{cor}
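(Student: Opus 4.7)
The plan is to verify well-definedness, surjectivity, and injectivity of $\psi$ by combining the $\mathbb{Q}$-factoriality criterion (Proposition \ref{locally fac. Q-fac. criterion}$(ii)$) with the horospherical description of $\mathfrak{B}_{Y}$ (Proposition \ref{description of V_Y} and Proposition \ref{description of D_Y}).

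First I would check well-definedness, i.e.\ that for every $D\in\mathcal{V}_{X}\cup\mathfrak{D}_{X}$ the image $\mathbb{Q}^{+}\rho(\nu_{D})$ really is a ray of some cone in $\mathbb{F}_{X}$. If $D\in\mathfrak{D}_{X}$ then by definition $D\in\mathfrak{D}_{Y}\subseteq\mathfrak{B}_{Y}$ for some $G$-orbit $Y$; if $D\in\mathcal{V}_{X}$ then $D$ is $G$-stable (as $\partial X$ is a union of $G$-orbit closures) and equals $\overline{Z}$ for the unique codimension-one $G$-orbit $Z$ it contains, so $D\in\mathcal{V}_{Z}\subseteq\mathfrak{B}_{Z}$. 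In either case $D\in\mathfrak{B}_{Y}$ for some $Y\in S_{X,G}$. By the $\mathbb{Q}$-factoriality hypothesis and Proposition \ref{locally fac. Q-fac. criterion}$(ii)$, the elements $\{\rho(\nu_{D'})\mid D'\in\mathfrak{B}_{Y}\}$ are linearly independent in $(N_{X})_{\mathbb{Q}}$; in particular $\rho(\nu_{D})\neq 0$. Since $\mathfrak{C}_{Y}$ is by definition generated by $\rho(\mathfrak{B}_{Y})$, this cone is simplicial and $\mathbb{Q}^{+}\rho(\nu_{D})$ is one of its rays.

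Next I would establish surjectivity. Any ray $R$ of $\mathbb{F}_{X}$ is a ray of some $\mathfrak{C}_{Y}$ with $Y\in S_{X,G}$. Since $\mathfrak{C}_{Y}$ is generated by the linearly independent family $\{\rho(\nu_{D})\mid D\in\mathfrak{B}_{Y}\}$, its rays are exactly $\{\mathbb{Q}^{+}\rho(\nu_{D})\mid D\in\mathfrak{B}_{Y}\}$, so $R=\psi(D)$ for some $D\in\mathfrak{B}_{Y}\subseteq\mathcal{V}_{X}\cup\mathfrak{D}_{X}$.

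For injectivity, suppose $D_{1},D_{2}\in\mathcal{V}_{X}\cup\mathfrak{D}_{X}$ satisfy $\mathbb{Q}^{+}\rho(\nu_{D_{1}})=\mathbb{Q}^{+}\rho(\nu_{D_{2}})=:R$. By the previous steps there exists $Y\in S_{X,G}$ with $R\subseteq\mathfrak{C}_{Y}$, so $\rho(\nu_{D_{1}}),\rho(\nu_{D_{2}})\in\mathfrak{C}_{Y}$. This is where the horospherical hypothesis becomes crucial: the second part of Proposition \ref{description of D_Y} combined with Proposition \ref{description of V_Y} gives $\mathfrak{B}_{Y}=\{D\in\mathcal{V}_{X}\cup\mathfrak{D}_{X}\mid \rho(\nu_{D})\in\mathfrak{C}_{Y}\}$, so both $D_{1},D_{2}\in\mathfrak{B}_{Y}$. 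Linear independence of $\rho$ on $\mathfrak{B}_{Y}$ then forces $D_{1}=D_{2}$. The only conceptual subtlety is the injectivity step, where one must use the horospherical form of $\mathfrak{B}_{Y}$ to transfer the equality of rays into membership in a common $\mathfrak{B}_{Y}$; without the horospherical assumption one would only get this for rays lying inside $\mathcal{V}(G/H)$, which is why the statement is restricted to horospherical embeddings.
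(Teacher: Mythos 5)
Your proposal is correct and follows essentially the same route as the paper: well-definedness and injectivity via the linear independence from Proposition \ref{locally fac. Q-fac. criterion}$(ii)$, and the key injectivity step via the horospherical description $\mathfrak{B}_{Y}=\{D\in\mathcal{V}_{X}\cup\mathfrak{D}_{X}\mid\rho(\nu_{D})\in\mathfrak{C}_{Y}\}$ from Proposition \ref{description of D_Y}. The only cosmetic difference is that the paper first produces an orbit $Y$ with $\mathfrak{C}_{Y}=R$ exactly (via \cite[Cor. 6.2]{Kn91} and Theorem \ref{correspondence fans and varieties}) whereas you work with any $Y$ satisfying $R\subseteq\mathfrak{C}_{Y}$, which is equally valid.
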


\begin{proof}
By Proposition \ref{locally fac. Q-fac. criterion}$(ii)$, $\psi$ is well-defined. Take any ray $R$ of $\mathbb{F}_{X}$. By the definition of $\mathbb{F}_{X}$, $R$ is generated by some $D\in\mathcal{V}_{X}\cup\mathfrak{D}_{X}$, i.e. $\psi$ is surjective.

By \cite[Cor. 6.2]{Kn91} and Theorem \ref{correspondence fans and varieties}, there exists a $G$-orbit $Y$ on $X$ such that $\mathfrak{C}_{Y}=R$. If $D'\in\mathcal{V}_{X}\cup\mathfrak{D}_{X}$ also satisfies that $\rho(\nu_{D'})\in R$, then by Proposition \ref{description of D_Y}, $D, D'\in\mathfrak{B}_{Y}$. By Proposition \ref{locally fac. Q-fac. criterion}$(ii)$, $D=D'$, i.e. $\psi$ is injective.
\end{proof}

Let $X$ be a $\mathbb{Q}$-factorial horospherical $G/H$-embedding. Let $R=\mathbb{Q}^{+}e$ be a ray of $\mathbb{F}_{X}$, then we denote by $D_{R}$ or $D_{e}$ the unique element in $\mathcal{V}_{X}\cup\mathfrak{D}_{X}$ such that $\rho^{-1}(R)=\{D_{R}\}=\{D_{e}\}$.

By Proposition \ref{cycles are rat. equiv. to stable ones, picard group on spherical varieties}$(ii)$ and Corollary \ref{correspondence rays and divisors horospherical Q-factorial}, we can get the formula to compute the Picard number of a complete $\mathbb{Q}$-factorial horospherical variety. More precisely, we have the following

\begin{prop} \label{Picard number horospherical}
Let $X$ be a complete $\mathbb{Q}$-factorial $G$-horospherical variety. Then the Picard number of $X$ is $\rho(X)=m-r+d$,  where $m$ is the number of rays in $\mathbb{F}_{X}$, $r=\text{rank}(G/H)$, and $d$ is the number of elements in the set $\mathfrak{D}(G/H)\backslash\mathfrak{D}_{X}$.
\end{prop}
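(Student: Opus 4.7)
The plan is to combine the two cited results. First I would apply Proposition \ref{cycles are rat. equiv. to stable ones, picard group on spherical varieties}$(ii)$: since $X$ is complete, there is a short exact sequence
\begin{eqnarray*}
0 \longrightarrow M_X \longrightarrow \mathbb{Z}(\mathfrak{B}(X)) \longrightarrow A^1(X) \longrightarrow 0,
\end{eqnarray*}
so $A^1(X)$ is a finitely generated abelian group of rank $|\mathfrak{B}(X)| - \mathrm{rank}(M_X) = |\mathfrak{B}(X)| - r$. Because $X$ is $\mathbb{Q}$-factorial, $\mathrm{Pic}(X) \subseteq A^1(X)$ is of finite index, hence the Picard number $\rho(X)$ equals $\mathrm{rank}\, A^1(X) = |\mathfrak{B}(X)| - r$.

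Next I would compute $|\mathfrak{B}(X)|$ by splitting $\mathfrak{B}(X) = \mathcal{V}_X \sqcup \mathfrak{D}(G/H)$ (this disjoint decomposition is recorded right before Definition \ref{defi. of colored fans}). Corollary \ref{correspondence rays and divisors horospherical Q-factorial} provides the bijection $\mathcal{V}_X \cup \mathfrak{D}_X \to \{\text{rays of }\mathbb{F}_X\}$, so $|\mathcal{V}_X| + |\mathfrak{D}_X| = m$. Since $\mathfrak{D}_X \subseteq \mathfrak{D}(G/H)$, by definition $d = |\mathfrak{D}(G/H)| - |\mathfrak{D}_X|$, hence
\begin{eqnarray*}
|\mathfrak{B}(X)| = |\mathcal{V}_X| + |\mathfrak{D}(G/H)| = \bigl(|\mathcal{V}_X| + |\mathfrak{D}_X|\bigr) + \bigl(|\mathfrak{D}(G/H)| - |\mathfrak{D}_X|\bigr) = m + d.
\end{eqnarray*}
Combining this with the previous step yields $\rho(X) = m - r + d$.

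There is no real obstacle: the only point requiring a small verification is that the horospherical $\mathbb{Q}$-factorial hypothesis is genuinely used through Corollary \ref{correspondence rays and divisors horospherical Q-factorial}, which is what ensures that distinct elements of $\mathcal{V}_X \cup \mathfrak{D}_X$ give distinct rays (so that $m$ really counts them with no collision). Everything else is a straightforward rank count from the exact sequence.
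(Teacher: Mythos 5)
Your proposal is correct and is exactly the argument the paper intends: Proposition \ref{cycles are rat. equiv. to stable ones, picard group on spherical varieties}$(ii)$ gives $\rho(X)=|\mathfrak{B}(X)|-r$, and Corollary \ref{correspondence rays and divisors horospherical Q-factorial} together with the disjoint decomposition $\mathfrak{B}(X)=\mathcal{V}_{X}\sqcup\mathfrak{D}(G/H)$ gives $|\mathfrak{B}(X)|=m+d$. The paper states the proposition as a direct consequence of these two results without writing out the count, so your write-up is just a fuller version of the same proof.
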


\begin{prop}(\cite[Prop. 2.4]{BM13}) \label{dim. of horospherical var.}
Let $X$ be a horospherical $G/H$-embedding and $Y$ be a $G$-orbit on $X$. Assume that $H\supseteq R_{u}(B)$ and $N_{G}(H)=P_{I}$. Then

$(i)$ $M_{\mathfrak{C}_{Y}}\subseteq \chi(P_{I\cup\mathfrak{D}_{Y}})$, where $M_{\mathfrak{C}_{Y}}=M_{G/H}\cap\mathfrak{C}_{Y}^{\bot}$;

$(ii)$ $Y$ is $G$-equivariantly isomorphic to $G/K$, where $K=\text{Ker}_{P_{I\cup\mathfrak{D}_{Y}}}M_{\mathfrak{C}_{Y}}$;

$(iii)$ $\text{dim}(Y)=\text{rank}(M_{\mathfrak{C}_{Y}})+\text{dim}(G/P_{I\cup\mathfrak{D}_{Y}})$.
\end{prop}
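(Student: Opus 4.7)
My plan is to prove the three assertions in order, using the horospherical dictionary from Remark \ref{D=S-I, rho(D_a)=a*} throughout. For $(i)$, the cone $\mathfrak{C}_{Y}$ is generated by the vectors $\rho(\nu_{D})$ with $D\in\mathfrak{B}_{Y}=\mathcal{V}_{Y}\cup\mathfrak{D}_{Y}$, so $\chi\in M_{G/H}$ lies in $M_{\mathfrak{C}_{Y}}$ precisely when $\langle\chi,\rho(\nu_{D})\rangle=0$ for every such $D$. For the colors $D=D_{\alpha}$ with $\alpha\in\mathfrak{D}_{Y}\subseteq S\setminus I$, Remark \ref{D=S-I, rho(D_a)=a*}$(iv)$ rewrites this as $\langle\chi,\alpha^{\vee}\rangle=0$. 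Under the identification $M_{G/H}\cong\chi(P_{I}/H)\subseteq\chi(P_{I})$, the character $\chi$ is already defined on $P_{I}$, and the vanishing on the coroots $\alpha^{\vee}$ for $\alpha\in\mathfrak{D}_{Y}$ is precisely the standard criterion for $\chi$ to extend from $P_{I}$ to the larger parabolic $P_{I\cup\mathfrak{D}_{Y}}$.

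For $(ii)$, I will pick a point $y$ in the open $B$-orbit of $Y$ with $K:=G_{y}\supseteq R_{u}(B)$; this is possible because $Y$, being a $G$-orbit on a horospherical variety, is itself horospherical, and Remark \ref{D=S-I, rho(D_a)=a*}$(v)$ then lets me conjugate to put $R_{u}(B)$ into the stabilizer. Applying Remark \ref{D=S-I, rho(D_a)=a*}$(i)(ii)$ to $Y=G/K$, the normalizer $N_{G}(K)$ is a parabolic $P_{J}\supseteq B$ and $K=\mathrm{Ker}_{P_{J}}M_{G/K}$, so it will suffice to verify $M_{G/K}=M_{\mathfrak{C}_{Y}}$ and $J=I\cup\mathfrak{D}_{Y}$. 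The first equality is a standard Luna--Vust fact: the $B$-semi-invariant rational functions on $Y$ are exactly the restrictions of those $f\in\mathbb{C}(G/H)^{(B)}$ whose associated character $\chi_{f}$ satisfies $\langle\chi_{f},\rho(\nu_{D})\rangle=0$ for every $D\in\mathfrak{B}_{Y}$, namely those that are regular and nonvanishing along the generic point of $Y$ inside a $B$-stable affine open neighborhood of $Y$ in the simple embedding $X_{Y}$. For the second equality, Remark \ref{D=S-I, rho(D_a)=a*}$(iii)$ applied to $G/K$ gives a bijection $\mathfrak{D}(G/K)\leftrightarrow S\setminus J$, and I will identify $\mathfrak{D}(G/K)$ with $\{D_{\alpha}\cap Y:\alpha\in(S\setminus I)\setminus\mathfrak{D}_{Y}\}$: a color $D_{\alpha}$ with $\alpha\notin\mathfrak{D}_{Y}$ does not contain $Y$ yet meets the open $B$-orbit of $Y$ in a codimension-one $B$-stable subset (via the local structure theorem applied to $X_{Y}$), whereas the colors in $\mathfrak{D}_{Y}$ already contain all of $Y$. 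This will force $S\setminus J=S\setminus(I\cup\mathfrak{D}_{Y})$, hence $J=I\cup\mathfrak{D}_{Y}$.

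Finally, $(iii)$ is just a dimension count from $(ii)$: the quotient $P_{I\cup\mathfrak{D}_{Y}}/K$ is a torus with character lattice $M_{\mathfrak{C}_{Y}}$, so $\dim K=\dim P_{I\cup\mathfrak{D}_{Y}}-\mathrm{rank}(M_{\mathfrak{C}_{Y}})$, yielding $\dim Y=\dim G-\dim K=\dim(G/P_{I\cup\mathfrak{D}_{Y}})+\mathrm{rank}(M_{\mathfrak{C}_{Y}})$. The main obstacle will be the two identifications in $(ii)$, namely $M_{G/K}=M_{\mathfrak{C}_{Y}}$ and the parameterization of $\mathfrak{D}(G/K)$ by $(S\setminus I)\setminus\mathfrak{D}_{Y}$: both follow from an explicit description of a $B$-stable affine open neighborhood of $Y$ inside $X_{Y}$ via the local structure theorem, which in the horospherical case takes a particularly clean form where the colors of the ambient $G/H$ either disappear (when they contain $Y$) or restrict to genuine colors of $Y$.
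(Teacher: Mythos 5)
First, a point of reference: the paper does not prove this statement at all — it is quoted from \cite[Prop.\ 2.4]{BM13} — so there is no internal argument to measure yours against. On the merits: your part $(i)$ is correct and complete ($M_{\mathfrak{C}_Y}\subseteq M_{G/H}\subseteq\chi(P_I)$ already kills the coroots from $I$, and orthogonality to $\rho(\nu_{D_\alpha})=\alpha^{\vee}|_{M_{G/H}}$ for $\alpha\in\mathfrak{D}_Y$ kills the rest), and $(iii)$ is an immediate dimension count once $(ii)$ is known. But $(ii)$ is where all the content lives, and your argument for it rests on three assertions you do not establish. (1) That $Y$ is itself horospherical: this is true but is essentially the statement being proved; Remark \ref{D=S-I, rho(D_a)=a*}$(v)$ only moves the base point of the \emph{open} orbit between Borels, it does not produce a maximal unipotent inside the isotropy group of a point of an arbitrary orbit. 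Moreover, under the convention $N_G(H)=P_I\supseteq B$, a point of the open $B$-orbit of $Y$ has isotropy containing $R_u(B^{-})$, not $R_u(B)$ (already visible for $SL_2/U\subseteq\mathbb{A}^2$), so the point you choose and the point you normalize are not the same. (2) The equality $M_{G/K}=M_{\mathfrak{C}_Y}$ on the nose, not merely up to finite index: the inclusion $M_{\mathfrak{C}_Y}\subseteq M_{Y}$ by restriction is easy, but surjectivity — every $B$-semiinvariant rational function on $Y$ lifts to one on $G/H$ with the prescribed valuations — is not formal, since $B$-eigenvectors need not lift along surjections of modules over a solvable group; one genuinely needs the $G$-equivariant local structure of $X_Y$, which is exactly the step you defer. (3) The parameterization of $\mathfrak{D}(G/K)$ by $(S\backslash I)\backslash\mathfrak{D}_Y$ via $D_\alpha\mapsto \overline{D_\alpha}\cap Y$: even the nonemptiness of $\overline{D_\alpha}\cap Y$ for $\alpha\notin\mathfrak{D}_Y$ requires an argument, since a $B$-stable divisor need not meet a given $G$-orbit.

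Be aware also that, inside this paper, the two identities you invoke as ``standard'' are \emph{derived from} this proposition: Corollary \ref{description of M_Y and N_G(K)} proves precisely $M_{G/K}=M_{\mathfrak{C}_Y}$ and $N_G(K)=P_{I\cup\mathfrak{D}_Y}$ as consequences of it, so you cannot borrow them from the surrounding text without circularity. The missing ingredient is the execution of the local structure theorem on the simple subembedding $X_Y$: one must actually exhibit the canonical $B$-stable affine chart, identify $Y$ there as fibered over $G/P_{I\cup\mathfrak{D}_Y}$ with fiber the closed torus orbit whose character lattice is $M_{\mathfrak{C}_Y}$, and read off both the lattice and the parabolic from that description. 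Your plan correctly points at this (it is how \cite{BM13} and Pasquier's thesis proceed) but stops short of carrying it out, so as written the proof of $(ii)$ — and hence of $(iii)$ — is not complete. The cleanest fix is simply to cite \cite[Prop.\ 2.4]{BM13}, as the paper does.
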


\begin{lem} \label{parabolic containing R_u(B) is unique}
Let $P$ be a parabolic subgroup of $G$ containing $B$. Then $(G/P)^{R_{u}(B)}=\{P/P\}$.
\end{lem}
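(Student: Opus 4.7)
The plan is to exploit the Bruhat decomposition of $G/P$ and reduce the statement to an easy analysis of the Schubert cells. First I would write
\[
G/P = \bigsqcup_{w \in W^{P}} Bw P/P,
\]
where $W^{P} \subseteq W$ denotes the set of minimal length representatives of $W/W_{P}$. This expresses $G/P$ as the disjoint union of finitely many $B$-orbits (Schubert cells).

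Next I would show that each of these $B$-orbits is already a single $R_{u}(B)$-orbit. Choose a lift $\dot{w} \in N_{G}(T)$ of $w$. For any $t \in T$, one has $t\dot{w}P = \dot{w}(\dot{w}^{-1}t\dot{w})P = \dot{w}P$ because $\dot{w}^{-1}t\dot{w} \in T \subseteq P$. Thus $T$ fixes the point $\dot{w}P/P$, and consequently $B \cdot \dot{w}P/P = T R_{u}(B) \cdot \dot{w}P/P = R_{u}(B) \cdot \dot{w}P/P$.

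Then I would invoke the standard description of this cell: the map $R_{u}(B) \cap \dot{w} R_{u}(B^{-}) \dot{w}^{-1} \longrightarrow R_{u}(B)\cdot \dot{w}P/P$ sending $u \mapsto u\dot{w}P/P$ is an isomorphism of varieties, so the cell is isomorphic to $\mathbb{A}^{\ell(w)}$. In particular, it is a single point if and only if $w = e$, and in that case it is exactly $\{P/P\}$ (using $R_{u}(B) \subseteq B \subseteq P$).

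The conclusion is then immediate: if $x \in (G/P)^{R_{u}(B)}$, the $R_{u}(B)$-orbit of $x$ is $\{x\}$, so the Schubert cell containing $x$ has dimension zero, forcing $w = e$ and hence $x = P/P$. There is no real obstacle in this argument; it is a direct application of Bruhat theory, and I suspect the lemma is recorded here only to be cited in the subsequent horospherical material.
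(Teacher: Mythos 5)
Your proof is correct, but it follows a genuinely different route from the one in the paper. The paper argues entirely through isotropy groups: writing an $R_{u}(B)$-fixed point as $x=g\cdot(P/P)$, it notes that $G_{x}=gPg^{-1}$ is a parabolic subgroup containing $R_{u}(B)$, invokes the fact that the intersection of two parabolic subgroups contains a maximal torus to conclude $G_{x}\supseteq B$, then uses the standard rigidity of parabolics containing a fixed Borel to get $G_{x}=P$, whence $g\in N_{G}(P)=P$ and $x=P/P$. You instead decompose $G/P$ into Schubert cells and observe that each cell $B\dot{w}P/P$ is a single $R_{u}(B)$-orbit isomorphic to $\mathbb{A}^{\ell(w)}$, so a fixed point forces $\ell(w)=0$. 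Your argument actually yields slightly more information --- it shows that the $B$-orbits and the $R_{u}(B)$-orbits on $G/P$ coincide and are exactly the Schubert cells --- at the price of invoking the cell structure of $G/P$; the paper's argument is shorter and purely group-theoretic, using only conjugacy and self-normalization of parabolic subgroups. Both are complete proofs of the lemma.
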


\begin{proof}
Denote by $x_{0}=P/P$. Assume that $x\in (G/P)^{R_{u}(B)}$, i.e. $G_{x}\supseteq R_{u}(B)$. Since $G/P$ is $G$-homogeneous, there exists an element $g\in G$ such that $g\cdot x_{0}=x$. Thus, $G_{x}=gPg^{-1}$ is a parabolic subgroup. By \cite[Prop. 11.1]{Gi07}, there is a maximal torus $T'\subseteq G_{x}\cap P$. Thus, $G_{x}\supseteq R_{u}(B)T'=B$. Then by \cite[Cor. 23.1]{Hu75}, $G_{x}=P$. In particular, $g\in N_{G}(P)=P$. Hence, $x=g\cdot x_{0}=x_{0}$.
\end{proof}

\begin{cor} \label{description of M_Y and N_G(K)}
Keep notations as in Proposition \ref{dim. of horospherical var.}. Then the following hold.

$(i)$ $K\supseteq H$, $M_{\mathfrak{C}_{Y}}=M_{G/K}$ and $N_{G}(K)=P_{I\cup\mathfrak{D}_{Y}}$.

$(ii)$ Denote by $x=H/H\in G/H$. If there is a $G$-equivariant morphism $\phi: G/H\rightarrow Y$, then $N_{G}(G_{\phi(x)})=P_{I\cup\mathfrak{D}_{Y}}$.
\end{cor}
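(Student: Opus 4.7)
The plan is to treat $(i)$ and $(ii)$ in order, with $(ii)$ reducing to $(i)$ via a conjugacy argument for parabolic subgroups.

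For $(i)$, I would first establish the inclusion $H\subseteq K$ directly from definitions. Since $M_{\mathfrak{C}_{Y}}\subseteq M_{G/H}$ by construction and $P_{I}\subseteq P_{I\cup\mathfrak{D}_{Y}}$ (enlarging the index set enlarges the parabolic), every $h\in H = \mathrm{Ker}_{P_{I}}M_{G/H}$ lies in $P_{I\cup\mathfrak{D}_{Y}}$ and is killed by each character in $M_{\mathfrak{C}_{Y}}$, so $h\in K$. In particular $K\supseteq R_{u}(B)$ and $G/K$ is horospherical. The identity $N_{G}(K) = P_{I\cup\mathfrak{D}_{Y}}$ would then follow from the uniqueness clause of Remark~\ref{D=S-I, rho(D_a)=a*}(ii) applied to $G/K$: $K$ is by construction the intersection of the kernels of a family of characters of the parabolic $P_{I\cup\mathfrak{D}_{Y}}$, and by that uniqueness such a parabolic is forced to be $N_{G}(K)$.

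For the equality $M_{\mathfrak{C}_{Y}} = M_{G/K}$, the inclusion $M_{\mathfrak{C}_{Y}}\subseteq M_{G/K} = \chi(P_{I\cup\mathfrak{D}_{Y}}/K)$ is immediate from the definition of $K$. For the reverse I would compare ranks: Proposition~\ref{dim. of horospherical var.}(iii) gives $\dim Y = \mathrm{rank}(M_{\mathfrak{C}_{Y}}) + \dim(G/P_{I\cup\mathfrak{D}_{Y}})$, while the horospherical dimension formula applied to the horospherical variety $G/K$ with $N_{G}(K) = P_{I\cup\mathfrak{D}_{Y}}$ gives the same equality with $M_{G/K}$ in place of $M_{\mathfrak{C}_{Y}}$, yielding equal ranks. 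To upgrade the rank match to equality, I would use that $M_{\mathfrak{C}_{Y}} = M_{G/H}\cap \mathfrak{C}_{Y}^{\perp}$ is saturated in $M_{G/H}$, and that $M_{G/H}$ is saturated in $\chi(P_{I})$ (because $P_{I}/H$ is a torus, so any character of $P_{I}$ whose power is trivial on the connected group $H$ is itself trivial on $H$); combining these and restricting to $\chi(P_{I\cup\mathfrak{D}_{Y}})\subseteq \chi(P_{I})$ shows $M_{\mathfrak{C}_{Y}}$ is saturated in $\chi(P_{I\cup\mathfrak{D}_{Y}})$, which combined with matching ranks forces $M_{\mathfrak{C}_{Y}} = M_{G/K}$.

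For $(ii)$, $G$-equivariance of $\phi:G/H\to Y$ applied to $x = eH$ gives $H\subseteq G_{\phi(x)}$, so $G_{\phi(x)}\supseteq R_{u}(B)$ and $G/G_{\phi(x)}$ is horospherical. Since $Y$ is a single $G$-orbit isomorphic to $G/K$ via Proposition~\ref{dim. of horospherical var.}(ii), the stabilizers $G_{\phi(x)}$ and $K$ are conjugate in $G$, hence so are their normalizers $N_{G}(G_{\phi(x)})$ and $N_{G}(K) = P_{I\cup\mathfrak{D}_{Y}}$ (the latter equality by part $(i)$). By Remark~\ref{D=S-I, rho(D_a)=a*}(i) applied to $G_{\phi(x)}$, the normalizer $N_{G}(G_{\phi(x)})$ is a parabolic subgroup containing $B$; as two conjugate parabolic subgroups of $G$ both containing $B$ are necessarily equal (each conjugacy class of parabolics contains a unique standard one), I conclude $N_{G}(G_{\phi(x)}) = P_{I\cup\mathfrak{D}_{Y}}$.

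The main obstacle is the saturation step in $(i)$: one must verify carefully that $M_{\mathfrak{C}_{Y}}$ embeds as a \emph{saturated} sublattice of $\chi(P_{I\cup\mathfrak{D}_{Y}})$, equivalently that $P_{I\cup\mathfrak{D}_{Y}}/K$ is a genuine torus rather than a diagonalizable group with a nontrivial finite component group, so that equality of ranks suffices to conclude $M_{\mathfrak{C}_{Y}} = M_{G/K}$ and not merely a finite-index inclusion.
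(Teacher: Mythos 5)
Your part (ii) is correct, and in part (i) your derivations of $K\supseteq H$ and of $N_{G}(K)=P_{I\cup\mathfrak{D}_{Y}}$ via the uniqueness clause of Remark \ref{D=S-I, rho(D_a)=a*}(ii) are fine --- the latter is even a slicker route than the paper's, which instead shows $N_{G}(K)\subseteq P_{I\cup\mathfrak{D}_{Y}}$ and forces equality through the dimension count. The genuine gap is precisely at the step you yourself flagged as the main obstacle: the claim that $M_{G/H}$ is saturated in $\chi(P_{I})$ ``because $P_{I}/H$ is a torus, so any character of $P_{I}$ whose power is trivial on the connected group $H$ is itself trivial on $H$.'' Nothing in the setup makes $H$ connected --- a horospherical subgroup is only required to contain $R_{u}(B)$ --- and the saturation can genuinely fail. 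For $G=SL_{2}$ and $H=R_{u}(B)\cdot\{\pm\mathrm{id}\}$ one has $N_{G}(H)=B$ and $B/H\cong\mathbb{G}_{m}$ (still a torus), yet $M_{G/H}=2\mathbb{Z}\varpi\subsetneq\mathbb{Z}\varpi=\chi(B)$, where $\varpi$ is the fundamental weight: $2\varpi$ kills $H$ but $\varpi$ does not. Consequently $M_{\mathfrak{C}_{Y}}$ need not be saturated in $\chi(P_{I\cup\mathfrak{D}_{Y}})$, and your mechanism for upgrading the finite-index inclusion $M_{\mathfrak{C}_{Y}}\subseteq M_{G/K}$ to an equality breaks down.

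The repair, which is what the paper does, is to avoid the ambient lattice $\chi(P_{I\cup\mathfrak{D}_{Y}})$ altogether. Restriction of characters gives $M_{G/K}=\chi(P_{I\cup\mathfrak{D}_{Y}}/K)\subseteq\chi(P_{I}/H)=M_{G/H}$, since a character of $P_{I\cup\mathfrak{D}_{Y}}$ killing $K$ restricts to a character of $P_{I}$ killing $H$. Your rank computation shows $M_{\mathfrak{C}_{Y}}$ has finite index in $M_{G/K}$, so $(M_{G/K})_{\mathbb{Q}}=(M_{\mathfrak{C}_{Y}})_{\mathbb{Q}}\subseteq(M_{G/H})_{\mathbb{Q}}\cap\mathfrak{C}_{Y}^{\bot}$, and therefore $M_{G/K}\subseteq M_{G/H}\cap\mathfrak{C}_{Y}^{\bot}=M_{\mathfrak{C}_{Y}}$. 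Here only the automatic saturation of $M_{G/H}\cap\mathfrak{C}_{Y}^{\bot}$ inside $M_{G/H}$ is used, and no connectedness hypothesis on $H$ is needed.
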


\begin{proof}
$(i)$ By Proposition \ref{dim. of horospherical var.} and Remark \ref{D=S-I, rho(D_a)=a*}$(i)(ii)$, $K\supseteq H$, $M_{\mathfrak{C}_{Y}}\subseteq M_{G/K}$ and $N_{G}(K)$ is a parabolic subgroup of $G$ such that $N_{G}(K)\subseteq P_{I\cup\mathfrak{D}_{Y}}$.

Note that $G/K$ is homogeneous horospherical variety. Thus, by Remark \ref{D=S-I, rho(D_a)=a*}$(i)$, $\text{dim}(G/K)=\text{rank}(M_{G/K})+\text{dim}(G/N_{G}(K))$. By the facts $M_{\mathfrak{C}_{Y}}\subseteq M_{G/K}$, $N_{G}(K)\subseteq P_{I\cup\mathfrak{D}_{Y}}$ and Proposition \ref{dim. of horospherical var.}$(iii)$, $\text{rank}(M_{G/K})=\text{rank}(M_{\mathfrak{C}_{Y}})$ and $\text{dim}(N_{G}(K))=\text{dim}(P_{I\cup\mathfrak{D}_{Y}})$. Thus, $M_{\mathfrak{C}_{Y}}$ is a subgroup of $M_{G/K}$ with a finite index. Since $N_{G}(K)$ and $P_{I\cup\mathfrak{D}_{Y}}$ are parabolic subgroups with the same dimension and $N_{G}(K)\subseteq P_{I\cup\mathfrak{D}_{Y}}$, we know that $N_{G}(K)=P_{I\cup\mathfrak{D}_{Y}}$.

By Remark \ref{D=S-I, rho(D_a)=a*}$(ii)$, $M_{G/K}=\chi(P_{I\cup\mathfrak{D}_{Y}}/K)\subseteq\chi(P_{I}/H)=M_{G/H}$. Since $M_{\mathfrak{C}_{Y}}$ is a subgroup of $M_{G/K}$ with a finite index, $(M_{\mathfrak{C}_{Y}})_{\mathbb{Q}}=(M_{\mathfrak{C}_{G/K}})_{\mathbb{Q}}$ as subspaces of $(M_{G/H})_{\mathbb{Q}}$. In particular, $M_{G/K}\subseteq M_{G/H}\cap (M_{\mathfrak{C}_{Y}})_{\mathbb{Q}}$. By Proposition \ref{dim. of horospherical var.}$(i)$, $(M_{\mathfrak{C}_{Y}})_{\mathbb{Q}}\subseteq (M_{G/H})_{\mathbb{Q}}\cap\mathfrak{C}_{Y}^{\bot}$. Thus, $M_{G/K}\subseteq M_{G/H}\cap(M_{G/H})_{\mathbb{Q}}\cap\mathfrak{C}_{Y}^{\bot}=M_{G/H}\cap \mathfrak{C}_{Y}^{\bot}=M_{\mathfrak{C}_{Y}}$. Hence, $M_{G/K}=M_{\mathfrak{C}_{Y}}$.

\medskip

$(ii)$ Identify $Y$ with $G/K$. Denote by $y=K/K\in G/K$, $y'=\phi(x)$, $K'=G_{y'}$, $P=P_{I\cup\mathfrak{D}_{Y}}$ and $z=P/P\in G/P$. Denote by $\pi_{1}: G/H\rightarrow Y$ and $\pi_{2}: Y\rightarrow G/P$ the natural morphisms induced by the inclusions $H\subseteq K\subseteq P$. By Lemma \ref{parabolic containing R_u(B) is unique}, $\pi_{2}(y')=z$. Thus, $y'\in(\pi_{2})^{-1}(z)=P/K\subseteq Y$. In particular, there is some element $g\in P$ such that $g\cdot y=y'$. Thus, $K'=gKg^{-1}$ and $N_{G}(K')=gN_{G}(K)g^{-1}=gPg^{-1}=P$.
\end{proof}

The following properties are direct consequences of the corresponding propositions in Section \ref{section spherical varieties}, but we will use them frequently later. For the convenience of discussions, we state them explicitly.

\begin{prop} \label{all faces are colored faces, maps of orbits, C_u maps to a point}
$(i)$ Let $X$ be a horospherical $G/H$-embedding, $Y$ be a $G$-orbit on $X$ and $\sigma$ be a face of $\mathfrak{C}_{Y}$. Then there exists a unique $G$-orbit $V(\sigma)$ on $X$ such that $\mathfrak{C}_{V(\sigma)}=\sigma$. Moreover, $Y\subseteq\overline{V(\sigma)}$ and $\mathfrak{C}^{c}_{V(\sigma)}$ is a colored face of $\mathfrak{C}^{c}_{Y}$.

$(ii)$ Let $X$ be a spherical $G/H$-embedding and $X'$ be spherical $G/H'$-embedding. Assume that $H\subseteq H'$ and there is a $G$-equivariant morphism $\pi: X\rightarrow X'$ extending the natural morphism $G/H\rightarrow G/H'$. Denote by $\phi: (N_{X})_{\mathbb{Q}}\rightarrow (N_{X'})_{\mathbb{Q}}$ the corresponding morphism. Let $Y\subseteq X$ and $Y'\subseteq X'$ be $G$-orbits such that $\phi(\mathfrak{C}_{Y})^{o}\cap\mathfrak{C}_{Y'}^{o}\neq\emptyset$. Then $\pi(Y)=Y'$.

$(iii)$  Let $X$ be a horospherical $G/H$-embedding and $X'$ be a horospherical $G/H'$-embedding. Assume that $X$ is projective and $\mathbb{Q}$-factorial, $H\subseteq H'$ and there is a $G$-equivariant morphism $\pi: X\rightarrow X'$ extending the natural morphism $G/H\rightarrow G/H'$. Denote by $\phi: (N_{X})_{\mathbb{Q}}\rightarrow (N_{X'})_{\mathbb{Q}}$ the corresponding morphism. Let $Y\subseteq X$ and $Y'\subseteq X'$ be two closed $G$-orbits such that $\phi(\mathfrak{C}_{Y})\subsetneqq\mathfrak{C}_{Y'}$. Then there is a wall $\mu$ in $\mathbb{F}_{X}$ such that $\pi(C_{\mu})$ is a point, where $C_{\mu}$ is the curve described in Remark \ref{fix C_u as a curve instead of a class}.
\end{prop}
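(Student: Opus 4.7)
The plan is to prove the three parts in sequence, exploiting the Luna--Vust correspondence between $G$-orbits and colored cones.

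For part $(i)$, I will invoke Theorem \ref{correspondence fans and varieties}$(ii)$, which provides a bijection between $G$-orbits whose closures contain $Y$ and colored faces of $\mathfrak{C}^c_Y$. It therefore suffices to produce a colored face of $\mathfrak{C}^c_Y$ with first coordinate equal to $\sigma$. I take $\mathfrak{D}_\sigma=\mathfrak{D}_Y\cap\rho^{-1}(\sigma)$ and verify Definition \ref{defi. of colored fans}$(ii)$. The only nontrivial condition is $\sigma^{\circ}\cap\mathcal{V}(G/H)\neq\emptyset$; this holds automatically because $X$ is horospherical, so by \cite[Cor. 6.2]{Kn91} one has $\mathcal{V}(G/H)=(N_{G/H})_{\mathbb{Q}}$.

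For part $(ii)$, $G$-equivariance of $\pi$ makes $Y'':=\pi(Y)$ a $G$-orbit of $X'$. Theorem \ref{morphism of fan} together with the standard Knop correspondence yields $\phi(\mathfrak{C}_Y)\subseteq\mathfrak{C}_{Y''}$ and, for any $\nu\in\mathfrak{C}_Y^{\circ}\cap\mathcal{V}(G/H)$, $\phi(\nu)\in\mathfrak{C}_{Y''}^{\circ}\cap\mathcal{V}(G/H')$. Combining the hypothesis $\phi(\mathfrak{C}_Y)^{\circ}\cap\mathfrak{C}_{Y'}^{\circ}\neq\emptyset$ with $\phi(\mathcal{V}(G/H))=\mathcal{V}(G/H')$, I will find a $G$-invariant valuation common to $\mathfrak{C}_{Y'}^{\circ}$ and $\mathfrak{C}_{Y''}^{\circ}$. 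The uniqueness clause of Definition \ref{defi. of colored fans}$(iii)(b)$ then forces $\mathfrak{C}^c_{Y''}=\mathfrak{C}^c_{Y'}$, whence $Y''=Y'$ by Theorem \ref{correspondence fans and varieties}.

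For part $(iii)$, $\mathbb{Q}$-factoriality and Proposition \ref{locally fac. Q-fac. criterion}$(ii)$ make the cones $\mathfrak{C}_Y$ and $\mathfrak{C}_{Y'}$ simplicial. Since $Y$ and $Y'$ are closed, both cones are full-dimensional in their respective lattices, and surjectivity of $\phi$ (from $H\subseteq H'$) makes $\phi(\mathfrak{C}_Y)$ full-dimensional in $N_{X'}$. The strict inclusion $\phi(\mathfrak{C}_Y)\subsetneq\mathfrak{C}_{Y'}$ then forces some facet $F$ of $\phi(\mathfrak{C}_Y)$ to satisfy $F^{\circ}\subseteq\mathfrak{C}_{Y'}^{\circ}$, and pulling back yields a codim-$1$ face $\mu$ of $\mathfrak{C}_Y$ with $\phi(\mu)\subseteq\mathfrak{C}_{Y'}^{\circ}$. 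Since $X$ is complete horospherical, the support of $\mathbb{F}_X$ equals all of $(N_X)_{\mathbb{Q}}$, so every facet of a maximal cone is a wall; thus $\mu=\mathfrak{C}_Y\cap\mathfrak{C}_{Y'''}$ for another maximal cone, with $Y'''$ a second closed $G$-orbit. By a small-perturbation argument, both $\phi(\mathfrak{C}_Y)^{\circ}$ and $\phi(\mathfrak{C}_{Y'''})^{\circ}$ meet $\mathfrak{C}_{Y'}^{\circ}$, so part $(ii)$ gives $\pi(Y)=\pi(Y''')=Y'$. Using Remark \ref{fix C_u as a curve instead of a class}, $C_{\mu}$ is a $B$-stable rational curve with endpoints $p_1\in Y$ and $p_2\in Y'''$ fixed by $R_u(B)$, so $\pi(C_{\mu})\subseteq Y'\cap(X')^{R_u(B)}$; as $Y'$ is a rational homogeneous space $G/P$ with $P\supseteq B$ (Proposition \ref{dim. of horospherical var.} applied to the full-dimensional $\mathfrak{C}_{Y'}$), Lemma \ref{parabolic containing R_u(B) is unique} shows this intersection is a single point, hence $\pi(C_{\mu})$ is a point. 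The main obstacle will be ensuring in part $(iii)$ that the facet of $\phi(\mathfrak{C}_Y)$ genuinely lifts to a wall of $\mathbb{F}_X$ rather than a mere boundary face, which is where completeness of $X$ combined with horosphericality (making $\mathcal{V}(G/H)$ the entire space) becomes essential.
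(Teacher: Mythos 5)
Your overall architecture matches the paper's: part $(i)$ via Theorem \ref{correspondence fans and varieties} together with $\mathcal{V}(G/H)=(N_{G/H})_{\mathbb{Q}}$, part $(ii)$ via the uniqueness axiom for colored fans, and part $(iii)$ by locating a wall $\mu$ of $\mathfrak{C}_{Y}$ whose image meets $\mathfrak{C}_{Y'}^{\circ}$ and then invoking Remark \ref{fix C_u as a curve instead of a class} and Lemma \ref{parabolic containing R_u(B) is unique}. However, the last step of $(iii)$ has a genuine gap. You apply part $(ii)$ to the two maximal orbits $Y$ and $Y'''$ flanking the wall and then conclude $\pi(C_{\mu})\subseteq Y'$ because the endpoints $p_{1},p_{2}$ map into $Y'$. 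That inference is invalid: by Remark \ref{fix C_u as a curve instead of a class}, $C_{\mu}=(\overline{V})^{R_{u}(B)}$ with $C_{\mu}\cap Y=\{p_{1}\}$ and $C_{\mu}\cap Y'''=\{p_{2}\}$, so all of $C_{\mu}\setminus\{p_{1},p_{2}\}$ lies in the \emph{wall orbit} $V$ with $\mathfrak{C}_{V}=\mu$, not in $Y\cup Y'''$. Knowing $\pi(Y)=\pi(Y''')=Y'$ only controls the two endpoints; a priori $\pi(V)$ could be a strictly larger orbit whose closure contains $Y'$, in which case $\pi(C_{\mu})$ would be a curve through the unique $R_{u}(B)$-fixed point of $Y'$ rather than a point. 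The fix is exactly the paper's route: apply part $(ii)$ to the pair $(V,Y')$ itself, using the fact you already established that $\phi(\mu^{\circ})\subseteq\mathfrak{C}_{Y'}^{\circ}$ (note that the stronger claim $\phi(\mu)\subseteq\mathfrak{C}_{Y'}^{\circ}$ cannot hold, since $0\in\phi(\mu)$). This yields $\pi(V)=Y'$, hence $\pi(C_{\mu})\subseteq\pi(\overline{V})\subseteq\overline{Y'}=Y'$, and then Lemma \ref{parabolic containing R_u(B) is unique} finishes.

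A secondary caution on $(ii)$: the statement is for arbitrary spherical embeddings, and your plan to produce a $G$-invariant valuation lying in $\mathfrak{C}_{Y'}^{\circ}\cap\mathfrak{C}_{Y''}^{\circ}$ is automatic only when $\mathcal{V}(G/H')=(N_{X'})_{\mathbb{Q}}$, i.e.\ in the horospherical case. In general the hypothesis only supplies a point of $\phi(\mathfrak{C}_{Y})^{\circ}\cap\mathfrak{C}_{Y'}^{\circ}$, which need not be a valuation, and $\mathfrak{C}_{Y}^{\circ}\cap\mathcal{V}(G/H)$ need not meet $\phi^{-1}(\mathfrak{C}_{Y'}^{\circ})$. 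The paper sidesteps this with a face argument: $\pi$ maps the simple embedding $X_{Y}$ into $X'_{Y'}$, so $\mathfrak{C}_{\pi(Y)}$ is a face of $\mathfrak{C}_{Y'}$ containing $\phi(\mathfrak{C}_{Y})$, and a face of a strictly convex cone that meets the relative interior must be the whole cone. You should either restrict your valuation argument to the horospherical setting (which is where $(ii)$ is actually used) or adopt the face argument.
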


\begin{proof}
$(i)$ It's a direct consequence of Theorem \ref{correspondence fans and varieties} and \cite[Cor. 6.2]{Kn91}.

\medskip

$(ii)$ By Theorem \ref{morphism of fan} and the fact $\phi(\mathfrak{C}_{Y})^{o}\cap\mathfrak{C}_{Y'}^{o}\neq\emptyset$, we know that $\phi(\mathfrak{C}_{Y})^{o}\subseteq\mathfrak{C}_{Y'}^{o}$. Then by Theorem \ref{morphism of fan} and Theorem \ref{correspondence fans and varieties}, $\pi$ maps $X_{Y}$ into $X'_{Y'}$ where $X_{Y}$ (resp. $X'_{Y'}$) is the simple spherical open subvariety of $X$ (resp. $X'$) with the unique closed $G$-orbit $Y$ (resp. $Y'$). In particular, the $G$-orbit $Z=\pi(Y)$ on $X'$ satisfies that $\overline{Z}\subseteq Y'$. By Theorem \ref{morphism of fan}, $\phi(\mathfrak{C}_{Y})\subseteq\mathfrak{C}_{Z}$ and $\mathfrak{C}_{Z}$ is a face of $\mathfrak{C}_{Y'}$. Since $\mathfrak{C}_{Y'}$ is a strictly convex cone and $\mathfrak{C}_{Z}\cap\mathfrak{C}_{Y'}^{o}\supseteq\phi(\mathfrak{C}_{Y})\cap\mathfrak{C}_{Y'}^{o}\neq\emptyset$, we know that $\mathfrak{C}_{Z}=\mathfrak{C}_{Y'}$. By $(i)$, $Z=Y'$, i.e. $\pi(Y)=Y'$.

\medskip

$(iii)$ By \cite[Thm. 6.3]{Kn91}, $\text{dim}(\mathfrak{C}_{Y})=\text{dim}(N_{X})_{\mathbb{Q}}$. Note that $\phi: (N_{X})_{\mathbb{Q}}\rightarrow (N_{X'})_{\mathbb{Q}}$ is a surjective morphism of vector spaces. Hence, $\text{dim}(\phi(\mathfrak{C}_{Y}))=\text{dim}(N_{X'})_{\mathbb{Q}}$. By Theorem \ref{correspondence fans and varieties}, $\mathfrak{C}_{Y}$ and $\mathfrak{C}_{Y'}$ are strictly convex cones. Then $\phi(\mathfrak{C}_{Y})$ is also a convex cone. Note that $\phi(\mathfrak{C}_{Y}^{o})\subseteq\phi(\mathfrak{C}_{Y})^{o}$, i.e. $\phi(\partial{\mathfrak{C}_{Y}})\supseteq \partial{\phi(\mathfrak{C}_{Y})}$. Note that $\partial{\phi(\mathfrak{C}_{Y})}\cap \mathfrak{C}_{Y'}^{o}\neq\emptyset$. Thus, there exists a face $\mu\subseteq\mathfrak{C}_{Y}$ of codimension one such that $\phi(\mu)\cap\mathfrak{C}_{Y'}^{o}\neq\emptyset$. Thus, $\phi(\mu)^{o}\subseteq\mathfrak{C}_{Y'}^{o}$. By $(i)$, $\mu$ is a wall of $\mathbb{F}_{X}$ and there exists a $G$-orbit $V$ on $X$ such that $\mathfrak{C}_{V}=\mu$. By $(ii)$, $\pi(V)=Y'$.

By Remark \ref{fix C_u as a curve instead of a class}, $C_{\mu}=(\overline{V})^{R_{u}(B)}$. Thus, $\pi(C_{\mu})\subseteq (Y')^{R_{u}(B)}$. Note that $Y'$ is a complete $G$-orbit, then by Lemma \ref{parabolic containing R_u(B) is unique}, there is only one $R_{u}(B)$-fixed point on $Y'$. The conclusion follows.
\end{proof}

\subsubsection{Smooth projective horospherical varieties whose effective cycle classes of codimension two are nef} \label{subsubsection horospherical codimension two}

In this part, we firstly  show in Proposition \ref{exceptional locus of bir. Mori cont. horospherical} that the exceptional locus of a birational Mori contraction of a projective $\mathbb{Q}$-factorial $G$-horospherical variey is irreducible and we can even describe the corresponding colored cone. Then we show that if $X$ is a smooth projective horospherical $G/H$-embedding of dimension $n\geq 3$ such that $\text{Nef}^{\, 2}(X)=\text{Psef}^{\, 2}(X)$, then $\text{Nef}^{\, 1}(X)=\text{Psef}^{\, 1}(X)$.

Let $X$ be a projective $\mathbb{Q}$-factorial spherical $G/H$-embedding of rank $r$ and $\mu$ be wall of $\mathbb{F}_{X}$. Choose primitive lattice points $e_{1},\ldots,e_{r+1}$ in $N_{X}$ such that $\mu=\langle e_{1},\ldots, e_{r-1}\rangle$, $\mu_{+}=\langle e_{1},\ldots, e_{r}\rangle$ and $\mu_{-}=\langle e_{1},\ldots, e_{r-1}, e_{r+1}\rangle$, where $(\mu_{+}, \mathfrak{D}_{+}), (\mu_{-}, \mathfrak{D}_{-}), (\mu, \mathfrak{D})\in \mathbb{F}_{X}$ for some $\mathfrak{D}_{+}, \mathfrak{D}_{-}\subseteq\mathfrak{D}(G/H)$ and $\mathfrak{D}\subseteq\mathfrak{D}_{+}\cap\mathfrak{D}_{-}$. Then there is a unique sequence $a_{1},\ldots, a_{r+1}\in\mathbb{Q}$ such that $\sum\limits_{i=1}^{r+1}a_{i}e_{i}=0$ and $a_{r+1}=1$. Note that $a_{r}>0$.

By reordering $e_{i}$, we can assume that
\begin{eqnarray*}
a_{i}\left\{ \begin{array}{ll}
<0, &1\leq i\leq\alpha,\\
=0, & \alpha+1\leq i\leq\beta,\\
>0, &\beta+1\leq i\leq r+1.
\end{array} \right.
\end{eqnarray*}

If all $a_{i}$ are nonnegative, then let $\alpha=0$. Note that $0\leq\alpha\leq\beta\leq r-1$.

\begin{prop} \label{exceptional locus of bir. Mori cont. horospherical}
Let $X$ be a projective $\mathbb{Q}$-factorial horospherical $G/H$-embedding, and $R$ be an extremal ray of $NE(X)$. Denote by $\pi=\text{cont}_{R}: X\rightarrow X'$ the corresponding contraction. Assume that $\pi$ is a birational morphism. Let $A$ be the exceptional locus of $\pi$ and $A'=\pi(A)$. Then $A$ is an $G$-orbit closure.

$(i)$ Suppose that $R=\mathbb{R}^{+}C_{\mu}$, where $\mu$ is a wall in $\mathbb{F}_{X}$. Keep notations as the discussions above. Then $A$ is the $G$-orbit closure on $X$ such that $\mathfrak{C}_{A}=\langle e_{1},\ldots,e_{\alpha}\rangle$, and $\mathfrak{C}_{A'}=\langle e_{1},\ldots,e_{\alpha},e_{\beta+1},\ldots,e_{r+1}\rangle$.

$(ii)$ Suppose that $R$ doesn't contain any $C_{\mu}$ for walls $\mu$ in $\mathbb{F}_{X}$ and that $R=\mathbb{R}^{+}C_{D, Y}$, where $Y$ is a closed $G$-orbit on $X$ and $D\in\mathfrak{D}(G/H)\backslash\mathfrak{D}_{Y}$. Then $\rho(\nu_{D})\in\mathfrak{C}_{Y}$, $A$ is the $G$-orbit closure on $X$ such that $\mathfrak{C}_{A}=\langle e_{1},\ldots,e_{s}\rangle$, and $\mathfrak{C}^{c}_{A'}=(\mathfrak{C}_{A}, \mathfrak{D}_{A}\cup\{D\})$, where $\mathfrak{C}_{Y}=\langle e_{1},\ldots,e_{r}\rangle$, $\rho(\nu_{D})=\sum\limits_{i=1}^{s}a_{i}e_{i}$, and $a_{i}>0$ for all $1\leq i\leq s$.
\end{prop}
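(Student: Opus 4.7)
The plan is to exploit the $G$-equivariance of $\pi$, which follows from \cite[Thm. 3.3]{Br93} since $R$ is spanned by the $B$-stable rational curve $C_\mu$ or $C_{D,Y}$. Being birational and $G$-equivariant, $\pi$ restricts to a $G$-equivariant isomorphism on the open orbit, so $X'$ is a spherical $G/H$-embedding for the same $H$, and the identity $N_X=N_{X'}$ makes both colored fans live in the same ambient lattice. By Theorem \ref{morphism of fan}, each colored cone of $\mathbb{F}_X$ is contained in a colored cone of $\mathbb{F}_{X'}$, and by Proposition \ref{all faces are colored faces, maps of orbits, C_u maps to a point}$(ii)$ an orbit $W$ of $X$ maps to the orbit of $X'$ whose colored cone contains $\mathfrak{C}_W^{\circ}$ in its relative interior. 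The exceptional locus $A$ is therefore the union of the closures $\overline{W}$ of those orbits $W$ for which $\mathfrak{C}_W^c$ properly enlarges in passing from $\mathbb{F}_X$ to $\mathbb{F}_{X'}$. In both cases the goal reduces to determining explicitly how the colored fan changes across the extremal ray $R$.

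For case $(i)$, I would apply formula $(1)$ to $\delta=\pi^{*}\delta'$ with $\delta'$ ample on $X'$. The vanishing $\pi^{*}\delta'\cdot C_\mu=0$ gives $l(\pi^{*}\delta',\mu_+)=l(\pi^{*}\delta',\mu_-)$, so the associated piecewise linear function extends linearly across the wall $\mu$. Combined with the ampleness criterion \cite[Thm. 3.2(ii)]{Br93} and the fact that only $R$ is contracted, this forces $\mathbb{F}_{X'}$ to agree with $\mathbb{F}_X$ outside the wall $\mu$, while the two maximal colored cones $(\mu_+,\mathfrak{D}_+), (\mu_-,\mathfrak{D}_-)$ and the colored wall $(\mu,\mathfrak{D})$ are replaced by a single maximal colored cone whose support is $\mathrm{conv}(\mu_+\cup\mu_-)=\langle e_1,\ldots,e_{r+1}\rangle$, together with its newly appearing colored faces. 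Using the relation $\sum a_ie_i=0$ and the prescribed sign pattern, I would read off which faces of $\mu_\pm$ remain faces of the new cone and which are strictly enlarged: the rays $e_{\beta+1},\ldots,e_{r+1}$ (with $a_i>0$) become interior to the new cone and its faces, while $e_1,\ldots,e_\alpha$ and $e_{\alpha+1},\ldots,e_\beta$ remain extreme. The unique minimal new face collecting every enlargement is $\langle e_1,\ldots,e_\alpha\rangle$; its associated orbit closure is $A$, and every contracted orbit has colored cone sitting as a colored face of $\mathfrak{C}_A^c$, which yields the irreducibility and identifies $\mathfrak{C}_{A'}=\langle e_1,\ldots,e_\alpha,e_{\beta+1},\ldots,e_{r+1}\rangle$.

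For case $(ii)$, I would first deduce $\rho(\nu_D)\in\mathfrak{C}_Y$ from the hypothesis that $R$ contains no wall class: otherwise, expressing $C_{D,Y}$ via formulas $(1),(2)$ and the description of $NE(X)$ in \cite[Thm. 3.2]{Br93} would force a wall contribution in $R$. Writing $\rho(\nu_D)=\sum_{i=1}^{s}a_ie_i$ with all $a_i>0$ then places $\rho(\nu_D)$ in the relative interior of the face $\langle e_1,\ldots,e_s\rangle$ of $\mathfrak{C}_Y$, and the new colored fan $\mathbb{F}_{X'}$ is obtained from $\mathbb{F}_X$ by adjoining $D$ as a color to $\mathfrak{C}_Y^c$ and to each colored face containing $\langle e_1,\ldots,e_s\rangle$. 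By the same orbit analysis as in case $(i)$, the exceptional locus $A$ is the orbit closure with colored cone $(\langle e_1,\ldots,e_s\rangle,\mathfrak{D}_A)$, and its image carries the colored cone $(\mathfrak{C}_A,\mathfrak{D}_A\cup\{D\})$.

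The main obstacle will be the careful bookkeeping of colors. In particular, one must check that every contracted orbit is actually absorbed into the single orbit closure identified above, i.e., that its colored cone is a colored face of $\mathfrak{C}_A^c$, which requires a precise analysis of how colors distribute across the new faces of $\mathbb{F}_{X'}$; the strict convexity condition $\mathfrak{C}^{\circ}\cap\mathcal{V}(G/H)\neq\emptyset$ in the definition of colored cones plays a decisive role in ruling out spurious components of the exceptional locus.
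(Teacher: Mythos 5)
Your overall strategy is the same as the paper's: describe how the colored fan changes under the contraction (the paper simply cites \cite[Prop.~4.6]{Br93} for case $(i)$ and \cite[Prop.~3.4]{Br93} for case $(ii)$ rather than re-deriving the fan modification from the formulas), identify the minimal colored face that is enlarged, and conclude that $A$ is the corresponding orbit closure. The combinatorial identifications of $\mathfrak{C}_{A}$ and $\mathfrak{C}_{A'}$ you read off from the sign pattern of the $a_{i}$ agree with the paper's.

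There is, however, a genuine gap in your reduction ``$A$ is the union of the closures of those orbits whose colored cone properly enlarges.'' The direction you flag as the main obstacle (every contracted orbit is absorbed into $\overline{V(\sigma)}$) is actually the easy direction: it follows from the identification of the open subsets $X\backslash\overline{V(\sigma)}$ and $X'\backslash\overline{V'(\tau)}$ via Theorems \ref{correspondence fans and varieties} and \ref{morphism of fan}. What you never address is the converse: why an orbit whose colored cone enlarges must actually meet the exceptional locus, i.e.\ why $\pi|_{V(\sigma)}$ fails to be an isomorphism. The paper proves this by a dimension count specific to horospherical varieties: in case $(i)$ it uses Corollary \ref{description of M_Y and N_G(K)} to get $\text{rank}(V(\sigma))=\text{rank}(V'(\tau))+r-\beta>\text{rank}(V'(\tau))$, and in case $(ii)$ --- where the cone $\mathfrak{C}_{V}=\mathfrak{C}_{V'}$ is literally unchanged and only the color $D$ is added --- it uses Proposition \ref{dim. of horospherical var.}$(iii)$ to see that enlarging $\mathfrak{D}_{V}$ to $\mathfrak{D}_{V}\cup\{D\}$ strictly decreases $\text{dim}(G/P_{I\cup\mathfrak{D}_{V}})$ and hence $\text{dim}(V')<\text{dim}(V)$. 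Without this step one cannot exclude $A\cap V=\emptyset$, and case $(ii)$ in particular cannot be settled by the ``rays become interior'' analysis you import from case $(i)$. Separately, your justification of $\rho(\nu_{D})\in\mathfrak{C}_{Y}$ in case $(ii)$ (a ``wall contribution'' forced by formulas $(1)$ and $(2)$) is too vague to check; the paper instead argues geometrically that if $\rho(\nu_{D})\notin\mathfrak{C}_{Y}$ then $\phi(\mathfrak{C}_{Y})\subsetneqq\mathfrak{C}_{Y'}$, so Proposition \ref{all faces are colored faces, maps of orbits, C_u maps to a point}$(iii)$ produces a wall curve $C_{\mu}$ contracted by $\pi$, contradicting the hypothesis on $R$.
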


\begin{proof}
By \cite[Thm. 3.2$(ii)$]{Br93}, either $R=\mathbb{Q}^{+}C_{\mu}$ for some wall $\mu$ in $\mathbb{F}_{X}$ or $R=\mathbb{Q}^{+}C_{D, Y}$ for some closed $G$-orbit $Y$ and some $D\in\mathfrak{D}(G/H)\backslash\mathfrak{D}_{Y}$. Hence, if the conclusions $(i)(ii)$ hold, then $A$ is a $G$-orbit closure. So we can turn to show the conclusions $(i)$ and $(ii)$.

\medskip

$(i)$ Denote by $\varphi: \mathbb{F}_{X}\rightarrow\mathbb{F}_{X'}$ the morphism of colored fans corresponding to $\pi$. Let $\sigma=\langle e_{1},\ldots,e_{\alpha}\rangle$, and $\tau=\langle e_{1},\ldots,e_{\alpha},e_{\beta+1},\ldots,e_{r+1}\rangle$. Denote by $Z$ the closed $G$-orbit such that $\mathfrak{C}_{Z}=\langle e_{1}, \ldots, e_{r} \rangle$.  By \cite[Prop. 4.6]{Br93}, $\mathfrak{C}_{\pi(Z)}=\langle e_{1}, \ldots, e_{r+1}\rangle$, and $\tau$ is a face of $\mathfrak{C}_{\pi(Z)}$. Thus, by Proposition \ref{all faces are colored faces, maps of orbits, C_u maps to a point}$(i)$, there exists a $G$-orbit $V(\sigma)$ on $X$ such that $\mathfrak{C}_{V(\sigma)}=\sigma$ and a $G$-orbit $V'(\tau)$ on $X'$ such that $\mathfrak{C}_{V'(\tau)}=\tau$.

Denote by $X_{-\sigma}=X\backslash\overline{V(\sigma)}$ and $X'_{-\tau}=X'\backslash\overline{V'(\tau)}$. By Theorem \ref{correspondence fans and varieties}, $\mathbb{F}_{X}\backslash\mathbb{F}_{X_{-\sigma}}=\{(\mathfrak{C}, \mathfrak{D})\mid \mathfrak{C}\supseteq\sigma\}$, and $\mathbb{F}_{X'}\backslash\mathbb{F}'_{X'_{-\tau}}=\{(\mathfrak{C}', \mathfrak{D}')\mid \mathfrak{C'}\supseteq\tau\}$. By \cite[Prop. 4.6]{Br93}, $\mathbb{F}_{X_{-\sigma}}$ is identified with $\mathbb{F}'_{X'_{-\tau}}$ by $\varphi$. By Theorem \ref{correspondence fans and varieties} and Theorem \ref{morphism of fan}, $X_{-\sigma}$ is $G$-equivariantly isomorphic to $X'_{-\tau}$ by the morphism $\pi$. In particular, $A\subseteq\overline{V(\sigma)}$.

Denote by $X_{V(\sigma)}$ (resp. $X'_{V'(\tau)}$) the simple spherical open subvariety of $X$ (resp. $X'$) with the unique closed $G$-orbit $V(\sigma)$ (resp. $V'(\tau)$). Since $X_{V(\sigma)}\backslash V(\sigma)\subseteq X_{-\sigma}$, $\pi$ maps $X_{V(\sigma)}\backslash V(\sigma)$ isomorphically to its image.

On the other hand, $\sigma\subseteq\tau$, i.e. $\varphi$ maps $\sigma$ into $\tau$. By Theorem \ref{morphism of fan}, $\pi$ maps $X_{V(\sigma)}$ into $X'_{V'(\tau)}$. In particular, $\overline{\pi(V(\sigma))}\supseteq V'(\tau)$. Denote by $Z=\pi(V(\sigma))$. Then $\mathfrak{C}^{c}_{Z}$ is a colored face of  $\mathfrak{C}^{c}_{V'(\tau)}$. The fact that $a_{i}<0$ for all $1\leq i\leq\alpha$ and $a_{i}>0$ for all $\beta+1\leq i\leq r+1$ impies $\sigma^{o}\subseteq\tau^{o}$. By Proposition \ref{all faces are colored faces, maps of orbits, C_u maps to a point}$(ii)$, $\pi(V(\sigma))=Z=V'(\tau)$. By Corollary \ref{description of M_Y and N_G(K)}$(i)$, $\text{rank}(V(\sigma))=\text{rank}(V'(\tau))+r-\beta$. In particular, $\text{rank}(V(\sigma))\neq\text{rank}(V'(\tau))$. Thus, $\pi|_{V(\sigma)}: V(\sigma)\rightarrow V'(\tau)$ is not an isomorphism. Hence, $A\cap V(\sigma)\neq\emptyset$.

Since $A$ is closed and $G$-stable, $A\supseteq\overline{G(A\cap V(\sigma))}=\overline{V(\sigma)}$. Hence, $A=\overline{V(\sigma)}$. Since $A$ is complete, $A'=\overline{\pi(V(\sigma))}=\overline{V'(\tau)}$.

\medskip

$(ii)$  Denote by $\varphi: \mathbb{F}_{X}\rightarrow\mathbb{F}_{X'}$ the morphism of colored fans corresponding to $\pi$.

Assume that $\rho(\nu_{D})\notin\mathfrak{C}_{Y}$. By \cite[Prop. 3.4]{Br93}, $(\langle\mathfrak{C}_{Y}, \rho(\nu_{D})\rangle, \mathfrak{D}_{Y}\cup\{D\})$ is a colored cone in $\mathbb{F}_{X'}$. By the fact $\mathfrak{C}_{Y}\subsetneqq \langle \mathfrak{C}_{Y}, \rho(\nu_{D}) \rangle$ and Proposition \ref{all faces are colored faces, maps of orbits, C_u maps to a point}$(iii)$, there is a wall $\mu$ in $\mathbb{F}_{X}$ such that $C_{\mu}$ is contracted by $\pi$, i.e. $C_{\mu}\in R$. It's contradicted with our assumption. Hence, $\rho(\nu_{D})\in\mathfrak{C}_{Y}$.

Assume that $\mathfrak{C}_{Y}=\langle e_{1}, \ldots, e_{r}$ and $\rho(\nu_{D})=\sum\limits_{i=1}^{s}a_{i}e_{i}$, where $a_{i}>0$ for all $1\leq i\leq s$. Denote by $\sigma=\langle e_{1}, \ldots, e_{s} \rangle$. Note that $\sigma=\langle\sigma, \rho(\nu_{D})\rangle$ is a face of $\mathfrak{C}_{Y}=\langle\mathfrak{C}_{Y}, \rho(\nu_{D})\rangle$. By \cite[Prop. 3.4]{Br93} and Theorem \ref{correspondence fans and varieties}, there is a $G$-orbit $Y'$ on $X'$ such that $\mathfrak{C}^{c}_{Y'}=(\mathfrak{C}_{Y}, \mathfrak{D}_{Y}\cup\{D\})$. By Proposition \ref{all faces are colored faces, maps of orbits, C_u maps to a point}$(ii)$, $\pi(Y)=Y'$. By Proposition \ref{all faces are colored faces, maps of orbits, C_u maps to a point}$(i)$, there is a $G$-orbit $V$ on $X$ and a $G$-orbit $V'$ on $X'$ such that $\mathfrak{C}_{V}=\mathfrak{C}_{V'}=\sigma$. Denote by $\mathbb{F}_{-\sigma}$  the subset of $\mathbb{F}_{X}$ such that $\mathbb{F}_{X}\backslash\mathbb{F}_{-\sigma}=\{(\mathfrak{C}, \mathfrak{D})| \mathfrak{C}\supseteq\sigma\}$, and $\mathbb{F}'_{-\sigma}$ the subset of $\mathbb{F}_{X'}$ such that $\mathbb{F}_{X'}\backslash\mathbb{F}'_{-\sigma}=\{(\mathfrak{C}', \mathfrak{D}')| \mathfrak{C'}\supseteq\sigma\}$. By Theorem \ref{correspondence fans and varieties}, the spherical $G/H$-embeddings $X_{-V}$ and $X'_{-V'}$ satisfy that $\mathbb{F}_{X_{-V}}=\mathbb{F}_{-\sigma}$ and $\mathbb{F}_{X'_{-V'}}=\mathbb{F}'_{-\sigma}$, where $X_{-V}=X\backslash\overline{V}$ and $X'_{-V'}=X'\backslash\overline{V'}$. By \cite[Prop. 3.4]{Br93}, $\mathbb{F}_{-\sigma}$ is identified with $\mathbb{F}'_{-\sigma}$ by $\varphi$. By Theorem \ref{correspondence fans and varieties} and Theorem \ref{morphism of fan}, $X_{-V}$ is $G$-equivariantly isomorphic to $X'_{-V'}$ by the morphism $\pi$. In particular, $A\subseteq\overline{V}$.

On the other hand, by Proposition \ref{all faces are colored faces, maps of orbits, C_u maps to a point}$(ii)$, $\pi(V)=V'$. By Theorem \ref{correspondence fans and varieties}, $\mathfrak{C}^{c}_{V}$ (resp. $\mathfrak{C}^{c}_{V'}$) is a colored face of $\mathfrak{C}^{c}_{Y}$ (resp. $\mathfrak{C}^{c}_{Y'}$). Since $\mathfrak{D}_{Y'}=\mathfrak{D}_{Y}\cup\{D\}$, $\mathfrak{D}_{V'}=\mathfrak{D}_{V}\cup\{D\}$. Then by Proposition \ref{dim. of horospherical var.}$(iii)$, $\text{dim}(V)\neq\text{dim}(V')$. In particular, $A\cap V\neq\emptyset$.

Since $A$ is closed and $G$-stable, $A\supseteq\overline{G(A\cap V)}=\overline{V}$. Hence, $A=\overline{V}$, and $\mathfrak{C}_{A}=\mathfrak{C}_{V}=\sigma$. Since $A$ is complete, $A'=\overline{\pi(V)}=\overline{V'}$, and $\mathfrak{C}^{c}_{A'}=\mathfrak{C}^{c}_{V'}=(\mathfrak{C}_{V}, \mathfrak{D}_{V}\cup\{D\})=(\mathfrak{C}_{A}, \mathfrak{D}_{A}\cup\{D\})$.
\end{proof}

\begin{thm} \label{nef2=psef2 horospherical}
Let $X$ be a smooth projective horospherical $G/H$-embedding of dimension $n\geq 3$ such that $\text{Nef}^{\, 2}(X)=\text{Psef}^{\, 2}(X)$. Then $\text{Nef}^{\, 1}(X)=\text{Psef}^{\, 1}(X)$.
\end{thm}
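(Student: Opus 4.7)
I will split the proof by dimension.

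When $n=3$, observe that $k=2=n-1$, so the hypothesis $\text{Nef}^{\,2}(X)=\text{Psef}^{\,2}(X)$ reads $\text{Nef}^{\,n-1}(X)=\text{Psef}^{\,n-1}(X)$, i.e., every effective $1$-cycle is nef. By the symmetry of the intersection pairing this is equivalent to the statement that every effective divisor pairs nonnegatively with every effective $1$-cycle, i.e., $\text{Psef}^{\,1}(X)\subseteq\text{Nef}^{\,1}(X)$; combined with the reverse inclusion from Theorem~\ref{Nef<=Eff=Psef}$(ii)$ this gives $\text{Nef}^{\,1}(X)=\text{Psef}^{\,1}(X)$.

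For $n\geq 4$ I argue by contradiction. Assume $\text{Nef}^{\,1}(X)\neq\text{Psef}^{\,1}(X)$. By Theorem~\ref{Nef<=Eff=Psef}$(ii)$ the inclusion is strict, so by duality there exists an extremal ray $R$ of $NE(X)$ with $R\not\subseteq\text{Nef}_{1}(X)$; fix an effective divisor $E$ and a curve class $C\in R$ with $E\cdot C<0$. The ray $R$ is contracted by $\pi=\text{cont}_{R}:X\to X'$ by \cite[Thm.~3.1]{Br93}, and Corollary~\ref{higher codimension spherical} applied with $k=2$ (in range since $2\leq 2\leq n-2$) shows $\pi$ is birational with exceptional locus $A$ of dimension at most $\min\{k-1,n-k-1\}=1$. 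Proposition~\ref{exceptional locus of bir. Mori cont. horospherical} identifies $A$ as an \emph{irreducible} $G$-orbit closure, and since a proper birational morphism to a normal variety with zero-dimensional exceptional locus is forced to be an isomorphism (Zariski's main theorem: such a morphism is quasi-finite and birational, hence an isomorphism), $\dim A=0$ is impossible. Hence $\dim A=1$, and since $C\subseteq A$ (as $\pi(C)$ is a point) and $A$ is an irreducible curve, we have $A=C$ as sets and $[A]=[C]$ in $A^{n-1}(X)$; in particular $E\cdot [A]=E\cdot C<0$.

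The final step, which I expect to be the main technical obstacle, is to deduce $E\cdot [A]\geq 0$ from the hypothesis $\text{Nef}^{\,2}(X)=\text{Psef}^{\,2}(X)$. My plan is to combine Proposition~\ref{dim. of horospherical var.} (the equation $\dim A=\text{rank}(M_{\mathfrak{C}_{A}})+\dim(G/P_{I\cup\mathfrak{D}_{A}})=1$ forces $I\cup\mathfrak{D}_{A}=S$, so the $B$-fixed endpoints of $A$ are $G$-fixed points), Proposition~\ref{locally fac. Q-fac. criterion}$(i)$ (the smoothness hypothesis identifies the $\rho(\nu_{D})$ for $D\in\mathfrak{B}_{A}$ as part of a $\mathbb{Z}$-basis), and Proposition~\ref{cycles are rat. equiv. to stable ones, picard group on spherical varieties} (the Chow ring is generated by $B$-stable divisor classes) to write $[A]\in A^{n-1}(X)$ as a nonnegative $\mathbb{Q}$-linear combination of monomials $\prod_{i=1}^{n-1}[D_{i}]$ in $B$-stable prime divisor classes. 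Granted such a decomposition, the product $E\cdot\prod_{i=1}^{n-1}[D_{i}]$ can be regrouped so that the $n$ codimension-one factors $E,[D_{1}],\ldots,[D_{n-1}]$ are paired into products of two classes each, every pair lying in $\text{Eff}^{\,2}(X)=\text{Nef}^{\,2}(X)$ by hypothesis (with one leftover paired into a codimension-two class via $E$ when $n$ is even, or a single leftover $[D_{j}]$ in $\text{Eff}^{\,1}(X)$ when $n$ is odd); iterated application of Theorem~\ref{Nef<=Eff=Psef}$(iv)$ (nef times nef is nef, nef times effective is effective) then shows this $0$-cycle is effective and thus a nonnegative integer. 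Summing over monomials yields $E\cdot [A]\geq 0$, contradicting $E\cdot [A]<0$, and completing the proof.
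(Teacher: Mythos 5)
Your reduction is sound and matches the paper's: the case $n=3$ by duality, and for $n\geq 4$ the use of Corollary \ref{higher codimension spherical} with $k=2$ together with Proposition \ref{exceptional locus of bir. Mori cont. horospherical} to conclude that the exceptional locus $A$ is an irreducible $G$-stable curve with $E\cdot A<0$ (after replacing $E$ by a $B$-stable prime divisor via Proposition \ref{cycles are rat. equiv. to stable ones, picard group on spherical varieties}$(i)$, which you should say explicitly). The problem is that your final step is a plan rather than a proof, and the plan has two genuine gaps. First, the decomposition $[A]=\sum_{M}c_{M}\prod_{i}[D_{M,i}]$ with $c_{M}\geq 0$ and $D_{M,i}$ $B$-stable prime divisors is not established and is not available in general: unlike the toric and toroidal cases, a $G$-orbit closure on a horospherical variety is not a transversal intersection of the $B$-stable divisors containing it (by Proposition \ref{dim. of horospherical var.}$(iii)$ its codimension exceeds $\mathrm{rank}(G/H)-\dim\mathfrak{C}_{A}$ by $\dim(G/P_{I\cup\mathfrak{D}_{A}})-\dim(G/P_{I})$, the contribution of the colors), and the Chow ring of a spherical variety need not be generated in degree one (closed orbits can be quadrics, whose middle cohomology is not spanned by products of divisor classes), so there is no reason $[A]$ admits such an expression.

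Second, even granting the decomposition, the regrouping into pairs fails exactly where the theorem is hard. A pair $D\cdot D'$ lies in $\mathrm{Eff}^{\,2}(X)$ only when $D\neq D'$; a self-intersection $D^{2}$ need not be pseudo-effective. Since $E\cdot A<0$ and $A$ is a $G$-orbit closure, necessarily $A\subseteq E$, and the delicate situation is when $E$ is the \emph{unique} $B$-stable prime divisor negative on $A$ (the paper's Case 3). In that situation any hypothetical nonnegative monomial decomposition must contain a monomial $M$ with $E\cdot\prod_{i}[D_{M,i}]<0$, and for that monomial no admissible pairing can exist (otherwise your own argument would give $\geq 0$), so the method cannot close. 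The paper resolves this case by entirely different means: when no $G$-stable divisor is negative on $A$ it invokes $-K_{X}\cdot A\geq 0$ and Ionescu's inequality $2\dim(A)\geq\dim(X)+l(R)-1$ to force $\dim(X)\leq 3$; and when the unique negative divisor is $G$-stable it uses the explicit classification of the extremal classes $C_{\mu}$ and $C_{D,Y}$ from \cite[Thm.~3.2]{Br93} together with Proposition \ref{exceptional locus of bir. Mori cont. horospherical} and a rank count to again force $\dim(X)\leq 3$. Your intersection-theoretic regrouping does cover the analogue of the paper's Case 1 (where a second, distinct divisor $D$ with $D\cdot A<0$ exists and $A=E\cap W$ for a two-dimensional orbit closure $W$), but it has no mechanism to handle the remaining cases, so the proof is incomplete.
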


\begin{proof}
When $n=3$, the conclusion follows from the duality. So we can assume $n\geq 4$. Now assume that $H\supseteq R_{u}(B)$, $N_{G}(H)=P_{I}$ and there is an extremal ray $R$ of $NE(X)$ such that $R\nsubseteq\text{Nef}_{1}(X)$.

By \cite[Thm. 3.1]{Br93}, there exists a corresponding contraction $\pi=\text{cont}_{R}: X\rightarrow Y$. Let $A\subseteq X$ be the exceptional locus, and $A'=\pi(A)$. By Corollary \ref{higher codimension spherical}, $\text{dim}(A)=1$ and $\text{dim}(A')=0$.

By Proposition \ref{exceptional locus of bir. Mori cont. horospherical}, $A$ is a $G$-orbit closure. Thus, $A$ is an irreducible $G$-stable curve on $X$ and $A'$ is a $G$-stable point on $Y$. Now we will get contradictions case by case as follows.

Case 1. Assume that there exist irreducible divisors $D$ and $E$ such that

$(i)$ $D$ is $B$-stable and $D\cdot A<0$,

$(ii)$ $E$ is $G$-stable and $A\subseteq E$, and

$(iii)$ $D\neq E$.

By Proposition \ref{locally fac. Q-fac. criterion}$(ii)$, $\mathbb{Q}^{+}\rho(v_{E})$ is an extremal ray of $\mathfrak{C}_{A}$, and there is a unique proper face $\mathfrak{C}$ of $\mathfrak{C}_{A}$ of codimension one not containing $\rho(v_{E})$. By Proposition \ref{all faces are colored faces, maps of orbits, C_u maps to a point}$(i)$, there is a unique $G$-orbit closure $W$ such that $\mathfrak{C}_{W}=\mathfrak{C}$, and $A\subsetneqq W$. Note that $\mathfrak{D}_{W}\subseteq\mathfrak{D}_{A}\subseteq\mathfrak{D}_{W}\cup\{E\}$ by Proposition \ref{description of D_Y} and Proposition \ref{locally fac. Q-fac. criterion}. Since $E\in\mathcal{V}_{X}$, $\mathfrak{D}_{A}=\mathfrak{D}_{W}$.

Since $E$ and $W$ are both $G$-stable, $E\cap W$ is the union of some $G$-orbits. Take any $G$-orbit $V$ contained in $E\cap W$. By Theorem \ref{correspondence fans and varieties}, $\mathfrak{C}_{V}\supseteq\langle \mathfrak{C}_{W}, \rho(\nu_{E}) \rangle=\mathfrak{C}_{A}$. Thus, $V\subseteq A$. Hence, $E\cap W\subseteq A$, i.e. $A=E\cap W$. Therefore, $\text{dim}(W)=2$, and $E\cdot W=\lambda A$ in $N_{1}(X)_{\mathbb{R}}$ for some $\lambda>0$. Thus, $D\cdot E\cdot W<0$.

On the other hand, the fact $D\neq E$ implies that $D\cdot E\in\text{Psef}^{\, 2}(X)=\text{Nef}^{\, 2}(X)$. Hence, $D\cdot E\cdot W\geq 0$. We get a contradiction.

\medskip

Case 2. Assume that every irreducible $G$-stable divisor $E$ has property $E\cdot A\geq 0$.

By \cite[Thm. 3.3.9]{Per12}, $-K_{X}\cdot A\geq 0$. Thus, the length $l(R)= -K_{X}\cdot A$ of the extremal ray $R$ is nonnegative. By \cite[Thm. (0.4)]{Io86}, $\text{dim}(X)\leq 2\text{\,dim}(A)-l(R)+1\leq 3$. It's contradicted with our assumption on the dimension of $X$.

\medskip

Case 3. Assume that there is an irreducible $G$-stable divisor $E$ such that $E\cdot A<0$, and $E$ is the unique irreducible $B$-stable divisor having a negative degree on the curve $A$.

By \cite[Thm. 3.2$(ii)$]{Br93}, either $A=\lambda C_{\mu}$ in $N_{1}(X)_{\mathbb{R}}$ for some wall $\mu$ of $\mathbb{F}_{X}$ and $\lambda >0$, or $C=\lambda C_{D, Z}$ in $N_{1}(X)_{\mathbb{R}}$ for some closed $G$-orbit $Z$, $D\in\mathfrak{D}(G/H)\backslash\mathfrak{D}_{Z}$ and $\lambda >0$.

\medskip

Case 3.1. Assume that $A=\lambda C_{\mu}$ in $N_{1}(X)_{\mathbb{R}}$, where $\mu$ is a wall in $\mathbb{F}_{X}$ and $\lambda>0$.

Let $\mu, \mu_{+}, \mu_{-}, e_{i}, a_{i}, \alpha, \beta, r$ be as in the discussions before Proposition \ref{exceptional locus of bir. Mori cont. horospherical}. Let $D_{e}$ be the irreducible $B$-stable divisor on $X$ such that $\rho(v_{D_{e}})\in\mathbb{Q}^{+}e$, which is unique by Corollary \ref{correspondence rays and divisors horospherical Q-factorial}. By Proposition \ref{exceptional locus of bir. Mori cont. horospherical}, $\mathfrak{C}_{A}=\langle e_{1},\ldots, e_{\alpha}\rangle$, $\mathfrak{C}_{A'}=\langle e_{1},\ldots, e_{\alpha}, e_{\beta+1},\ldots, e_{r+1}\rangle$. By Proposition \ref{dim. of horospherical var.}, $1=\text{dim}(A)=r-\alpha+\text{dim}(G/P_{I\cup \mathfrak{D}_{A}})$. Since $\alpha\leq\beta\leq r-1$, we know that $\alpha=\beta=r-1$ and $S=I\cup \mathfrak{D}_{A}$. Thus, $\mathfrak{D}(G/H)=S\backslash I=\mathfrak{D}_{A}\subseteq\{D_{e_{1}},\ldots, D_{e_{\alpha}}\}$.

On the other hand, by the formula (1) on page \pageref{eqn. C_(D, Y)}, the intersection number $D_{e_{i}}\cdot C_{\mu}$ has the same sign as $a_{i}$. In particular, $D_{e_{i}}\cdot C_{\mu}<0$ for all $1\leq i\leq \alpha$. By the uniqueness of $E$ in the assumption of Case 3, $\alpha=1$ and $D_{e_{1}}=E$. Hence, $\text{rank}(G/H)=r=\alpha+1=2$ and $\mathfrak{D}(G/H)\subseteq\{E\}$. However, $E$ is $G$-stable. So $\mathfrak{D}(G/H)=\emptyset$, i.e. $P_{I}=G$. Thus, $\text{dim}(X)=\text{dim}(G/H)=\text{rank}(G/H)+\text{dim}(G/P_{I})=2$, which is contradicted with our assumption on the dimension of $X$.

\medskip

Case 3.2. Assume that there doesn't exist any wall $\mu$ in $\mathbb{F}_{X}$ such that $A=\lambda C_{\mu}$ in $N_{1}(X)_{\mathbb{R}}$ for some $\lambda>0$. By \cite[Thm. 3.2$(ii)$]{Br93}, we can assume that $C=\lambda C_{D, Z}$ in $N_{1}(X)_{\mathbb{R}}$, where $\lambda>0$, $Z$ is a closed $G$-orbit on $X$ and $D\in\mathfrak{D}(G/H)\backslash\mathfrak{D}_{Z}$. By the assumption, we can apply \cite[Prop. 3.4]{Br93} to get $\mathbb{F}_{Y}$.

Denote by $\mathfrak{C}_{Z}=\langle e_{1},\ldots, e_{r}\rangle$, and $\rho(v_{D})=\sum\limits_{i=1}^{r}a_{i}e_{i}$. Thus, by the formula (2) on page \pageref{eqn. C_(D, Y)}, $D\cdot C_{D, Z}=1$, $D_{e_{i}}\cdot C_{D, Z}=-a_{i}$ for all $1\leq i\leq r$, and for any $D'\in\mathfrak{B}(X)\backslash\{D_{e_{1}}, \ldots, D_{e_{r}}\}$, the intersection number $D'\cdot C_{D, Z}\geq 0$.

By Proposition \ref{exceptional locus of bir. Mori cont. horospherical}, $\rho(\nu_{D})\in\mathfrak{C}_{Z}$, i.e. $a_{i}\geq 0$ for all $1\leq i\leq r$. Then by the uniqueness of $E$ in the assumption of Case 3, we can assume that $E=D_{e_{1}}$, $a_{1}>0$ and $a_{i}=0$ for all $2\leq i\leq r$, i.e. $\rho(v_{D})=a_{1}e_{1}=a_{1}\rho(v_{E})$. By Proposition \ref{exceptional locus of bir. Mori cont. horospherical}$(ii)$, $E$ is the exceptional locus of $\pi$, i.e. $E=A$. Since $A$ is a curve and $E$ is a prime divisor on $X$, we know that $\text{dim}(X)=2$. It is contradicted with our assumption on the dimension of $X$. Hence, the conclusion follows.
\end{proof}

\subsubsection{Reduction morphisms on horospherical varieties} \label{subsubsection isomorphic colored fans}

In this part, we analysis the horospherical cases of Theorem \ref{morphism to G/P_0} and show its converse in the horospherical cases.

Let $X$ be a complete horospherical $G/H$-embedding such that $H\supseteq R_{u}(B^{-})$ and $N_{G}(H)=P_{I}^{-}$. By Remark \ref{D=S-I, rho(D_a)=a*}, $\mathfrak{D}(G/H)$ is identified with $S\backslash I$. Let $\mathfrak{D}_{1}$ be a subset of $\mathfrak{D}_{0}(G/H)$. By Theorem \ref{morphism to G/P_0}$(i)$, $D_{0}$ induces a $G$-equivariant morphism $\pi_{0}: X\rightarrow G/P_{0}^{-}$, where $D_{0}=\sum\limits_{D\in\mathfrak{D}_{1}}D$, and $P_{0}^{-}$ is some parabolic subgroup of $G$ containing $B^{-}$. Let $x_{0}=H/H\in G/H$, $\tilde{x}_{0}=\pi_{0}(x_{0})$ and $X_{0}=\pi_{0}^{-1}(\tilde{x}_{0})$. Note that $Bx_{0}$ is the open $B$-orbit on $X$ and $G_{\tilde{x}_{0}}=P_{0}^{-}$. Then by Lemma \ref{morphism to G/P_0 then F_X <= F_(X_0)}$(a)(e)$, $X_{0}$ is a complete spherical $L_{0}/H_{0}$-embedding and there is an immersion $\Phi: \mathbb{F}_{X}\rightarrow \mathbb{F}_{X_{0}}$, where $L_{0}=P_{0}\cap P_{0}^{-}$ and $H_{0}=H\cap L_{0}$.

\begin{thm} \label{morphism to G/P_0 horospherical}
Keep notations as in the discussions above. Then the following hold.

$(i)$ $P_{0}=P_{S\backslash \mathfrak{D}_{1}}$, $H_{0}\supseteq R_{u}(B_{0}^{-})$ and $X_{0}$ is a complete horospherical $L_{0}/H_{0}$-embedding.

$(ii)$ For any $\alpha\in I\cup \mathfrak{D}_{1}$, $\langle M_{L_{0}/H_{0}}, \alpha^{\vee} \rangle_{G}=0$.

$(iii)$ $R_{u}(P_{S\backslash \mathfrak{D}_{1}}^{-})$ acts trivially on $X_{0}$.

$(iv)$ The map $\Phi: \mathbb{F}_{X}\rightarrow\mathbb{F}_{X_{0}}$ is an isomorphism of colored fans.
\end{thm}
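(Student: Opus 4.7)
For part (i), the plan is to identify $P_0$ by comparing colors on the two sides of $\pi_0$. By Lemma \ref{morphism to G/P_0 then F_X <= F_(X_0)}(d), the map $\pi_0^{\sharp}\colon \mathfrak{D}(G/P_0^{-})\rightarrow\mathfrak{D}_{1}$ is a bijection. Under the identification of colors with simple roots from Remark \ref{D=S-I, rho(D_a)=a*}(iii), $\mathfrak{D}_{1}$ corresponds to a subset of $S\setminus I$, and $\mathfrak{D}(G/P_0^{-})$ corresponds to the simple roots complementary to the set defining $P_0$; matching them (which is forced by $G$-equivariance of $\pi_0$ restricted to the open $G$-orbit) gives $P_0=P_{S\setminus\mathfrak{D}_1}$. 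Alternatively, one can note that $D_0$ has class $\sum_{\alpha\in\mathfrak{D}_1}\omega_\alpha$ in the Picard group of the open orbit, and the proof of Theorem \ref{morphism to G/P_0}(i) identifies $G/P_0^{-}$ with the orbit of the highest weight line in $\mathbb{P}(H^{0}(X,D_{0}))^{\vee}$, giving the same conclusion. Then $R_{u}(B_{0}^{-})$ is generated by root subgroups $U_{-\alpha}$ for $\alpha$ a positive root of $L_{0}$, hence $R_{u}(B_{0}^{-})\subseteq R_{u}(B^{-})\subseteq H$ and simultaneously $R_{u}(B_{0}^{-})\subseteq L_{0}$, so $R_{u}(B_{0}^{-})\subseteq H_{0}$, and $X_{0}$ is a complete horospherical $L_{0}/H_{0}$-embedding.

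For part (ii), the key observation is that the isomorphism $\phi\colon M_{G/H}\to M_{L_0/H_0}$ from Lemma \ref{morphism to G/P_0 then F_X <= F_(X_0)}(c) is the identity on characters of $T$ (since for $f\in\mathbb{C}(X)^{(B)}$, the $B_{0}$-eigencharacter of $f|_{X_0}$ is obtained by restriction from $B$ to $B_{0}$, and $T\subseteq B_{0}$). So $M_{L_0/H_0}$ and $M_{G/H}$ agree as subgroups of $\chi(T)$. For $\alpha\in I$, we have $M_{G/H}\subseteq \chi(P_I^{-})$, and elements of $\chi(P_{I}^{-})$ kill $\alpha^{\vee}$ because they are trivial on $[L_{I},L_{I}]$. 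For $\alpha\in\mathfrak{D}_{1}\subseteq\mathfrak{D}_{0}(G/H)$, Remark \ref{D=S-I, rho(D_a)=a*}(iv) gives $\rho(\nu_{D_{\alpha}})=\alpha^{\vee}|_{M_{G/H}}=0$, so again $\langle M_{G/H},\alpha^{\vee}\rangle_{G}=0$. Combined, $\langle M_{L_0/H_0},\alpha^{\vee}\rangle_{G}=0$ for all $\alpha\in I\cup\mathfrak{D}_{1}$.

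For part (iii), first note $R_{u}(P_{S\setminus\mathfrak{D}_1}^{-})=R_{u}(P_{0}^{-})$ is generated by root subgroups $U_{-\beta}$ with $\beta\in\Phi^{+}$ having support meeting $\mathfrak{D}_{1}$, all of which lie in $R_{u}(B^{-})\subseteq H$. So every $u\in R_{u}(P_{0}^{-})$ fixes $x_{0}$. Since $L_{0}$ normalizes $R_{u}(P_{0}^{-})$, for any $l\in L_{0}$ we have $l^{-1}ul\in R_{u}(P_{0}^{-})\subseteq H$, whence $u\cdot(l\cdot x_{0})=l\cdot(l^{-1}ul\cdot x_{0})=l\cdot x_{0}$. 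Thus $u$ fixes the dense open $L_{0}$-orbit $L_{0}\cdot x_{0}\subseteq X_{0}$ pointwise; by separatedness the fixed locus of $u$ on $X_{0}$ is closed, hence equals $X_{0}$.

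For part (iv), the plan is simply to invoke the fact that for a horospherical homogeneous space the valuation cone is the entire ambient $\mathbb{Q}$-vector space (in Proposition \ref{valuation cone is a max dim. cone} there are no constraint forms $\chi_{i}$). Hence $\mathcal{V}(G/H)=(N_{G/H})_{\mathbb{Q}}$ and $\mathcal{V}(L_{0}/H_{0})=(N_{L_{0}/H_{0}})_{\mathbb{Q}}$, and under $\phi$ these coincide. By Remark \ref{explanation of defi. of immmersion of colored fans}(i), the difference $\mathbb{F}_{X_{0}}\setminus\mathbb{F}_{X}$ consists of colored cones whose interior misses $\phi(\mathcal{V}(G/H))$, but this set is empty since $\phi(\mathcal{V}(G/H))$ is the whole space. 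Therefore the immersion $\Phi$ is a bijection, i.e., an isomorphism of colored fans in the sense of Remark \ref{explanation of defi. of immmersion of colored fans}(iii). The main subtlety in the whole theorem is the bookkeeping in part (i)—correctly matching the identification of colors with simple roots on $G/H$ and on $G/P_{0}^{-}$—but once Lemma \ref{morphism to G/P_0 then F_X <= F_(X_0)}(d) is in hand this is straightforward.
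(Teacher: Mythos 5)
Your proposal is correct and follows essentially the same route as the paper's proof: part (i) via the color bijection of Lemma \ref{morphism to G/P_0 then F_X <= F_(X_0)}$(d)$ and Remark \ref{D=S-I, rho(D_a)=a*}$(iii)$, part (ii) via $M_{L_0/H_0}=M_{G/H}$ together with Remark \ref{D=S-I, rho(D_a)=a*}$(ii)(iv)$, part (iii) by showing $R_u(P_0^-)\subseteq H$ fixes a dense orbit pointwise, and part (iv) from $\mathcal{V}(G/H)=(N_{G/H})_{\mathbb{Q}}$ killing the difference $\mathbb{F}_{X_0}\setminus\mathbb{F}_X$. The minor variations (the alternative highest-weight-line identification of $P_0$, and using the $L_0$-orbit instead of the $P_0^-$-orbit in (iii)) do not change the substance of the argument.
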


\begin{proof}
$(i)$ Consider the restricted morphism $\pi_{0}|_{G/H}: G/H\rightarrow G/P_{0}^{-}$. By Lemma \ref{parabolic containing R_u(B) is unique}, this morphism is induced by the inclusion $H\subseteq P_{0}^{-}$. By Lemma \ref{morphism to G/P_0 then F_X <= F_(X_0)}$(d)$, $\pi^{\sharp}: \mathfrak{D}(G/P_{0}^{-})\rightarrow\mathfrak{D}_{1}, D'\mapsto\pi_{0}^{-1}(D')$ is a bijective map. Apply Remark \ref{D=S-I, rho(D_a)=a*}$(iii)$ to $G/H$ and $G/P_{0}^{-}$ respectively, we get that $P_{0}=P_{S\backslash \mathfrak{D}_{1}}$. As a closed subscheme of $X$, $X_{0}$ is complete. Note that $H_{0}=H\cap L_{0}\supseteq R_{u}(B^{-})$. Hence, the spherical $L_{0}/H_{0}$-embedding is $L_{0}$-horospherical.

\medskip

$(ii)$ By Lemma \ref{morphism to G/P_0 then F_X <= F_(X_0)}$(c)$ and Remark \ref{D=S-I, rho(D_a)=a*}$(ii)$, $M_{L_{0}/H_{0}}=M_{G/H}\subseteq\chi(P_{I}^{-})$. Thus, for all $\alpha\in I$, $\langle M_{L_{0}/H_{0}}, \alpha^{\vee} \rangle_{G}=0$.

By Remark \ref{D=S-I, rho(D_a)=a*}$(iii)(iv)$ and the fact $\mathfrak{D}_{1}\subseteq \mathfrak{D}_{0}(G/H)$, $\beta^{\vee}|_{M_{G/H}}=\rho(\nu_{D_{\beta}})=0$ for all $\beta\in \mathfrak{D}_{1}$. Hence, $\langle M_{L_{0}/H_{0}}, \beta^{\vee} \rangle_{G}=0$.

\medskip

$(iii)$ Note that $(B\cdot x_{0})\cap X_{0}=\{b\cdot x_{0}\mid b\in B, \pi(b\cdot x_{0})=\tilde{x}_{0}\}=\{b\cdot x_{0}\mid b\in G_{\tilde{x}_{0}}\cap B\}=B_{0}\cdot x_{0}$. Thus, $P_{0}^{-}\cdot x_{0}\supseteq B_{0}\cdot x_{0}$ is an open subset of $X_{0}$.

Note that $(P_{0}^{-})_{x_{0}}=H\cap P_{0}^{-}=H\supseteq R_{u}(B^{-})\supseteq R_{u}(P_{0}^{-})$. Take any point $x\in P_{0}^{-}\cdot x_{0}$. There exists some element $p\in P_{0}^{-}$ such that $p\cdot x_{0}=x$. Thus, $(P_{0}^{-})_{x}=p(P_{0}^{-})_{x_{0}}p^{-1}\supseteq pR_{u}(P_{0}^{-})p^{-1}=R_{u}(P_{0}^{-})$. Hence, $R_{u}(P_{0}^{-})\subseteq\bigcap\limits_{x\in P_{0}^{-}\cdot x_{0}}(P_{0}^{-})_{x}$, i.e. $R_{u}(P_{0}^{-})$ act trivially on the open subset $P_{0}^{-}\cdot x_{0}$ of the irreducible variety $X_{0}$. Therefore, $R_{u}(P_{0}^{-})$ act trivially on $X_{0}$.

\medskip

$(iv)$ By Lemma \ref{morphism to G/P_0 then F_X <= F_(X_0)}$(e)$, $\mathbb{F}_{X}\subseteq\mathbb{F}_{X_{0}}$ and $\mathbb{F}_{X_{0}}\backslash\mathbb{F}_{X}=\{(\mathfrak{C}, \mathfrak{D})\in\mathbb{F}_{X_{0}}\mid \mathfrak{C}^{o}\cap\mathcal{V}(G/H)=\emptyset\}$. However, by \cite[Cor. 6.2]{Kn91}, $\mathcal{V}(G/H)=(N_{X})_{\mathbb{Q}}$. Hence, $\mathbb{F}_{X}=\mathbb{F}_{X_{0}}$.
\end{proof}

\begin{thm} \label{converse of morphism to G/P_0 horospherical}
Let $\mathfrak{D}_{1}\subseteq S$ be a subset. Denote by $P_{0}=P_{S\backslash \mathfrak{D}_{1}}$, $L_{0}=P_{0}\cap P_{0}^{-}$ the standard Levi factor and $B_{0}=B\cap L_{0}$. Suppose that the following three conditions hold.

$(a)$ There is a $P_{0}^{-}$-action on a normal variety $X_{0}$ such that $R_{u}(P_{0}^{-})$ acts trivially on $X_{0}$, and $X_{0}$ is a horospherical $L_{0}/H_{0}$-embedding, where $H_{0}\supseteq R_{u}(B_{0}^{-})$.

$(b)$ For any $\alpha\in \mathfrak{D}_{1}$, $\langle M_{L_{0}/H_{0}}, \alpha^{\vee} \rangle_{G}=0$.

$(c)$ $N_{L_{0}}(H_{0})=P_{I, L_{0}}^{-}$ is the parabolic subgroup of $L_{0}$ containing $B_{0}$ corresponding to the subset $I$ of the set $S\backslash \mathfrak{D}_{1}$ of simple roots of $L_{0}$. And for any $\alpha\in I$, $\langle M_{L_{0}/H_{0}}, \alpha^{\vee} \rangle_{G}=0$.

Let $H=\text{Ker}_{P_{I}^{-}}M_{L_{0}/H_{0}}$, where $P_{I}$ is the parabolic subgroup of $G$ containing $B$ corresponding to the subset $I$ of the set $S$ of simple roots of $G$. Then the following hold.

$(i)$ The variety $X=G\times^{P_{0}^{-}}X_{0}$ is a horospherical $G/H$-embedding, and $\mathbb{F}_{X}=\mathbb{F}_{X_{0}}$.

$(ii)$ There is an inclusion $\mathfrak{D}_{1}\subseteq \mathfrak{D}_{0}(G/H)$.

$(iii)$ Assume that $X_{0}$ is complete. Let $\pi_{0}: X\rightarrow\widetilde{X}$ be the morphism corresponding to $\mathfrak{D}_{1}$ as in Theorem \ref{morphism to G/P_0}$(i)$. Then $\widetilde{X}$ is $G$-equivariantly isomorphic to $G/P_{0}^{-}$ and $\pi_{0}^{-1}\pi_{0}(x_{0})$ is $P_{0}^{-}$-equivariantly isomorphic to $X_{0}$.
\end{thm}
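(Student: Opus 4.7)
\emph{Proof proposal.} The plan is to construct $X:=G\times^{P_0^-}X_0$ as an associated bundle and deduce every assertion from this presentation.

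\emph{Construction.} Form the geometric quotient $X:=G\times^{P_0^-}X_0$ of $G\times X_0$ by $p\cdot(g,x)=(gp^{-1},p\cdot x)$. Since $G\to G/P_0^-$ is Zariski-locally trivial (via translates of the big cell $R_u(P_0)P_0^-/P_0^-$), $X$ is a Zariski-locally trivial $X_0$-bundle over $G/P_0^-$, hence normal, equipped with the natural $G$-action and $G$-equivariant projection $\pi\colon X\to G/P_0^-$ whose fiber over $\tilde x_0:=eP_0^-$ is canonically $X_0$. Let $x_0^*:=H_0/H_0\in X_0$ and $x^*:=[e,x_0^*]\in X$.

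\emph{Stabilizer and identification with $H$.} From the quotient relation one computes $G_{x^*}=(P_0^-)_{x_0^*}=R_u(P_0^-)\cdot H_0$, using that $R_u(P_0^-)$ acts trivially on $X_0$. To match this with $H$, use that $I\subseteq S\setminus\mathfrak{D}_1$ forces $P_I^-\subseteq P_0^-$, and the Lie-algebra splitting by negative-root subgroups gives an isomorphism of varieties $R_u(P_0^-)\times R_u(P_{I,L_0}^-)\to R_u(P_I^-)$. Conditions (b) and (c) place $M_{L_0/H_0}$ naturally inside $\chi(P_I^-)=\chi(L_I)$, so writing $K_I:=\text{Ker}_{L_I}M_{L_0/H_0}$ we get $H=R_u(P_I^-)\cdot K_I$ and $H_0=R_u(P_{I,L_0}^-)\cdot K_I$; combining, $R_u(P_0^-)\cdot H_0=R_u(P_0^-)R_u(P_{I,L_0}^-)K_I=R_u(P_I^-)K_I=H$. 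Because $K_I\supseteq[L_I,L_I]\supseteq U_{\pm\alpha}$ for $\alpha\in I$, in fact $H\supseteq R_u(B^-)$, and Remark \ref{D=S-I, rho(D_a)=a*}$(ii)$ applied to this $H$ recovers $N_G(H)=P_I^-$.

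\emph{Sphericity and the fan equality.} On the open $\pi^{-1}(B\tilde x_0)\cong R_u(P_0)\times X_0$, the decomposition $B=R_u(P_0)\cdot B_0$ (immediate from the root datum, since $B\cap R_u(P_0)=R_u(P_0)$) together with the open $B_0$-orbit $B_0\cdot x_0^*\subseteq X_0$ shows that $B\cdot x^*\supseteq R_u(P_0)\times B_0\cdot x_0^*$ is open in $X$; thus $X$ is a horospherical $G/H$-embedding, proving the first half of $(i)$. For the fan equality, triviality of the $R_u(P_0^-)$-action identifies $P_0^-$-orbits on $X_0$ with $L_0$-orbits, and the bundle structure gives a bijection $Y_0\mapsto G\cdot Y_0$ between $S_{X_0,L_0}$ and $S_{X,G}$; under the restriction isomorphism $M_{G/H}\cong M_{L_0/H_0}$ (parallel to Lemma \ref{morphism to G/P_0 then F_X <= F_(X_0)}$(c)$), the map $D\mapsto D\cap X_0$ bijects $B$-stable prime divisors on $X$ with $B_0$-stable prime divisors on $X_0$, with matching valuations; hence $\mathbb{F}_X=\mathbb{F}_{X_0}$.

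\emph{Parts $(ii)$ and $(iii)$.} For $(ii)$, identify $\mathfrak{D}(G/H)$ with $S\setminus I$ (Remark \ref{D=S-I, rho(D_a)=a*}$(iii)$); for $\alpha\in\mathfrak{D}_1$, Remark \ref{D=S-I, rho(D_a)=a*}$(iv)$ combined with condition (b) gives $\rho(\nu_{D_\alpha})=\alpha^\vee|_{M_{G/H}}=\alpha^\vee|_{M_{L_0/H_0}}=0$, so $\mathfrak{D}_1\subseteq\mathfrak{D}_0(G/H)$. For $(iii)$, assume $X_0$ complete, so $X$ is complete. Theorem \ref{morphism to G/P_0}$(i)$ applied to $D_0=\sum_{\alpha\in\mathfrak{D}_1}D_\alpha$ yields $\pi_0\colon X\to G/Q^-$, and Theorem \ref{morphism to G/P_0 horospherical}$(i)$ identifies $Q=P_{S\setminus\mathfrak{D}_1}=P_0$. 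Both $\pi$ and $\pi_0$ are $G$-equivariant and, restricted to $G/H$, both realize the quotient map $G/H\to G/P_0^-$ induced by $H\subseteq P_0^-$; since $G/H$ is dense and $G/P_0^-$ is separated, $\pi_0=\pi$, so $\pi_0^{-1}(\pi_0(x^*))=\pi^{-1}(\tilde x_0)=X_0$ $P_0^-$-equivariantly.

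\emph{Main obstacle.} The principal technical step is the identification $R_u(P_0^-)\cdot H_0=H$: it rests on the intermediate Levi factorization $R_u(P_I^-)\cong R_u(P_0^-)\times R_u(P_{I,L_0}^-)$ (valid since $I\subseteq S\setminus\mathfrak{D}_1$) and on the natural embedding of $M_{L_0/H_0}$ into $\chi(P_I^-)=\chi(L_I)$ granted by conditions (b) and (c); once that is in place, the remaining assertions are essentially bookkeeping via the bundle structure of $X$.
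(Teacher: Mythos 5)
Your proposal is correct, but it runs in the opposite direction from the paper's argument. The paper first constructs an abstract horospherical $G/H$-embedding $X'$ with $\mathbb{F}_{X'}=\mathbb{F}_{X_{0}}$ via the Luna--Vust correspondence (Theorem \ref{correspondence fans and varieties}), after establishing $H\cap L_{0}=H_{0}$, $M_{G/H}=M_{L_{0}/H_{0}}$ and $P_{0}^{-}/H=L_{0}/H_{0}$, and only then identifies $X'$ with $G\times^{P_{0}^{-}}X_{0}$ by feeding $X'$ back into the already-proved forward direction (Theorem \ref{morphism to G/P_0 horospherical} together with Remark \ref{X is a quotient of G*X_0 by (P_0)-}), passing through a completion of $X_{0}$ when it is not complete. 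You instead build the associated bundle $G\times^{P_{0}^{-}}X_{0}$ directly and verify everything on it; the heart of your argument, the stabilizer identification $G_{x^{*}}=R_{u}(P_{0}^{-})\cdot H_{0}=R_{u}(P_{I}^{-})\cdot K_{I}=H$ via the factorization $R_{u}(P_{I}^{-})=R_{u}(P_{0}^{-})\cdot R_{u}(P_{I,L_{0}}^{-})$ and the inclusion $M_{L_{0}/H_{0}}\subseteq\chi(P_{I}^{-})$ supplied by condition $(c)$, is sound and is essentially the same character-theoretic bookkeeping the paper performs when it shows $R_{u}(P_{0}^{-})\subseteq H$ in Step 1, just organized around the bundle rather than around the fan. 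Your route makes the isomorphism $X\cong G\times^{P_{0}^{-}}X_{0}$ true by construction, dispenses with the completion detour, and reduces part $(iii)$ to the observation that $\pi$ and $\pi_{0}$ agree on the dense orbit; the price is that you must re-derive for your hand-built $X$ the orbit and divisor correspondences of Lemma \ref{morphism to G/P_0 then F_X <= F_(X_0)} (your ``parallel to $(c)$'' step) in order to conclude $\mathbb{F}_{X}=\mathbb{F}_{X_{0}}$, whereas the paper reuses that lemma wholesale. One small imprecision: only condition $(c)$ is needed to embed $M_{L_{0}/H_{0}}$ into $\chi(P_{I}^{-})$; condition $(b)$ enters solely in part $(ii)$.
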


Note that $M_{L_{0}/H_{0}}\subseteq\chi(B_{0})=\chi(B)$. Hence, the pairing $\langle -, - \rangle_{G}$ in the conditions $(a)$ and $(b)$ are well-defined. By Remark \ref{D=S-I, rho(D_a)=a*}$(i)$, $N_{L_{0}}(H_{0})$ is a parabolic subgroup of $L_{0}$ containing $B_{0}^{-}$. Since $L_{0}=P_{S\backslash \mathfrak{D}_{1}}\cap P_{S\backslash \mathfrak{D}_{1}}^{-}$, the set of the simple roots of $L_{0}$ corresponding to $(B_{0}, T)$ is $S\backslash \mathfrak{D}_{1}$. Hence, the condition $(c)$ is well-defined. Remark that the conditions $(a)(b)(c)$ are corresponding to the conclusions $(i)(ii)(iii)$ in Theorem \ref{morphism to G/P_0 horospherical}. So Theorem \ref{converse of morphism to G/P_0 horospherical} is indeed the converse of Theorem \ref{morphism to G/P_0} in the horospherical cases.

\begin{proof}
The condition $(c)$ implies that $M_{L_{0}/H_{0}}\subseteq\chi(P_{I})$. So $H=\text{Ker}_{P_{I}}M_{L_{0}/H_{0}}$ is well-defined, and $H\supseteq R_{u}(B)$. In particular, $G/H$ is a homogeneous horospherical variety. We will show this theorem in two steps.

\medskip

Step 1. We will construct a horospherical $G/H$-embedding $X'$ with the colored fan isomorphic to $\mathbb{F}_{X_{0}}$.

Note that $P_{I}^{-}\cap L_{0}$ is a parabolic subgroup of $L_{0}$ containing $B_{0}^{-}$. Then by the one to one correspondence between parabolic subgroups of $L_{0}$ containing $B_{0}^{-}$ and subsets of $S\backslash \mathfrak{D}_{1}$, $P_{I, L_{0}}^{-}=P_{I}^{-}\cap L_{0}$. Hence, $H\cap L_{0}=\text{Ker}_{P_{I}^{-}\cap L_{0}}M_{L_{0}/H_{0}}=\text{Ker}_{P_{I, L_{0}}^{-}}M_{L_{0}/H_{0}}=H_{0}$, where the last equality follows from Remark \ref{D=S-I, rho(D_a)=a*}$(ii)$.

Note that $M_{L_{0}/H_{0}}\subseteq\chi(P_{I}^{-}/H)\subseteq\chi(P_{I, L_{0}}^{-}/H_{0})$. By Remark \ref{D=S-I, rho(D_a)=a*}$(ii)$, $M_{G/H}=\chi(P_{I}^{-}/H)$ and $M_{L_{0}/H_{0}}=\chi(P_{I, L_{0}}^{-}/H_{0})$. Hence, $M_{G/H}=M_{L_{0}/H_{0}}$, and $N_{G/H}=N_{L_{0}/H_{0}}$. Remark that $(ii)$ of this theorem follows from the condition $(b)$ and the identity $M_{G/H}=M_{L_{0}/H_{0}}$.

By considering the natural morphisms $L_{0}\rightarrow P_{0}^{-}\rightarrow P_{0}^{-}/H$, we get an inclusion $L_{0}/H_{0}\subseteq P_{0}^{-}/H$. On the other hand, $P_{0}^{-}=R_{u}(P_{0}^{-})L_{0}$ and $R_{u}(P_{0}^{-})\subseteq H$. Hence, $L_{0}/H_{0}=P_{0}/H$, and $G/H\cong G\times^{P_{0}^{-}}P_{0}^{-}/H\cong G\times^{P_{0}^{-}}L_{0}/H_{0}$. This implies that there is an injective map $\Psi: \mathfrak{D}(L_{0}/H_{0}) \rightarrow \mathfrak{D}(G/H), D\mapsto\overline{B\times D}$. From the inclusion $L_{0}/H_{0}=P_{0}^{-}/H\subseteq G/H$, we can get a natural morphism $\phi: \mathbb{C}(G/H)^{(B)}\rightarrow \mathbb{C}(L_{0}/H_{0})^{(B_{0})}, f\mapsto f|_{L_{0}/H_{0}}$. Since $M_{G/H}=M_{L_{0}/H_{0}}$, $\phi$ is an isomorphism. Moreover, $\rho_{L_{0}/H_{0}}(\nu_{D})=\rho_{G/H}(\nu_{\Psi(D)})$ for all $D\in\mathfrak{D}(L_{0}/H_{0})$. Thus, $\mathfrak{D}(L_{0}/H_{0})$ can be identified with a subset of $\mathfrak{D}(G/H)$. By \cite[Cor. 6.2]{Kn91}, $\mathcal{V}(G/H)=(N_{G/H})_{\mathbb{Q}}$ and $\mathcal{V}(L_{0}/H_{0})=(N_{L_{0}/H_{0}})_{\mathbb{Q}}$. Therefore, $\mathbb{F}_{X_{0}}$ is a colored fan in $(N_{G/H})_{\mathbb{Q}}=(N_{L_{0}/H_{0}})_{\mathbb{Q}}$. By Theorem \ref{correspondence fans and varieties}, there exists a horospherical $G/H$-embedding $X'$ such that $\mathbb{F}_{X'}=\mathbb{F}_{X_{0}}$.

\medskip

Step 2. We claim that $X'$ is $G$-equivariantly isomorphic to $G\times^{P_{0}^{-}}X_{0}$.

For the moment, we assume that $X_{0}$ is complete. Then by \cite[Thm. 4.2, Cor. 6.2]{Kn91}, $X'$ is also complete. By Theorem \ref{morphism to G/P_0 horospherical}$(i)(iii)$ and the condition $(a)$, the conclusion $(iii)$ holds if we replace $X$ by $X'$. By Remark \ref{X is a quotient of G*X_0 by (P_0)-}, $X'$ is $G$-equivariantly isomorphic to $G\times^{P_{0}^{-}}X_{0}$. Hence, when $X_{0}$ is complete, the conclusions $(i)$ and $(iii)$ hold.

Now we deal with the general case when $X_{0}$ may be not complete. Let $\overline{X_{0}}$ be a normal $G$-equivariant completion of $X_{0}$. Let $\overline{X'}$ be a horospherical $G/H$-embedding such that $\mathbb{F}_{\overline{X'}}=\mathbb{F}_{\overline{X_{0}}}$ as in Step 1. By the discussions above, $\overline{X'}=G\times^{P_{0}^{-}}\overline{X_{0}}$. By Theorem \ref{correspondence fans and varieties}, $X'$ is the fiber product of $G\times^{P_{0}^{-}}X_{0}$ and $\overline{X'}$ over $G\times^{P_{0}}\overline{X_{0}}$, i.e. $X'=G\times^{P_{0}}X_{0}$. Hence, $(i)$ holds.
\end{proof}

\subsubsection{Smooth projective horospherical varieties whose effective divisors are nef} \label{subsubsection horospherical codimension one}

Our main result in this part is Corollary \ref{nef1=psef1 horospherical description}, which has been described in the introduction of Subsection \ref{subsection horospherical varieties}. It's a direct consequence of Theorem \ref{morphism to G/P_0 horospherical} and Theorem \ref{Nef1=Psef1 horospherical is more or less product}. So our main job in this part is to show Theorem \ref{Nef1=Psef1 horospherical is more or less product}, who says that if $X$ is a smooth projective horospherical $G/H$-embedding such that $\mathfrak{D}_{0}(G/H)=\emptyset$ and $\text{Nef}^{\, 1}(X)=\text{Psef}^{\, 1}(X)$, then $X$ is isomorphic to the product of some smooth projective $G$-horospherical varieties of Picard number one.

Our approach to show Theorem \ref{Nef1=Psef1 horospherical is more or less product} is as follows. Firstly, by studying the toric variety associated with $X$, we show that $\mathbb{F}_{X}$ is a product $\prod\limits_{i}\mathbb{F}_{i}$, (for the precise meaning, see Definition \ref{defi. of product of colored fans}). Then we show that each $\mathbb{F}_{i}$ can be identified with the colored fan of a $G$-horospherical variety $X_{i}$. Finally, we show that $\prod\limits_{i}X_{i}$ is a horospherical $G/H$-embedding such that $\mathbb{F}_{\prod\limits_{i}X_{i}}=\mathbb{F}_{X}$, which implies that $X$ is $G$-equivariantly isomorphic to $\prod\limits_{i}X_{i}$.

Guided by the approach above, we organize this part as follows. We firstly study the properties of the toric variety associated with $X$. Then we study the conditions when the product of several $G$-horospherical varieties $X_{i}$ is itself $G$-horospherical, and study the colored fan of $\prod\limits_{i}X_{i}$. After these discussions, we turn to the proofs of Theorem \ref{Nef1=Psef1 horospherical is more or less product} and Corollary \ref{nef1=psef1 horospherical description}. In the end, we give an example to explain Corollary \ref{nef1=psef1 horospherical description} and to show that the fiberation $\pi_{0}$ in Theorem \ref{morphism to G/P_0} may be nontrivial.

\medskip

Let $X$ be a  horospherical $G/H$-embedding such that $H\supseteq R_{u}(B)$ and $N_{G}(H)=P_{I}$. We will define a toroidal horospherical variety $\mathcal{T}(X)=G\times^{P_{I}}Y$ as follows. The maximal torus $T\subseteq B$ satisfies that $P_{I}=TH$. Thus the torus $T_{H}=P_{I}/H$ is a quotient of $T$. Let $\mathbb{F}=\{\mathfrak{C}\mid (\mathfrak{C}, \mathfrak{D})\in\mathbb{F}_{X}\}$. Thus, $\mathbb{F}$ is a fan in $(N_{X})_{\mathbb{Q}}$. By Theorem \ref{correspondence fans and varieties}, there is a unique toric variety $Y$ with the fan $\mathbb{F}_{Y}=\mathbb{F}$ and the maximal torus orbit $T_{H}$. Since $P_{I}/H=T_{H}\subseteq Y$, we get that $H$ has a trivial action on $Y$, and $P_{I}$ has a natural action on $Y$. In the set theory, we define $\mathcal{T}(X)=G\times^{P_{I}}Y$ to be the quotient of $G\times Y$ by equivalent relations that $(g, py)\thicksim(gp, y)$ for all $g\in G$, $p\in P_{I}$, and $y\in Y$. Thus, $\mathcal{T}(X)$ is a toroidal horospherical $G/H$-embedding such that $N_{\mathcal{T}(X)}=N_{X}$ and $\mathbb{F}_{\mathcal{T}(X)}=\{(\mathfrak{C}, \emptyset)\mid (\mathfrak{C}, \mathfrak{D})\in\mathbb{F}_{X}\}$.

Every $G$-orbit on $\mathcal{T}(X)$ has the form $G\times^{P_{I}}Z$, where $Z$ is a $T_{H}$-orbit on $Y$. Note that for each cone $(\mathfrak{C}, \emptyset)\in\mathbb{F}_{\mathcal{T}(X)}$, there is a unique subset $I(\mathfrak{C})\subseteq S\backslash I$ such that  $(\mathfrak{C}, I(\mathfrak{C}))\in\mathbb{F}_{X}$. By Theorem \ref{morphism of fan}, there is a unique birational $G$-equivariant morphism $\mathcal{T}_{X}: \mathcal{T}(X)\rightarrow X$ extending the identity $G/H=G/H$ and inducing the morphism $\mathbb{F}_{\mathcal{T}(X)}\rightarrow\mathbb{F}_{X}, (\mathfrak{C}, \emptyset)\rightarrow (\mathfrak{C}, I(\mathfrak{C}))$.

\begin{rmk} \label{toroidal horospherical var.  has form G*(P)Y}
Let $X$ be a toroidal $G$-horospherical variety. By Theorem \ref{correspondence fans and varieties}, $\mathcal{T}(X)=X$. Hence, there is a parabolic subgroup $P$ of $G$ and a toric variety $Y$ such that $Y$ is a $P$-variety and
$X$ is $G$-equivariantly isomorphic to $G\times^{P}Y$. This can also be seen from \cite[Example 1.13(2), Thm. 1.17, Example 1.19(3))]{Pa06}.
\end{rmk}

Keep notations as in the discussions above. In fact, we can read a lot of information of $X$ from $Y$ or inversely. Remark that $Y$ is a $T$-horospherical variety which implies that theories and results on horospherical varieties can be applied to $Y$. If $\mu$ is a wall of $\mathbb{F}_{X}$, then it is also a wall of $\mathbb{F}_{Y}$. Denote by $C^{Y}_{\mu}$ the $T$-stable irreducible curve on $Y$ corresponding to $\mu$. For any ray $R=\mathbb{R}^{+}e$ of $\mathbb{F}_{X}$, denote by $D^{Y}_{R}$ or $D^{Y}_{e}$ the corresponding $T$-stable prime divisor on $Y$.

\begin{prop} \label{information on toric Y getting from horospherical X}
Let $X$ be a projective $\mathbb{Q}$-factorial horospherical $G/H$-embedding such that $H\supseteq R_{u}(B)$, $N_{G}(H)=P_{I}$, and $\mathcal{T}(X)=G\times^{P_{I}} Y$. Then the following hold.

$(i)$ $\mathcal{T}(X)$ is a projective $\mathbb{Q}$-factorial toroidal horospherical $G/H$-embedding and $Y$ is a projective $\mathbb{Q}$-factorial toric variety.

$(ii)$ The variety $X$ is locally factorial if and only if $Y$ is smooth if and only if $\mathcal{T}(X)$ is smooth.

$(iii)$ If moreover $\text{Nef}^{\, 1}(X)=\text{Psef}^{\, 1}(X)$, then $\text{Nef}^{\, 1}(Y)=\text{Psef}^{\, 1}(Y)$.
\end{prop}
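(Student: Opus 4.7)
The three parts all exploit the same starting observation: the fans $\mathbb{F}_{X}$, $\mathbb{F}_{\mathcal{T}(X)}$, and $\mathbb{F}_Y$ share the same underlying cones (hence the same walls and rays); only the coloring data differ. For (i), toroidality of $\mathcal{T}(X)$ is by construction, and horosphericity is inherited because $\mathcal{T}(X)$ still contains $G/H$ as an open orbit with $H \supseteq R_{u}(B)$. For the $\mathbb{Q}$-factoriality of $\mathcal{T}(X)$ and $Y$, the plan is to apply Proposition~\ref{locally fac. Q-fac. criterion}(ii): the hypothesis on $X$ says that the vectors $\rho(\nu_D)$ for $D \in \mathfrak{B}_Y^X$ are linearly independent and span $\mathfrak{C}_Y$, so each $\mathfrak{C}_Y$ is simplicial. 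Then the primitive generators of its rays (which are exactly the $\rho$-values of the boundary divisors in $\mathcal{T}(X)$ and $Y$) are linearly independent as well, giving $\mathbb{Q}$-factoriality by the same criterion. For projectivity, take an ample $B$-stable Cartier divisor $\delta$ on $X$; by Brion's ampleness criterion~\cite[Thm.~3.2]{Br93} the piecewise linear function $l(\delta)$ is strictly convex across every wall of $\mathbb{F}_X$. Since $\mathbb{F}_Y$ has the same walls, this yields a strictly convex PL function on $\mathbb{F}_Y$ and hence an ample $T$-stable divisor on $Y$, so $Y$ is projective; then $\mathcal{T}(X) = G \times^{P_I} Y$, as a locally trivial $Y$-bundle over the projective base $G/P_I$, is also projective.

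For (ii), the equivalence $Y$ smooth $\Longleftrightarrow$ $\mathcal{T}(X)$ smooth is immediate from the bundle structure $\mathcal{T}(X) \to G/P_I$, whose base is smooth and whose fibers are $Y$. For the equivalence with local factoriality of $X$, the plan is to apply Proposition~\ref{locally fac. Q-fac. criterion}(i) simultaneously to $X$ and to $\mathcal{T}(X)$, and then compare. Under the $\mathbb{Q}$-factoriality established in (i), for each $G$-orbit $Y$ the set $\mathfrak{B}_Y^X$ is in bijection with the rays of the simplicial cone $\mathfrak{C}_Y$ (one divisor per ray), while $\mathfrak{B}_Y^{\mathcal{T}(X)}$ consists of the boundary divisors on the same rays, whose $\rho$-values are the primitive generators. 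The two $\mathbb{Z}$-basis conditions then match via this correspondence.

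For (iii), since $Y$ is $\mathbb{Q}$-factorial, it suffices by Remark~\ref{Nef1<=Eff1=Psef1 Q-factorial} to show that every $T$-stable prime divisor $D_e^Y$ on $Y$ (indexed by the rays $e$ of $\mathbb{F}_Y$) is nef, i.e.\ that $D_e^Y \cdot C_\mu^Y \geq 0$ for every wall $\mu$ of $\mathbb{F}_Y$. By the $\mathbb{Q}$-factoriality of $X$, each ray $e$ carries a unique element $D_e^X \in \mathfrak{B}(X)$, either a boundary divisor or a color, and $\rho(\nu_{D_e^X}) = k_e v_e$ for a positive integer $k_e$ (with $k_e = 1$ when $D_e^X$ is a boundary divisor), where $v_e$ is the primitive generator of $e$. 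Comparing the piecewise linear functions attached to these divisors on the two fans, the one attached to $D_e^Y$ equals $k_e$ times the one attached to $D_e^X$, so the formula~(\ref{eqn. C_u}) applied on $X$ and on $Y$ yields the scaling relation $D_e^Y \cdot C_\mu^Y = k_e (D_e^X \cdot C_\mu^X)$ for every wall $\mu$, common to both fans. The hypothesis $\text{Nef}^{\,1}(X) = \text{Psef}^{\,1}(X)$ forces $D_e^X$ to be nef on $X$, so $D_e^X \cdot C_\mu^X \geq 0$, and therefore $D_e^Y \cdot C_\mu^Y \geq 0$ as required. The main obstacle will be to verify this scaling relation rigorously, in particular when $D_e^X$ is a color with non-primitive $\rho$-value, so that nefness genuinely transfers from the horospherical setting to the toric one.
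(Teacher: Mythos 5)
Your treatment of parts (i) and (iii) follows essentially the same route as the paper. For (i), the only real divergence is at the end: the paper does not invoke the general principle that a locally trivial bundle with projective fibre over a projective base is projective (this is not a formal fact --- one needs a relatively ample line bundle), but instead exhibits an ample divisor on $\mathcal{T}(X)$ explicitly, namely $\delta^{Y}+\pi^{*}\delta'$ with $\delta'$ ample on $G/P_{I}$, checking ampleness via Brion's criterion. Since a power of $\mathcal{O}_{Y}(\delta^{Y})$ can be $P_{I}$-linearized, your phrasing can be repaired, but as written it skips the point. Part (iii) is exactly the paper's argument (the paper says ``each $C_{\mu}^{Y}$ is nef'', you say dually ``each $D_{e}^{Y}$ is nef''; the computation through formula (\ref{eqn. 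C_u}) and the scaling $l(D_{e}^{Y})=k_{e}\,l(D_{e}^{X})$ is the same), and the scaling relation you flag as the remaining obstacle does hold, by Corollary \ref{correspondence rays and divisors horospherical Q-factorial} together with Proposition \ref{description of D_Y}: on each maximal cone both piecewise linear functions are determined by their values on the rays of that cone.

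The genuine gap is in (ii). Local factoriality of $X$ is the condition that, for each $G$-orbit $Z$, the vectors $\rho(\nu_{D})=d_{e}v_{e}$ for $D\in\mathfrak{B}_{Z}$ form part of a $\mathbb{Z}$-basis of $N_{X}$, whereas smoothness of $Y$ (equivalently of $\mathcal{T}(X)$) is the same condition for the primitive generators $v_{e}$. These two conditions do \emph{not} ``match via the correspondence'': the implication from $X$ to $Y$ is fine (membership in a $\mathbb{Z}$-basis forces each $d_{e}v_{e}$ to be primitive, hence $d_{e}=1$), but the converse fails whenever some color $D\in\mathfrak{D}_{X}$ has non-primitive $\rho(\nu_{D})$ --- precisely the phenomenon you yourself isolate in part (iii) and then ignore in (ii). Concretely, let $G=SL_{2}$ act on $X=\mathbb{P}(1,1,2)$ through the standard representation on the two weight-one coordinates and trivially on the weight-two coordinate: then $X$ is a projective $\mathbb{Q}$-factorial horospherical $G/H$-embedding with $H=R_{u}(B)\cdot\{\pm I\}$ and $M_{G/H}=\mathbb{Z}\alpha$, the unique color satisfies $\rho(\nu_{D_{\alpha}})=2e$ with $e$ primitive and spans the colored cone of the $G$-fixed point, so $X$ is not locally factorial while $Y\cong\mathbb{P}^{1}$ and $\mathcal{T}(X)$ are smooth. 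Hence the converse direction cannot be obtained by the purely lattice-theoretic comparison you propose; the paper's own (one-line) proof of (ii) invokes Pasquier's criterion \cite[Thm. 2.6]{Pa06} in addition to Proposition \ref{locally fac. Q-fac. criterion}(i), and any complete write-up must bring in that input (or an added hypothesis) to control the multiplicities $d_{e}$ on rays spanned by colors. Note that the implication you do establish, ``$X$ locally factorial $\Rightarrow$ $Y$ smooth'', is the only one used later in the paper (in the proof of Theorem \ref{Nef1=Psef1 horospherical is more or less product}).
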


\begin{proof}
$(i)$ The conclusion that $\mathcal{T}(X)$ and $Y$ are $\mathbb{Q}$-factorial follows from Proposition \ref{locally fac. Q-fac. criterion}$(ii)$.

Take any ray $R$ in $\mathbb{F}_{X}$. By Corollary \ref{correspondence rays and divisors horospherical Q-factorial}, there exists a unique prime divisor $D_{R}$ such that $\rho_{X}(\nu_{D_{R}})\in R$. Since $\rho_{Y}(\nu_{D^{Y}_{R}})$ is the primitive lattice point in $R$, there is a positive integer $d_{R}$ such that $\rho_{X}(\nu_{D_{R}})=d_{R}\rho_{Y}(\nu_{D^{Y}_{R}})$.

Let $\delta=\sum\limits_{R}n_{R}(\delta)D_{R}+\sum\limits_{D\in\mathfrak{D}(G/H)\backslash \mathfrak{D}_{X}}n_{D}(\delta)D$ be an ample Cartier divisor on $X$, where $R$ runs over all the rays in $\mathbb{F}_{X}$. Denote by $\delta^{Y}=\sum\limits_{R}\frac{n_{R}(\delta)}{d_{R}}D_{R}^{Y}$, where $R$ runs over all the rays in $\mathbb{F}_{X}$. By \cite[Prop. 3.1]{Br89}, $\delta^{Y}$ is a Cartier divisor on $Y$. By the formula $(1)$ on page \pageref{eqn. C_u}, for any wall $\mu$ in $\mathbb{F}_{X}$ and any $D\in\mathfrak{D}(G/H)\backslash\mathfrak{D}_{X}$, $D\cdot C_{\mu}=0$ and $\delta^{Y}\cdot C_{\mu}^{Y}=\delta\cdot C_{\mu}>0$. Thus, $\delta^{Y}$ is an ample Cartier divisor on $Y$, i.e. $Y$ is projective.

Let $\delta'$ be an ample Cartier divisor on $G/P_{I}$. Then by \cite[Prop. 3.1, Thm. 3.3]{Br89}, $\delta^{Y}+\pi^{*}\delta'$ is an ample Cartier divisor on $\mathcal{T}(X)$. Hence, $\mathcal{T}(X)$ is projective.

\medskip

$(ii)$ follows from Proposition \ref{locally fac. Q-fac. criterion}$(i)$ and \cite[Thm. 2.6]{Pa06}.

\medskip

$(iii)$ By the formula $(1)$ on page \pageref{eqn. C_u}, for each wall $\mu$ of $\mathbb{F}_{X}$, $C_{\mu}^{Y}\in\text{Nef}_{1}(Y)$. Thus, $\text{Nef}_{1}(Y)=\text{Psef}_{1}(Y)$ and $\text{Nef}^{\, 1}(Y)=\text{Psef}^{\, 1}(Y)$.
\end{proof}

\begin{lem} \label{D=D_X uinon D_0 now}
Let $X$ be a projective $\mathbb{Q}$-factorial horospherical $G/H$-embedding such that $\text{Nef}^{\, 1}(X)=\text{Psef}^{\, 1}(X)$. Then $\mathfrak{D}(G/H)\backslash\mathfrak{D}_{X}=\mathfrak{D}_{0}(G/H)$, i.e. for all $D\in\mathfrak{D}(G/H)\backslash\mathfrak{D}_{X}$, $\rho(\nu_{D})=0$;
\end{lem}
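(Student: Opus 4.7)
The plan is to argue by contradiction. Suppose there is some $D \in \mathfrak{D}(G/H) \setminus \mathfrak{D}_X$ with $\rho(\nu_D) \neq 0$. I will produce a pseudo-effective curve class $C_{D,Y}$ that pairs strictly negatively with some prime $B$-stable divisor $D'$, contradicting the hypothesis $\text{Nef}^{\,1}(X) = \text{Psef}^{\,1}(X)$.

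First I would locate a closed $G$-orbit $Y$ with $\rho(\nu_D) \in \mathfrak{C}_Y$. Since $X$ is a complete horospherical variety, $\mathcal{V}(G/H) = (N_X)_{\mathbb{Q}}$ by \cite[Cor.~6.2]{Kn91} and hence $\mathrm{Supp}(\mathbb{F}_X) = (N_X)_{\mathbb{Q}}$; so $\rho(\nu_D)$ lies in some maximal cone of $\mathbb{F}_X$, which by \cite[Thm.~6.3]{Kn91} is full-dimensional and of the form $\mathfrak{C}_Y$ for a closed $G$-orbit $Y$. Combining $\mathbb{Q}$-factoriality (Proposition \ref{locally fac. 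Q-fac. criterion}(ii)) with \cite[Thm.~6.3]{Kn91}, the vectors $\{\rho(\nu_{D'})\}_{D' \in \mathfrak{B}_Y}$ form a $\mathbb{Q}$-basis of $(N_X)_{\mathbb{Q}}$ generating $\mathfrak{C}_Y$ as a simplicial cone, so we may uniquely expand
$$\rho(\nu_D) = \sum_{D' \in \mathfrak{B}_Y} c_{D'}\,\rho(\nu_{D'}), \qquad c_{D'} \geq 0.$$

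Next, I would exploit formula $(2)$ on page \pageref{eqn. C_(D, Y)}. Since $D \notin \mathcal{V}_X \cup \mathfrak{D}_X$ whereas $\mathfrak{B}_Y \subseteq \mathcal{V}_X \cup \mathfrak{D}_X$, in particular $D \notin \mathfrak{D}_Y$, so the pseudo-effective class $C_{D,Y}$ is defined (by \cite[Thm.~3.2]{Br93}) and $D \neq D'$ for every $D' \in \mathfrak{B}_Y$. By the defining property of $l(D',Y)$, this linear functional on $(N_X)_{\mathbb{Q}}$ is precisely the dual of $\rho(\nu_{D'})$ in the basis $\{\rho(\nu_{D''})\}_{D'' \in \mathfrak{B}_Y}$, so $l(D',Y)(\rho(\nu_D)) = c_{D'}$, and formula $(2)$ applied to the $\mathbb{Q}$-Cartier divisor $D'$ yields
$$D' \cdot C_{D,Y} \;=\; n_D(D') - l(D', Y)(\rho(\nu_D)) \;=\; -c_{D'}.$$

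Finally, the hypothesis $\text{Nef}^{\,1}(X) = \text{Psef}^{\,1}(X)$ makes every effective divisor nef, so each $D'$ is nef and $D' \cdot C_{D,Y} \geq 0$, i.e.\ $c_{D'} \leq 0$. Combined with $c_{D'} \geq 0$ this forces $c_{D'} = 0$ for every $D' \in \mathfrak{B}_Y$, whence $\rho(\nu_D) = 0$, contradicting the assumption. The main obstacle is selecting the right curve class: the wall classes $C_\mu$ only yield $D \cdot C_\mu = 0$ with no sign control on $D' \cdot C_\mu$, whereas the choice $C_{D,Y}$ with $\rho(\nu_D) \in \mathfrak{C}_Y$ produces the clean identity $D' \cdot C_{D,Y} = -c_{D'}$ which the nef hypothesis instantly collapses to $\rho(\nu_D) = 0$.
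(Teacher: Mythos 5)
Your proposal is correct and follows essentially the same route as the paper: both locate a closed orbit $Y$ with $\rho(\nu_D)\in\mathfrak{C}_Y$ via $\mathrm{Supp}(\mathbb{F}_X)=\mathcal{V}(G/H)=(N_X)_{\mathbb{Q}}$, expand $\rho(\nu_D)$ nonnegatively in the simplicial generators of $\mathfrak{C}_Y$, and use formula $(2)$ to show some boundary divisor pairs negatively with $C_{D,Y}$, contradicting nefness. The only cosmetic difference is that you conclude all coefficients vanish, whereas the paper stops at the first strictly positive coefficient to derive the contradiction.
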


\begin{proof}
By Theorem \ref{correspondence fans and varieties}, for each $G$-orbit $Y$ on $X$, $\mathfrak{C}^{c}_{Y}$ is a strictly convex colored cone. Hence, for all $D\in\mathfrak{D}_{Y}$, $\rho(\nu_{D})\neq 0$. Therefore, $\mathfrak{D}_{0}(G/H)\subseteq\mathfrak{D}(G/H)\backslash\mathfrak{D}_{X}$. Now assume that the inclusion is strict, then there is a divisor $D\in\mathfrak{D}(G/H)\backslash\mathfrak{D}_{X}$ such that $\rho(\nu_{D})\neq 0$.

Note that $\text{Supp}(\mathbb{F}_{X})=\mathcal{V}(G/H)=(N_{X})_{\mathbb{Q}}$ by \cite[Thm. 4.2, Cor. 6.2]{Kn91}. Thus, there is a closed $G$-orbit $Y$ such that $\rho(\nu_{D})\in\mathfrak{C}_{Y}$. The fact $D\in\mathfrak{D}(G/H)\backslash\mathfrak{D}_{X}$ implies that $D\in\mathfrak{D}(G/H)\backslash\mathfrak{D}_{Y}$. Then there is a $B$-stable 1-cycle class $C_{D, Y}$ in $N_{1}(X)_{\mathbb{R}}$ defined as in the formula $(2)$ on page \pageref{eqn. C_(D, Y)}.

Assume $\mathfrak{C}_{Y}=\langle e_{1},\ldots,e_{r}\rangle$. Then $\rho(\nu_{D})=\sum\limits_{i=1}^{r}a_{i}e_{i}$, where all $a_{i}\geq 0$ and there is an $i_{0}$ such that $a_{i_{0}}>0$.

By the definition of $\mathfrak{C}_{Y}$, there is an irreducible $B$-stable divisor $D'$ such that $\rho(\nu_{D'})=b_{i_{0}}e_{i_{0}}$, where $b_{i_{0}}>0$. Then by the formula $(2)$ on page \pageref{eqn. C_(D, Y)}, the intersection number $D'\cdot C_{D, Y}=-\frac{a_{i_{0}}}{b_{i_{0}}}<0$. Hence, $D'\in\text{Psef}^{\, 1}(X)\backslash\text{Nef}^{\, 1}(X)$. We get a contradiction.
\end{proof}

Remark that the condition $\text{Nef}^{\, 1}(X)=\text{Psef}^{\, 1}(X)$ in Lemma \ref{D=D_X uinon D_0 now} is necessary to guarantee $\mathfrak{D}(G/H)\backslash \mathfrak{D}_{X}=\mathfrak{D}_{0}(G/H)$. In general, the inclusion $\mathfrak{D}_{0}(G/H)\subseteq\mathfrak{D}(G/H)\backslash\mathfrak{D}_{X}$ is strict. For example, we have the following Proposition \ref{example D is not the uinon of D_X and D_0}. Note that by \cite[Thm. 1.4, Thm. 1.7]{Pa09}, there exists a smooth projective horospherical $G/H$-embedding $X$ such that $\text{rank}(X)=1$ and $\mathfrak{D}_{X}\neq\emptyset$.

\begin{prop} \label{example D is not the uinon of D_X and D_0}
Let $X$ be a smooth projective horospherical $G/H$-embedding such that $\text{rank}(X)=1$ and $\mathfrak{D}_{X}\neq\emptyset$. Choose any $D\in\mathfrak{D}_{X}$, then $\rho(\nu_{D})\neq 0$. There is a closed $G$-orbit $Z$ such that $\mathfrak{C}_{Z}=\mathbb{Q}^{+}\rho(\nu_{D})\in\mathbb{F}_{X}$.  Let $\widetilde{X}=Bl_{Z}(X)$ be the blowing-up of $X$ along $Z$. Then $\widetilde{X}$ is a smooth projective horospherical $G/H$-embedding, and $\pi^{-1}_{*}D\in\mathfrak{D}(G/H)\backslash (\mathfrak{D}_{\widetilde{X}}\cup\mathfrak{D}_{0}(G/H))$, where $\pi: \widetilde{X}\rightarrow X$ is the natural morphism, and $\pi^{-1}_{*}D$ is the strict transform of $D$.
\end{prop}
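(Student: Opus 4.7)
Plan. The first two claims follow quickly from the rank-one hypothesis: since $D\in\mathfrak{D}_X$, there is a $G$-orbit $Y$ with $D\in\mathfrak{D}_Y$, and the strict convexity of the colored cone $\mathfrak{C}^c_Y$ (Theorem \ref{correspondence fans and varieties}) forces $\rho(\nu_D)\neq 0$; in rank one, the cone $\mathfrak{C}_Y\subseteq\mathbb{Q}$ must then be the maximal ray $\mathbb{Q}^+\rho(\nu_D)$, so $Y$ is a closed $G$-orbit, and we take $Z:=Y$. The variety $\widetilde X=Bl_Z X$ is smooth (as the blowup of a smooth variety along the smooth, $G$-homogeneous subvariety $Z$), projective (as $X$ is), and inherits a $G$-action because $Z$ is $G$-stable. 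Since $Z\subseteq\partial X$, the morphism $\pi$ is a $G$-equivariant isomorphism over $G/H$, making $\widetilde X$ a horospherical $G/H$-embedding.

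For the last assertion, $\pi^{-1}_*D$ is $B$-stable but not $G$-stable, hence a color of $\widetilde X$; identifying $N_{\widetilde X}=N_X$ via the common open orbit $G/H$, we have $\rho_{\widetilde X}(\nu_{\pi^{-1}_*D})=\rho_X(\nu_D)\neq 0$, so $\pi^{-1}_*D\notin\mathfrak{D}_0(G/H)$. The main point, and the principal obstacle of the proof, is to show $\pi^{-1}_*D\notin\mathfrak{D}_{\widetilde X}$. The key structural observation is that $\widetilde X$ also has rank one (since $M_{\widetilde X}=M_X$), and for a rank-one complete horospherical variety, strict convexity together with the completeness condition $\text{Supp}(\mathbb{F}_{\widetilde X})=\mathcal{V}(G/H)=\mathbb{Q}$ forces $\mathbb{F}_{\widetilde X}$ to contain exactly three cones — $(0,\emptyset)$ and a single one on each of the two rays. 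Therefore $\widetilde X$ has exactly three $G$-orbits: the open orbit $G/H$, the untouched closed orbit $Z_-$ on $\sigma_-$, and a single new closed orbit $Z'$ on $\sigma_+:=\mathbb{Q}^+\rho(\nu_D)$.

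The decisive consequence is that $E=Z'$ set-theoretically: the open subset $\widetilde X\setminus E$ is $G$-equivariantly isomorphic to $X\setminus Z$ and therefore contains only the orbits $G/H$ and $Z_-$, so the new orbit $Z'$ must lie in $E$; conversely $E$ is $G$-stable and disjoint from both $G/H$ and $Z_-$. This structural identification is the crux; once it is in hand, the remainder is immediate. By Proposition \ref{description of D_Y}, $\mathfrak{D}_{\widetilde X}=\mathfrak{D}_{Z_-}\cup\mathfrak{D}_{Z'}$; the set $\mathfrak{D}_{Z'}$ is empty because no color of $\widetilde X$ (a non-$G$-stable prime divisor) can contain the irreducible $G$-stable prime divisor $Z'=E$, and $\mathfrak{D}_{Z_-}$ consists of colors whose $\rho$-values lie on $\sigma_-$, which excludes $\pi^{-1}_*D$ since $\rho(\nu_{\pi^{-1}_*D})\in\sigma_+$. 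Hence $\pi^{-1}_*D\notin\mathfrak{D}_{\widetilde X}$, completing the proof.
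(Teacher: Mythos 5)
Your proof is correct and follows essentially the same route as the paper's: both use the rank-one completeness argument to count exactly three $G$-orbits on $X$ and on $\widetilde{X}$, identify the exceptional divisor with the new closed orbit over $\mathbb{Q}^{+}\rho(\nu_{D})$, and then check that the strict transform contains neither closed orbit of $\widetilde{X}$. Your writeup of the final step (why $\pi^{-1}_{*}D$ contains neither $Z'$ nor $Z_{-}$) is in fact slightly more explicit than the paper's, but the content is the same.
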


\begin{proof}
By \cite[Thm. 4.2, Cor. 6.2]{Kn91}, and Proposition \ref{all faces are colored faces, maps of orbits, C_u maps to a point}$(i)$, there are exactly three $G$-orbits $G/H, Z, Y$ on $X$, the two $G$-orbits $Z$ and $Y$ are complete, and $-\mathfrak{C}_{Y}=\mathfrak{C}_{Z}=\mathbb{Q}^{+}\rho(\nu_{D})$. Hence, $Y$ and $Z$ are isomorphic to rational $G$-homogeneous spaces. In particular, $Z$ is smooth. Hence, $\widetilde{X}$ is smooth. Since $Z$ is a closed $G$-stable subvariety on $X$, there is a natural $G$-action on $\widetilde{X}$ such that $\pi$ is $G$-equivariant. Since $\pi^{-1}(G/H)=G/H$ is open in $\widetilde{X}$, $\widetilde{X}$ is a complete smooth horospherical $G/H$-embedding. Since $X$ is projective and $\pi$ is a projective morphism, $\widetilde{X}$ is also projective.

Since $\pi$ is $G$-equivariant, $\pi^{-1}_{*}D$ is a prime $B$-stable divisor and it is not $G$-stable. Thus, $\pi^{-1}_{*}D\in\mathfrak{D}(G/H)$. Since $\pi$ is birational and $\pi$ maps $(\pi^{-1}_{*}D)\cap (G/H)$ isomorphically to $D\cap(G/H)$, we know that $\mathbb{C}(X)^{(B)}=\mathbb{C}(\widetilde{X})^{(B)}$ and $\rho_{\widetilde{X}}(\nu_{\pi^{-1}_{*}D})=\rho_{X}(\nu_{D})\neq 0$, i.e. $\pi^{-1}_{*}D\notin\mathfrak{D}_{0}(G/H)$.

Moreover, since $\widetilde{X}$ is the blowing-up of $X$ along $Z$, there are exactly three $G$-orbits $G/H, \pi^{-1}(Z)$ and $\pi^{-1}(Y)$ on $\widetilde{X}$, where $\pi^{-1}(Z)$ and $\pi^{-1}(Y)$ are the inverse images of $Z$ and $Y$ respectively. By considering the morphism $\pi$, we know that $\pi^{-1}_{*}D$ contains neither $\pi^{-1}(Z)$ nor $\pi^{-1}(Y)$. Hence, $\pi^{-1}_{*}D\notin\mathfrak{D}_{\widetilde{X}}$.
\end{proof}

\begin{prop} \label{when will a product of horo. var. be horo.}
Assume that for each $1\leq i\leq m$, $X_{i}$ is a horospherical $G/H_{i}$-embedding such that $H_{i}\supseteq R_{u}(B)$ and $N_{G}(H_{i})=P_{I_{i}}$. Denote by $X=\prod\limits_{i=1}^{m}X_{i}$, $H=\bigcap\limits_{i=1}^{m}H_{i}$, and $N_{G}(H)=P_{I}$. Then the following are equivalent:

$(i)$ $X$ is a $G$-horospherical variety;

$(ii)$ $X$ is a horospherical $G/H$-embedding;

$(iii)$ the natural injective morphism $\pi: G/H\rightarrow \prod\limits_{i=1}^{m}G/H_{i}$ is an open immersion;

$(iv)$ $\pi$ is an isomorphism;

$(v)$ $M_{G/H}=\bigoplus\limits_{i=1}^{m} M_{G/H_{i}}$, and if $i\neq j$, then for any root $\alpha\in S\backslash I_{i}$ and any root $\beta\in S\backslash I_{j}$, $\alpha$ and $\beta$ lie in different connected components of the Dynkin diagram of $G$.
\end{prop}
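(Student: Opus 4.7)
The plan is to prove the cycle $(iv) \Rightarrow (iii) \Rightarrow (ii) \Rightarrow (i) \Rightarrow (v) \Rightarrow (iv)$. The implications $(iv) \Rightarrow (iii)$ and $(ii) \Rightarrow (i)$ are immediate; for $(iii) \Rightarrow (ii)$, an open immersion $\pi: G/H \hookrightarrow \prod G/H_i$ exhibits $G/H$ as an open subvariety of the normal product $X = \prod X_i$ (each $G/H_i$ being open in $X_i$), which combined with $H \supseteq R_u(B)$ makes $X$ a horospherical $G/H$-embedding.

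For $(i) \Rightarrow (v)$, suppose $X$ is $G$-horospherical with open orbit $U \cong G/H''$ and $H'' \supseteq R_u(B)$. Each projection $\pi_i: X \to X_i$ is open and $G$-equivariant, so $\pi_i(U)$ is an open $G$-orbit of $X_i$, necessarily equal to $G/H_i$ by uniqueness; hence $U \subseteq \prod G/H_i$ and $H'' = \bigcap_i g_i H_i g_i^{-1}$ for some $g_i \in G$. Using $R_u(B) \subseteq g_i H_i g_i^{-1} \subseteq g_i P_{I_i} g_i^{-1}$, the standard conjugacy of maximal connected unipotent subgroups inside each conjugate parabolic, $N_G(R_u(B)) = B$, and $T \subseteq N_G(H_i) = P_{I_i}$ from Remark \ref{D=S-I, rho(D_a)=a*}$(i)$, one concludes $g_i H_i g_i^{-1} = H_i$, whence $H'' = H$. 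Next, any $B$-semi-invariant $f \in \mathbb{C}(\prod G/H_i)^{(B, \chi)}$ decomposes under $B^m$ as $\sum c_{\chi_1, \ldots, \chi_m} f_{1, \chi_1} \otimes \cdots \otimes f_{m, \chi_m}$ over decompositions $\chi = \sum \chi_i$ with $\chi_i \in M_{G/H_i}$, each tensor factor $\mathbb{C}(G/H_i)^{(B, \chi_i)}$ being at most $1$-dimensional by sphericity of $G/H_i$. Sphericity of $X$ forces $\dim \mathbb{C}(X)^{(B, \chi)} \leq 1$, so the decomposition is unique and $M_{G/H} = \bigoplus_i M_{G/H_i}$. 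For the Dynkin disjointness, decompose $G$ up to central isogeny into simple factors $G_k$; horosphericity lets each $H_i$ be written as the preimage of $\prod_k H_i^{(k)}$ with $H_i^{(k)} \subseteq G_k$ horospherical, and directness of $\bigoplus_i M_{G/H_i}$ together with the induced product structure $M_{G/H_i} = \bigoplus_k M_{G_k/H_i^{(k)}}$ forces the simple factors for which $M_{G_k/H_i^{(k)}}$ is nonzero to be pairwise disjoint across $i$'s, which is precisely the Dynkin condition.

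For $(v) \Rightarrow (iv)$, the Dynkin disjointness partitions the simple factors of $G$ into blocks $G_{(1)}, \ldots, G_{(m)}, G_{(0)}$ with $S \setminus I_i \subseteq G_{(i)}$ and $G_{(0)}$ meeting no $S \setminus I_j$; since $[P_{I_i}, P_{I_i}] \subseteq H_i$ by horosphericity (Remark \ref{D=S-I, rho(D_a)=a*}$(ii)$), $H_i$ contains the semisimple part of every $G_{(j)}$ for $j \neq i$ and of $G_{(0)}$. Consequently $H_1 H_2 \cdots H_m = G$, giving transitivity of the diagonal $G$-action on $\prod G/H_i$; together with injectivity of $\pi$ from $H = \bigcap_i H_i$, this shows $\pi$ is an isomorphism. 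The main obstacle is the Dynkin part of $(i) \Rightarrow (v)$: one must carefully track, within the decomposition $G = Z \cdot \prod_k G_k$, how directness of the lattice sum $\bigoplus_i M_{G/H_i}$ forces the ``supports'' of the $M_{G/H_i}$ (the set of simple factors on which they are nonzero, equivalently the Dynkin components meeting $S \setminus I_i$) to be pairwise disjoint across distinct $i$.
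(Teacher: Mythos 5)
The decisive step of your cycle, the Dynkin-disjointness half of $(i)\Rightarrow(v)$, does not work as described, and you have correctly singled it out as the main obstacle. You propose to extract the disjointness of the components meeting $S\backslash I_{i}$ and $S\backslash I_{j}$ from the directness of $\bigoplus_{i}M_{G/H_{i}}$, via the ``supports'' of these lattices on the simple factors of $G$. But the support of $M_{G/H_{i}}$ is not the set of components meeting $S\backslash I_{i}$: the lattice $M_{G/H_{i}}$ is merely a sublattice of the span of $\{\omega_{\alpha}\mid\alpha\in S\backslash I_{i}\}$ and can vanish entirely while $S\backslash I_{i}\neq\emptyset$. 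Concretely, take $G=SL_{3}$, $X_{1}=\mathbb{P}^{2}$ and $X_{2}=(\mathbb{P}^{2})^{*}$, so that $H_{i}=P_{I_{i}}$ and $M_{G/H_{1}}=M_{G/H_{2}}=0$: the lattice sum is trivially direct, yet the Dynkin condition fails, and must fail, since the open diagonal orbit of $\mathbb{P}^{2}\times(\mathbb{P}^{2})^{*}$ has a Levi subgroup as isotropy and is not horospherical. So no argument that only sees the lattices $M_{G/H_{i}}$ can produce the second half of $(v)$; that condition encodes the surjectivity of $\psi: G/P_{I}\rightarrow\prod_{i}G/P_{I_{i}}$. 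This is how the paper obtains it: from $(i)$ one first shows (as you do) that the open orbit of $X$ lies in $\prod_{i}G/H_{i}$ with isotropy $H$, hence is dense there; projecting to $\prod_{i}G/P_{I_{i}}$ and using completeness of $G/P_{I}$ one gets that $\psi$ is surjective, and Lemma \ref{explanations of when will a product of horo. var. be horo.}$(ii)$ converts this, by a tangent-space computation with negative root spaces, into the disjointness of the components. Some version of that root-theoretic step cannot be bypassed.

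A secondary, repairable imprecision: the assertion that a diagonal $B$-semi-invariant $f\in\mathbb{C}(\prod_{i}G/H_{i})^{(B,\chi)}$ ``decomposes under $B^{m}$'' into a finite sum of pure tensors is not a general fact about rational functions on a product (compare $1/(x+y)$). It holds here only because $f$ is determined by its restriction to the open $B^{m}$-orbit, which for $H_{i}\supseteq R_{u}(B)$ is the torus $\prod_{i}P_{I_{i}}/H_{i}$ on which $B$ acts through a homomorphism $T\rightarrow\prod_{i}P_{I_{i}}/H_{i}$; sphericity of $X$ forces this homomorphism to be surjective, so $f$ restricts to a scalar multiple of a single character, which yields $M_{G/H}=\sum_{i}M_{G/H_{i}}$ and the directness simultaneously. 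This is essentially the paper's argument, phrased as the open immersion $P_{I}/H\rightarrow\prod_{i}P_{I_{i}}/H_{i}$ of torus fibers being an isomorphism. The remaining implications you give --- $(iii)\Rightarrow(ii)$, the identification of the generic isotropy with $H$ via Lemma \ref{parabolic containing R_u(B) is unique}, and the transitivity argument $H'\cdot H_{m}=G$ for $(v)\Rightarrow(iv)$ --- are sound, and the last is a genuine, more group-theoretic alternative to the paper's fiberwise comparison of $\pi$ over $G/P_{I}$.
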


Before proving this proposition, we give a lemma to explain the conditions in it.

\begin{lem} \label{explanations of when will a product of horo. var. be horo.}
Keep notations as in Proposition \ref{when will a product of horo. var. be horo.}. Then the following hold.

$(i)$ $I=\bigcap\limits_{i=1}^{m}I_{i}$ and $P_{I}=\bigcap\limits_{i=1}^{m}G/P_{I_{i}}$.

$(ii)$ Let $\psi: G/P_{I}\rightarrow\prod\limits_{i=1}^{m}G/P_{I_{i}}$ be the natural morphism. Then $\psi$ is an isomorphism if and only if for any $i\neq j$, any root $\alpha\in S\backslash I_{i}$ and any root $\beta\in S\backslash I_{j}$, $\alpha$ and $\beta$ lie in different connected components of the Dynkin diagram of $G$
\end{lem}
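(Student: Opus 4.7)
The plan is to handle both parts by translating them into facts about standard parabolic subgroups and root systems, with the key input being the uniqueness clause in Remark \ref{D=S-I, rho(D_a)=a*}(ii): for any horospherical subgroup $H' \supseteq R_u(B)$, the parabolic $N_G(H')$ is the \emph{unique} parabolic subgroup of $G$ containing $B$ in which $H'$ is realized as an intersection of kernels of characters.

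For (i), since each $H_i = \ker_{P_{I_i}} M_{G/H_i}$ and since intersections of standard parabolics satisfy $\bigcap_i P_{I_i} = P_{\bigcap_i I_i}$, a direct unpacking of definitions gives
\[
H \;=\; \bigcap_{i=1}^{m} \ker_{P_{I_i}} M_{G/H_i} \;=\; \ker_{P_{\bigcap_i I_i}} \Bigl( \sum_{i=1}^{m} M_{G/H_i} \Bigr).
\]
Thus $H$ is realized as an intersection of kernels of characters of the parabolic $P_{\bigcap_i I_i} \supseteq B$, and the uniqueness clause above forces $N_G(H) = P_{\bigcap_i I_i}$; hence $P_I = P_{\bigcap_i I_i}$ and $I = \bigcap_{i=1}^{m} I_i$.

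For (ii), the $G$-equivariant morphism $\psi$ has image the $G$-orbit of $(eP_{I_1},\ldots,eP_{I_m})$ under the diagonal action; by (i) the stabilizer of this basepoint is $\bigcap_i P_{I_i} = P_I$, so $\psi$ is an equivariant bijection of $G/P_I$ onto its image, and since $G/P_I$ is complete this image is closed, making $\psi$ a closed immersion. Consequently, $\psi$ is an isomorphism iff $\dim G/P_I = \sum_{i=1}^{m} \dim G/P_{I_i}$. Using the standard formula $\dim G/P_J = |\{\beta \in \Phi^{+} : \operatorname{supp}(\beta) \not\subseteq J\}|$ together with the identity $S \setminus I = \bigcup_{i}(S \setminus I_i)$ from (i), the desired dimension equality becomes
\[
\bigl|\{\beta \in \Phi^{+} : \operatorname{supp}(\beta) \cap (S \setminus I) \neq \emptyset\}\bigr| \;=\; \sum_{i=1}^{m} \bigl|\{\beta \in \Phi^{+} : \operatorname{supp}(\beta) \cap (S \setminus I_i) \neq \emptyset\}\bigr|.
\]
Every positive root on the left is counted at least once on the right, so equality holds iff no positive root has its support meeting both $S \setminus I_i$ and $S \setminus I_j$ for some $i \ne j$. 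Since the support of a positive root is contained in a single connected component of the Dynkin diagram, and conversely any two simple roots in a common component appear together in the support of some positive root (take the sum of simple roots along a path connecting them in the corresponding Dynkin subdiagram), this combinatorial condition is precisely the one stated in (ii).

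The main subtlety lies in invoking the uniqueness clause in (i) correctly, i.e.\ in recognizing that once $H$ has been written as an intersection of kernels of characters of $P_{\bigcap_i I_i}$, the Pasquier-type uniqueness from Remark \ref{D=S-I, rho(D_a)=a*}(ii) immediately pins down $N_G(H)$; after that, (ii) reduces to a transparent dimension count on positive roots together with the fact that positive-root supports are connected in the Dynkin diagram.
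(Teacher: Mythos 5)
Your proof is correct; both parts reach the paper's conclusions by somewhat different routes. For $(i)$, the paper defines $H'=\text{Ker}_{P_{\cap I_{i}}}(\sum_{i} M_{G/H_{i}})$ and establishes $H'=H$ by a double inclusion, then recovers $P_{I}=TH=TH'=P_{\cap I_{i}}$ from Remark \ref{D=S-I, rho(D_a)=a*}$(i)$; you instead note that $\bigcap_{i}\text{Ker}_{P_{I_{i}}}M_{G/H_{i}}=\text{Ker}_{P_{\cap I_{i}}}(\sum_{i} M_{G/H_{i}})$ and let the uniqueness clause of Remark \ref{D=S-I, rho(D_a)=a*}$(ii)$ pin down $N_{G}(H)$ at once --- shorter, but resting entirely on Pasquier's uniqueness statement where the paper only needs $P_{I}=TH$. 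For $(ii)$ the divergence is more substantive: the paper linearizes, showing $\psi$ is an isomorphism iff the differential of $G\to\prod_{i} G/P_{I_{i}}$ at the origin is surjective, reduces this to $\mathcal{R}^{-}=\mathcal{R}^{-}_{I_{i}}\cup\mathcal{R}^{-}_{I_{j}}$ for all $i\neq j$, and then invokes a case-by-case inspection of the Dynkin diagrams of simple groups; you run a global dimension count via $\dim G/P_{J}=\#\{\beta>0 : \operatorname{supp}(\beta)\not\subseteq J\}$ and settle the resulting combinatorics uniformly, using that supports of roots are connected and that the sum of the simple roots along a path of the Dynkin diagram is again a root. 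The two root-theoretic conditions reached are identical, and your type-free argument is arguably cleaner than the paper's case check. The only step worth making explicit is the passage from the dimension equality back to ``$\psi$ is an isomorphism'': the injective morphism $\psi$ from the complete variety $G/P_{I}$ has closed irreducible image, so equal dimensions force surjectivity, and a bijective morphism onto the smooth (hence normal) variety $\prod_{i} G/P_{I_{i}}$ in characteristic zero is an isomorphism.
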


\begin{proof}
$(i)$ Denote by $I'=\bigcap\limits_{i=1}^{m}I_{i}$. Then $P_{I'}=\bigcap\limits_{i=1}^{m}P_{I_{i}}$. Let $M'=\sum\limits_{i=1}^{m}M_{G/H_{i}}\subseteq\chi(P_{I'})$, and $H'=\text{Ker}_{P_{I'}}M'$. By Remark \ref{D=S-I, rho(D_a)=a*}$(i)(ii)$ $P_{I'}=\bigcap\limits_{i=1}^{m}TH_{i}\supseteq TH=P_{I}$, and $M_{G/H_{i}}=\chi(P_{I_{i}}/H_{i})\subseteq\chi(P_{I}/H)=M_{G/H}$. Hence, $M'\subseteq M_{G/H}$, $H'=\text{Ker}_{P_{I'}}M'\supseteq \text{Ker}_{P_{I}}M=H$ and $H'=\text{Ker}_{P_{I'}}M'\subseteq \text{Ker}_{P_{I_{i}}}M_{G/H_{i}}=H_{i}$. The later inclusion implies that $H'\subseteq\bigcap\limits_{i=1}^{m}H_{i}=H$. Thus, $H'=H$, and $P_{I'}=TH'=TH=P_{I}$, i.e. $I=I'=\bigcap\limits_{i=1}^{m}I_{i}$ and $P_{I}=\bigcap\limits_{i=1}^{m}G/P_{I_{i}}$.

\medskip

$(ii)$ By $(i)$, $\psi$ is always injective. Since $G/P_{I}$ is complete, $\psi$ is an isomorphism if and only if the natural morphism $\Psi: G\rightarrow\prod\limits_{i=1}^{m}G/P_{I_{i}}$ is dominant. Let $\mathcal{R}^{-}$ be the set of negative roots of $G$, and $\mathcal{R}^{-}_{I}$ be the set of negative roots generated by $-\alpha$ where $\alpha\in I$. Note that the tangent space $\mathcal{T}_{i}$ of $G/P_{I_{i}}$ at the origin $P_{I_{i}}/P_{I_{i}}$ is naturally isomorphic to $\bigoplus\limits_{\alpha\in \mathcal{R}^{-}\backslash \mathcal{R}_{I_{i}}^{-}}\mathfrak{g}_{\alpha}$, and the tangent space $\mathcal{T}_{G}$ of $G$ at the origin is naturally isomorphic to its Lie algebra $\mathfrak{g}$. Hence, $\Psi$ is dominant if and only if the map between the tangent spaces at the origins $\mathcal{T}_{\Psi}: \mathcal{T}_{G}\rightarrow\bigoplus\limits_{i=1}^{m}\mathcal{T}_{i}$ is surjective. The later holds if and only if for any $i\neq j$, $(\mathcal{R}^{-}\backslash \mathcal{R}_{I_{i}}^{-})\cap(\mathcal{R}^{-}\backslash \mathcal{R}_{I_{j}}^{-})=\emptyset$, i.e. $\mathcal{R}^{-}=\mathcal{R}_{I_{i}}^{-}\cup \mathcal{R}_{I_{j}}^{-}$. By checking the Dynkin diagrams of simple groups case by case, we know that this is equivalent to the fact that for any $i\neq j$, any root $\alpha\in S\backslash I_{i}$ and any root $\beta\in S\backslash I_{j}$, $\alpha$ and $\beta$ lie in different connected components of the Dynkin diagram of $G$.
\end{proof}

\begin{proof}[Proof of Proposition \ref{when will a product of horo. var. be horo.}]
$(ii)\Rightarrow (iii)$ By the definition of spherical varieties, $G/H$ is the unique open $G$-orbit on $X$. On the other hand, $\prod\limits_{i=1}^{m}G/H_{i}$ is a $G$-stable open subset on $X$. Hence, $\pi$ is an open immersion.

\medskip

$(iii)\Rightarrow (iv)$ Consider the following commutative diagram $(*)$:
\begin{eqnarray*}
\xymatrix{
G/H\ar[r]^-{\pi}\ar[d]_-{\phi_{1}}&\prod\limits_{i=1}^{m}G/H_{i}\ar[d]^-{\phi_{2}}\\
G/P_{I}\ar[r]^-{\psi}&\prod\limits_{i=1}^{m}G/P_{I_{i}}.
}
\end{eqnarray*}

Since $\pi$ is dominant, $\phi_{2}$ is surjective and $\psi\circ\phi_{1}=\phi_{2}\circ\pi$, we know that $\psi$ is dominant, which implies that $\psi$ is surjective, since $G/P_{I}$ is complete. On the other hand, $\psi$ is injective, since $P_{I}=\bigcap\limits_{i=1}^{m}P_{I_{i}}$. Hence, $\psi$ is an isomorphism. We identify $G/P_{I}$ with $\prod\limits_{i=1}^{m}G/P_{I_{i}}$ in the following.

Now we consider the fibers of $\phi_{1}$ and $\phi_{2}$. Denote by $x=P_{I}/P_{I}\in G/P_{I}=\prod\limits_{i=1}^{m}G/P_{I_{i}}$. Since $\pi$ is an open immersion, the morphism $\phi_{1}^{-1}(x)\xrightarrow{\pi}\phi_{2}^{-1}(x)$ is an open immersion. Thus, the torus $P_{I}/H$ is a subgroup of the torus $\prod\limits_{i=1}^{m}P_{I_{i}}/H_{i}$ with finite index, which implies that $P_{I}/H=\prod\limits_{i=1}^{m}P_{I_{i}}/H_{i}$. Thus, for any point $x'\in G/P_{I}$, $\phi_{1}^{-1}(x')\xrightarrow{\pi}\phi_{2}^{-1}(x')$ is an isomorphism. Hence, $\pi$ is an isomorphism.

\medskip

$(iv)\Rightarrow (ii)$ Since $B^{-}H$ is an open subset of $G$, $\prod\limits_{i=1}^{m}G/H_{i}$ has an open $B^{-}$-orbit. By Proposition \ref{equi. defi.}$(i)(iii)$, $X$ is a horospherical $G/H$-emedding.

\medskip

$(ii)\Rightarrow (i)$ is trivial.

\medskip

$(i)\Rightarrow (iv)$ Let $\hat{x}\in X$ be a point such that $G_{\hat{x}}\supseteq R_{u}(B)$. Denote by $p_{i}: X\rightarrow X_{i}$ the $i$-th projection, $\hat{H}=G_{\hat{x}}$, $\hat{x}_{i}=p_{i}(\hat{x})$, and $\hat{H}_{i}=G_{\hat{x}_{i}}$. Since $G\cdot\hat{x}$ is an open $G$-orbit on $X$, $p_{i}(G\cdot\hat{x})=G/H_{i}$. In particular, $\hat{x}_{i}\in G/H_{i}$. Now we consider the composition $G\cdot\hat{x}\xrightarrow{p_{i}} G/H_{i}\xrightarrow{\varphi_{i}} G/P_{I_{i}}$. By Lemma \ref{parabolic containing R_u(B) is unique}, $\hat{x}_{i}\in\varphi_{i}^{-1}(P_{I_{i}}/P_{I_{i}})$. Thus, $\hat{H}_{i}\subseteq P_{I_{i}}$ and there is an element $g_{i}\in P_{i}$ such that $g_{i}\cdot x_{i}=\hat{x}_{i}$, where $x_{i}=H_{i}/H_{i}\in G/H_{i}$. Hence, $\hat{H}_{i}=g_{i}H_{i}g_{i}^{-1}$ and $N_{G}(\hat{H}_{i})=g_{i}N_{G}(H)g_{i}^{-1}=P_{I_{i}}$.

Denote by $P_{\hat{I}}=N_{G}(\hat{H})$. Note that $\hat{x}=(\hat{x}_{1}, \ldots, \hat{x}_{m})$ in $X=\prod\limits_{i=1}^{m}X_{i}$. Thus, $\hat{H}=\bigcap\limits_{i=1}^{m}\hat{H}_{i}$. Apply the equivalence of the conclusions $(ii)$ and $(iv)$ in this theorem to $\hat{H}$ and $\hat{H}_{i}$ instead of $H$ and $H_{i}$, then we get that $\hat{\pi}: G/\hat{H}=G\cdot\hat{x}\rightarrow \prod\limits_{i=1}^{m}G/\hat{H}_{i}$ is an isomorphism. Note that $G/\hat{H}_{i}=G\cdot\hat{x}_{i}=G/H_{i}$, and $G/H\subseteq\prod\limits_{i=1}^{m}G/H_{i}$. Thus, $G/H\subseteq G\cdot\hat{x}$. Since $G\cdot\hat{x}$ is $G$-homogeneous, $G/H=G\cdot\hat{x}$. Hence, the natural morphism $\pi$ is an isomorphism.

\medskip

$(iv)\Rightarrow(v)$ Recall the first part of the proof of $(iii)\Rightarrow(iv)$, we know that $\psi$ is an isomorphism. By Lemma \ref{explanations of when will a product of horo. var. be horo.}$(ii)$, the second assertion of $(v)$ holds. Since $\pi$ is an isomorphism, the corresponding fibers $P_{I}/H$ and $\prod\limits_{i=1}^{m}P_{I_{i}}/H_{i}$ are isomorphic to each other. By Remark \ref{D=S-I, rho(D_a)=a*}$(ii)$, $M_{G/H}=\chi(P_{I}/H)\cong\bigoplus\limits_{i=1}^{m}\chi(P_{I_{i}}/H_{i})=\bigoplus\limits_{i=1}^{m} M_{G/H_{i}}$.

\medskip

$(v)\Rightarrow (iv)$ Consider the commutative diagram $(*)$ in the proof of $(iii)\Rightarrow (iv)$. By Lemma \ref{explanations of when will a product of horo. var. be horo.}$(ii)$, $\psi: G/P_{I}\rightarrow\prod\limits_{i=1}^{m}G/P_{I_{i}}$ is an isomorphism. We identify them and still denote by $x=P_{I}/P_{I}\in G/P_{I}$. Then $\phi_{1}^{-1}(x)=P_{I}/H$ and $\phi_{2}^{-1}(x)=\prod\limits_{i=1}^{m}P_{I_{i}}/H_{i}$ are two tori. Since $\chi(P_{I}/H)=M_{G/H}=\bigoplus\limits_{i=1}^{m}M_{G/H_{i}}=\bigoplus\limits_{i=1}^{m}\chi(P_{I_{i}}/H_{i})$, we get that $P_{I}/H\cong\prod\limits_{i=1}^{m}P_{I_{i}}/H_{i}$. Thus, for any point $x'\in G/P_{I}$, $\phi_{1}^{-1}(x')\xrightarrow{\pi}\phi_{2}^{-1}(x')$ is an isomorphism. Hence, $\pi$ is an isomorphism.
\end{proof}

\begin{defi} \label{defi. of product of colored fans}
Let $G/H$ be a homogeneous spherical variety, and $\mathbb{F}$ be a colored fan in $(N_{G/H})_{\mathbb{Q}}$. We say that $\mathbb{F}$ is a product of $\mathbb{F}_{1},\ldots,\mathbb{F}_{m}$ and denote by $\mathbb{F}_{X}=\prod\limits_{i=1}^{m}\mathbb{F}_{i}$, if there is a decomposition $N_{G/H}=\bigoplus\limits_{i=1}^{m}N_{i}$ of abelian groups such that

$(i)$ $\mathbb{F}_{i}=\{(\mathfrak{C}, \mathfrak{D})\in\mathbb{F}\mid \mathfrak{C}\subseteq (N_{i})_{\mathbb{Q}}\}$;

$(ii)$ for any $\mathfrak{C}^{c}_{i}=(\mathfrak{C_{i}}, \mathfrak{D}_{i})\in\mathbb{F}_{i}$, $\prod\limits_{i=1}^{m}\mathfrak{C}^{c}_{i}\in\mathbb{F}$, where $\prod\limits_{i=1}^{m}\mathfrak{C}^{c}_{i}=(\prod\limits_{i}\mathfrak{C}_{i}, \bigcup\limits_{i}\mathfrak{D}_{i})$, and every colored cone in $\mathbb{F}$ is of this form.
\end{defi}

\begin{prop} \label{colored fan of product is product of colored fans}
Assume that for each $1\leq i\leq m$, $X_{i}$ is a horospherical $G/H_{i}$-embedding such that $H_{i}\supseteq R_{u}(B)$. If $X=\prod\limits_{i=1}^{m}X_{i}$ is $G$-horospherical, then $\mathbb{F}_{X}=\prod\limits_{i=1}^{m}\mathbb{F}_{X_{i}}$.
\end{prop}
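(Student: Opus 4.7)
The plan is to transport the product decomposition of the open orbit to a decomposition of the whole embedding by matching prime $B$-stable divisors and then $G$-orbits across the factors.

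First I would invoke Proposition \ref{when will a product of horo. var. be horo.}(iv)(v) to get $G/H=\prod_i G/H_i$ together with $M_{G/H}=\bigoplus_i M_{G/H_i}$, whence $N_{G/H}=\bigoplus_i N_{G/H_i}$; Lemma \ref{explanations of when will a product of horo. var. be horo.}(i) gives $I=\bigcap_i I_i$, and the Dynkin component condition in Proposition \ref{when will a product of horo. var. be horo.}(v) makes $S\setminus I=\bigsqcup_i(S\setminus I_i)$ disjoint. Writing $\pi_i\colon X\to X_i$ for the projection, I would then use Remark \ref{D=S-I, rho(D_a)=a*}(iii) to identify $\mathfrak{D}(G/H)=\bigsqcup_i\mathfrak{D}(G/H_i)$ via $D\mapsto\pi_i^{-1}(D)$, and observe that the boundary $\partial X=\bigcup_i\pi_i^{-1}(\partial X_i)$ has prime components exactly $\pi_i^{-1}(V)$ for $V\in\mathcal{V}_{X_i}$, giving $\mathfrak{B}(X)=\bigsqcup_i\pi_i^{-1}(\mathfrak{B}(X_i))$. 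A short computation of valuations on $B$-semi-invariants of the form $f=\prod_j\pi_j^{\ast}f_j$ then shows $\rho_X(\nu_{\pi_i^{-1}(D)})=\rho_{X_i}(\nu_D)$, viewed as an element of the summand $(N_{X_i})_{\mathbb{Q}}\subseteq(N_X)_{\mathbb{Q}}$.

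The heart of the argument is to show that every $G$-orbit $Y\subseteq X$ has the form $\prod_i Y_i$ with $Y_i=\pi_i(Y)$. The image $Y_i$ is automatically a single $G$-orbit on $X_i$ and $Y\subseteq\prod_i Y_i$; under the identifications above together with the observation $Y\subseteq\pi_i^{-1}(E)\Leftrightarrow Y_i\subseteq E$, I obtain $\mathfrak{B}_Y=\bigsqcup_i\pi_i^{-1}(\mathfrak{B}_{Y_i})$, hence $\mathfrak{C}_Y=\bigoplus_i\mathfrak{C}_{Y_i}$, $M_{\mathfrak{C}_Y}=\bigoplus_i M_{\mathfrak{C}_{Y_i}}$, and $\mathfrak{D}_Y=\bigsqcup_i\mathfrak{D}_{Y_i}$. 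Setting $J_i=I_i\cup\mathfrak{D}_{Y_i}$, I would check $I\cup\mathfrak{D}_Y=\bigcap_i J_i$ using the disjoint Dynkin components, which forces $P_{I\cup\mathfrak{D}_Y}=\bigcap_i P_{J_i}$; since $S\setminus J_i\subseteq S\setminus I_i$ inherits the disjointness, Lemma \ref{explanations of when will a product of horo. var. be horo.}(ii) applied to the larger parabolics $P_{J_i}$ gives $\dim G/P_{I\cup\mathfrak{D}_Y}=\sum_i\dim G/P_{J_i}$. Plugging these into Proposition \ref{dim. of horospherical var.}(iii) on both sides yields $\dim Y=\sum_i\dim Y_i=\dim\prod_i Y_i$, so the inclusion $Y\subseteq\prod_i Y_i$ becomes an equality by irreducibility.

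Conversely, any tuple $(Y_i)$ is realized: taking $y_i\in Y_i$ and setting $Y=G\cdot(y_1,\ldots,y_m)$ produces a $G$-orbit whose projections are the $Y_i$, and the same dimension count forces $Y=\prod_i Y_i$. Consequently the $G$-orbits on $X$ are in natural bijection with tuples of $G$-orbits on the $X_i$, and under this bijection $\mathfrak{C}^c_Y=(\mathfrak{C}_Y,\mathfrak{D}_Y)=\prod_i\mathfrak{C}^c_{Y_i}$ in the sense of Definition \ref{defi. of product of colored fans}; both conditions in that definition then follow immediately (a colored cone has support in $(N_{G/H_i})_{\mathbb{Q}}$ iff $Y_j=G/H_j$ for all $j\neq i$, which makes $\mathfrak{C}^c_{Y_j}=(0,\emptyset)$), yielding $\mathbb{F}_X=\prod_i\mathbb{F}_{X_i}$. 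The main technical obstacle I expect is the combinatorial identity $I\cup\mathfrak{D}_Y=\bigcap_i J_i$ together with the extension of the Dynkin disjointness argument of Lemma \ref{explanations of when will a product of horo. var. be horo.}(ii) to the enlarged parabolics $P_{J_i}$, since that is exactly what guarantees $\prod_i Y_i$ is a single $G$-orbit rather than a proper union.
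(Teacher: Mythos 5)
Your proof is correct, but it takes a genuinely different route from the paper. The paper never analyzes the orbits of $X=\prod_i X_i$ directly: it first checks that $\prod_i\mathbb{F}_{X_i}$ is a legitimate colored fan in $(N_{G/H})_{\mathbb{Q}}$ (using $\mathcal{V}(G/H)=(N_{G/H})_{\mathbb{Q}}$), takes the horospherical $G/H$-embedding $X'$ it defines via Theorem \ref{correspondence fans and varieties}, builds $\Phi\colon X'\to\prod_i X_i$ and $\Psi\colon\prod_i X_i\to X'$ out of Theorem \ref{morphism of fan}, and verifies $\psi\circ\phi=\mathrm{id}$ on colored fans to conclude $\Phi$ is an isomorphism, treating the complete case first and reducing the general case to it by equivariant completion. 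You instead stay on $X$ itself and prove the key geometric fact that every $G$-orbit is $\prod_i\pi_i(Y)$, via the decomposition $\mathfrak{B}_Y=\bigsqcup_i\pi_i^{-1}(\mathfrak{B}_{Y_i})$ and the dimension formula of Proposition \ref{dim. of horospherical var.}(iii), with the combinatorial identity $I\cup\mathfrak{D}_Y=\bigcap_i(I_i\cup\mathfrak{D}_{Y_i})$ and Lemma \ref{explanations of when will a product of horo. var. be horo.}(ii) applied to the enlarged parabolics $P_{J_i}$ (which is legitimate, since $S\setminus J_i\subseteq S\setminus I_i$ preserves the Dynkin disjointness). Your approach buys a completion-free, more geometric argument that directly exhibits the orbit structure; the paper's buys freedom from any dimension bookkeeping at the cost of the auxiliary variety $X'$ and the completion step. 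One small point to tighten: equal dimension plus irreducibility only makes $Y$ open and dense in $\prod_i Y_i$, not equal to it; you close the gap by noting that \emph{every} orbit $Z\subseteq\prod_i Y_i$ projects onto each $Y_i$ and hence is also open and dense by the same count, and two dense open orbits of an irreducible variety must meet and therefore coincide --- you flag exactly this concern in your last sentence, so the argument is complete once that sentence is spelled out.
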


\begin{proof}
Let $H=\bigcap\limits_{i=1}^{m}H_{i}$. Denote by $P_{I}=N_{G}(H)$ and $P_{I_{i}}=N_{G}(H_{i})$. By Proposition \ref{when will a product of horo. var. be horo.}$(i)(iv)(v)$, $M_{G/H}=\bigoplus\limits_{i=1}^{m}M_{G/H_{i}}$, $N_{G/H}=\bigoplus\limits_{i=1}^{m}N_{G/H_{i}}$ and $G/H=\prod\limits_{i=1}^{m}G/H_{i}$. Thus, for any $D\in\mathfrak{D}(G/H_{i})$, $\pi_{i}^{-1}(D)\in\mathfrak{D}(G/H)$ and $\rho_{G/H}(\nu_{\pi_{i}^{-1}(D)})=\rho_{G/H_{i}}(\nu_{D})\in (N_{G/H_{i}})_{\mathbb{Q}}\subseteq(N_{G/H})_{\mathbb{Q}}$, where $\pi_{i}: X\rightarrow X_{i}$ is the $i$-th projection. Note that by \cite[Cor. 6.2]{Kn91}, $\mathcal{V}(G/H)=(N_{G/H})_{\mathbb{Q}}$. So $\prod\limits_{i=1}^{m}\mathbb{F}_{X_{i}}$ is a colored fan in $(N_{G/H})_{\mathbb{Q}}$.

Let $X'$ be a horospherical $G/H$-embedding such that $\mathbb{F}_{X'}=\prod\limits_{i=1}^{m}\mathbb{F}_{X_{i}}$. By Theorem \ref{morphism of fan}, there is a $G$-equivariant morphism $\Phi_{j}: X'\rightarrow X_{j}$ extending the natural morphism $G/H\rightarrow G/H_{i}$ and inducing the morphism of colored fans $\phi_{j}: \mathbb{F}_{X'}=\prod\limits_{i=1}^{m}\mathbb{F}_{X_{i}}\rightarrow\mathbb{F}_{X_{j}}$. Define $\Phi: X'\rightarrow X=\prod\limits_{i=1}^{m}X_{i}, x\mapsto (\Phi_{1}(x),\ldots,\Phi_{m}(x))$. Denote by $\phi=\prod\limits_{i=1}^{m}\phi_{i}: \mathbb{F}_{X'}=\prod\limits_{i=1}^{m}\mathbb{F}_{X_{i}}\rightarrow \mathbb{F}_{X}$ the morphism of colored fans corresponding to $\Phi$.
\medskip

Step 2. For the moment, we assume that all $X_{i}$ are complete. By \cite[Thm. 4.2, Cor. 6.2]{Kn91}, $X'$ is also complete, which implies that the birational morphism $\Phi$ is surjective. By Theorem \ref{morphism of fan}, $\pi_{i}$ corresponds to a morphism of colored fans $\psi_{i}: \mathbb{F}_{X}\rightarrow\mathbb{F}_{X_{i}}$. Consider the morphism of colored fans $\psi=\prod\limits_{i=1}^{m}\psi_{i}: \mathbb{F}_{X}\rightarrow\prod\limits_{i=1}^{m}\mathbb{F}_{X_{i}}=\mathbb{F}_{X'}, \mathfrak{C}^{c}_{Y}\mapsto\prod\limits_{i=1}^{m}\psi_{i}(\mathfrak{C}^{c}_{Y})$ and the identity $G/H=\prod\limits_{i=1}^{m}G/H_{i}$. By Theorem \ref{morphism of fan}, they correspond to a $G$-equivariant morphism $\Psi: X\rightarrow X'$.

Take any $Y'\in S_{X', G}$. Let $Y=\Phi(Y')$ and $Y_{i}=\phi_{i}(Y')=\pi_{i}\circ\Phi(Y')=\pi_{i}(Y)$. By the construction of $\mathbb{F}_{X'}$, $\mathfrak{C}^{c}_{Y'}=\prod\limits_{i=1}^{m}\mathfrak{C}^{c}_{Y_{i}}$. By Theorem \ref{morphism of fan}, $\phi(\mathfrak{C}^{c}_{Y'})\subseteq\mathfrak{C}^{c}_{Y}$ and $\psi_{i}(\mathfrak{C}^{c}_{Y})\subseteq\mathfrak{C}^{c}_{Y_{i}}$. Thus, $\psi\circ\phi(\mathfrak{C}^{c}_{Y'})\subseteq\prod\limits_{i=1}^{m}\mathfrak{C}^{c}_{Y_{i}}=\mathfrak{C}^{c}_{Y'}$. Since $\phi$ and $\psi$ are compatible with the isomorphisms of vector spaces $(N_{X'})_{\mathbb{Q}}\rightarrow(N_{X})_{\mathbb{Q}}$ and $(N_{X})_{\mathbb{Q}}\rightarrow(N_{X'})_{\mathbb{Q}}$ respectively, we get that $\psi\circ\phi(\mathfrak{C}_{Y'})=\mathfrak{C}_{Y'}$. By Proposition \ref{description of D_Y}, $\rho_{X'}^{-1}(\psi\circ\phi)^{-1}(\rho_{X'}(\mathfrak{D}_{Y'}))=D_{X'}\cap\rho_{X'}^{-1}(\psi\circ\phi)^{-1}(\mathfrak{C}_{Y'}) =D_{X'}\cap\rho_{X'}^{-1}(\mathfrak{C}_{Y'})=\mathfrak{D}_{Y'}$. Thus, $\psi\circ\phi(\mathfrak{C}^{c}_{Y'})=\mathfrak{C}^{c}_{Y'}$. Hence, $\psi\circ\phi=id_{\mathbb{F}_{X'}}$. By Theorem \ref{morphism of fan} and Theorem \ref{correspondence fans and varieties}, $\Psi\circ\Phi=id_{X'}$. In particular, $\Phi$ is a injective morphism. Thus, the birational surjective $G$-equivariant morphism $\Phi$ is an isomorphism and for each $Y_{i}\in S_{X_{i}, G}$, $\prod\limits_{i=1}^{m}Y_{i}$ is itself a $G$-oribt and $\mathfrak{C}^{c}_{\prod\limits_{i=1}^{m}Y_{i}}=\prod\limits_{i=1}^{m}\mathfrak{C}^{c}_{Y_{i}}$. Then the general case when $X_{i}$ may be not complete follows from this fact and Theorem \ref{correspondence fans and varieties} by taking a $G$-euqivariant normal completion for each $X_{i}$.
\end{proof}

\begin{rmk}
If $X=\prod\limits_{i=1}^{m}X_{i}$ is only assumed to be $G$-spherical in Proposition \ref{colored fan of product is product of colored fans}, then maybe $\mathbb{F}_{X}\neq\prod\limits_{i=1}^{m}\mathbb{F}_{X_{i}}$. We can consider an example as follows. Let $X=X_{1}\times X_{2}$, where $X_{i}=G/P_{S\backslash\{\alpha_{i}\}}$. Assume for the moment that $X$ is $G$-spherical. Then $\mathbb{F}_{X}=\mathbb{F}_{X_{1}}\times\mathbb{F}_{X_{2}}$ if and only if $\text{rank}(X)=0$ if and only if $X$ is $G$-homogeneous is and only if $X$ is $G$-equivariantly isomorphic to $G/P_{S\backslash\{\alpha_{1}, \alpha_{2}\}}$. By Lemma \ref{explanations of when will a product of horo. var. be horo.}$(ii)$, the last assertion is equivalent to the fact that $\alpha_{1}$ and $\alpha_{2}$ lie in different connected component of the Dynkin diagram of $G$. In particular, if $G$ is a simple group, then $\mathbb{F}_{X}\neq\mathbb{F}_{X_{1}}\times\mathbb{F}_{X_{2}}$. However, by \cite{Li94}, there exist such an example that $G$ is simple and simply connected, and $G/P_{S\backslash\{\alpha_{1}\}}\times G/P_{S\backslash\{\alpha_{2}\}}$ is $G$-spherical, (see Table $I$ there).
\end{rmk}

\begin{prop} \label{converse of Nef1=Psef1 horospherical}
Assume that for each $1\leq i\leq m$, $X_{i}$ is a complete $\mathbb{Q}$-factorial horospherical $G/H_{i}$-embedding. If $X=\prod\limits_{i=1}^{m}X_{i}$ is  $G$-horospherical, then the following hold.

$(i)$ $\rho(X)=\sum\limits_{i=1}^{m}\rho(X_{i})$, where $\rho(X)$ (resp. $\rho(X_{i})$) is the Picard number of $X$ (resp. $X_{i}$).

$(ii)$ $\text{Nef}^{\, 1}(X)=\text{Psef}^{\, 1}(X)$ if and only if $\text{Nef}^{\, 1}(X_{i})=\text{Psef}^{\, 1}(X_{i})$ for all $1\leq i\leq m$.
\end{prop}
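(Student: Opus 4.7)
My plan is to reduce both parts to the colored fan description established in Proposition \ref{colored fan of product is product of colored fans} together with the Picard number formula of Proposition \ref{Picard number horospherical}.

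For (i), I will compute $\rho(X) = m - r + d$ in terms of the analogous quantities $m_i, r_i, d_i$ for each $X_i$. Proposition \ref{when will a product of horo. var. be horo.}$(i)(iv)(v)$ gives $M_{G/H} = \bigoplus_{i} M_{G/H_i}$, so $r = \sum_i r_i$. That same proposition forces the simple roots in $S \setminus I_i$ to lie in pairwise distinct connected components of the Dynkin diagram of $G$; in particular the sets $S \setminus I_i$ are pairwise disjoint, and hence under the identification of Remark \ref{D=S-I, rho(D_a)=a*}$(iii)$ one obtains $\mathfrak{D}(G/H) = \bigsqcup_i \mathfrak{D}(G/H_i)$. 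The decomposition $\mathbb{F}_X = \prod_i \mathbb{F}_{X_i}$ coming from Proposition \ref{colored fan of product is product of colored fans}, relative to the direct sum $N_{G/H} = \bigoplus_i N_{G/H_i}$, shows that the rays of $\mathbb{F}_X$ are exactly the disjoint union of the rays of the various $\mathbb{F}_{X_i}$, so $m = \sum_i m_i$.

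It then remains to verify that $\mathfrak{D}_X$ decomposes as the disjoint union of the pullbacks of the $\mathfrak{D}_{X_i}$, which will yield $d = \sum_i d_i$ and hence (i). For this, I will use that by (the proof of) Proposition \ref{colored fan of product is product of colored fans} every $G$-orbit $Y$ on $X$ is of the form $Y = \prod_i Y_i$ with $Y_i \subseteq X_i$ a $G$-orbit; so for $D = \pi_i^{-1}(D_i)$ with $D_i \in \mathfrak{D}(G/H_i)$ one has $D \supseteq Y$ if and only if $D_i \supseteq Y_i$. This gives $D \in \mathfrak{D}_X$ if and only if $D_i \in \mathfrak{D}_{X_i}$, which is the desired decomposition. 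Summing then produces $\rho(X) = \sum_i \rho(X_i)$.

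For (ii), the equality $\mathrm{Nef}^{\,1} = \mathrm{Psef}^{\,1}$ depends only on the underlying variety, not on the specific acting group. Taking $G_i := G$ for each $i$, each $X_i$ is a complete $\mathbb{Q}$-factorial $G_i$-spherical variety, and $X = \prod_{i} X_i$ is complete, $\mathbb{Q}$-factorial, and $(G_1 \times \cdots \times G_m)$-spherical (under the product action, which extends the diagonal one making $X$ $G$-horospherical). Remark \ref{nef1(X)=psef1(X) if and only if nef1(X_i)=psef1(X_i) Q-factorial} then applies verbatim and yields (ii).

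The main thing requiring care is the $G$-orbit decomposition used in (i): the statement that orbits of the diagonal $G$-action on the product are products of orbits is sharper than the mere fanwise equality $\mathbb{F}_X = \prod_i \mathbb{F}_{X_i}$, and it is what makes the color count additive. Once this is in place, both (i) and (ii) are essentially formal.
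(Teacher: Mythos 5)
Your proposal is correct and follows essentially the same route as the paper: both parts reduce to the colored fan decomposition $\mathbb{F}_{X}=\prod_{i}\mathbb{F}_{X_{i}}$ of Proposition \ref{colored fan of product is product of colored fans} together with Proposition \ref{when will a product of horo. var. be horo.}, and $(ii)$ is exactly the appeal to Remark \ref{nef1(X)=psef1(X) if and only if nef1(X_i)=psef1(X_i) Q-factorial}. The only cosmetic difference is in $(i)$: you count via $\rho=m-r+d$ from Proposition \ref{Picard number horospherical}, which forces you to also check $\mathfrak{D}_{X}=\bigsqcup_{i}\mathfrak{D}_{X_{i}}$, whereas the paper counts all of $\mathfrak{B}(X)=\mathcal{V}_{X}\cup\mathfrak{D}(G/H)$ directly via the exact sequence in Proposition \ref{cycles are rat. equiv. to stable ones, picard group on spherical varieties}$(ii)$ — the two bookkeepings are equivalent.
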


\begin{proof}
$(i)$ By Proposition \ref{colored fan of product is product of colored fans}, $\mathbb{F}_{X}=\prod\limits_{i=1}^{m}\mathbb{F}_{X_{i}}$, which implies that $S_{X, G}=\{\prod\limits_{i=1}^{m}Y_{i}\mid Y_{i}\in S_{X_{i}, G}\}$. Hence, there is a bijection between $\mathcal{V}_{X}$ and the disjoint union $\bigcup\limits_{i=1}^{m}\mathcal{V}_{X_{i}}$. By Remark \ref{D=S-I, rho(D_a)=a*}$(i)(ii)$, we can assume that $H_{i}\supseteq R_{u}(B)$ and $N_{G}(H_{i})=P_{I_{i}}$. Denote by $H=\bigcap\limits_{i=1}^{m}H_{i}$ and $P_{I}=N_{G}(H)$. By Lemma \ref{explanations of when will a product of horo. var. be horo.}$(i)$, $S\backslash I=\bigcup\limits_{i=1}^{m}S\backslash I_{i}$. By Proposition \ref{when will a product of horo. var. be horo.}$(i)(v)$, $M_{G/H}=\bigoplus\limits_{i=1}^{m}M_{G/H_{i}}$, and if $i\neq j$, then $(S\backslash I_{i})\cap(S\backslash I_{j})=\emptyset$. Hence, by Proposition \ref{cycles are rat. equiv. to stable ones, picard group on spherical varieties}$(ii)$ and Remark \ref{D=S-I, rho(D_a)=a*}$(iii)$, $\rho(X)=\sum\limits_{i=1}^{m}\rho(X_{i})$.

\medskip

$(ii)$ follows from Remark \ref{nef1(X)=psef1(X) if and only if nef1(X_i)=psef1(X_i) Q-factorial}.
\end{proof}

Now we are able to show the main theorem in this part, which is a part of the characterization of the smooth projective horospherical varieties whose effective divisors are nef.

\begin{thm} \label{Nef1=Psef1 horospherical is more or less product}
Let $X$ be a smooth projective horospherical $G/H$-embedding such that $H\supseteq R_{u}(B)$, $\mathfrak{D}_{0}(G/H)=\emptyset$ and $\text{Nef}^{\, 1}(X)=\text{Psef}^{\, 1}(X)$. Then there is a $G$-equivariant isomorphism $\Phi: X\rightarrow\prod\limits_{i=1}^{\rho(X)}X_{i}$, where $\rho(X)$ is the Picard number of $X$, and each $X_{i}$ is a smooth projective horospherical $G/H_{i}$-embedding of Picard number one. Moreover, the natural morphism $\pi: G/H\rightarrow\prod\limits_{i=1}^{\rho(X)}G/H_{i}$ is an isomorphism, $\Phi$ extends $\pi$, and $H=\bigcap\limits_{i=1}^{\rho(X)}H_{i}$.
\end{thm}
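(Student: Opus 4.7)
Following the roadmap preceding the statement, the strategy is to exhibit $\mathbb{F}_X$ as a product $\prod_i\mathbb{F}_i$ of colored fans, realize each $\mathbb{F}_i$ as the fan of a smooth projective horospherical $G/H_i$-embedding $X_i$ of Picard number one, and identify $\prod X_i$ with $X$ via Theorem \ref{correspondence fans and varieties}. For the fan decomposition, apply Proposition \ref{information on toric Y getting from horospherical X} to the associated toric variety $\mathcal{T}(X)=G\times^{P_I}Y$: the variety $Y$ is smooth, projective, $\mathbb{Q}$-factorial, and satisfies $\text{Nef}^1(Y)=\text{Psef}^1(Y)$, so Theorem \ref{Nef=Psef toric} gives $Y\cong\prod_{i=1}^{\rho}\mathbb{P}^{d_i}$, with $\rho=\rho(Y)=\rho(X)$ by Lemma \ref{D=D_X uinon D_0 now} (using $\mathfrak{D}_0(G/H)=\emptyset$ to conclude $\mathfrak{D}(G/H)=\mathfrak{D}_X$) together with Proposition \ref{Picard number horospherical}. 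The product fan on $Y$ induces a lattice decomposition $N_X=\bigoplus_{i=1}^{\rho}N_i$ with $N_i\cong\mathbb{Z}^{d_i}$; by Corollary \ref{correspondence rays and divisors horospherical Q-factorial}, each color $D_\alpha$ ($\alpha\in S\setminus I$) gives a unique ray $\mathbb{Q}^+\alpha^\vee|_{M_X}$ lying in a single summand $N_{i(\alpha)}$. Partitioning $S\setminus I=\bigsqcup_i J_i$ via $J_i=\{\alpha:i(\alpha)=i\}$ and letting $\mathbb{F}_i$ consist of those colored cones of $\mathbb{F}_X$ supported in $(N_i)_\mathbb{Q}$ with their natural colors, I obtain $\mathbb{F}_X=\prod_i\mathbb{F}_i$ in the sense of Definition \ref{defi. of product of colored fans}.

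Next, I construct the factor varieties. Set $I_i=S\setminus J_i$ and let $M_i\subseteq M_X$ be the summand dual to $N_i$. For $\alpha\in I_i\setminus I$ one has $\alpha^\vee|_{M_X}\in N_j$ with $j\neq i$, so $\alpha^\vee$ annihilates $M_i$; this shows $M_i\subseteq\chi(P_{I_i})$. Defining $H_i:=\text{Ker}_{P_{I_i}}M_i$ yields a homogeneous horospherical variety $G/H_i$ with $M_{G/H_i}=M_i$, $N_G(H_i)=P_{I_i}$, $H_i\supseteq R_u(B)$, and $\mathfrak{D}(G/H_i)\leftrightarrow J_i$ via Remark \ref{D=S-I, rho(D_a)=a*}. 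Hence $\mathbb{F}_i$ is a strictly convex colored fan for $G/H_i$, and Theorem \ref{correspondence fans and varieties} produces a horospherical $G/H_i$-embedding $X_i$ with $\mathbb{F}_{X_i}=\mathbb{F}_i$. Smoothness and projectivity of $X_i$ are inherited from those of $X$ via the factor structure of $\mathbb{F}_i$ inside $\mathbb{F}_X$, and $\rho(X_i)=1$ follows from Proposition \ref{Picard number horospherical}. A direct computation using Lemma \ref{explanations of when will a product of horo. var. be horo.}(i) gives $\bigcap_i H_i=\text{Ker}_{P_I}(\sum_i M_i)=\text{Ker}_{P_I}M_X=H$.

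Finally, by Proposition \ref{when will a product of horo. var. be horo.}, once the \emph{Dynkin condition} holds --- namely, for $i\neq j$, $\alpha\in J_i$, $\beta\in J_j$, the simple roots $\alpha,\beta$ lie in distinct connected components of the Dynkin diagram of $G$ --- the product $\prod X_i$ is a horospherical $G/H$-embedding with the natural map $G/H\to\prod G/H_i$ an isomorphism. Then Proposition \ref{colored fan of product is product of colored fans} yields $\mathbb{F}_{\prod X_i}=\prod\mathbb{F}_i=\mathbb{F}_X$, and Theorem \ref{correspondence fans and varieties} furnishes the desired $G$-equivariant isomorphism $X\cong\prod X_i$ extending $\pi$. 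The main obstacle is precisely the Dynkin condition, which does not follow formally from the lattice decomposition $N_X=\bigoplus N_i$: it pairs a global constraint on the root system of $G$ with a local constraint on $X$. My plan is to derive it from the smoothness of $X$ at a closed $G$-orbit $Y$ whose colored cone contains both $\rho(D_\alpha)$ and $\rho(D_\beta)$ as rays (such a $Y$ exists by the product structure of $\mathbb{F}_X$), using a refined smoothness criterion for horospherical varieties to show that the tangent $G$-representation at $Y$ must decompose compatibly with the partition $\{J_i\}$; Dynkin adjacency between $\alpha\in J_i$ and $\beta\in J_j$ ($i\neq j$) would obstruct such a decomposition, contradicting the smoothness of $X$ at $Y$.
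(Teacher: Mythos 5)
Your proposal follows essentially the same route as the paper: decompose $\mathbb{F}_X$ via the associated toric variety $\mathcal{T}(X)=G\times^{P_I}Y$ and Theorem \ref{Nef=Psef toric}, build each $X_i$ from $H_i=\text{Ker}_{P_{I_i}}M_i$, and reduce everything to the Dynkin condition of Proposition \ref{when will a product of horo. var. be horo.}$(v)$. The ``refined smoothness criterion'' you invoke for that last step is exactly how the paper closes the argument (in Lemma \ref{product of colored fans implies product of varieties}, Step 2): two colors $D_{\alpha_1}, D_{\alpha_2}$ with $\alpha_1,\alpha_2$ adjacent in a connected component would both lie in a single maximal colored cone of $\mathbb{F}_X$, contradicting Pasquier's smoothness criterion \cite[Thm. 2.6]{Pa06}.
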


\begin{proof}
Denote by $\mathcal{T}(X)=G\times^{P_{I}}Y$ the corresponding toroidal horospherical variety. Then by Proposition \ref{information on toric Y getting from horospherical X}, $Y$ is a smooth projective toric variety such that $\text{Nef}^{\, 1}(Y)=\text{Psef}^{\, 1}(Y)$. By Theorem \ref{Nef=Psef toric}, $Y\cong\mathbb{P}^{r_{1}}\times\ldots\mathbb{P}^{r_{\rho(Y)}}$, where $\rho(Y)$ is the Picard number of $Y$ and $\sum\limits_{i=1}^{\rho(Y)}r_{i}$ equals to the rank $r$ of $G/H$.

Denote by $Y_{i}=\mathbb{P}^{r_{i}}$, and $N_{i}=\text{supp}(\mathbb{F}_{Y_{i}})\cap N_{X}$. Then $N_{X}=\bigoplus\limits_{i=1}^{m}N_{i}$. For any cone $\mathfrak{C}\in\mathbb{F}_{Y}$, define $\Phi(\mathfrak{C})=\{D\in\mathfrak{D}_{X}\mid \rho(\nu_{D})\in\mathfrak{C}\}$. By Proposition \ref{description of D_Y}, if $\mathfrak{C}\in\mathbb{F}_{Y}$, then $(\mathfrak{C}, \Phi(\mathfrak{C}))\in\mathbb{F}_{X}$. Let $\mathbb{F}_{i}=\{(\mathfrak{C}, \Phi(\mathfrak{C}))\mid \mathfrak{C}\in\mathbb{F}_{Y_{i}}\}$. The fact $\mathbb{F}_{Y}=\prod\limits_{i=1}^{\rho(Y)}\mathbb{F}_{Y_{i}}$ and Proposition \ref{description of D_Y} imply that $\mathbb{F}_{X}=\prod\limits_{i=1}^{\rho(Y)}\mathbb{F}_{i}$.

By Lemma \ref{D=D_X uinon D_0 now}, $\mathfrak{D}(G/H)=\mathfrak{D}_{X}$.  Hence, by Lemma \ref{product of colored fans implies product of varieties} in the following,  there exists a smooth projective horospherical $G/H_{i}$-embedding $X_{i}$ for each $1\leq i\leq\rho(Y)$ such that $\mathbb{F}_{X_{i}}=\mathbb{F}_{i}$, $\mathfrak{D}(G/H_{i})=\mathfrak{D}_{X_{i}}$, $H=\bigcap\limits_{i=1}^{\rho(Y)}H_{i}$, $X$ is $G$-equivariantly isomorphic to $\prod\limits_{i=1}^{\rho(Y)}X_{i}$, and this isomorphism extends the isomorphism $G/H\rightarrow\prod\limits_{i=1}^{\rho(Y)}G/H_{i}$.  By Proposition \ref{Picard number horospherical} and Corollary \ref{correspondence rays and divisors horospherical Q-factorial}, the Picard numbers $\rho(X)=\rho(Y)$ and $\rho(X_{i})=\rho(Y_{i})=1$. The conclusion follows.
\end{proof}

\begin{lem} \label{product of colored fans implies product of varieties}
Let $X$ be a smooth horospherical $G/H$-embedding such that $H\supseteq R_{u}(B)$, $N_{G}(H)=P_{I}$, $\mathfrak{D}(G/H)=\mathfrak{D}_{X}$ and $\mathbb{F}_{X}=\prod\limits_{i=1}^{m}\mathbb{F}_{i}$. Then there exists a smooth horospherical $G/H_{i}$-embedding $X_{i}$ for each $1\leq i\leq m$ such that $H_{i}\supseteq H$, $\mathfrak{D}(G/H_{i})=\mathfrak{D}_{X_{i}}$, $\mathbb{F}_{X_{i}}=\mathbb{F}_{i}$, $H=\bigcap\limits_{i=1}^{m}H_{i}$, the natural morphism $G/H\rightarrow\prod\limits_{i=1}^{m}G/H_{i}$ is an isomorphism, and this isomorphism can be extended to a $G$-equivariant isomorphism $X\rightarrow \prod\limits_{i=1}^{m}X_{i}$.
\end{lem}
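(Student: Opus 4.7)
The plan is to construct each $H_i$ explicitly by dualizing the decomposition $N_{G/H} = \bigoplus_{i=1}^m N_i$, build $X_i$ from $\mathbb{F}_i$ via the classification Theorem \ref{correspondence fans and varieties}, and then identify $X$ with $\prod X_i$ using Proposition \ref{colored fan of product is product of colored fans} together with the uniqueness of the colored-fan correspondence. The main obstacle will be verifying that the natural morphism $G/H \to \prod G/H_i$ is a global isomorphism.

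First I would take the dual decomposition $M_{G/H} = \bigoplus_i M_i$ with $M_i = N_i^{\vee}$. Since $X$ is smooth it is $\mathbb{Q}$-factorial, so Corollary \ref{correspondence rays and divisors horospherical Q-factorial} identifies the rays of $\mathbb{F}_X$ with $\mathcal{V}_X \cup \mathfrak{D}_X$; the product structure places each ray into exactly one $(N_i)_{\mathbb{Q}}$, and combined with $\mathfrak{D}(G/H) = \mathfrak{D}_X$ and the bijection $S \setminus I \leftrightarrow \mathfrak{D}(G/H)$ of Remark \ref{D=S-I, rho(D_a)=a*}, this produces a partition $S \setminus I = \bigsqcup_i S_i$ where $\alpha \in S_i$ iff $\alpha^{\vee}|_{M_{G/H}} \in N_i$. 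Set $I_i := S \setminus S_i$, so that $\bigcap_i I_i = I$. I would then verify $M_i \subseteq \chi(P_{I_i})$: the inclusion $M_{G/H} \subseteq \chi(P_I)$ handles simple roots in $I$, while for $\beta \in S_j \subseteq I_i$ with $j \ne i$ the element $\beta^{\vee}|_{M_{G/H}} \in N_j$ annihilates $M_i$ via the direct-sum pairing. Defining $H_i := \text{Ker}_{P_{I_i}} M_i$, Remark \ref{D=S-I, rho(D_a)=a*} gives that $G/H_i$ is horospherical with $N_G(H_i) = P_{I_i}$, $M_{G/H_i} = M_i$, and colors naturally labelled by $S_i$; the map $\rho_{G/H_i}(\nu_{D_\alpha}) = \alpha^{\vee}|_{M_i}$ agrees with $\rho_{G/H}(\nu_{D_\alpha})$ under the identification $N_{G/H_i} = N_i$. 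The inclusion $H \subseteq H_i$ is clear, and $\bigcap_i P_{I_i} = P_I$ together with $\sum_i M_i = M_{G/H}$ yields $\bigcap_i H_i = \text{Ker}_{P_I} M_{G/H} = H$.

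Transporting $\mathbb{F}_i$ via the identification $(N_{G/H_i})_{\mathbb{Q}} = (N_i)_{\mathbb{Q}}$ gives a colored fan for $G/H_i$, and Theorem \ref{correspondence fans and varieties} produces a horospherical $G/H_i$-embedding $X_i$ with $\mathbb{F}_{X_i} = \mathbb{F}_i$ and $\mathfrak{D}(G/H_i) = \mathfrak{D}_{X_i}$; smoothness of $X_i$ follows from Proposition \ref{locally fac. Q-fac. criterion}$(i)$ applied to $X$, since at each maximal colored cone of $\mathbb{F}_i$ the generators $\rho(\nu_D)$ form a subset of a $\mathbb{Z}$-basis of $N_X = \bigoplus N_i$ which then restricts to a basis of $N_i$. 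To conclude, I would show $G/H \to \prod G/H_i$ is an isomorphism via Proposition \ref{when will a product of horo. var. be horo.}$(v)$: the equality $M_{G/H} = \bigoplus M_{G/H_i}$ holds by construction, and the remaining Dynkin-diagram condition -- that for $i \ne j$ every $\alpha \in S_i$ and $\beta \in S_j$ lie in different connected components of the Dynkin diagram of $G$ -- is the main obstacle. I expect to derive it from smoothness: the product structure forces a maximal colored cone of $\mathbb{F}_X$ to contain colors $D_\alpha, D_\beta$ for any prescribed $\alpha \in S_i, \beta \in S_j$, and the smoothness criterion for horospherical varieties invoked in Proposition \ref{information on toric Y getting from horospherical X} then precludes adjacency of such $\alpha, \beta$ in the Dynkin diagram. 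Once the Dynkin condition is in place, Proposition \ref{colored fan of product is product of colored fans} gives $\mathbb{F}_{\prod X_i} = \prod \mathbb{F}_{X_i} = \mathbb{F}_X$, and Theorem \ref{correspondence fans and varieties} upgrades the group-theoretic isomorphism $G/H \cong \prod G/H_i$ to the required $G$-equivariant isomorphism $X \cong \prod X_i$.
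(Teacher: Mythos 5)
Your proposal follows essentially the same route as the paper: define $M_i=N_i^{\vee}$ and the partition of $S\setminus I$ induced by which summand $(N_i)_{\mathbb{Q}}$ each $\rho(\nu_{D_\alpha})$ lies in, set $H_i=\text{Ker}_{P_{I_i}}M_i$, verify $M_i\subseteq\chi(P_{I_i})$ by the same direct-sum pairing argument, build $X_i$ from $\mathbb{F}_i$ via Theorem \ref{correspondence fans and varieties}, derive the Dynkin-diagram condition of Proposition \ref{when will a product of horo. var. be horo.}$(v)$ from the smoothness criterion of Pasquier applied to a product colored cone containing $D_\alpha$ and $D_\beta$, and conclude with Proposition \ref{colored fan of product is product of colored fans}. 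The paper also checks one point you pass over silently, namely that $M_{G/H_i}$ equals $M_i$ rather than a finite-index overlattice; this follows because $M_i=(M_i)_{\mathbb{Q}}\cap M_{G/H}$ is saturated in $M_{G/H}$, and is routine.

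The one step that does not work as written is your claim that smoothness of $X_i$ follows from Proposition \ref{locally fac. Q-fac. criterion}$(i)$ applied to $X$. That proposition characterizes \emph{local factoriality}, which for horospherical varieties with colors is strictly weaker than smoothness --- the extra Dynkin-diagram condition in Pasquier's smoothness criterion is precisely the gap between the two, which is why the paper's Proposition \ref{information on toric Y getting from horospherical X}$(ii)$ only equates local factoriality of $X$ with smoothness of the associated \emph{toric} variety. This slip is not fatal: once you have the $G$-equivariant isomorphism $X\cong\prod_{i}X_i$ at the end, smoothness of each factor is immediate from smoothness of $X$, which is how the paper in effect obtains it; but you should delete the appeal to the local factoriality criterion and instead record smoothness of the $X_i$ only after the final isomorphism is established.
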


\begin{proof}
Step 1. In this step, we will construct a horospherical $G/H_{i}$-embedding $X_{i}$ for each $i$ such that $\mathbb{F}_{X_{i}}=\mathbb{F}_{i}$ .

Let $\mathfrak{D}_{i}=\{D\in\mathfrak{D}_{X}\mid \rho(\nu_{D})\in (N_{i})_{\mathbb{Q}}\}$ and $I_{i}=S\backslash\mathfrak{D}_{i}$. The fact $\mathbb{F}_{X}=\prod\limits_{i=1}^{m}\mathbb{F}_{i}$ implies that $\mathfrak{D}_{X}$ is the disjoint union of those $\mathfrak{D}_{i}$, where $1\leq i\leq m$. Thus, $\bigcap\limits_{i=1}^{m}I_{i}=I$, $\bigcap\limits_{i=1}^{m}P_{I_{i}}=P_{I}$ and if $i_{1}\neq i_{2}$, then $I_{i_{1}}\cup I_{i_{2}}=S$.

Let $M_{i}=\text{Hom}(N_{i}, \mathbb{Z})$. Then $M_{G/H}=\bigoplus\limits_{i=1}^{m}M_{i}$. Recall that $\chi(P_{I})=\{\chi\in\chi(B)\mid \langle \chi, \alpha^{\vee} \rangle=0 \text{ for all } \alpha\in I\}$ and $\chi(P_{I_{i}})=\{\chi\in\chi(B)\mid \langle \chi, \alpha^{\vee} \rangle=0 \text{ for all } \alpha\in I_{i}\}$.  Take any $\chi_{1}\in M_{i}$, then by Remark \ref{D=S-I, rho(D_a)=a*}$(ii)$, $\chi_{1}\in M_{G/H}\subseteq\chi(P_{I})$. Moreover, for any $\alpha_{1}\in I_{i}\backslash I$, there exists some $j\neq i$ such that $\alpha_{1}\in S\backslash I_{j}$. Thus, $\rho(\nu_{D_{\alpha_{1}}})\in (N_{j})_{\mathbb{Q}}\subseteq M_{i}^{\bot}\cap (N_{X})_{\mathbb{Q}}$. In particular, $\langle \chi_{1}, \alpha_{1}^{\vee} \rangle=0$, i.e. $\chi_{1}\in\chi(P_{I_{i}})$. Hence, $M_{i}\subseteq\chi(P_{I_{i}})\cap M_{G/H}$. Let $H_{i}=\text{Ker}_{P_{I_{i}}}M_{i}$. Then $H_{i}\supseteq\text{Ker}_{P_{I}}M_{G/H}=H\supseteq R_{u}(B)$. By Remark \ref{D=S-I, rho(D_a)=a*}$(ii)$ and the definition of $H_{i}$, we get that $N_{G}(H_{i})=P_{I_{i}}$, $M_{G/H_{i}}=\chi(P_{I_{i}}/H_{i})$ and $H_{i}=\text{Ker}_{P_{I_{i}}}M_{G/H_{i}}$. Hence, $M_{i}$ is a subgroup of $M_{G/H_{i}}$ with a finite index. So $(M_{i})_{\mathbb{Q}}=(M_{G/H_{i}})_{\mathbb{Q}}$ as subspaces of $(M_{G/H})_{\mathbb{Q}}$. By the decomposition $M_{G/H}=\bigoplus\limits_{i=1}^{m}M_{i}$, we can know that $M_{G/H_{i}}\subseteq(M_{i})_{\mathbb{Q}}\cap M_{G/H}=M_{i}$. Hence, $M_{G/H_{i}}=M_{i}$ and $N_{G/H_{i}}=N_{i}$.

Consider the following commutative diagram:
\begin{eqnarray*}
\xymatrix{
G/H\ar[r]^-{\phi_{i}}\ar[d]&G/H_{i}\ar[d]\\
G/P_{I}\ar[r]&G/P_{I_{i}}.
}
\end{eqnarray*}
By Remark \ref{D=S-I, rho(D_a)=a*}$(iii)$, $\mathfrak{D}(G/H_{i})$ can be identified with $S\backslash I_{i}$. Denote by $D_{\alpha, i}$ the $B$-stable divisor on $G/H_{i}$ corresponding to the simple root $\alpha$. Thus, $\phi_{i}^{-1}(D_{\alpha, i})=D_{\alpha}\in \mathfrak{D}(G/H)=S\backslash I$ and $\rho_{G/H}(\nu_{D_{\alpha}})=\rho_{G/H_{i}}(\nu_{D_{\alpha, i}})\in (N_{G/H_{i}})_{\mathbb{Q}}$. Thus, $\phi_{i}^{-1}: \mathfrak{D}(G/H_{i})\rightarrow \mathfrak{D}_{i}, D\mapsto\phi_{i}^{-1}(D)$ is a well-defined bijection. Hence, $\mathbb{F}_{i}$ is a colored fan in $(N_{i})_{\mathbb{Q}}=(N_{G/H_{i}})_{\mathbb{Q}}$. By Theorem \ref{correspondence fans and varieties}, there exists a horospherical $G/H_{i}$-embedding $X_{i}$ such that $\mathbb{F}_{X_{i}}=\mathbb{F}_{i}$.

\medskip

Step 2. Claim: $H=\bigcap\limits_{i=1}^{m}H_{i}$ and $\prod\limits_{i=1}^{m}X_{i}$ is a horospherical $G/H$-embedding.

We know from Remark \ref{D=S-I, rho(D_a)=a*}$(ii)$ that $\bigcap\limits_{i=1}^{m}H_{i}=\bigcap\limits_{i=1}^{m}(\text{Ker}_{P_{I_{i}}}M_{G/H_{i}})=\text{Ker}_{\bigcap\limits_{i=1}^{m} P_{I_{i}}}(\bigcup\limits_{i=1}^{m} M_{G/H_{i}})=\text{Ker}_{\bigcap\limits_{i=1}^{m}P_{I_{i}}}(\bigoplus\limits_{i=1}^{m} M_{G/H_{i}})=\text{Ker}_{P_{I}}(M_{G/H})=H$.

In fact, on each connected component $\Gamma$ of the Dynkin diagram of $G$, there is at most one $1\leq i\leq m$ such that $(S\backslash I_{i})\cap\Gamma\neq\emptyset$. Otherwise, assume that $\alpha_{1}, \alpha_{2}\in\Gamma$ are two simple roots such that $\alpha_{1}\in S\backslash I_{i_{1}}, \alpha_{2}\in S\backslash I_{i_{2}}, i_{1}\neq i_{2}$ and there is no simple roots in $S\backslash I=\bigcup\limits_{i}S\backslash I_{i}$ between $\alpha_{1}$ and $\alpha_{2}$ on $\Gamma$. Since $S\backslash I_{i}$ is bijectively corresponding to $\mathfrak{D}_{X_{i}}$ for each $i$, there are maximal dimensional colored cones $(\mathfrak{C}_{1}, \mathfrak{D}_{1})\in\mathbb{F}_{X_{i_{1}}}, (\mathfrak{C}_{2}, \mathfrak{D}_{2})\in\mathbb{F}_{X_{i_{2}}}$ such that $D_{\alpha_{1}}\in\mathfrak{D}_{1}, D_{\alpha_{2}}\in\mathfrak{D}_{2}$. The fact $\mathbb{F}_{X}=\prod\limits_{i}\mathbb{F}_{X_{i}}$ implies that there is a maximal dimensional colored cone $(\mathfrak{C}, \mathfrak{D})\in\mathbb{F}_{X}$ such that $(\mathfrak{C}_{1}, \mathfrak{D}_{1}), (\mathfrak{C}_{2}, \mathfrak{D}_{2})$ are colored faces of $(\mathfrak{C}, \mathfrak{D})\in\mathbb{F}_{X}$ under the natural inclusions $\mathbb{F}_{X_{i}}\subseteq\mathbb{F}_{X}$. This is contradicted with \cite[Thm. 2.6]{Pa06}. Then by Proposition \ref{when will a product of horo. var. be horo.}$(ii)(iv)(v)$, the claim holds, and the natural morphism $\pi: G/H\rightarrow\prod\limits_{i=1}^{m}G/H_{i}$ is an isomorphism.

By Proposition \ref{colored fan of product is product of colored fans} and Theorem \ref{correspondence fans and varieties}, $X$ is $G$-equivariantly isomorphic to $\prod\limits_{i=1}^{m}X_{i}$, and the isomorphism extends $\pi$.
\end{proof}

\begin{cor} \label{nef1=psef1 horospherical description}
Let $X$ be a smooth projective horospherical $G/H$-embedding such that $\text{Nef}^{\, 1}(X)=\text{Psef}^{\, 1}(X)$. Then there exists a $G$-equivariant morphism $\pi: X\rightarrow G/P_{S\backslash \mathfrak{D}_{0}(G/H)}$ such that each fiber is isomorphic to the product of some smooth projective $L_{0}$-horospherical varieties of Picard number one, where $L_{0}=P_{S\backslash \mathfrak{D}_{0}(G/H)}\cap P_{S\backslash \mathfrak{D}_{0}(G/H)}^{-}$.
\end{cor}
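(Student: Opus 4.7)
The plan is to combine Theorem \ref{morphism to G/P_0 horospherical} with Theorem \ref{Nef1=Psef1 horospherical is more or less product}, essentially reducing from $X$ to a typical fiber which has trivial $\mathfrak{D}_0$. By Remark \ref{D=S-I, rho(D_a)=a*}$(v)$ we may assume $H\supseteq R_u(B)$ and $N_G(H)=P_I$, so that $\mathfrak{D}_0(G/H)\subseteq S\setminus I$. Taking $\mathfrak{D}_1=\mathfrak{D}_0(G/H)$ in Theorem \ref{morphism to G/P_0}$(i)$ and using the horospherical identification of the target in Theorem \ref{morphism to G/P_0 horospherical}$(i)$, I obtain a $G$-equivariant morphism $\pi_0:X\to G/P_0$ with $P_0=P_{S\setminus\mathfrak{D}_0(G/H)}$.

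Next I would pass to a fiber. Let $F=\pi_0^{-1}(\tilde x_0)$ for the base point $\tilde x_0$ whose stabilizer is $P_0^-$ (so that Lemma \ref{morphism to G/P_0 then F_X <= F_(X_0)} applies after swapping the roles of $B$ and $B^-$ via Remark \ref{D=S-I, rho(D_a)=a*}$(v)$). By Theorem \ref{morphism to G/P_0}$(ii)$, $F$ is smooth and projective because $X$ is; by Theorem \ref{morphism to G/P_0 horospherical}$(i)$, $F$ is a complete horospherical $L_0/H_0$-embedding for $L_0=P_0\cap P_0^-$ and some $H_0\supseteq R_u(B\cap L_0)$; by Theorem \ref{morphism to G/P_0 horospherical}$(iv)$ together with Lemma \ref{morphism to G/P_0 then F_X <= F_(X_0)}$(f)$, the fan $\mathbb{F}_F$ is isomorphic to $\mathbb{F}_X$ and $\mathfrak{D}_0(L_0/H_0)=\emptyset$; and by Theorem \ref{morphism to G/P_0}$(iii)$, the hypothesis $\mathrm{Nef}^{\,1}(X)=\mathrm{Psef}^{\,1}(X)$ forces $\mathrm{Nef}^{\,1}(F)=\mathrm{Psef}^{\,1}(F)$.

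All hypotheses of Theorem \ref{Nef1=Psef1 horospherical is more or less product} are now satisfied for the $L_0$-horospherical variety $F$, so $F$ is $L_0$-equivariantly isomorphic to a product $\prod_{i=1}^{\rho(F)} F_i$ of smooth projective $L_0$-horospherical varieties of Picard number one. Finally, by Remark \ref{X is a quotient of G*X_0 by (P_0)-} the ambient $X$ is the homogeneous fiber bundle $G\times^{P_0^-}F$, so every fiber of $\pi_0$ is $L_0$-equivariantly isomorphic to $F$, which yields the claimed description.

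I do not expect a substantive obstacle: the real content has already been packaged into the two invoked theorems. The only care needed is bookkeeping, namely the switch between the conventions $H\supseteq R_u(B)$ (used in Theorem \ref{Nef1=Psef1 horospherical is more or less product} and in the statement we are proving) and $H\supseteq R_u(B^-)$ (used in Theorem \ref{morphism to G/P_0 horospherical}), and the identification of $\mathfrak{D}_0(G/H)$ with a subset of $S\setminus I$ via Remark \ref{D=S-I, rho(D_a)=a*}$(iii)(iv)$. Both reduce to standard conjugation inside $G$ and cause no loss of generality.
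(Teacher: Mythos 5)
Your proposal is correct and follows essentially the same route as the paper's proof: reduce via the morphism $\pi_{0}$ of Theorem \ref{morphism to G/P_0} and Theorem \ref{morphism to G/P_0 horospherical} to a fiber with $\mathfrak{D}_{0}=\emptyset$, transfer smoothness, projectivity and the nef-equals-psef condition to that fiber via Theorem \ref{morphism to G/P_0}$(ii)(iii)$ and Lemma \ref{morphism to G/P_0 then F_X <= F_(X_0)}$(f)$, and then apply Theorem \ref{Nef1=Psef1 horospherical is more or less product}. The only cosmetic difference is that the paper works from the outset with the convention $H\supseteq R_{u}(B^{-})$ rather than conjugating at the end, which you correctly identify as harmless bookkeeping.
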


\begin{proof}
By Remark \ref{D=S-I, rho(D_a)=a*}$(v)$, we can assume that $H\supseteq R_{u}(B^{-})$. Let $D_{0}=\sum\limits_{D\in\mathfrak{D}_{0}(G/H)}D$. Then by Theorem \ref{morphism to G/P_0}$(i)$ and Theorem \ref{morphism to G/P_0 horospherical}$(i)$, $D_{0}$ induces a $G$-equivariant morphism $\pi_{0}: X\rightarrow G/P_{S\backslash\mathfrak{D}_{0}(G/H)}$. Denote by $\tilde{x}_{0}$ the point in $G/P_{S\backslash\mathfrak{D}_{0}(G/H)}$ such that $G_{\tilde{x}_{0}}=P_{S\backslash\mathfrak{D}_{0}(G/H)}^{-}$. Let $X_{0}=\pi_{0}^{-1}(\tilde{x}_{0})$, $L_{0}=P_{S\backslash\mathfrak{D}_{0}(G/H)}\cap P_{S\backslash\mathfrak{D}_{0}(G/H)}^{-}$, and $H_{0}=H\cap L_{0}$. Then by Lemma \ref{morphism to G/P_0 then F_X <= F_(X_0)}$(f)$ and Theorem \ref{morphism to G/P_0 horospherical}$(i)$, $X_{0}$ is a complete horospherical $L_{0}/H_{0}$-embedding such that $\mathfrak{D}_{0}(L_{0}/H_{0})=\emptyset$ and $H_{0}\supseteq R_{u}(B^{-})$, where $B_{0}=B\cap L_{0}$. By Theorem \ref{morphism to G/P_0}$(ii)(iii)$, $X_{0}$ is a smooth projective variety such that $\text{Nef}^{\, 1}(X_{0})=\text{Psef}^{\, 1}(X_{0})$. By Theorem \ref{Nef1=Psef1 horospherical is more or less product}, $X_{0}$ is isomorphic to the product of some smooth projective $L_{0}$-horospherical varieties of Picard number one. Note that different fibers of $\pi_{0}$ are isomorphic to each other. The conclusion follows.
\end{proof}

\begin{rmk}
The smooth projective $G$-horospherical varieties of Picard number one have been classified in \cite{Pa09}. Thus, by Theorem \ref{morphism to G/P_0}, Lemma \ref{morphism to G/P_0 then F_X <= F_(X_0)}, Theorem \ref{morphism to G/P_0 horospherical}, Theorem \ref{converse of morphism to G/P_0 horospherical}, Proposition \ref{converse of Nef1=Psef1 horospherical}, and Theorem \ref{Nef1=Psef1 horospherical is more or less product}, we can give a complete characterization of the smooth projective $G$-horospherical varieties whose effective divisors are nef.
\end{rmk}

\begin{e.g.} \label{example}
Let $M$ be a complex vector space of dimension $m+2\geq 5$, and $L, E$ be subspaces such that $\text{dim}_{\mathbb{C}}(L)=1$, $\text{dim}_{\mathbb{C}}(E)=m+1$ and $M=L\oplus E$. Let $X$ be the set of pairs $(W, V)\in Gr(k-1, E)\times Gr(k+1, M)$ such that $W\subseteq E\cap V$, where $2\leq k\leq m-1$. Associate $X$ with the reduced closed subvariety structure. Take $e_{0}\in L\backslash\{0\}$ and $\{e_{1}, e_{2},\ldots,e_{m+1}\}$ to be a basis of $E$.  Let $G=SL_{m+1}(\mathbb{C})$, consider the natural $G$-action on $E$ and let $G$ acts trivially on $L$. Thus, $X$ is naturallly a $G$-variety. This variety will help us to understand the characterization of smooth projective horospherical varieties whose effective divisors are nef. More precisely, we have the following Proposition \ref{properties of the example}.
\end{e.g.}

\begin{prop} \label{properties of the example}
Keep notations as in Example \ref{example}. Then the following hold.

$(i)$ $X$ is a smooth projective $G$-horospherical variety of Picard number two such that $\text{Nef}^{\, 1}(X)=\text{Psef}^{\, 1}(X)$.

$(ii)$ $X$ is not homogeneous under the action of $\text{Aut}^{o}(X)$, where $\text{Aut}^{o}(X)$ is the connected component of the automorphism group of $X$ containing the identity.

$(iii)$ $X$ is not isomorphic to a nontrivial product of two varieties.

$(iv)$ Let $D_{0}=\sum\limits_{D\in\mathfrak{D}_{0}(G/H)}D$, where $H=G_{x}\supseteq R_{u}(B^{-})$ for some point $x$ in the open $B$-orbit. Then $D_{0}$ induces a $G$-equivariant morphism $\pi_{0}: X\rightarrow G/P_{0}^{-}$. Let $F$ be a fiber of $\pi_{0}$, then $X$ is not isomorphic to $G/P_{0}^{-}\times F$.
\end{prop}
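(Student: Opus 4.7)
The plan is to first identify $X$ with the incidence variety $\mathcal{I} = \{(W,V) \in Gr(k-1,E) \times Gr(k+1,M) : W \subset V\}$. The first projection $p_1: X \to Gr(k-1,E)$ realizes $X$ as the Grassmannian bundle $Gr(2, \mathcal{Q} \oplus \mathcal{O}_{Gr(k-1,E)})$, where $\mathcal{Q}$ is the tautological quotient; this immediately gives smoothness, projectivity, and Picard number $\rho(X) = 2$. For the $G$-horospherical structure I would take the base point $(W_0, V_0) = (\langle e_1,\ldots,e_{k-1}\rangle, \langle e_1,\ldots,e_k, e_0+e_{k+1}\rangle)$ and compute its stabilizer $H \subset G$ directly: $H$ is block upper-triangular with block pattern $(k-1, 1, 1, m-k)$ (with $1$ in position $(k+1,k+1)$), contains $R_u(B)$, and satisfies $N_G(H) = P_I$ with $I = S \setminus \{\alpha_{k-1}, \alpha_k, \alpha_{k+1}\}$ and $M_{G/H} = \mathbb{Z}\epsilon_{k+1}$ of rank one. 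A coroot pairing yields $\rho(\nu_{D_{\alpha_{k-1}}}) = 0$, $\rho(\nu_{D_{\alpha_k}}) = -1$, $\rho(\nu_{D_{\alpha_{k+1}}}) = +1$, so the Picard relation from $\chi = \epsilon_{k+1}$ (Proposition~\ref{cycles are rat. equiv. to stable ones, picard group on spherical varieties}) gives $D_{\alpha_k} \sim D_{\alpha_{k+1}}$ in $A^1(X)$, and hence $\text{Psef}^1(X) = \mathbb{R}_{\geq 0} D_{\alpha_{k-1}} + \mathbb{R}_{\geq 0} D_{\alpha_k}$. Intersection computations with the extremal curves $C_{D_{\alpha_{k-1}}}$ and $C_\mu$ of $NE(X)$, via formulas (1)--(2) on page~\pageref{eqn. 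C_(D, Y)}, identify $D_{\alpha_{k-1}} = p_1^* H_1$ and $D_{\alpha_k} = p_2^* H_2$ (where $p_2: X \to Gr(k+1,M)$ and $H_1, H_2$ are ample generators of the targets), so both are nef, finishing (i).

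The key tool for (ii)--(iv) is that $p_1$ and $p_2$ are the two extremal Mori contractions of $X$: the two generators of $NE(X)$ are $C_\mu$ (contracted by $p_1$) and $C_{D_{\alpha_{k-1}}}$ (contracted by $p_2$), which follows from Lemma~\ref{D_0 and C_(D_0) are extremal} together with the intersection numbers just computed. By Blanchard's lemma $\text{Aut}^{o}(X)$ then acts equivariantly on both targets, producing a homomorphism $\text{Aut}^{o}(X) \to PGL(E) \times PGL(M)$ whose image preserves $X$ as a subvariety of $Gr(k-1,E) \times Gr(k+1,M)$. A direct incidence analysis shows that a pair $(g_1, g_2) \in PGL(E) \times PGL(M)$ preserves $\mathcal{I}$ only if $g_2$ preserves the hyperplane $E \subset M$: fixing $W \in Gr(k-1,E)$, the equivalence $V \supset W \Leftrightarrow g_2 V \supset g_1 W$ for all $V \in Gr(k+1,M)$ forces $g_2 \, i(W) = i(g_1 W)$, where $i: E \hookrightarrow M$; letting $W$ vary gives $g_2 \circ i = i \circ g_1$ projectively. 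Hence the image of $\text{Aut}^{o}(X)$ in $PGL(M)$ sits in the proper parabolic stabilizing $E$, and so cannot act transitively on $Gr(k+1,M)$; if $X$ were $\text{Aut}^{o}(X)$-homogeneous the induced action on $Gr(k+1,M)$ would be transitive, a contradiction.

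For (iii), any product decomposition $X \cong X_1 \times X_2$ with both factors positive-dimensional has $\rho(X_i)=1$, so its Mori contractions are the two projections and $\{X_1, X_2\}$ agrees (up to isomorphism) with $\{Gr(k-1,E), Gr(k+1,M)\}$; this is ruled out by the dimension count $\dim Gr(k-1,E) + \dim Gr(k+1,M) - \dim X = (k-1)(m-k+1) > 0$ for $k \geq 2$. For (iv), Theorem~\ref{morphism to G/P_0 horospherical} combined with $\mathfrak{D}_0(G/H) = \{D_{\alpha_{k-1}}\}$ identifies $\pi_0$ with $p_1$ and $F$ with $Gr(2, m-k+3)$; an isomorphism $X \cong G/P_0^- \times F$ would again force the Mori-contraction targets to be $G/P_0^-$ and $F$, hence $Gr(2, m-k+3) \cong Gr(k+1, m+2)$, which fails by a dimension check since $2(m-k+1) \neq (k+1)(m-k+1)$ for $k \geq 2$.

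The main technical obstacle is the incidence rigidity used in (ii), namely that preservation of the subvariety $\mathcal{I} \subset Gr(k-1,E) \times Gr(k+1,M)$ by $(g_1, g_2) \in PGL(E) \times PGL(M)$ forces $g_2$ to preserve $E \subset M$. Care is needed because $W$ must be allowed to range over the full Grassmannian $Gr(k-1,E)$ while $V$ ranges over all of $Gr(k+1, M)$; the argument relies on $k-1 \geq 1$ and $k+1 \leq m$ so that both the subspace $W$ and its ambient $V$ are generic enough to witness the inclusion $E \hookrightarrow M$. Once this incidence rigidity is established, parts (ii)--(iv) all follow from the same strategy of reading off the two Mori contractions of $X$, and (i) reduces to the colored-fan computation sketched above.
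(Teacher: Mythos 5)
Your proposal is essentially correct, and for most of the statement it runs parallel to the paper's proof: both arguments get smoothness, projectivity and $\rho(X)=2$ from the fibration $p_{1}$ with Grassmannian fibers $Gr(2,M/W)$, both identify the colored data ($M_{G/H}=\mathbb{Z}(\omega_{k}-\omega_{k+1})$, $\mathcal{V}_{X}=\emptyset$, $\mathfrak{D}_{0}(G/H)=\{D_{\alpha_{k-1}}\}$, $\rho(\nu_{D_{\alpha_{k}}})=-\rho(\nu_{D_{\alpha_{k+1}}})$), and both prove $(iii)$ by recognizing $p_{1},p_{2}$ as the two Mori contractions and comparing dimensions. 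The genuine divergences are in $(i)$ and $(ii)$. For $(i)$ you verify nefness of $D_{\alpha_{k-1}}$ and $D_{\alpha_{k}}$ by pairing directly with $C_{\mu}$ and $C_{D_{\alpha_{k-1}}}$ via formulas $(1)$--$(2)$; the paper instead invokes Theorem \ref{morphism to G/P_0}$(iii)$ to reduce to the fiber of $\pi_{0}$, which has Picard number one. Both are fine; yours is more self-contained, the paper's reuses machinery already built. For $(ii)$ the paper's argument is shorter: Blanchard's lemma applied to $\pi_{2}$ alone, plus the jump of fiber dimension from $\mathbb{P}^{k-1}$ to $Gr(2,k+1)$ over $Gr(k+1,E)$, shows $X_{2}$ is $\text{Aut}^{o}(X)$-stable. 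Your incidence-rigidity argument is also correct (the key points being $\bigcap_{V\supseteq W}V=W$ so that $g_{2}$ maps each $(k-1)$-plane of $E$ to a $(k-1)$-plane of $E$, and $k-1\geq 1$ so these planes span $E$), but it does more work than necessary and needs the extra observation that $\text{Aut}^{o}$ of each Grassmannian target is the corresponding $PGL$.

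The one place your argument has a real (though easily repaired) defect is $(iv)$. After identifying the two Mori-contraction targets with $Gr(k-1,E)$ and $Gr(k+1,M)$, a hypothetical isomorphism $X\cong G/P_{0}^{-}\times F$ only forces the \emph{unordered pair} $\{G/P_{0}^{-},F\}$ to match $\{Gr(k-1,m+1),Gr(k+1,m+2)\}$; your dimension count $2(m-k+1)\neq(k+1)(m-k+1)$ only excludes the matching $F\cong Gr(k+1,M)$, and the other matching $G/P_{0}^{-}\cong Gr(k+1,M)$ is not excluded by dimensions alone (e.g.\ for $m=6$, $k=5$ one has $\dim Gr(4,7)=\dim Gr(6,8)=12$). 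The fix is the one the paper uses: since $G/P_{0}^{-}$ and $F$ both have positive dimension, $G/P_{0}^{-}\times F$ is a nontrivial product, so $(iv)$ is an immediate corollary of your already-established $(iii)$ with no further dimension count needed.
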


Keep notations as above. Take $x_{i}=(W_{0}, V_{i})$ for $i=0, 1, 2$, where $w_{0}=e_{1}\wedge\ldots\wedge e_{k-1}$, $v_{0}=e_{1}\wedge\ldots\wedge e_{k}\wedge(e_{0}+e_{k+1})$, $v_{1}=e_{1}\wedge\ldots\wedge e_{k}\wedge e_{0}$, and $v_{2}=e_{1}\wedge\ldots\wedge e_{k}\wedge e_{k+1}$ are the corresponding representatives under the Pl$\ddot{u}$ker coordinates. Let $X_{i}=G\cdot x_{i}$ and $H_{i}=G_{x_{i}}$. Let $S=\{\alpha_{1},\ldots,\alpha_{m}\}$, where $\alpha_{i}$ is the $i$-th simple root of $G$ by the standard notations. Let $\omega_{i}$ be the $i$-th fundamental dominant weight of $G$.

\begin{lem}
Keep the notations as above, then

$(i)$ $X$ is the disjoint union of $X_{0}$, $X_{1}$ and $X_{2}$;

$(ii)$ the isotropy groups $H_{0}=\text{Ker}_{P_{S\backslash\{\alpha_{k-1}, \alpha_{k}, \alpha_{k+1}\}}}(\omega_{k}-\omega_{k+1})$, $H_{1}=P_{S\backslash\{\alpha_{k-1}, \alpha_{k}\}}$, and $H_{2}=P_{S\backslash\{\alpha_{k-1}, \alpha_{k+1}\}}$.

$(iii)$ $X$ is a projective horospherical $G/H_{0}$-embedding.
\end{lem}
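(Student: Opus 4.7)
The plan is to treat the three parts sequentially, using throughout that $G=SL_{m+1}(\mathbb{C})$ preserves both $L$ and $E$ (acting trivially on $L$) and acts transitively on any flag type in $E$. For part $(i)$, I would classify pairs $(W,V)\in X$ by the two $G$-invariants $\dim(V\cap E)\in\{k,k+1\}$ and whether $L\subseteq V$. Since $V\subseteq M=L\oplus E$ has dimension $k+1$, there are exactly three cases: $(a)$ $V\subseteq E$; $(b)$ $V\not\subseteq E$ and $L\subseteq V$, so $V=L\oplus V'$ with $V'$ a $k$-plane in $E$ containing $W$; $(c)$ $V\not\subseteq E$ and $L\cap V=0$, so $V\cap E$ is a $k$-plane containing $W$ and $V=(V\cap E)+\mathbb{C}(e_0+u)$ for some $u\in E\setminus(V\cap E)$. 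Each case reduces the data to a $G$-flag in $E$, and the three chosen representatives $x_2,x_1,x_0$ exhaust these types; hence $X=X_0\sqcup X_1\sqcup X_2$.

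For part $(ii)$, the stabilizers $H_1$ and $H_2$ are straightforward: since $V_1=\langle e_1,\dots,e_k\rangle\oplus L$ and $V_2=\langle e_1,\dots,e_{k+1}\rangle\subseteq E$, preserving $V_i$ together with $W_0$ reduces to preserving the corresponding flag in $E$, yielding $P_{S\setminus\{\alpha_{k-1},\alpha_k\}}$ and $P_{S\setminus\{\alpha_{k-1},\alpha_{k+1}\}}$ respectively. The computation of $H_0$ is more subtle. Any $g\in H_0$ preserves $W_0$ and $V_0\cap E=\langle e_1,\dots,e_k\rangle$, and because $g$ fixes $e_0$, the requirement $g(e_0+e_{k+1})\in V_0$ forces $g(e_{k+1})\equiv e_{k+1}\pmod{\langle e_1,\dots,e_k\rangle}$. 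In particular $g(e_{k+1})\in\langle e_1,\dots,e_{k+1}\rangle$, so $g$ also preserves $\langle e_1,\dots,e_{k+1}\rangle$ and lies in $P_{S\setminus\{\alpha_{k-1},\alpha_k,\alpha_{k+1}\}}$. The remaining condition that $g$ act trivially on the one-dimensional quotient $\langle e_1,\dots,e_{k+1}\rangle/\langle e_1,\dots,e_k\rangle$ is precisely the vanishing of the character $\omega_{k+1}-\omega_k$, so $H_0=\ker_{P_{S\setminus\{\alpha_{k-1},\alpha_k,\alpha_{k+1}\}}}(\omega_k-\omega_{k+1})$.

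For part $(iii)$, projectivity is immediate from the closed embedding $X\hookrightarrow Gr(k-1,E)\times Gr(k+1,M)$. To see that $X$ is a $G/H_0$-embedding, note that $X_0$ is open because the two defining conditions $V\not\subseteq E$ and $L\not\subseteq V$ are open, and $X$ is irreducible because the one-parameter family $V_t=\langle e_1,\dots,e_k,\,te_0+e_{k+1}\rangle$ with $t\in\mathbb{C}^*$ lies in $X_0$ and specializes to $V_2$ as $t\to 0$ and to $V_1$ as $t\to\infty$, so $X_1\cup X_2\subseteq\overline{X_0}$. Normality then follows from a local-charts smoothness check at points of $X_1$ and $X_2$, where the incidence condition $W\subseteq E\cap V$ cuts out a transverse intersection of expected dimension in the ambient product of Grassmannians. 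Finally, by $(ii)$, $H_0$ is the kernel of a character on the parabolic $P_{S\setminus\{\alpha_{k-1},\alpha_k,\alpha_{k+1}\}}$, so $H_0\supseteq R_u(P_{S\setminus\{\alpha_{k-1},\alpha_k,\alpha_{k+1}\}})\supseteq R_u(B)$, which by Remark~\ref{D=S-I, rho(D_a)=a*} makes $X$ a horospherical $G/H_0$-embedding.

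The main obstacle is the computation of $H_0$: one must correctly track how the component along $L$ interacts with the $E$-components under $g$ and identify the resulting character on $P_{S\setminus\{\alpha_{k-1},\alpha_k,\alpha_{k+1}\}}$. The secondary technical point is the normality assertion in $(iii)$, which requires either a direct smoothness check on $X$ along $X_1\cup X_2$ or an identification of $X$ with an already-known normal horospherical variety via its colored fan.
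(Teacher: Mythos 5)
Your route is essentially the paper's: the same three-way case analysis on the position of $V$ relative to $E$ and $L$ for $(i)$, a direct stabilizer computation for $(ii)$, and projectivity plus normality plus $H_{0}\supseteq R_{u}(B)$ for $(iii)$. Your derivation of $H_{0}$ by tracking $g(e_{0}+e_{k+1})$ and reading the character off the quotient line $\langle e_{1},\ldots,e_{k+1}\rangle/\langle e_{1},\ldots,e_{k}\rangle$ is a sound linear-algebra substitute for the paper's Pl\"{u}cker-coordinate argument, which writes $v_{0}=v_{1}+v_{2}$ in $\Lambda^{k+1}M$ and reads off $\omega_{k}$ and $\omega_{k+1}$ from the two summands.

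Two points need repair. First, the chain $H_{0}\supseteq R_{u}(P_{S\backslash\{\alpha_{k-1},\alpha_{k},\alpha_{k+1}\}})\supseteq R_{u}(B)$ is backwards: for parabolics $P\supseteq B$ one has $R_{u}(P)\subseteq R_{u}(B)$, not the reverse, so your second inclusion is false. The correct (and equally short) justification is that $R_{u}(B)\subseteq B\subseteq P_{S\backslash\{\alpha_{k-1},\alpha_{k},\alpha_{k+1}\}}$ and every character of an algebraic group is trivial on unipotent elements, whence $R_{u}(B)\subseteq\text{Ker}(\omega_{k}-\omega_{k+1})=H_{0}$. Second, normality is asserted via an unperformed ``local-charts smoothness check''; the clean argument is that the first projection exhibits $X$ as a Grassmannian bundle over $Gr(k-1,E)$ with fibers $Gr(2, M/W)$, so $X$ is in fact smooth --- the paper itself only records this in the proof of Proposition \ref{properties of the example}, resting in the Lemma on the codimension-$\geq 2$ remark. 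A smaller gloss: in your case $(c)$ the orbit datum is the flag of type $(k-1,k,k+1)$ in $E$ \emph{together with} a nonzero vector of $V''/V'$ determined by $e_{0}+u$, so one must also note that the flag stabilizer acts transitively on $V''/V'\backslash\{0\}$ (this is exactly the explicit element with $ge_{k+1}=t\tilde{e}_{k+1}$ that the paper constructs); with $k\geq 2$ this is immediate, but it is not literally ``reducing the data to a flag''.
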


\begin{proof}
Take any point $x=(W, V)\in X$, where $w=e'_{1}\wedge\ldots\wedge e'_{k-1}$ and $v=e'_{1}\wedge\ldots\wedge e'_{k-1}\wedge e'_{k}\wedge e'_{k+1}$ are the corresponding  Pl$\ddot{u}$ker coordinates. If $V\subseteq E$, then there is an element $g\in G$ such that $ge_{i}=\lambda_{i}e'_{i}, \lambda_{i}\in\mathbb{C}^{*}, i=1,\ldots, k+1$, hence $x\in X_{2}$. If $L\subseteq V$, then we can assume that $e'_{k}\in E$ and $e'_{k+1}=e_{0}$. There is an element $g\in G$ such that $ge_{i}=\lambda_{i}e'_{i}, \lambda_{i}\in\mathbb{C}^{*}, i=1,\ldots, k$, hence $x\in X_{1}$.

If $L\nsubseteq V$ and $V\nsubseteq E$, then we can assume that $e'_{k}\in E$ and $e'_{k+1}=e_{0}+t\tilde{e}_{k+1}$, where $t\in\mathbb{C}^{*}$ and $\tilde{e}_{k+1}\in E$. There is an element $g'\in G$ such that $g'e_{i}=\lambda_{i}e'_{i}, \lambda_{i}\in\mathbb{C}^{*}, i=1,\ldots,k$ and $g'e_{k+1}=\lambda_{k+1}\tilde{e}_{k+1}$, $\lambda_{k+1}\in\mathbb{C}^{*}$. This implies that there is a $g\in G$ such that $ge_{i}=\lambda_{i}e'_{i}, i=1,\ldots,k-1$, $ge_{k}=\lambda_{k}\lambda_{k+1}t^{-1}e'_{k}$ and $ge_{k+1}=t\tilde{e}_{k+1}$. Thus, $x\in X_{0}$. This shows that $X=X_{0}\cup X_{1}\cup X_{2}$.

Take any element $h\in G$. Then $h\in H_{i}$ if and only if $hw_{0}=\mu w_{0}$ for some $\mu\in\mathbb{C}^{*}$ and $hv_{i}=\mu_{i}v_{i}$ for some $\mu_{i}\in\mathbb{C}^{*}$. And $hw_{0}=\mu w_{0}$ for some $\mu\in\mathbb{C}^{*}$ if and only if $h\in P_{S\backslash\{\alpha_{k-1}\}}$. On the other hand, $v_{0}=v_{1}+v_{2}$. And $hv_{1}=\mu_{1} v_{1}$ for some $\mu_{1}\in\mathbb{C}^{*}$ if and only if $h\in P_{S\backslash\{\alpha_{k}\}}$. What's more, in this case, $\mu_{1}=\omega_{k}(h)$. Similarly, $hv_{2}=\mu_{2} v_{2}$ for some $\mu_{2}\in\mathbb{C}^{*}$ if and only if $h\in P_{S\backslash\{\alpha_{k+1}\}}$. And in this case, $\mu_{2}=\omega_{k+1}(h)$. Thus, $hv_{0}=\mu_{0}v_{0}$ for some $\mu_{0}\in\mathbb{C}^{*}$ if and only if $hv_{1}=\mu_{1} v_{1}$, $hv_{2}=\mu_{2} v_{2}$ for some $\mu_{1}=\mu_{2}\in\mathbb{C}^{*}$, i.e. if and only if $h\in\text{Ker}_{P_{S\backslash\{\alpha_{k}, \alpha_{k+1}\}}}(\omega_{k}-\omega_{k+1})$. This implies $(ii)$ and the fact $X_{i}\cap X_{j}=\emptyset$ if $i\neq j$, i.e. $(i)$ holds too.

Since $H_{0}\supseteq R_{u}(B)$, $X_{0}=G/H_{0}$ is a homogeneous $G$-horospherical variety. The fact $\text{dim}(X_{i})\leq\text{dim}(X)-2$ for $i=1, 2$ implies that $X$ is a normal variety. Thus, $X$ is a horospherical $G/H_{0}$-embedding. Since $X$ is a closed subvariety of $Gr(k-1, E)\times Gr(k+1, M)$, $X$ is projective. Thus, $(iii)$ holds.
\end{proof}

\begin{proof}[Proof of Proposition \ref{properties of the example}]
Let $X\xrightarrow{\pi_{1}}Y_{1}\subseteq Gr(k-1, E)$ and $X\xrightarrow{\pi_{2}}Y_{2}\subseteq Gr(k+1, M)$ be the restrictions of the two projections from $Gr(k-1, E)\times Gr(k+1, M)$ to its factors, where $Y_{1}, Y_{2}$ are the images. Then $Y_{1}=Gr(k-1, E)$ and each fiber $\pi_{1}^{-1}\pi_{1}(x)$ is isomorphic to $Gr(2, M/V)$, where $x=(W, V)$. Thus, $\pi_{1}$ is a smooth morphism, and $X$ is a smooth variety.

The construction of $X$ implies that $Y_{2}=Gr(k+1, M)$. For any $x=(W, V)\in X_{0}\cup X_{1}$, the fiber $\pi_{2}^{-1}\pi_{2}(x)\cong Gr(k-1, E\cap V)\cong\mathbb{P}^{k-1}$. For any $X=(W, V)\in X_{2}$,  the fiber $\pi_{2}^{-1}(\pi_{2}(x))\cong Gr(k-1, E\cap V)=Gr(k-1, V)\cong Gr(2, V)$. By \cite[Prop. 2.1]{Br11}, $X_{2}$ is stable under the action of $\text{Aut}^{0}(X)$, which implies the conclusion $(ii)$.

By our previous discussions, we know that $G/H_{0}$ is the open $G$-orbit on $X$,  $M_{G/H}=\mathbb{Z}(\omega_{k}-\omega_{k+1})$, $\mathcal{V}_{X}=\emptyset$, $\mathfrak{D}(G/H_{0})=\{D_{\alpha_{k-1}},D_{\alpha_{k}},D_{\alpha_{k+1}}\}$ , $\mathfrak{D}_{0}(G/H_{0})=\{D_{\alpha_{k-1}}\}$, $\rho(\nu_{D_{\alpha_{k}}})=-\rho(\nu_{D_{\alpha_{k+1}}})$, $\mathfrak{C}_{X_{1}}^{c}=(\mathbb{Q}^{+}\rho(\nu_{D_{\alpha_{k+1}}}), \{D_{\alpha_{k+1}}\})$ and $\mathfrak{C}_{X_{2}}^{c}=(\mathbb{Q}^{+}\rho(\nu_{D_{\alpha_{k}}}), \{D_{\alpha_{k}}\})$. Moreover, $X$ is a smooth projective horospherical $G/H_{0}$-embedding.

We know from Theorem \ref{morphism to G/P_0 horospherical}$(i)$ that both $G/P_{0}^{-}$ and $F$ have positive dimensions. Hence, the conclusion $(iv)$ follows from the conclusion $(iii)$.

By Proposition \ref{cycles are rat. equiv. to stable ones, picard group on spherical varieties}$(i)(ii)$ and \cite[Cor. 1.3$(iv)$]{Br93}, $D_{\alpha_{k}}$ and $D_{\alpha_{k+1}}$ are numerically equivalent in $N^{1}(X)_{\mathbb{R}}$, the Picard number $\rho(X)=2$, and $\mathbb{R}^{+}D_{\alpha_{k-1}}$, $\mathbb{R}^{+}D_{\alpha_{k}}$ are exact the two extremal rays of $\text{Psef}^{\, 1}(X)$.

By Theorem \ref{morphism to G/P_0}$(i)$ and Theorem \ref{morphism to G/P_0 horospherical}$(i)$, $D_{\alpha_{k-1}}$ induces a $G$-equivariant morphism $\pi_{0}: X\rightarrow G/P_{S\backslash\{\alpha_{k-1}\}}^{-}=Y_{1}$. By Lemma \ref{parabolic containing R_u(B) is unique}, $\pi_{1}|_{X_{0}}=\pi_{0}|_{X_{0}}$. Note that the morphisms of colored fans corresponding to $\pi_{1}$ and $\pi_{0}$ are both $\mathbb{F}_{X}\rightarrow\{(pt, \emptyset)\}$. By Theorem \ref{morphism of fan}, $\pi_{1}=\pi_{0}$. By Theorem \ref{morphism to G/P_0}$(iii)$, $\text{Nef}^{\, 1}(X)=\text{Psef}^{\, 1}(X)$. Thus, $\pi_{1}$ is a Mori contraction of $X$ and the conclusion $(i)$ holds.

On the other hand, $\pi_{2}^{*}\mathcal{O}_{Y_{2}}(1)$ is not ample and $\pi_{2}$ doesn't factor through $\pi_{1}$. This implies that $\pi_{2}$ factors through the Mori contraction different from $\pi_{1}$. The facts $Y_{2}$ is smooth,$\pi_{2}$ has connected fibers and the Picard number $\rho(X)=2$ imply that $\pi_{2}$ is indeed the other Mori contraction of $X$.

Suppose that $X$ is isomorphic to a nontrivial product $X_{1}\times X_{2}$, then the fact $\rho(X)=2$ implies that $\pi'_{1}: X\rightarrow X_{1}$ and $\pi'_{2}: X\rightarrow X_{2}$ are exact the two Mori contractions. Thus, by reordering $X_{1}$ and $X_{2}$ if necessary, $X_{1}=Y_{1}$ and $X_{2}=Y_{2}$. It's contradicted with the fact that $\text{dim}(X)<\text{dim}(Y_{1})+\text{dim}(Y_{2})$. Hence, the conclusions $(iii)$ and $(iv)$ hold.
\end{proof}

\small

INSTITUTE OF MATHEMATICS, AMSS, CHINESE ACADEMY OF SCIENCES,

\smallskip

55 ZHONGGUANCUN EAST ROAD, BEIJING, 100190, P.R.CHINA

\smallskip

E-mail address: qifengli@amss.ac.cn

\end{document}